\newtheorem{theorem}{Theorem}[section]
\newtheorem{lemma}[theorem]{Lemma}
\newtheorem{prop}[theorem]{Proposition}
\newtheorem{cor}[theorem]{Corollary}
\theoremstyle{definition}
\newtheorem{defn}[theorem]{Definition}
\newtheorem{example}[theorem]{Example}
\theoremstyle{remark}
\newtheorem{remark}[theorem]{Remark}
\numberwithin{equation}{section}
\def\N{{\mathbb N}}
\def\U{{\mathcal U}}
\def\R{{\mathbb R}}
\def\T{{\mathbb T}}
\def\H{{\mathcal H}}
\def\K{{\mathcal K}}
\def\W{{\mathcal W}}
\def\Z{{\mathbb Z}}
\def\G{{\mathcal G}}
\def\Br{\operatorname{Br}}
\begin{document}

\baselineskip=1.2\baselineskip
 
\title[A Chern-Weil Isomorphism for the Equivariant Brauer Group]{A Chern-Weil 
Isomorphism for the\\ Equivariant Brauer Group}

\author[P Bouwknegt]{Peter Bouwknegt}

\address[P Bouwknegt]{Department of Mathematics, Mathematical Sciences Institute, and
Department of Theoretical Physics, Research School of Physics and Engineering,
Australian National University, Canberra ACT~0200, Australia}
\email{peter.bouwknegt@anu.edu.au}

\author[A Carey]{Alan Carey}
\address[A Carey]{Mathematical Sciences Institute, Australian National University, Canberra
ACT~0200, Australia}
\email{alan.carey@anu.edu.au}

\author[R Ratnam]{Rishni Ratnam}

\address[R Ratnam]{Department of Mathematics, 
Mathematical Sciences Institute, Australian National University, Canberra
ACT~0200, Australia}
\email{rishni.ratnam@anu.edu.au, rishniratnam@yahoo.com.au}

\thanks{This research was supported under Australian Research Council's Discovery
Projects funding scheme (project numbers DP0559415 and DP0878184).}

\begin{abstract}
In this paper we construct a Chern-Weil isomorphism for the equivariant 
Brauer group of $\R^n$ actions on a principal torus bundle, where the target 
for this isomorphism is a ``dimensionally reduced" \v Cech cohomology group. 
{}From this point of view, the usual forgetful functor takes the form of a connecting 
homomorphism in a long exact sequence in dimensionally reduced cohomology.
\end{abstract}

\maketitle

\section{Introduction}

For a second countable locally compact Hausdorff space $X$, we define the 
Brauer group $\Br(X)$ to be the group of $C_0(X)$-isomorphism classes 
of \emph{stable} separable continuous trace algebras with spectrum $X$. 
Dixmier-Douady \cite{DixDou63} showed that this group is isomorphic to the 
sheaf cohomology group $\check{H}^2(X,\mathcal{S})$ {{}(where $\mathcal{S}$ 
is the sheaf of germs of continuous $\mathbb T$-valued functions, and 
$\mathbb T = \mathbb R/\mathbb Z$ is the unit circle).} Given an 
action of a locally compact group $G$ on $X$, one can extend the Dixmier-Douady 
result to outer conjugacy classes of pairs $(A(X),\alpha)$, where $A(X)$ is a separable 
continuous trace C*-algebra with spectrum $X$, and $\alpha$ is a $G$-action 
on $A(X)$ that covers the $G$-action on $X$ \cite{CroKumRaeWil97}. 
Importantly, if the pairs $(A(X),\alpha)$ and $(B(X),\beta)$ are outer conjugate then 
they will have isomorphic crossed products $A(X)\rtimes_\alpha G$ and 
$B(X)\rtimes_\beta G$ \cite{RaeWill98}.
The group of outer conjugacy classes of pairs $(A(X),\alpha)$ 
is known as the equivariant Brauer group $\Br_G(X)$.

There has been a resurgence of interest in this equivariant Brauer group as a result
of applications to T-duality in string theory. {In particular, it was shown in 
\cite{MatRos05, MatRos06} that the equivariant Brauer group, modulo an imprecise 
homotopy equivalence, provides a natural setting for computing the T-duals of principal 
$\T^n$-bundles with H-flux, even when there does not exist a so-called ``classical" T-dual bundle}.
 Recently in \cite{Tu}  an isomorphism between $\Br_G(X)$ and a certain 
\v Cech-type cohomology group $\check{H}^2(G\ltimes X,\mathcal{S})$ for the 
transformation-group groupoid $G\ltimes X$ was established as an analogue of the 
Dixmier-Douady result. Our aim is to extend \cite{Tu} to shed  new light on the structure of $\Br_G(X)$.

The structure of  $\Br_G(X)$ is well known to be very  difficult to analyse except in special cases. 
For example, when $G$ acts freely and properly on $X$ we have 
$\Br_G(X)\cong \check{H}^2(G\backslash X,\mathcal{S})$. At the other extreme, when $G$ acts trivially on $X$:
\[\Br_G(X)\cong \check{H}^2(X,\mathcal{S})\oplus \check{H}^1(X,\mathcal{G}_{ab})
\oplus C(X,H^2_M(G,\T)),\]
where $\mathcal{G}_{ab}$ is the sheaf of germs of continuous $G_{ab}$-valued functions 
(and $G_{ab}$ is the abelianisation of $G$), and $H_M$ is Moore's cohomology for groups 
\cite{Moo76}. When there is a subgroup $N\subset G$ such that $N$ acts trivially on $X$, and 
the $G$ action induces a principal $G/N$-bundle structure $X\to G\backslash X$  progress was 
made in \cite{RaeWil93ddclasses} and \cite{PacRaeWill96}, where the subgroup of $\Br_G(X)$ 
with trivial Mackey obstruction was shown to be isomorphic to the degree 2 cohomology of a 
two-column double complex. This is still insufficient for the applications to T-duality which study 
the case where $\pi:X\to Z$ is a principal $\R^n/\Z^n$ bundle.

Now, it is well known that such principal bundles are classified by the cohomology 
group $\check{H}^2(Z,\underline{\Z}^n)$, and we call the cohomology class associated 
to $\pi:X\to Z$ the \emph{Euler vector}. In this paper we provide an isomorphism from all 
of $\Br_{\R^n}(X)$ to the degree 2 cohomology of a three column complex. 
This is not an obvious generalisation of \cite{RaeWil93ddclasses, PacRaeWill96}. 
The innovation is in the fact that we do not use the ordinary horizontal and vertical 
differentials to constitute the total differential. Instead, we mimic the $E_2$ page of the 
Leray-Serre spectral sequence by combining the vertical differential with the cup product 
with the Euler vector.

To assist the reader we summarise in Sections 2, 3, 4, and 6, respectively,
the equivariant Brauer group (and the case arising in $T$-duality), dimensionally reduced
cohomology, Tu's groupoid cohomology and the Raeburn-Williams equivariant cohomology.
Extensions of these last two papers needed for our results are in Sections 5 and 6
and the main theorem is proved in Section 8.
We explain the main result of the paper in detail in the next subsection. Some results have also 
appeared in \cite{BouCarRat1}.

\subsection{The main theorem}

We begin by recalling the Raeburn-Williams ``equivariant cohomology" from 
\cite{RaeWil93ddclasses, PacRaeWill96}. Consider the case where $G$ is a second 
countable locally compact Hausdorff abelian group acting on a locally compact space 
$X$ with orbit space $Z=G\backslash X$. Suppose also that $N\subset G$ is a closed subgroup 
such that $G\to N$ and $\hat{G}\to\hat{N}$ have local sections, and $\pi:X\to Z$ is a 
principal $G/N$-bundle. As the Dixmier-Douady theorem implies there is an isomorphism 
$\Br_{G}(X)\cong \check{H}^3(X,\underline{\Z})$, we denote by $CT(X,\delta)$ the unique 
(up to $C_0(X)$-isomorphism) stable separable continuous trace C*-algebra with 
\emph{Dixmier-Douady class} $\delta\in\check{H}^3(X,\underline{\Z})$. This definition implies 
that for any element $[CT(X,\delta),\alpha]\in\Br_G(X)$ there is an open cover $\{U_{\lambda_0}\}$ 
of $X$ and isomorphisms $\Phi_{\lambda_0}:CT(X,\delta)|_{U_{\lambda_0}}\to 
C_0(U_{\lambda_0},\K)$. It follows by \cite[Prop 2.1]{EchNes01} that 
$\Phi_{\lambda_0}\circ\alpha|_N\circ\Phi_{\lambda_0}$ is locally inner. 
In \cite{RaeWil93ddclasses} and \cite{PacRaeWill96} the authors develop a 
cohomology theory $H^k_{G}(X,{\mathcal S})$ that in degree 2 is isomorphic to 
the subgroup of $\mbox{Br}_G(X)$ such that the restriction of the actions to $N$ are 
locally \emph{unitary}. Note that, the failure of $\Phi_{\lambda_0}\circ\alpha|_N
\circ\Phi_{\lambda_0}$ to be locally unitary at $x\in X$ is measured by a class 
$[M(\cdot,\cdot,x)]$ in $H_M^2(N,\T)$, which is independent of $\lambda_0$ and 
constant on orbits of $X$. We then obtain a map $\operatorname{M}:\Br_G(X)\mapsto 
C(G\backslash X,H_M^2(N,\T))$, called the \emph{Mackey obstruction map}, which has trivial image 
if and only if the restriction of $\alpha$ to $N$ is locally unitary for all $x\in X$. 
Packer, Raeburn and Williams call such systems \emph{$N$-principal}.

Let $\U=\{U_{\lambda_0}\}$ be an open cover of $X$ by $G$-invariant sets. 
We define a two column cochain complex $C^{kj}_G(\U,{\mathcal S})$ as follows:
$$
C^{k0}_G(\U,{\mathcal S}):=\check{C}^k(\U,{\mathcal S}),\quad
C^{(k-1)1}_G(\U,{\mathcal S}):= Z_G^1(G,\check{C}^{k-1}(\U,{\mathcal S})),
$$
where $Z_G^1(G,\check{C}^{k-1}(\U,{\mathcal S}))$ denotes the set of \emph{continuous} group
cohomology 1-\emph{cocycles} from $G$ into the $G$-module $\check{C}^{k-1}(\U,{\mathcal S})$ 
with the obvious $G$ action. In other words, an element $\eta\in C^{(k-1)1}_G(\U,{\mathcal S})$ 
is a collection of continuous functions $\eta_{\lambda_0\dots\lambda_{k-1}}: 
G\times U_{\lambda_0\dots\lambda_{k-1}}\to \T$ such that, for all $g_0,g_1\in G$ 
and $x\in U_{\lambda_0\dots\lambda_{k-1}}$, the following holds:
\[\eta_{\lambda_0\dots\lambda_{k-1}}(g_1,x)\eta_{\lambda_0\dots
\lambda_{k-1}}(g_0g_1,x)^*\eta_{\lambda_0\dots\lambda_{k-1}}(g_0,g_1^{-1}x)=1.\]
This complex has as horizontal differential the usual group cohomology differential 
$\partial_G$, and as vertical differential the usual \v{C}ech differential $\check{\partial}$. 
The cohomology group $H^k_{G}(\U,{\mathcal S})$ is then the cohomology 
$Z^k_{G}(\U,{\mathcal S})/B^k_{G}(\U,{\mathcal S})$ of the total complex
\[\left(\bigoplus_{j=0}^1 C^{(k-j)j}_G(\U,{\mathcal S}), \check\partial\oplus (-1)^{k-j}\partial_G\right).\]
After showing refinement maps induce canonical maps on cohomology 
\cite[Sect 2]{PacRaeWill96}, the authors define
$H^k_{G}(X,{\mathcal S}):=\varinjlim_{\U}H^k_{G}(\U,{\mathcal S}).$

The importance of these groups to our work is in the following two results:

\begin{lemma}[{\cite[Lemma 1.3]{PacRaeWill96}}]\label{IsotoKerM}
Suppose that $G$ is a locally compact abelian group, and $N$ a closed 
subgroup such that $\hat{G}\to\hat{N}$ has local sections. Suppose $\pi:X\to Z$ 
is a locally trivial principal $G/N$-bundle over a paracompact space $Z$ and let 
$\operatorname{M}:\operatorname{Br}_G(X)\to C(G\backslash X,H^2_M(N,X))$ denote the 
Mackey Obstruction map. Then $H_G^2(X,{\mathcal S})\cong \ker \operatorname{M}$.
\end{lemma}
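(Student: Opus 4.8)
The plan is to realise the isomorphism explicitly on cocycles, constructing maps in both directions. For the forward map $\ker\operatorname{M}\to H^2_G(X,\mathcal S)$ I would start with $[\,CT(X,\delta),\alpha\,]\in\ker\operatorname{M}$. Because the Mackey obstruction vanishes, \cite[Prop.~2.1]{EchNes01} lets us pass to a sufficiently fine $G$-invariant cover $\U=\{U_{\lambda_0}\}$ on which $CT(X,\delta)$ is trivialised by isomorphisms $\Phi_{\lambda_0}\colon CT(X,\delta)|_{U_{\lambda_0}}\to C_0(U_{\lambda_0},\K)$, with transition unitaries $\nu_{\lambda_0\lambda_1}\colon U_{\lambda_0\lambda_1}\to U(\H)$, $\Phi_{\lambda_0}\Phi_{\lambda_1}^{-1}=\operatorname{Ad}(\nu_{\lambda_0\lambda_1})$, so that $n:=\check\partial\nu$ is a \v Cech $2$-cocycle representing $\delta\in\check H^2(X,\mathcal S)$. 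Transporting $\alpha$ to $\alpha^{\lambda_0}_g:=\Phi_{\lambda_0}\circ\alpha_g\circ\Phi_{\lambda_0}^{-1}$ and using a good cover to trivialise the $\operatorname{Aut}(\K)$-part, one writes $\alpha^{\lambda_0}_g=\operatorname{Ad}(V^{\lambda_0}_g)$ composed with the constant-field action of $g$, for unitary fields $V^{\lambda_0}_g\colon U_{\lambda_0}\to U(\H)$; local unitarity of $\alpha|_N$ is precisely what permits $g\mapsto V^{\lambda_0}_g$ to be taken a genuine crossed homomorphism. Setting
\[
\eta_{\lambda_0\lambda_1}(g,x):=V^{\lambda_0}_g(x)\,\nu_{\lambda_0\lambda_1}(g^{-1}x)\,V^{\lambda_1}_g(x)^{-1}\,\nu_{\lambda_0\lambda_1}(x)^{-1}\in\T
\]
defines $\eta\in Z^1_G(G,\check C^1(\U,\mathcal S))$ with $\check\partial\eta$ equal to $\partial_G n$ up to sign, so $(n,\eta)$ is a cocycle for the total complex; its class in $H^2_G(\U,\mathcal S)$, pushed to the limit, is the image of $[\,CT(X,\delta),\alpha\,]$.

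For the inverse I would take a total $2$-cocycle $(n,\eta)$ over $\U$, pick unitary lifts $\nu_{\lambda_0\lambda_1}$ with $\check\partial\nu=n$ (possible since $U(\H)$ is contractible), clutch the trivial fields $U_{\lambda_0}\times\K$ along $\operatorname{Ad}(\nu_{\lambda_0\lambda_1})$ to get $CT(X,\delta)$ with $\delta=[n]$, choose crossed homomorphisms $V^{\lambda_0}_\bullet\colon G\to C_b(U_{\lambda_0},U(\H))$ satisfying the displayed identity with this $\eta$, and define $\alpha$ locally by $\operatorname{Ad}(V^{\lambda_0}_g)$ followed by the constant-field action. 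The cocycle identities for $(n,\eta)$ are exactly what make these local automorphisms patch to a global $G$-action, and since each $V^{\lambda_0}_\bullet$ restricts to a homomorphism on $N$ (whose action on $X$ is trivial), $\alpha|_N$ is locally unitary, i.e.\ $[\,CT(X,\delta),\alpha\,]\in\ker\operatorname{M}$. One then verifies that the two assignments are mutually inverse, that they intertwine the tensor-product operation of $\Br_G(X)$ with the pointwise product of cocycles, and that refinement of covers acts compatibly, so that everything descends to $H^2_G(X,\mathcal S)=\varinjlim_{\U}H^2_G(\U,\mathcal S)$.

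Most of the work, though routine, will be checking well-definedness: replacing $\Phi$, $\nu$ or $V$ alters $n$ by $\check\partial m$ and $\eta$ by $\partial_G m\cdot\check\partial\zeta$ for appropriate $m\in\check C^1(\U,\mathcal S)$ and $\zeta\in Z^1_G(G,\check C^0(\U,\mathcal S))$, that is, by a total coboundary, and an exterior equivalence $\alpha\mapsto\operatorname{Ad}(w)\circ\alpha$ is absorbed the same way. The real obstacle, and the place where the hypotheses on $N$ and $G$ bite, is twofold: (i) one must prove that $N$-principality is \emph{precisely} the condition under which the cover, trivialisations and implementing fields can be chosen with $g\mapsto V^{\lambda_0}_g$ a crossed homomorphism, which is where \cite[Prop.~2.1]{EchNes01}, the vanishing on a good cover of the relevant Phillips--Raeburn-type obstruction, and the local-section hypotheses on $\hat G\to\hat N$ are used; and (ii) in the reverse direction one must manufacture an honest strictly continuous $G$-action out of the combinatorial cocycle data, which forces the various lifts $U_{\lambda_0}\to U(\H)$ to be chosen coherently across the cover. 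With (i) and (ii) settled, the remaining cocycle and coboundary computations, and the compatibility with refinements and with the group operations, are mechanical.
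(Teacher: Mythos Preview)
The paper does not prove this lemma; it is quoted verbatim from \cite[Lemma~1.3]{PacRaeWill96}. What the paper \emph{does} do, in Section~\ref{inclusion}, is spell out the forward map $\ker\operatorname{M}\to H^2_G(X,\mathcal S)$ explicitly (Equations~(\ref{rwetaeqn}) and~(\ref{rwnueqn}) and the proposition following them), and your construction of $(n,\eta)$ from $\{\Phi_{\lambda_0}\}$, $\{\nu_{\lambda_0\lambda_1}\}$, $\{V^{\lambda_0}_g\}$ is, up to an index convention, exactly that construction. So your outline is on target and agrees with the argument the paper (and \cite{PacRaeWill96}) have in mind.

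One point to sharpen: at the outset you invoke \cite[Prop.~2.1]{EchNes01}, but that result only gives local \emph{innerness} of $\alpha|_N$. What you actually need in order to take $g\mapsto V^{\lambda_0}_g$ as a genuine $\alpha$-cocycle (your ``crossed homomorphism'') over a $G$-invariant cover is the stronger structural result that, for $N$-principal systems, there exist trivialisations $\Phi_{\lambda_0}$ over sets of the form $\pi^{-1}(W_{\lambda_0})$ carrying $\alpha$ to an action \emph{exterior equivalent to} $\tau$. This is Theorem~\ref{invarianttheorem} in the paper, quoted from \cite[Thm.~2.1]{RaeWil93}, and it is precisely the content of your obstacle~(i). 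You correctly flag (i) as the non-routine step, but the reference you want there is \cite{RaeWil93} rather than \cite{EchNes01}; the local-section hypothesis on $\hat G\to\hat N$ enters through that theorem. With that substitution your sketch matches the paper's treatment of the forward direction, and your plan for the inverse and for well-definedness is the standard one from \cite{PacRaeWill96}.
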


In order to generalise Lemma \ref{IsotoKerM} to all of $\Br_G(X)$, one may choose to 
study a three-column complex defined by setting
$C^{k0}_G(\U,{\mathcal S}):=\check{C}^k(\U,{\mathcal S}),$ and
$$C^{(k-1)1}_G(\U,{\mathcal S}):= C_G^1(G,\check{C}^{k-1}(\U,{\mathcal S})),\quad
C^{(k-2)2}_G(\U,{\mathcal S}):= Z_G^2(G,\check{C}^{k-2}(\U,{\mathcal S})).$$
However, such a complex is unable to go beyond $\ker\operatorname{M}$, 
because the horizontal differential requires $\U$ to consist of $G$-invariant sets, and 
\cite[Cor 5.18]{PacRaeWill96} \footnote{The corollary is incorrect as the isomorphism
claimed there is only a surjection.} implies continuous trace algebras trivialisable 
over $G$-invariant sets have trivial Mackey obstruction.

On the other hand, the following theorem provides the appropriate direction:
\begin{theorem}[{\cite[Thm 4.1]{PacRaeWill96}}]\label{PRWGysin}
Let $G$ be a locally compact abelian group and $N$ a closed subgroup such that 
$G\to N$ and $\hat{G}\to\hat{N}$ have local sections. Let ${\mathcal N}$ and $\mathcal{\hat{N}}$ 
denote the sheaves of germs of continuous $N$ and $\hat{N}$-valued functions respectively. 
Then for any principal $G/N$-bundle $\pi:X\to Z$ with Euler vector 
$c\in \check{H}^2(Z,{\mathcal N})$ there is a long exact sequence
\[\dots\to \check{H}^k(Z,{\mathcal S})\stackrel{\pi^*_G}{\to} 
H^k_G(X,{\mathcal S})\stackrel{\pi_*}{\to} \check{H}^{k-1}
(Z,\mathcal{\hat{N}})\stackrel{\cup c}{\to}\check{H}^{k+1}(Z,{\mathcal S})\to\dots\]
The sequence starts with $\check{H}^1(Z,{\mathcal S})$, and $\pi^*_G:
\check{H}^1(Z,{\mathcal S})\to \check{H}^1_G(X,{\mathcal S})$ is injective.
\end{theorem}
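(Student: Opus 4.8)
The plan is to realise $H^{k}_{G}(X,\mathcal{S})$ as sheaf hypercohomology on $Z$ of a short complex and then extract the sequence from the associated two-step filtration. Since the cover $\U=\{U_{\lambda_{0}}\}$ consists of $G$-invariant sets, each $U_{\lambda_{0}}=\pi^{-1}(V_{\lambda_{0}})$ for an open set $V_{\lambda_{0}}\subseteq Z$, and, writing $\pi_{*}\mathcal{S}$ for the pushforward to $Z$ of the sheaf of germs of continuous $\T$-valued functions on $X$, one has $\check{C}^{k}(\U,\mathcal{S})=\check{C}^{k}(\mathcal{V},\pi_{*}\mathcal{S})$, where $\mathcal{V}=\{V_{\lambda_{0}}\}$; likewise $Z^{1}_{G}(G,\check{C}^{k}(\U,\mathcal{S}))=\check{C}^{k}(\mathcal{V},\mathcal{Z}^{1})$, where $\mathcal{Z}^{1}$ is the sheaf on $Z$ whose sections over an open $W$ are the continuous group $1$-cocycles $G\times\pi^{-1}(W)\to\T$. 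Hence the total complex computing $H^{k}_{G}(\U,\mathcal{S})$ is exactly the \v{C}ech hypercohomology complex, with respect to $\mathcal{V}$, of the two-term complex of sheaves $\mathcal{C}^{\bullet}:=\bigl[\,\pi_{*}\mathcal{S}\xrightarrow{\ \partial_{G}\ }\mathcal{Z}^{1}\,\bigr]$, placed in degrees $0$ and $1$. The $G$-invariant open covers of $X$ correspond exactly to the open covers of $Z$, so passing to the direct limit gives $H^{k}_{G}(X,\mathcal{S})\cong\mathbb{H}^{k}(Z,\mathcal{C}^{\bullet})$ (on the paracompact base $Z$ this \v{C}ech hypercohomology agrees with the derived one).

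Next I would compute the cohomology sheaves of $\mathcal{C}^{\bullet}$. The kernel of $\partial_{G}$ is the subsheaf of $G$-invariant functions in $\pi_{*}\mathcal{S}$, and since $Z=G\backslash X$ this is exactly the sheaf $\mathcal{S}$ on $Z$; thus $\mathcal{H}^{0}(\mathcal{C}^{\bullet})\cong\mathcal{S}$. For $\mathcal{H}^{1}(\mathcal{C}^{\bullet})=\operatorname{coker}\partial_{G}$, restrict to a set $V$ over which the bundle is trivial: as a $G$-module, $C(\pi^{-1}(V),\T)\cong C\bigl(V\times(G/N),\T\bigr)$ is induced from the trivial $N$-module $C(V,\T)$, so a Shapiro-type argument gives
\[H^{1}_{\mathrm{grp}}\bigl(G,\,C(\pi^{-1}(V),\T)\bigr)\;\cong\;H^{1}_{\mathrm{grp}}\bigl(N,\,C(V,\T)\bigr)\;=\;\operatorname{Hom}\bigl(N,\,C(V,\T)\bigr)\;=\;C(V,\hat N),\]
the isomorphism being restriction of a cocycle to $N$; one checks, using that $N$ acts trivially, that the restricted cocycle is constant on $G$-orbits and so descends to $Z$. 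Naturality in $V$ identifies $\mathcal{H}^{1}(\mathcal{C}^{\bullet})$ with $\hat{\mathcal{N}}$, the sheaf of germs of continuous $\hat N$-valued functions on $Z$. The hypotheses that $G\to G/N$ and $\hat G\to\hat N$ admit local sections are precisely what make the Shapiro step and the local surjectivity of $\mathcal{Z}^{1}\to\hat{\mathcal{N}}$ work.

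With $\mathcal{H}^{0}(\mathcal{C}^{\bullet})\cong\mathcal{S}$ and $\mathcal{H}^{1}(\mathcal{C}^{\bullet})\cong\hat{\mathcal{N}}$, the hypercohomology spectral sequence $E_{2}^{p,q}=\check{H}^{p}\bigl(Z,\mathcal{H}^{q}(\mathcal{C}^{\bullet})\bigr)\Rightarrow\mathbb{H}^{p+q}(Z,\mathcal{C}^{\bullet})$ is concentrated in the two rows $q=0,1$; it therefore degenerates at $E_{3}$, and the resulting two-step filtration of $\mathbb{H}^{k}(Z,\mathcal{C}^{\bullet})=H^{k}_{G}(X,\mathcal{S})$ splices into the long exact sequence
\[\cdots\longrightarrow\check{H}^{k}(Z,\mathcal{S})\xrightarrow{\ \pi^{*}_{G}\ }H^{k}_{G}(X,\mathcal{S})\xrightarrow{\ \pi_{*}\ }\check{H}^{k-1}(Z,\hat{\mathcal{N}})\xrightarrow{\ d_{2}\ }\check{H}^{k+1}(Z,\mathcal{S})\longrightarrow\cdots .\]
Equivalently, this is the long exact sequence of the short exact sequence of complexes $0\to\mathcal{S}\to\mathcal{C}^{\bullet}\to\mathcal{C}^{\bullet}/\mathcal{S}\to0$, whose quotient is quasi-isomorphic to $\hat{\mathcal{N}}$ placed in degree $1$ (because $\ker\partial_{G}=\mathcal{S}$). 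From this description $\pi^{*}_{G}$ is the equivariant pull-back along $\pi$, induced by $\mathcal{S}\hookrightarrow\mathcal{C}^{\bullet}$, and $\pi_{*}$ is induced by $\mathcal{C}^{\bullet}\to\mathcal{C}^{\bullet}/\mathcal{S}\simeq\hat{\mathcal{N}}$, i.e.\ it restricts the $1$-cocycle part of a class to $N$. Since the quotient has vanishing degree-$0$ cohomology, the sequence begins $0\to\check{H}^{1}(Z,\mathcal{S})\xrightarrow{\pi^{*}_{G}}H^{1}_{G}(X,\mathcal{S})\to\cdots$, which yields both the stated starting point and the injectivity of $\pi^{*}_{G}$ in degree one.

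The remaining step, which I expect to be the main obstacle, is the identification of $d_{2}$ with cup product by the Euler vector $c\in\check{H}^{2}(Z,\mathcal{N})$. Splicing $\mathcal{C}^{\bullet}$ at its cohomology gives the four-term exact sequence $0\to\mathcal{S}\to\pi_{*}\mathcal{S}\xrightarrow{\partial_{G}}\mathcal{Z}^{1}\to\hat{\mathcal{N}}\to0$, and $d_{2}$ is Yoneda composition with the class of this $2$-extension in $\operatorname{Ext}^{2}_{Z}(\hat{\mathcal{N}},\mathcal{S})$. I would compute that class by \v{C}ech methods: choose a cover $\{V_{i}\}$ trivialising the bundle, with transition functions $g_{ij}\colon V_{ij}\to G/N$; using a local section of $G\to G/N$, lift these to $\tilde g_{ij}\colon V_{ij}\to G$, so that $n_{ijk}=\tilde g_{ij}\,\tilde g_{jk}\,\tilde g_{ik}^{-1}\colon V_{ijk}\to N$ is a \v{C}ech $2$-cocycle representing $c$; split the four-term sequence over each $V_{i}$ (using the local triviality and a local section of $\hat G\to\hat N$); and verify that the discrepancy of these local splittings along double and triple overlaps equals the image of $n_{ijk}$ under the evaluation pairing $\mathcal{N}\otimes\hat{\mathcal{N}}\to\mathcal{S}$, $(n,\chi)\mapsto\chi(n)$. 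Feeding this back into the spectral sequence identifies $d_{2}$ with $\cup\,c$ and completes the proof. Apart from this comparison everything is formal once the cohomology sheaves of $\mathcal{C}^{\bullet}$ are in hand; the genuinely delicate point is keeping track of how the two local trivialisations interact on triple overlaps.
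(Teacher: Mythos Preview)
The paper does not prove this theorem: it is quoted from \cite[Thm 4.1]{PacRaeWill96}, and the paper only recalls the explicit \v{C}ech-level definitions of the three maps $\pi^*_G$, $\pi_*$, and $\cup\,c$ (together with the key technical input \cite[Lemma 4.2]{PacRaeWill96}, which produces a cochain $\mu(\nu,\eta)$ with $\check\partial[\mu(\nu,\eta)]=\pi_*(\eta)\cup F$). The original proof in \cite{PacRaeWill96} is by direct \v{C}ech-cocycle manipulation over a trivialising cover of $Z$, so there is no ``paper's own proof'' to compare against beyond those explicit formulas.

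Your approach is genuinely different and, in outline, correct. Recasting $H^k_G(X,\mathcal S)$ as the \v{C}ech hypercohomology on $Z$ of the two-term complex $\pi_*\mathcal S\xrightarrow{\partial_G}\mathcal Z^1$ is the right abstraction, and the two-row spectral sequence then delivers the long exact sequence formally. The computation of $\mathcal H^0\cong\mathcal S$ and $\mathcal H^1\cong\hat{\mathcal N}$ via Shapiro is sound; the local-section hypotheses on $G\to G/N$ and $\hat G\to\hat N$ enter exactly where you say they do. Your description of $\pi_*$ as ``restrict the $1$-cocycle part to $N$'' matches the paper's explicit formula (this is precisely Equation~(\ref{etainvariantonorbits}) plus the observation that $\check\partial\eta|_N=1$). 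What your approach buys is conceptual clarity and an explanation of \emph{why} the connecting map must be a cup product; what the direct \v{C}ech argument of \cite{PacRaeWill96} buys is the concrete formula for $\mu$ that this paper actually needs later (in Theorem~\ref{PRWagreement} and Corollary~\ref{EquivariantInjective}).

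The one place to be careful is the step you yourself flag: identifying $d_2$ with $\cup\,c$. Your Yoneda/$\operatorname{Ext}^2$ description is correct in principle, but the bookkeeping in the last paragraph---splitting the four-term sequence locally using a section of $\hat G\to\hat N$ and then tracking the discrepancy on triple overlaps---is exactly the content of \cite[Lemma 4.2]{PacRaeWill96}, and it is not entirely formal: one has to produce, for each $V_i$, a specific lift $\hat N\to\mathcal Z^1|_{V_i}$ and then show that the comparison on $V_{ijk}$ is literally $\chi\mapsto\chi(n_{ijk})$ rather than some twist of it. If you carry this out you will essentially have reproduced the hands-on argument, so the spectral-sequence packaging saves work on exactness but not on the identification of the differential.
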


The maps $\pi^*_{G}, \pi_*$ and $\cup c$ of the theorem are defined as follows.
If $[\phi]\in  \check{H}^k(Z,{\mathcal S})$ then $\pi^*_{G}[\phi]=[\pi^*(\phi),1]$, where $\pi^*$ 
is the  pullback $\pi^*: \check{Z}^k(Z,{\mathcal S})\to \check{Z}^k(X,{\mathcal S})$. 
To define the ``integration" map $\pi_*$, let $(\nu,\eta)\in \check{Z}^k_{G}(\pi^{-1}(\W),
{\mathcal S})$ for some open cover $\W$ of $Z$. Now a calculation, exploiting the fact 
that $(\nu,\eta)$ is a cochain and $G$ is abelian, implies for any $t\in G$, $m\in N$ 
and $x\in X$ that
\begin{equation}\label{etainvariantonorbits}
\eta_{\mu_0\dots\mu_{k-1}}(m,-t\cdot x)=\eta_{\mu_0\dots\mu_{k-1}}(m,x).
\end{equation}
Then, using the cocycle property of  $(\nu,\eta)$, we find
$
\check\partial \eta(m,\cdot)_{\mu_0\dots\mu_{k-1}}(x)=1.
$
Therefore we may define $\pi_*(\eta)\in \check{Z}^{k-1}(\W,\hat{{\mathcal N}})$ by
\[\pi_*(\eta)(z)_{\mu_0\dots\mu_{k-1}}(m):=\eta_{\mu_0\dots\mu_{k-1}}(m,x),\quad \pi(x)=z.\]
For the last map $\cup c:\check{H}^{k-1}(Z,\hat{{\mathcal N}})\to
\check{H}^{k+1}(Z,{\mathcal S})$, let $\psi\in \check{Z}^{k-1}(\W,\hat{{\mathcal N}})$, 
and choose a representative $F\in\check{Z}^2(\W,\mathcal{N})$ of $c$ (this may 
require taking a common refinement). Then $[\psi]\cup c$ is by definition the class with representative
\[W_{\mu_0\dots\mu_{k+1}}\ni z\mapsto (-1)^k\psi_{\mu_0\dots\mu_{k-1}}(F_{\mu_{k-1}
\mu_{k}\mu_{k+1}}(z),z).\]
This is well-defined because one can show the image of $[\psi]$ under $\cup c$ is 
independent of the choice of representatives $\psi$ and $F$.

It is the definition of these maps and the exactness of the sequence from 
Theorem \ref{PRWGysin} that tells us how to proceed. Indeed 
\cite[Lemma 4.2]{PacRaeWill96} gives a function $\mu:Z^k_G(\pi^{-1}(\W),\mathcal{S})\to 
\check{C}^{k}(\W,\mathcal{S})$ such that for any cocycle $(\nu,\eta)\in Z^k_G(\pi^{-1}(\W),\mathcal{S})$ 
we have $\check{\partial}[\mu(\nu,\eta)]=\pi_*(\eta)\cup F$. We can then define a two column complex
\[C^{k0}_F(\W,{\mathcal S}):=\check{C}^k(\W,{\mathcal S}),\quad
C^{(k-1)1}_F(\W,{\mathcal S}):= \check{C}^{k-1}(\W,\hat{{\mathcal N}})\]
with differential $\check{\partial}_F(\phi^{k0},\phi^{(k-1)1})=(\check\partial \phi^{k0}+(-1)^{k+1}\phi^{(k-1)1}
\cup F,\check{\partial}\phi^{(k-1)1})$ and cohomology $H^k_F(\W,\mathcal{S})$. If we choose $\W$ to 
be ``good", the Five Lemma shows that the map $[\nu,\eta]\mapsto [\mu(\nu,\eta),\pi_*(\eta)]$ is an 
isomorphism of $\check{H}^k_G(Z,\mathcal{S})$ with $H^k_F(\W,\mathcal{S})$. To accommodate 
non-trivial Mackey obstructions then, we need to extend $H^k_F(\W,\mathcal{S})$ 
to a three column complex.

We denote by $\check{H}^3(X,\underline{\Z})|_{\pi^{0,3}=0}$ the kernel of the Leray-Serre spectral 
sequence projection $\pi^{0,3}:\check{H}^3(X,\underline{\Z})\to C(Z,\check{H}^3(\T^n,\underline{\Z}))$. 
We now state our main theorem, which applies to the case $G=\R^n$, $N=\Z^n$.

\begin{theorem}\label{MainSquare}
Let $\pi:X\to Z$ be a $C^\infty$ principal $\T^n$-bundle over a Riemannian manifold $Z$. 
Then there exists an open cover $\U$ of $X$, and a cocycle 
$F\in \check{Z}^2(\pi(\U),\underline{\Z}^n)$ such that
\begin{itemize}
\item[(1)] the image of $[F]\in \check{H}^2(\pi(\U),\underline{\Z}^n)$ in $\check{H}^2(Z,\underline{\Z}^n)$ 
is the Euler vector of  $\pi:X\to Z$; and
\item[(2)] every stable continuous trace C*-algebra over $X$ is trivialised over $\U$.\\
Moreover, for all $k\geq 0$ there exist groups $\mathbb{H}^k_{F}(\pi(\U),\mathfrak{G})$, being the 
cohomology of a three column complex, where $\mathfrak{G}$ is either the sheaf $\mathcal{S}$ 
or $\underline{\Z}$, such that there is a commutative diagram
\end{itemize}\medskip

\centerline{\xymatrix{
\operatorname{Br}_{\R^n}(X)\ar[r]\ar[d]^{\cong}&\check{H}^3(X,\underline{\Z})|_{\pi^{0,3}=0}\ar[d]^{\cong}\\
\mathbb{H}^2_{F}(\pi(\U),\mathcal{S})\ar[r]&\mathbb{H}^3_{F}(\pi(\U),\underline{\Z}).
}}
\end{theorem}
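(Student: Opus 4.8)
The plan is to build the three-column complex by extending the Raeburn--Williams two-column complex $C^{\bullet\bullet}_G(\U,\mathcal S)$ with a third column of continuous group $2$-cocycles $Z^2_G(\R^n,\check C^{\bullet}(\U,\mathcal S))$, but---and this is the key point emphasised in the introduction---\emph{not} with the naive total differential $\check\partial\oplus\partial_G\oplus\partial_G$. Instead, following the Leray--Serre heuristic, I would replace the bottom-right portion of the differential by the cup product with the Euler vector. Concretely, first I would produce the cover $\U$ and the cocycle $F$: by paracompactness of $Z$ choose a good cover $\W$ of $Z$ (all finite intersections contractible), refined so that the $\T^n$-bundle is trivial over each $W_{\mu_0}$; let $\U=\pi^{-1}(\W)$ and let $F\in\check Z^2(\W,\underline{\Z}^n)$ be the clutching cocycle of $\pi$, which by construction maps to the Euler vector in $\check H^2(Z,\underline{\Z}^n)$. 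Property~(2) follows because a good cover of $X$ trivialises every principal $PU(\H)$-bundle, hence every stable continuous trace algebra.

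Next I would assemble $\mathbb H^k_F(\pi(\U),\mathfrak G)$ as the cohomology of the total complex whose columns are $\check C^k(\U,\mathfrak G)$, $C^1_G(\R^n,\check C^{k-1}(\U,\mathfrak G))$, and $Z^2_G(\R^n,\check C^{k-2}(\U,\mathfrak G))$, with total differential built from $\check\partial$, the group differential $\partial_G$ from column $0$ to column $1$, and---crucially---a map from column $1$ to column $2$ that combines $\partial_G$ with (a shifted version of) $\cup F$, mimicking the $d_2$ differential on the $E_2$-page. The point is that by Theorem~\ref{PRWGysin} and the discussion after it, the obstruction to a class in $\ker\operatorname M$ lifting to all of $\Br_G(X)$ is exactly the failure of an element of $\check H^{k-1}(Z,\hat{\mathcal N})$ to die under $\cup c$; building $\cup F$ into the differential is precisely what records the Mackey obstruction without requiring the extra column to vanish (which, as the footnote to \cite[Cor 5.18]{PacRaeWill96} warns, it would under the naive double complex).

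The core of the proof is then the vertical isomorphism $\operatorname{Br}_{\R^n}(X)\cong \mathbb H^2_F(\pi(\U),\mathcal S)$. I would obtain this by first invoking Tu's isomorphism $\Br_G(X)\cong\check H^2(G\ltimes X,\mathcal S)$ (Section~4/5 of the paper), then realising groupoid cocycles on $\R^n\ltimes X$ over the cover $\U$ as triples $(\nu,\eta,M)$ with $\nu\in\check C^2(\U,\mathcal S)$, $\eta\in C^1_G(\R^n,\check C^1(\U,\mathcal S))$, and $M\in Z^2_G(\R^n,\check C^0(\U,\mathcal S))$ --- the last component being essentially the Mackey cocycle of Packer--Raeburn--Williams. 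Using the invariance identity~\eqref{etainvariantonorbits} (and its analogue for $M$), which holds because cocycles are constant on $G$-orbits modulo $N$, one can push the $\eta$- and $M$-data down to $Z$ via the integration map $\pi_*$, obtaining an element of $\check C^1(\W,\hat{\mathcal N})$ and a class valued in $H^2_M(N,\T)$-cochains on $Z$. Combined with the Raeburn--Williams function $\mu$ of \cite[Lemma 4.2]{PacRaeWill96}, for which $\check\partial[\mu(\nu,\eta)]=\pi_*(\eta)\cup F$, the assignment $[\nu,\eta,M]\mapsto[\mu(\nu,\eta),\pi_*(\eta),\pi_*(M)]$ lands in the three-column complex, and the Gysin sequence of Theorem~\ref{PRWGysin} together with the corresponding elementary exact sequences for $\mathbb H^k_F$ lets me conclude via a Five Lemma / diagram-chase that it is an isomorphism when $\W$ is good. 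The right-hand isomorphism $\check H^3(X,\underline{\Z})|_{\pi^{0,3}=0}\cong\mathbb H^3_F(\pi(\U),\underline{\Z})$ is the ``sheaf $\underline\Z$'' analogue of the same argument, where the vanishing of the $(0,3)$ Leray--Serre component corresponds exactly to the top column of the $\underline\Z$-complex being empty (there is no $\check H^3(\T^n)$ contribution surviving), and one checks that the dimensionally-reduced Gysin sequence in $\underline\Z$-coefficients is the Bockstein image of the $\mathcal S$-one.

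Finally, commutativity of the square: the top horizontal map is the forgetful functor $\Br_{\R^n}(X)\to\check H^3(X,\underline\Z)$, which under Dixmier--Douady is the Bockstein $\check H^2(X,\mathcal S)\to\check H^3(X,\underline\Z)$; the bottom map is the connecting homomorphism $\mathbb H^2_F(\pi(\U),\mathcal S)\to\mathbb H^3_F(\pi(\U),\underline\Z)$ of the short exact sheaf sequence $0\to\underline\Z\to\underline\R\to\mathcal S\to0$ applied columnwise (this is exactly the ``connecting homomorphism in dimensionally reduced cohomology'' promised in the abstract). Commutativity then reduces to the statement that the two vertical isomorphisms are compatible with the Bockstein, which is automatic once the $\mathcal S$- and $\underline\Z$-versions are constructed by the same diagram chase from the same $\mu$. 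The main obstacle I anticipate is verifying that the proposed total differential on the three-column complex actually squares to zero --- since the column-$1$-to-column-$2$ map mixes $\partial_G$ and $\cup F$, the identity $\partial_G^2=0$ no longer suffices, and one needs the compatibility $\partial_G(\phi\cup F)=(\partial_G\phi)\cup F$ together with the cocycle condition $\check\partial F=0$ and a careful sign bookkeeping of the mapping-cone/total-complex conventions; getting this right (and checking that refinement of $\W$ induces well-defined maps on $\mathbb H^k_F$, so the direct limit makes sense) is where most of the technical work lies.
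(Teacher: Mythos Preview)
Your proposal has a fundamental gap in the construction of the cover. Setting $\U=\pi^{-1}(\W)$ yields $\R^n$-invariant open sets, each homeomorphic to $W_{\mu_0}\times\T^n$; these are \emph{not} contractible, so $\U$ is not a good cover of $X$ and property~(2) fails. Worse, this is precisely the dead end the introduction warns about: over $G$-invariant sets every trivialisable continuous trace algebra has vanishing Mackey obstruction (cf.\ the footnote to \cite[Cor~5.18]{PacRaeWill96}), so any complex built from $\check C^\bullet(\pi^{-1}(\W),\cdot)$ with group-cohomology columns $C^j_G(\R^n,\cdot)$ can only ever recover $\ker\operatorname M$, not all of $\Br_{\R^n}(X)$. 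Your modification of the column-1-to-column-2 differential by $\cup F$ does not escape this, because the group differential $\partial_G$ itself already requires the invariant cover. The paper instead uses a Riemannian argument (Lemma~\ref{geoconvex}) to produce a \emph{geodesically convex} cover $\U$ of $X$ whose pushforward $\pi(\U)$ is good on $Z$; the sets $U_\lambda$ are genuinely contractible but not $G$-invariant, and the local sections $\sigma_\lambda:\pi(U_\lambda)\to U_\lambda$ give $s\in\check C^1(\pi(\U),\mathcal R^n)$ with $F=\check\partial s$.

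Consequently the paper's three-column complex is quite different from yours: it lives entirely on $Z$, with columns $\check C^k(\pi(\U),\mathcal S)$, $\check C^{k-1}(\pi(\U),\hat{\mathcal N})$, $\check C^{k-2}(\pi(\U),\mathcal M)$ and a differential $D_F$ built \emph{only} from $\check\partial$ and cup products $\cup_1 F$, $\cup_2 C(F)$ --- there is no $\partial_G$ anywhere (Section~\ref{DimRedCohoSection}). The left vertical isomorphism $\Xi_{\U,s}$ is not obtained by a $\pi_*$/Five Lemma argument from Raeburn--Williams data; rather it is constructed by hand from unitary lifts $v_{\lambda_0}^m$ and $v_{\lambda_0\lambda_1}$ of $\beta^{\alpha,\Phi}$ evaluated at the section points $\sigma_{\lambda_0}(z)$ and along the transition elements $-s_{\lambda_0\lambda_1}(z)$ (Definition~\ref{prelims}, equations~(\ref{phi11defn})--(\ref{phi20defn})). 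Injectivity does use Raeburn--Williams (Proposition~\ref{BrauerCommuting}), but surjectivity goes through Tu's groupoid cohomology via an explicit Tu--\v Cech cocycle $\varphi$ manufactured from $(\phi^{20},\phi^{11},\phi^{02})$ (Corollary~\ref{surjectiveproof}). The right vertical isomorphism and commutativity then follow from a Five Lemma on the dimensionally reduced long exact sequence (Corollary~\ref{dimreducedlongexact}) against the Mathai--Rosenberg sequence of Proposition~\ref{BrauerKernel}, after checking in Lemma~\ref{Liftindependence} that the ambiguity in lifting along $F^{-1}$ dies under the connecting map.
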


There is actually substantially more motivation for the above theorem than just 
\cite{PacRaeWill96}. Indeed, the nature of the cohomology groups 
$\mathbb{H}^k_{F}(\pi(\U),\underline{\Z})$ and the right vertical isomorphism 
was predicted by the de Rham cohomology version of Theorem \ref{PRWGysin} 
contained in \cite{BouHanMat05}.

\section{Preliminaries}\label{Mackeysectionchapter0}

\subsection[Continuous Trace C*-Algebras]{Continuous Trace C*-Algebras}\label{ctstracesection}

In this paper we are only concerned with separable and stable algebras and so our discussion 
is a greatly restricted version of the usual one for which the reader should consult \cite{RaeWill98}.
With $X$ as in the introduction, let $p:E\to X$ be a (locally trivial) vector bundle over $X$ 
with fibre $\K$ (the compact operators
on a separable, infinite dimensional Hilbert space $\H$), and structure group 
$\operatorname{Aut}\K$ (with the point-norm topology).  Let $\Gamma(E,X)$ 
be the $*$-algebra of all continuous sections of $p:E\to X$. Then
\[\Gamma_0(E,X):=\{f\in \Gamma(E,X): x\mapsto ||f(x)|| \mbox{ vanishes at } \infty\}\]
is a $C^*$-algebra with respect to point-wise operations and the sup norm \cite[Prop 4.89]{RaeWill98}.

\begin{defn}
A separable C*-algebra $A$ with {spectrum} 
 $X$ is called \emph{continuous trace} if it is $C_0(X)$-isomorphic to $\Gamma_0(E,X)$, for some 
(locally trivial) vector bundle $p: E\to X$ with fibre $\K$, and structure group $\operatorname{Aut}\K$.
\end{defn}

{{} Now we recall the Brauer group. The product of algebras $A$ and $B$ with spectrum $X$ is given 
by taking the ideal $I_X$ of $A\otimes B$ generated by the set
\[\{(f\cdot a)\otimes b-a\otimes (f\cdot b):f\in C_0(X), a\in A, b\in B\} \,, \]
and defining the \emph{balanced tensor product} $A\otimes_{C_0(X)} B$ by
$A\otimes_{C_0(X)} B:= (A\otimes B)/I_X$.
(Note that  C*-algebras with Hausdorff spectra are nuclear \cite[Cor B.44]{RaeWill98}).
Thus if $p:E\to X$ and $p':E'\to X$ are bundles with fibre $\K$ then
\[\Gamma_0(E,X)\otimes_{C_0(X)} \Gamma_0(E',X)\cong \Gamma_0(E\otimes_X E',X),\]
where $E\otimes_X E'$ is the restriction of the tensor product bundle $E\otimes E\to X\times X$ to 
the diagonal $\{(x,x):x\in X\}$ \cite[Sect 3]{KumMuhRenWil98}. Thus $A\otimes_{C_0(X)} B$ is a 
continuous trace C*-algebra  also with spectrum $X$.}
Then the \emph{Brauer group} $\mbox{Br}(X)$ is the group of $C_0(X)$-isomorphism classes of 
continuous trace C*-algebras with spectrum $X$ where the zero element is the $C_0(X)$-isomorphism 
class of $C_0(X,\K)$ and the group operation is the balanced tensor product.

By \cite[Prop 4.53]{RaeWill98}, isomorphism classes of vector bundles with fibre 
$\K$ and structure group $\operatorname{Aut}\K$ are in one-to-one correspondence 
with $\check{H}^1(X,{\mathcal A})$, where ${\mathcal A}$ is the sheaf of germs of 
continuous $\operatorname{Aut}\K$-valued functions. If we equip the unitary operators 
$U(\H)$ with the strong operator topology, then there is an exact sequence of topological groups
\begin{equation}\label{Uexactsequence}
1\to \T\to U(\H)\to \operatorname{Aut}\K\to 1,
\end{equation}
such that $U(\H)\to \operatorname{Aut}\K$ has local continuous sections \cite[Chapter 1]{RaeWill98}. 
Consequently, with ${\mathcal S}$ as in the introduction, the long exact sequence in sheaf cohomology, 
combined with the fact that $U(\H)$ is contractible (in the strong operator topology) 
(see, e.g., \cite[Thm 4.72]{RaeWill98}) implies that 
$\check{H}^1(X,{{\mathcal A}})\cong \check{H}^2(X,{{\mathcal S}})$. From the exact 
sequence $0\to \Z\to \R\to \T\to 1$,
we then obtain
$\check{H}^1(X,{{\mathcal A}})\cong \check{H}^2(X,{{\mathcal S}})\cong \check{H}^3(X,\underline{\Z}).$
As continuous trace C*-algebras are $C_0(X)$-isomorphic if and only if they are 
$C_0(X)$-isomorphic to the algebra of sections of isomorphic bundles, we obtain the 
\emph{Dixmier-Douady} classification \cite{DixDou63}.
Namely,
if $A$ is a continuous trace C*-algebra with spectrum $X$, $C_0(X)$-isomorphic to
continuous sections of a bundle $p:E\to X$, then the isomorphism $\mathrm{Br}(X)\to 
\check{H}^3(X,\underline{\Z})$ is defined by the map induced by sending $A$ to
its \emph{Dixmier-Douady class}, the image $\delta$, of $p:E\to X$ in $\check{H}^3(X,\underline{\Z})$.
The corresponding class of continuous trace C*-algebras will be denoted $CT(X,\delta)$.
We move on now to the \emph{equivariant} Brauer group. 

\begin{defn}
Fix a locally compact group $G$ and a second countable locally compact 
Hausdorff (left) $G$-space $X$. Denote the induced $G$-action on $C_b(X)$ by $\tau$. That is
$\tau_g(f)(x)=f(g^{-1}x),$ for $ f\in C_b(X).$
Let $A$ be a continuous trace C*-algebra with spectrum $X$, and $\alpha$ an action of 
$G$ on $A$. Then $\alpha$ is said to \emph{preserve the (given) action on the spectrum} 
if for all $g\in G$, $a\in A$ and $f\in C_b(X)$:
$\alpha_g(f\cdot a)=\tau_g(f)\cdot \alpha_g(a).$
\end{defn}
We write $\mathfrak{Br}_G(X)$ for the collection of pairs $(A,\alpha)$, where $A$ is a 
continuous trace C*-algebra with spectrum a $G$-space $X$ and $\alpha$ is an action 
of $G$ on $A$ that preserves the given action on the spectrum.
If $A$ is a C*-algebra, we denote by $M(A)$ and $UM(A)$ the \emph{multiplier algebra} 
of $A$ and unitary elements of $M(A)$, respectively. 
Now, given elements $(A,\alpha)$ and $(A,\beta)$ of $\mathfrak{Br}_G(X)$, we say the 
actions $\alpha$ and $\beta$ are \emph{exterior equivalent} if there is a (strictly) continuous 
map $w:G\to UM(A)$ such that
\begin{align}
\label{exteriorcondition1}
\beta_g(a)&=w_g\alpha_g(a)w_g^* \quad \mbox{ for all } a\in 
A \mbox{ and } g\in G,\\
\label{exteriorcondition2}w_{gh}&=w_g\overline{\alpha}_g(w_h)\quad \mbox{ for all } g,h\in G.
\end{align}
In this case, one says that $w$ is a \emph{unitary} $\alpha$ \emph{cocycle} implementing 
the equivalence. Two pairs $(A,\alpha)$ and $(B,\beta)$ are \emph{outer conjugate} if and 
only if there is a $C_0(X)$-isomorphism $\phi:A\to B$ such that $\alpha$ and 
$\phi^{-1}\circ \beta\circ \phi$ are exterior equivalent.
 {{} If $(A,\alpha)$ and $(B,\beta)$ are in $\mathfrak{Br}_G(X)$ then by 
\cite[Prop B.13]{RaeWill98} there is an action $\alpha\otimes \beta$ of 
$G$ on $A\otimes B$ given by
\begin{equation*}
(\alpha\otimes \beta)_g(a\otimes b)=\alpha_g(a)\otimes \beta_g(b)\,, 
\end{equation*}
and as 
\[(\alpha\otimes \beta)_g(f\cdot a\otimes b -a\otimes f\cdot b)=\tau_g(f)\cdot 
\alpha_g(a)\otimes \beta_g(b) -\alpha_g(a)\otimes \tau_g(f)\cdot \beta_g(b)\]
then $\alpha\otimes \beta$ preserves the ideal $I_X$ of $A\otimes B$, and therefore 
induces an action $\alpha\otimes_X\beta$ of $G$ on $A\otimes_{C_0(X)}B$. 
With the notation of the previous definition we have:
\begin{defn}
The \emph{equivariant Brauer group} $\mbox{Br}_G(X)$  is defined as 
the group of equivalence classes in $\mathfrak{Br}_G(X)$
under outer conjugacy. The zero element is the equivalence class of 
$(C_0(X,\K),\tau)$. The binary operation is
\begin{equation*}
[A,\alpha]\cdot[B,\beta]=[A\otimes_{C_0(X)}B,\alpha\otimes_X \beta] \,,
\end{equation*}
and the inverse of $[A,\alpha]$ is
the conjugate algebra $[\overline{A},\overline{\alpha}]$ (we use the canonical
bijection of sets $\flat:A\to \overline{A}$, to define $\overline{\alpha}_g(\flat(a)):=\flat(\alpha_g(a))$.
\end{defn}
Later on we will use the ``forgetful homomorphism" $F:\mbox{Br}_G(X)\to 
\mbox{Br}(X)$ that sends $[CT(X,\delta),\alpha]$ to $[CT(X,\delta)]$. 
This map is neither surjective nor injective in general.

\subsection{Actions of $\Z^n$ on $C_0(X,\K)$.}\label{Mackeysectiononeoneone}

With $X$  as above we now review the
 Mackey obstruction map for actions $\alpha$ of $\Z^n$ on $C_0(X,\K)$.
We call $\alpha$ \emph{locally inner} if there exists an open cover $\{U_{\lambda_0}\}_{\lambda_0\in 
\mathcal{I}}$ of $X$ and functions $u_{\lambda_0}:\Z^n\to C(U_{\lambda_0},U(\H))$ such that
\[(\alpha_{m}h)(x)=u_{\lambda_0}(m,x)h(x)u_{\lambda_0}(m,x)^*,\quad h\in C_0(X,\K),\ m\in \Z^n.\]
and we note from {\cite[Prop 2.1]{EchNes01}}  that every action of $\Z^n$ on $C_0(X,\K)$ locally inner.

Fix generators $e_i$ of $\Z^n$, and let the action of $e_i$ at $x\in U_{\lambda_0}$ be 
implemented by $u_{\lambda_0}^i(x) = u_{\lambda_0}(e_i,x)$:
\begin{equation}\label{implemented}
(\alpha_{e_i}h)(x)=u_{\lambda_0}^i(x)h(x)u_{\lambda_0}^i(x)^*.
\end{equation}
For $m,l\in \Z^n$, let $m_i$ denote the $i^{th}$ component of $m$, and let $\T$ be the centre 
of $U(\H)$. If we define
$u^m_{\lambda_0}(x):=(u_{\lambda_0}^1(x))^{m_1}(u_{\lambda_0}^2(x))^{m_2}\dots 
(u_{\lambda_0}^n(x))^{m_n}$,
then there is a function $M_{\lambda_0}:\Z^n\times \Z^n\times U_{\lambda_0}\to\T$ defined by
\begin{equation}\label{mdefn}
u^l_{\lambda_0}(x)u^m_{\lambda_0}(x)=M_{\lambda_0}(m,l,x)u^{m+l}_{\lambda_0}(x).
\end{equation}
An easy calculation shows that $M_{\lambda_0}$ satisfies the (Moore cohomology 
\cite{Moo76}) cocycle identity
\[M_{\lambda_0}(l,k,x)M_{\lambda_0}(m+l,k,x)^*M_{\lambda_0}(m,l+k,x)M_{\lambda_0}(m,l,x)^*=1.\]
Thus the map $x\mapsto ((m,l)\mapsto [M_{\lambda_0}(m,l,x)])$ defines an element of
$\prod_{\lambda_0} C(U_{\lambda_0},H_M^2(\Z^n,\T)).$
For two sets $U_{\lambda_0}$ and $U_{\lambda_1}$, the unitary $u_{\lambda_0}^i(x)$ 
differs from $u_{\lambda_1}^i(x)$ by an element of $\T$, from which it follows that 
$x\mapsto ((m,l)\mapsto [M_{\lambda_0}(m,l,x)])$ defines a \emph{global} continuous 
function in $C(X,H_M^2(\Z^n,\T))$. If $M_n^u(\T)$ is the group (under addition of matrices) 
of $n\times n$ strictly upper triangular matrices with entries in $\T$, then a result from 
\cite{Klep65} says $H_M^2(\Z^n,\T)\cong M_n^u(\T)$. Thus we have defined a continuous 
function $f:X\to M_n^u(\T)$ called the \emph{Mackey obstruction} of $(C_0(X,\K),\alpha)$. 
The map $\operatorname{M}:(C_0(X,\K),\alpha)\to f$ is called the \emph{Mackey obstruction map}.

It is possible to extract the function $f$ from the cocycle $M$. To see this 
define $f_{\lambda_0}:U_{\lambda_0}\to \T$ by\footnote{We thank Iain Raeburn 
for communicating this definition.}
\begin{equation}\label{fdefn}
f_{\lambda_0}(x)_{ij}u_{\lambda_0}^i(x)u_{\lambda_0}^j(x)=u_{\lambda_0}^j(x)u_{\lambda_0}^i(x)\,,
\qquad i\leq j\,.
\end{equation}
Since $f_{\lambda_0}$ is independent of $\lambda_0$ (for the same reason that $M$ gave a 
global function), we have a well-defined map $f:X\to M_n^u(\T)$ with $ij^{th}$ entry
$f(x)_{ij}=f_{\lambda_0}(x)_{ij}, \ x\in U_{\lambda_0}.$
{}From the definitions one may see that
\begin{equation}\label{mfequation}
M_{\lambda_0}(m,l,x)=\prod_{1\leq i<j\leq n} f_{\lambda_0}(x)_{ij}^{m_il_j}.
\end{equation}
Therefore $M_{\lambda_0}$, defined as in Eqn.~(\ref{mdefn}), is independent of $\lambda_0$ 
even at the level of cocycles (as opposed to Moore cohomology classes). In particular, 
if $f$ satisfies $f(x)_{ij}=1$ for all $x\in X$ then
$u^l_{\lambda_0}(x)u^m_{\lambda_0}(x)=u^{m+l}_{\lambda_0}(x)$.
\begin{remark}
That $f$ and $M$ are independent of the choice of $u_{\lambda_0}:\Z^n\to C(U_{\lambda_0},U(\H))$  
follows respectively from Equation (\ref{fdefn}) and Equation (\ref{mfequation}).
\end{remark}

\subsection{Actions of $\R^n$ on a Continuous Trace C*-algebra over a Principal $\T^n$-bundle}
\label{Mackeysectiononeonetwo}

In the application we are primarily concerned with, the action of $\Z^n$ comes from the restriction of 
an action of $\R^n$ on $CT(X,\delta)$ which covers the fibre action for a principal $\T^n$-bundle 
$\pi:X\to Z$. This data gives an element $[CT(X,\delta),\alpha]$ of the equivariant Brauer group 
$\mbox{Br}_{\R^n}(X)$.

In this case, we proceed as above by first choosing an open cover 
$\{U_{\lambda_0}\}_{\lambda_0\in \mathcal{I}}$ of $X$ so that there are isomorphisms
$\Phi_{\lambda_0}:CT(X,\delta)\vline_{U_{\lambda_0}}
\stackrel{\cong}{\longrightarrow} C_0(U_{\lambda_0},\K).$
These isomorphisms, together with the exact sequence (\ref{Uexactsequence}),  
imply that there exist functions $v_{\lambda_0\lambda_1}\in C(U_{\lambda_0\lambda_1},U(\H))$ 
such that $\Phi_{\lambda_1}\circ \Phi_{\lambda_0}^{-1}=\operatorname{Ad} v_{\lambda_0\lambda_1}$, 
and a representative for $\delta$ is given by the image of the map
$x\mapsto v_{\lambda_1\lambda_2}(x)v_{\lambda_0\lambda_2}(x)^*v_{\lambda_0\lambda_1}(x)$
under the connecting isomorphism $\Delta:\check{H}^2(X,{{\mathcal S}})\to \check{H}^3(X,\underline{\Z})$. 
Since $\Z^n$ acts trivially on the spectrum $X$, the actions 
$\Phi_{\lambda_0}\circ\alpha|_{\Z^n}\circ\Phi^{-1}_{\lambda_0}$ on $C_0(U_{\lambda_0},\K)$ 
are locally inner. Refine, if necessary, the sets $\{U_{\lambda_0}\}$ so that there exists 
$u_{\lambda_0}^{i}\in C(U_{\lambda_0},U(\H))$ that implements the actions:
\begin{equation}\label{chapter0eq0}
\left(\Phi_{\lambda_0}\circ\alpha_{e_i}\circ\Phi^{-1}_{\lambda_0}\right)(h)(x)=
u_{\lambda_0}^i(x)h(x)u_{\lambda_0}^i(x)^* \quad h\in C_0(U_{\lambda_0},\K),
\end{equation}
(cf. (\ref{implemented}) above, where the local isomorphisms $\Phi_\bullet$ are restrictions). 
We define the Mackey obstruction at $x\in X$ as follows. Since (\ref{chapter0eq0}) is equivalent to
\[(\Phi_{\lambda_0}\circ\alpha_{e_i}(a))(x)=u_{\lambda_0}^i(x)\Phi_{\lambda_0}(a)(x)
u_{\lambda_0}^i(x)^*,  \quad a\in CT(X,\delta),\]
if we define for any $m\in \Z^n$,
$u^m_{\lambda_0}(x):=(u_{\lambda_0}^1(x))^{m_1}(u_{\lambda_0}^2(x))^{m_2}
\dots (u_{\lambda_0}^n(x))^{m_n}$,
the fact that $\alpha$ is a homomorphism implies
\begin{equation}\label{alphahomomorphism}
(\Phi_{\lambda_0}\circ\alpha_{m}(a))(x)=u_{\lambda_0}^m(x)\Phi_{\lambda_0}(a)(x)
u_{\lambda_0}^m(x)^*, \quad a\in CT(X,\delta).
\end{equation}
Then, the Mackey obstruction at $x$ is  the class $[M_{\lambda_0}(\cdot,\cdot,x)]\in H^2(\Z^n,\T)$ given by
\begin{equation}\label{fullmackey}
u^l_{\lambda_0}(x)u^m_{\lambda_0}(x)=M_{\lambda_0}(m,l,x)u^{m+l}_{\lambda_0}(x).
\end{equation}
Now, we may still define
$f_{\lambda_0}(x)_{ij}u_{\lambda_0}^i(x)u_{\lambda_0}^j(x)=u_{\lambda_0}^j(x)u_{\lambda_0}^i(x),$
but then it is not obvious that $f$ is a global function on $X$. In order to prove that it is, 
we present a simple lemma that we will use repeatedly  throughout this paper.
\begin{lemma}\label{cyclicpermute}
Let $k\in\N$ and $u_1,u_2\dots,u_k\in U(\H)$ be unitaries such that 
$u_1u_2\dots u_k\in ZU(\H)\cong \T$. Then
$u_1u_2\dots u_k=u_ku_1u_2\dots u_{k-1}.$
\end{lemma}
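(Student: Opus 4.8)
The plan is to exploit the fact that $u_1u_2\cdots u_k$ is central: multiplying a central element by anything from the left or right gives the same result. Concretely, set $z = u_1u_2\cdots u_k \in ZU(\H)$. Since $z$ is central, it commutes with $u_k^*$ (which is again a unitary), so $u_k^* z = z u_k^*$. Unpacking this identity, the left-hand side is $u_k^*(u_1u_2\cdots u_{k-1}u_k) = u_k^* u_1 u_2 \cdots u_{k-1} u_k$, while the right-hand side is $(u_1 u_2 \cdots u_{k-1} u_k)u_k^* = u_1 u_2 \cdots u_{k-1}$. Hence $u_1 u_2 \cdots u_{k-1} = u_k^* u_1 u_2 \cdots u_{k-1} u_k$.

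From here I would multiply both sides on the left by $u_k$, obtaining $u_k u_1 u_2 \cdots u_{k-1} = u_k u_k^* u_1 u_2 \cdots u_{k-1} u_k = u_1 u_2 \cdots u_{k-1} u_k = u_1 u_2 \cdots u_k$, which is exactly the desired conclusion $u_1 u_2 \cdots u_k = u_k u_1 u_2 \cdots u_{k-1}$. Equivalently — and perhaps more cleanly for the write-up — one can observe directly that since $z$ is central and $u_k$ is a unitary, $u_k z u_k^* = z$, i.e. $u_k (u_1 \cdots u_{k-1} u_k) u_k^* = u_1 \cdots u_k$; the left side telescopes to $u_k u_1 \cdots u_{k-1}$, giving the claim in one line.

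There is essentially no obstacle here: the only facts used are that the centre of $U(\H)$ is the scalars $\T$ (so its elements commute with every unitary, stated in the excerpt) and the group axioms in $U(\H)$. The statement is a routine lemma isolated for repeated use later in the paper, so I would keep the proof to two or three lines along the lines of: let $z := u_1\cdots u_k \in ZU(\H)$; then $u_k u_1 \cdots u_{k-1} = u_k u_1 \cdots u_{k-1} u_k u_k^* = u_k z u_k^* = z = u_1 \cdots u_k$, where the third equality uses centrality of $z$. If one wants to be slightly more careful one notes $u_k u_1\cdots u_{k-1}u_k = u_k z$ and $u_1\cdots u_{k-1}u_k u_k = z u_k$, and cancels the common right factor $u_k$ after invoking $u_k z = z u_k$.
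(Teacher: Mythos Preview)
Your proof is correct and is essentially identical to the paper's: both set $t = u_1\cdots u_k \in \T$ and use $u_k t u_k^* = t$ (equivalently, centrality of $t$) to conclude $u_k u_1\cdots u_{k-1} = u_1\cdots u_k$. The paper's write-up is the one-line version you arrive at in your ``more cleanly'' remark.
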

\begin{proof}
Suppose that $t\in\T$ is such that $t=u_1u_2\dots u_k$. Then
\[u_1u_2\dots u_k=t=u_ktu_k^*=u_ku_1u_2\dots u_{k-1}.\]
\end{proof}
\begin{prop}
The function $x\mapsto f_{\lambda_0}(x)_{ij}$ is independent of $\lambda_0$.
\end{prop}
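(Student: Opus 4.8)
The plan is to mimic the argument already used in Section~\ref{Mackeysectiononeoneone} for the case of $C_0(X,\K)$, where independence of $f_{\lambda_0}$ was deduced from the fact that on overlaps $U_{\lambda_0\lambda_1}$ the implementing unitaries $u_{\lambda_0}^i$ and $u_{\lambda_1}^i$ differ only by a scalar in $\T$. The key first step is therefore to establish the analogous relation here. On an overlap $U_{\lambda_0\lambda_1}$ we have the transition unitary $v_{\lambda_0\lambda_1}\in C(U_{\lambda_0\lambda_1},U(\H))$ with $\Phi_{\lambda_1}\circ\Phi_{\lambda_0}^{-1}=\operatorname{Ad}v_{\lambda_0\lambda_1}$. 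Comparing the two expressions for $\Phi_{\lambda_1}\circ\alpha_{e_i}\circ\Phi_{\lambda_1}^{-1}$ obtained by going directly (via $u_{\lambda_1}^i$) and by conjugating the $\lambda_0$-expression (via $v_{\lambda_0\lambda_1}$ and $u_{\lambda_0}^i$), one finds that for $x\in U_{\lambda_0\lambda_1}$ the unitary $(v_{\lambda_0\lambda_1}(x)u_{\lambda_0}^i(x)v_{\lambda_0\lambda_1}(x)^*)^*u_{\lambda_1}^i(x)$ commutes with every element of $\K$, hence lies in $ZU(\H)\cong\T$. So there is a continuous function $c^i_{\lambda_0\lambda_1}:U_{\lambda_0\lambda_1}\to\T$ with
\[
u_{\lambda_1}^i(x)=c^i_{\lambda_0\lambda_1}(x)\,v_{\lambda_0\lambda_1}(x)\,u_{\lambda_0}^i(x)\,v_{\lambda_0\lambda_1}(x)^*.
\]

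The second step is to substitute this into the defining relation $f_{\lambda_1}(x)_{ij}u_{\lambda_1}^i(x)u_{\lambda_1}^j(x)=u_{\lambda_1}^j(x)u_{\lambda_1}^i(x)$ for $i\le j$. The scalar factors $c^i_{\lambda_0\lambda_1}(x)$ and $c^j_{\lambda_0\lambda_1}(x)$ are central, so they appear on both sides and cancel. The conjugating unitaries $v_{\lambda_0\lambda_1}(x)$ telescope: $v\,u_{\lambda_0}^i\,v^*\,v\,u_{\lambda_0}^j\,v^* = v\,u_{\lambda_0}^i u_{\lambda_0}^j\,v^*$, and similarly on the right, so conjugation by $v_{\lambda_0\lambda_1}(x)$ can be stripped from both sides. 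What remains is $f_{\lambda_1}(x)_{ij}\,v u_{\lambda_0}^i u_{\lambda_0}^j v^* = v u_{\lambda_0}^j u_{\lambda_0}^i v^*$, i.e. $f_{\lambda_1}(x)_{ij}\,u_{\lambda_0}^i(x)u_{\lambda_0}^j(x) = u_{\lambda_0}^j(x)u_{\lambda_0}^i(x)$ after conjugating back; comparing with the defining relation for $f_{\lambda_0}(x)_{ij}$ and using that the unitaries $u_{\lambda_0}^i(x)u_{\lambda_0}^j(x)$ are invertible gives $f_{\lambda_1}(x)_{ij}=f_{\lambda_0}(x)_{ij}$ on $U_{\lambda_0\lambda_1}$, which is the claim.

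I should double-check one subtlety in the cancellation: $f_{\lambda_0}(x)_{ij}$ is defined only for $i\le j$, and the products $u^i u^j$ are built from single generators, not the composite $u^m$, so no appearance of $M_{\lambda_0}$ is triggered and Lemma~\ref{cyclicpermute} is not even needed — the argument is purely formal manipulation of the scalar and conjugation factors. The only genuinely new ingredient beyond Section~\ref{Mackeysectiononeoneone} is the presence of the transition unitaries $v_{\lambda_0\lambda_1}$, and this is exactly where I expect the mild bookkeeping care to be needed: one must be sure the $v$'s and the $c^i$'s commute past each other correctly (they do, since the $c^i$ are scalar) and that the conjugation genuinely telescopes (it does, since $\operatorname{Ad}v$ is a homomorphism). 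There is no serious obstacle; the proposition is a routine adaptation, and the main point of isolating it as a proposition is simply that, unlike the $C_0(X,\K)$ case, one cannot ignore the gluing data.
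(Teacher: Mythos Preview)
Your proof is correct and follows essentially the same approach as the paper: both first establish the key relation $u_{\lambda_1}^i(x)=c^i_{\lambda_0\lambda_1}(x)\,v_{\lambda_0\lambda_1}(x)\,u_{\lambda_0}^i(x)\,v_{\lambda_0\lambda_1}(x)^*$ on overlaps, then substitute into the commutator expression defining $f$ and observe that the scalar factors cancel while the conjugations by $v_{\lambda_0\lambda_1}$ telescope. The only cosmetic difference is that the paper computes $f_{\lambda_1}(x)_{ij}=u_{\lambda_1}^{j*}u_{\lambda_1}^{i*}u_{\lambda_1}^{j}u_{\lambda_1}^{i}$ directly and invokes Lemma~\ref{cyclicpermute} to strip the residual outer $v\cdots v^*$, whereas you work with the defining equation and avoid that lemma by conjugating both sides; these are equivalent manipulations.
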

\begin{proof}
One may see that for any $\lambda_0,\lambda_1\in \mathcal{I}$ and 
appropriate sections in $CT(X,\delta)\vline_{U_{\lambda_0\lambda_1}}$
\begin{align*}
&\Phi_{\lambda_1}^{-1}\circ \operatorname{Ad}u_{\lambda_1}^i\circ \Phi_{\lambda_1}=
\Phi_{\lambda_0}^{-1}\circ \operatorname{Ad}u_{\lambda_0}^i\circ \Phi_{\lambda_0}
\implies  \Phi_{\lambda_1}\circ\Phi_{\lambda_0}^{-1}\circ \operatorname{Ad}u_{\lambda_0}^i\circ 
\Phi_{\lambda_0}\circ \Phi_{\lambda_1}^{-1}=\operatorname{Ad}u_{\lambda_1}^i.
\end{align*}
This means that, if $h$ is a section in $C_0(U_{\lambda_0\lambda_1},\K)$ then
\begin{align*}
\left(\Phi_{\lambda_1}\circ\Phi_{\lambda_0}^{-1}\circ \operatorname{Ad}
u_{\lambda_0}^i\circ \Phi_{\lambda_0}\circ \Phi_{\lambda_1}^{-1}\right)(h)(x)
&=v_{\lambda_0\lambda_1}(x)u_{\lambda_0}^i(x)v_{\lambda_0\lambda_1}^*(x)h(x)
v_{\lambda_0\lambda_1}(x)u_{\lambda_0}^i(x)^*v_{\lambda_0\lambda_1}^*(x)\\
&=u_{\lambda_1}^i(x)h(x)u_{\lambda_1}^i(x)^*.
\end{align*}
Hence there are continuous functions $\eta_{\lambda_0\lambda_1}^i:U_{\lambda_0\lambda_1}\to \T$ satisfying
\begin{equation*}
u_{\lambda_1}^i(x):=\eta_{\lambda_0\lambda_1}^i(x)v_{\lambda_0\lambda_1}(x)
u_{\lambda_0}^i(x)v_{\lambda_0\lambda_1}(x)^*.
\end{equation*}
Therefore, using Lemma \ref{cyclicpermute} between the second and third equality, we have
\begin{align*}
f_{\lambda_1}(x)_{ij}&=u_{\lambda_1}^j(x)^*u_{\lambda_1}^i(x)^*u_{\lambda_1}^j(x)u_{\lambda_1}^i(x)\\
&=\eta_{\lambda_0\lambda_1}^j(x)^*v_{\lambda_0\lambda_1}(x)u_{\lambda_0}^j(x)^*v_{\lambda_0\lambda_1}(x)^*
\eta_{\lambda_0\lambda_1}^i(x)^*v_{\lambda_0\lambda_1}(x)u_{\lambda_0}^i(x)^*v_{\lambda_0\lambda_1}(x)^*\\
&\quad\times\eta_{\lambda_0\lambda_1}^j(x)v_{\lambda_0\lambda_1}(x)u_{\lambda_0}^j(x)
v_{\lambda_0\lambda_1}(x)^*
\eta_{\lambda_0\lambda_1}^i(x)v_{\lambda_0\lambda_1}(x)u_{\lambda_0}^i(x)
v_{\lambda_0\lambda_1}(x)^*\\
&=u_{\lambda_0}^j(x)^*u_{\lambda_0}^i(x)^*u_{\lambda_0}^j(x)u_{\lambda_0}^i(x)
=f_{\lambda_0}(x)_{ij}.
\end{align*}
\end{proof}
It follows that
$M_{\lambda_0}(m,l,x)=\prod_{1\leq i<j\leq n} f_{\lambda_0}(x)^{m_il_j}_{ij},$
is independent of $\lambda_0$, and therefore globally continuous. Hence, we obtain a map
$\operatorname{M}:\mathfrak{Br}_{\R^n}(X)\to C(X, M_n^u(\T)),$ from 
$M:(CT(X,\delta),\alpha)\mapsto f.$
It is a standard result \cite[Sect 1]{PacRaeWill96} that $\operatorname{M}$ 
is constant on outer conjugacy classes and constant on orbits of $X$, and hence
defines a group homomorphism (denoted by the same symbol)
$\operatorname{M}:\operatorname{Br}_{\R^n}(X)\to C(Z, M_n^u(\T)).$
In this case, we may write

\begin{equation}\label{mfequation2}
M_{\lambda_0}(m,l,x)=M(m,l,\pi(x))=\prod_{1\leq i<j\leq n} f(\pi(x))^{m_il_j}_{ij}.
\end{equation}
As before the map $\operatorname{M}:[C_0(X,\K),\alpha]\to f$ is called 
the \emph{Mackey obstruction map}, and the function $f$ is called the 
\emph{Mackey obstruction} of $[CT(X,\delta),\alpha]$.

\section{Dimensionally Reduced Cohomology}\label{DimRedCohoSection}
{{}
In this Section we recall the ``dimensionally reduced cohomology" groups of \cite{BouRat1}.  
In degree 2 with ${\mathcal S}$ coefficients they are isomorphic to the equivariant 
Brauer groups $\operatorname{Br}_{\R^n}(X)$, for $X$ a principal $\T^n$-bundle (see 
Theorem \ref{MainSquare}).}

\begin{lemma}[{\cite[Sect 2]{Ste47}}]\label{commutecoboudary}
Let $Z$ be a topological space with an open cover $\W$ and $A,B\in \check{Z}^2(\W,\underline{\Z})$. Then
$A\cup B-B\cup A=\check\partial C$
where
\[C_{\lambda_0\lambda_1\lambda_2\lambda_3}(z):= A_{\lambda_0\lambda_1\lambda_2}
(z)B_{\lambda_0\lambda_2\lambda_3}(z)-A_{\lambda_1\lambda_2\lambda_3}
(z)B_{\lambda_0\lambda_1\lambda_3}(z).\]
\end{lemma}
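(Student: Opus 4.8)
\emph{Proof proposal.} The statement is the classical fact that the \v{C}ech cup product is graded‑commutative already at the level of cochains, with $C$ playing the role of Steenrod's $\cup_1$‑product of $A$ and $B$; I would prove it by a direct expansion. Throughout I suppress the point $z$, work on a fivefold overlap $W_{\lambda_0\cdots\lambda_4}$, write $A_{ijk}$ for $A_{\lambda_i\lambda_j\lambda_k}$ and similarly for $B$, and use the conventions $(A\cup B)_{\lambda_0\cdots\lambda_4}=A_{012}B_{234}$ and $(\check\partial C)_{\lambda_0\cdots\lambda_4}=C_{1234}-C_{0234}+C_{0134}-C_{0124}+C_{0123}$ (for a different sign normalisation of $\cup$ the argument goes through verbatim). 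All computations take place in the abelian group of $\underline{\Z}$‑valued functions on $W_{\lambda_0\cdots\lambda_4}$, so the cover is the only topological input.

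First I would substitute the definition of $C$ into $(\check\partial C)_{\lambda_0\cdots\lambda_4}$, producing ten signed monomials $\pm A_{\bullet\bullet\bullet}B_{\bullet\bullet\bullet}$, and collect them by their $A$‑factor: $A_{134},A_{124},A_{023},A_{013}$ each occur once, while the three ``diagonal'' factors $A_{012},A_{123},A_{234}$ each occur twice, giving combinations of the shape $A_{\bullet\bullet\bullet}\bigl(B_{\bullet\bullet\bullet}-B_{\bullet\bullet\bullet}\bigr)$. On each diagonal pair I would use the cocycle identity $\check\partial B=0$ applied to the appropriate four‑index string (for instance $\lambda_0\lambda_2\lambda_3\lambda_4$ for the $A_{012}$‑pair, $\lambda_0\lambda_1\lambda_2\lambda_4$ for the $A_{234}$‑pair, $\lambda_0\lambda_1\lambda_3\lambda_4$ for the $A_{123}$‑pair) to rewrite the difference of $B$'s as another difference; the bookkeeping is arranged precisely so that the two monomials $A_{012}B_{234}$ and $-B_{012}A_{234}$ — that is, $(A\cup B-B\cup A)_{\lambda_0\cdots\lambda_4}$ — are split off.

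It then remains to check that the leftover monomials cancel. Regrouping them by their $B$‑factor leaves two terms, one multiplying $B_{014}$ and one multiplying $B_{034}$, whose $A$‑coefficients work out to $A_{234}-A_{134}+A_{124}-A_{123}$ and $A_{123}-A_{023}+A_{013}-A_{012}$ respectively; these are exactly $(\check\partial A)_{\lambda_1\lambda_2\lambda_3\lambda_4}$ and $(\check\partial A)_{\lambda_0\lambda_1\lambda_2\lambda_3}$, hence vanish since $A$ is a cocycle. Assembling the pieces gives $\check\partial C=A\cup B-B\cup A$.

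The only real obstacle is sign and index bookkeeping: for each diagonal $A$‑pair one must select the correct instance of $\check\partial B=0$, and keep the alternating signs of $\check\partial C$, of the cup product, and of the cocycle relations mutually consistent. Carrying out the reduction in the order ``expand $\check\partial C$, group by $A$‑factor and apply $\check\partial B=0$, then group the remainder by $B$‑factor and apply $\check\partial A=0$'' — rather than trying to manipulate $A\cup B$ and $B\cup A$ directly — keeps the number of terms small and makes every cancellation transparent, and the particular cochain $C$ in the statement is exactly the one tuned so that this works.
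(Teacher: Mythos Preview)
Your direct computation is correct: expanding $(\check\partial C)_{01234}$, grouping by $A$-factor, applying $\check\partial B=0$ on the strings $0234$, $0124$, $0134$ to the three diagonal pairs, and then killing the residue via $(\check\partial A)_{0123}$ and $(\check\partial A)_{1234}$ does exactly what you describe and yields $A_{012}B_{234}-A_{234}B_{012}$.

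As for comparison: the paper does not give its own proof of this lemma. It is stated with a citation to Steenrod and used as input for the construction of the dimensionally reduced differential $D_F$. So there is nothing in the paper to compare your argument against; your explicit verification is precisely the kind of proof the citation points to, and your identification of $C$ as Steenrod's $\cup_1$-product is the right conceptual gloss.
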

{{}
Let $Z$ be a $C^\infty$ manifold and fix an open cover ${{\mathcal W}}=\{W_{\mu_0}\}$ of $Z$ 
together with a cocycle $F\in \check{Z}^2({{\mathcal W}},\underline{\Z}^n)$. 
Let $\mathcal{S}, \hat{\mathcal{N}}$ and $\mathcal{M}$ denote the sheaves of 
germs of continuous $\T,\hat{\Z}^n$ and $M_n^u(\T)$-valued functions respectively. 
We can think of a \v{C}ech cocycle in $\phi^{(k-2)2}\in \check{C}^{k-2}(\W,{\mathcal M})$ 
as an $n \choose 2$-tuple
$\{\phi^{(k-2)2}(\cdot)_{ij}\}_{1\leq i<j\leq n},$
where $\phi^{(k-2)2}(\cdot)_{ij}\in \check{C}^{k-2}(\W,\mathcal{S})$.

We define a cochain complex
$C^k_F({{\mathcal W}},{{\mathcal S}}),$
for $k\geq 2$ as all triples $(\phi^{k0},\phi^{(k-1)1},\phi^{(k-2)2})$ consisting of 
\v{C}ech cochains $\phi^{k0}\in \check{C}^{k}(\W,\mathcal{S})$, $\phi^{(k-1)1}\in 
\check{C}^{k-1}(\W,\hat{\mathcal{N}})$ and $\phi^{(k-2)2}\in \check{C}^{k-2}(\W,\mathcal{M})$. 
When $k=1$, we define a cochain to be a pair $(\phi^{10},\phi^{01
})$, where $\phi^{10}\in \check{C}^{1}(\W,\mathcal{S})$, $\phi^{01}\in 
\check{C}^{0}(\W,\hat{\mathcal{N}})$, whilst when $k=0$ a cochain is an 
element  $\phi^{00}\in \check{C}^{0}(\W,\mathcal{S})$.

Now, for any $A\in \check{C}^2(\W,\underline{\Z}^n)$ and $B\in 
\check{C}^3(\W,\underline{M_n^u(\Z)})$ we can define products
\begin{align*}
\cup_1 A: \check{C}^{k-1}(\W,\hat{\mathcal{N}})\to \check{C}^{k+1}(\W,\mathcal{S}),\quad
&\cup_1 A: \check{C}^{k-2}(\W,\mathcal{M}) \to \check{C}^{k}(\W,\hat{\mathcal{N}}),\quad \mbox{and}\\
\cup_2 B: \check{C}^{k-2}(\W,\mathcal{M}) &\to \check{C}^{k+1}(\W,\mathcal{S}),
\end{align*}
by the formulas}
\begin{align*}
(\phi^{(k-1)1}\cup_1 A)_{\lambda_0\dots\lambda_{k+1}}(z):=&
\phi^{(k-1)1}_{\lambda_0\dots\lambda_{k-1}}(A_{\lambda_{k-1}\lambda_k\lambda_{k+1}}(z),z),\\
(\phi^{(k-2)2}\cup_1 A)_{\lambda_0\dots\lambda_{k+1}}(m,z):=&\prod_{1\leq i<j\leq n}
\phi^{(k-2)2}_{\lambda_0\dots\lambda_{k-1}}(z)_{ij}^{A_{\lambda_{k-1}\lambda_k
\lambda_{k+1}}(z)_i(m)_j-(m)_iA_{\lambda_{k-1}\lambda_k\lambda_{k+1}}(z)_j},\\
(\phi^{(k-2)2}\cup_2 B)_{\lambda_0\dots\lambda_{k+1}}(z):=&\prod_{1\leq i<j\leq n}
\phi^{(k-2)2}_{\lambda_0\dots\lambda_{k-2}}(z)_{ij}^{B_{\lambda_{k-2}\lambda_{k-1}\lambda_{k}
\lambda_{k+1}}(z)_{ij}}.
\end{align*}
The \v{C}ech differential $\check\partial$ is a graded derivation with respect to these 
products \cite{BouRat1}.
Now, let $F_i$ denote the $i^{th}$ component of $F$, our fixed representative of the 
Euler vector of $\pi:X\to Z$. Applying Lemma \ref{commutecoboudary} with $A=F_i, B=F_j$ 
gives us a 3-cochain $C(F)\in \check{C}^3(\W,\underline{M_n^u(\Z)})$ defined by the formula:
\[C(F)_{\lambda_0\lambda_1\lambda_2\lambda_3}(z)_{ij}:=
F_{\lambda_0\lambda_1\lambda_2}(z)_iF_{\lambda_0\lambda_2\lambda_3}(z)_j-
F_{\lambda_1\lambda_2\lambda_3}(z)_iF_{\lambda_0\lambda_1\lambda_3}(z)_j.\]
Then,  $D_F: C^k_F({\mathcal W},{\mathcal S})\to C^{k+1}_F({\mathcal W},{\mathcal S})$ 
is defined as follows:
\begin{align}
\label{dfdefn}D_F(\phi^{k0}, \phi^{(k-1)1},\phi^{(k-2)2}):=&
(\check\partial\phi^{k0}\times(\phi^{(k-1)1}\cup_1 F)^{(-1)^{k+1}}\times(\phi^{(k-2)2}
\cup_2 C(F)))^{(-1)^{k+1}},\notag\\
&\quad\check\partial\phi^{(k-1)1}\times(\phi^{(k-2)2}\cup_1 F)^{(-1)^{k}},\check\partial\phi^{(k-2)2})\notag.
\end{align}
It is straightforward to see that $D_F^2=0$  so we have:
\begin{defn}
The $k^{th}$ dimensionally reduced \v{C}ech cohomology group of the covering $\W$ 
with coefficients in ${\mathcal S}$,
is the cohomology of $C^k_F({{\mathcal W}},{{\mathcal S}})$  under the differential $D_F$. 
This group is denoted 
${{\mathbb H}}^k_F(\W,{\mathcal S}).$
\end{defn}
We can also define similarly, ${\mathbb H}^k_F(\W,\underline{\Z})$ and 
${\mathbb H}^k_F(\W,{\mathcal  R})$, using integer  and real coefficients.
 Cochains in $C^k_F(\W,\underline{\Z})$ are triples $(\phi^{k0},\phi^{(k-1)1},\phi^{(k-2)2})$ 
 consisting of  \v{C}ech cochains $\phi^{k0}\in \check{C}^{k}(\W,\underline{\Z})$, $\phi^{(k-1)1}\in 
\check{C}^{k-1}(\W,\underline{\Z}^n)$ and $\phi^{(k-2)2}\in \check{C}^{k-2}(\W,\underline{M_n^u(\Z)})$. 
We define degree 0 and 1 cochains as before, by truncating the lower \v{C}ech cochains. 
To define the differential, let $m_l$ denote the $l^{th}$ component of $m\in\Z^n$.
Then we have maps
\begin{align*}
\cup_1 F: \check{C}^{k-1}(\W,\underline{\Z}^n)\to \check{C}^{k+1}(\W,\underline{\Z}),
\cup_1 F&: \check{C}^{k-2}(\W,\underline{M_n^u(\Z)}) \to \check{C}^{k}(\W,\underline{\Z}^n), 
\quad \mbox{and}\\
\cup_2 C(F): \check{C}^{k-2}(\W,\underline{M_n^u(\Z)})& \to \check{C}^{k+1}(\W,\underline{\Z}),
\end{align*}
with their integer cohomology analogues:
\begin{align*}
(\phi^{(k-1)1}\cup_1 F)_{\lambda_0\dots\lambda_{k+1}}(z)=&
\sum_{l=1}^n\phi^{(k-1)1}_{\lambda_0\dots\lambda_{k-1}}(z)_l 
F_{\lambda_{k-1}\lambda_k\lambda_{k+1}}(z)_l,\\
(\phi^{(k-2)2}\cup_1 F)_{\lambda_0\dots\lambda_{k}}(z)_l=
\sum_{1\leq i<j\leq n}\phi^{(k-2)2}_{\lambda_0
\dots\lambda_{k-2}}(z)_{ij}    &(F_{\lambda_{k-2}\lambda_{k-1}\lambda_{k}}(z)_i(e_l)_j-
(e_l)_iF_{\lambda_{k-2}\lambda_{k-1}\lambda_{k}}(z)_j),\\
\mbox{and} \quad
(\phi^{(k-2)2}\cup_2 C(F))_{\lambda_0\dots\lambda_{k+1}}(z):=&\sum_{1\leq i<j\leq n}
\phi^{(k-2)2}_{\lambda_0\dots\lambda_{k-2}}(z)_{ij} C(F)_{\lambda_{k-2}\lambda_{k-1}
\lambda_{k}\lambda_{k+1}}(z)_{ij},
\end{align*}
Then the differential in integer coefficients is
\begin{align*}
D_F(\phi^{k0}, \phi^{(k-1)1},\phi^{(k-2)2}):=&
(\check\partial\phi^{k0}+(-1)^{k+1}\phi^{(k-1)1}\cup_1 F+(-1)^{k+1}\phi^{(k-2)2}\cup_2 C(F),\\
&\quad\check\partial\phi^{(k-1)1}+(-1)^{k}\phi^{(k-2)2}\cup_1 F,\check\partial\phi^{(k-2)2}).
\end{align*}
\begin{defn}
Fix a cocycle $F\in\check{Z}^2(\W,\underline{\Z}^n)$. The \emph{$k^{th}$ dimensionally 
reduced \v{C}ech cohomology group of the cover $\W$ with coefficients in $\Z$} is the 
cohomology of $C^k_F({\mathcal W},\underline{\Z})$. 
\end{defn}
{{}
Similarly for real coefficients, cochains in $C^k_F(\W,{\mathcal  R})$ 
are triples $(\phi^{k0},\phi^{(k-1)1},\phi^{(k-2)2})$ consisting of  \v{C}ech 
cochains $\phi^{k0}\in \check{C}^{k}(\W,{\mathcal  R})$, $\phi^{(k-1)1}\in 
\check{C}^{k-1}(\W,{\mathcal  R}^n)$ and $\phi^{(k-2)2}\in 
\check{C}^{k-2}(\W,\mathcal{M}({\mathcal  R})),$ where ${\mathcal  R}$ 
denotes the sheaf of germs of continuous $\R$-valued functions, and 
$\mathcal{M}({\mathcal  R})$, the sheaf of germs of continuous $M_n^u(\R)$-valued functions.}
\begin{prop}[\cite{BouRat1}]\label{cohomologylongexactsequence}
Let $\W$ be a good open cover of a $C^\infty$ manifold $Z$, and fix a cocycle 
$F\in \check{Z}^2(\W,\underline{\Z}^n)$. Then there is a long exact sequence of cohomology groups
\[\cdots\to\mathbb{H}^k_F(\W,{\mathcal  R})\to\mathbb{H}^k_F(\W,{\mathcal S})\to 
{\mathbb H}^{k+1}_F(\W,\underline{\Z})\to {\mathbb H}^{k+1}_F(\W,{\mathcal  R})\to\cdots\]
\end{prop}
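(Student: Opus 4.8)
The plan is to deduce the long exact sequence from a short exact sequence of cochain complexes, exactly as one proves the ordinary long exact sequence in \v{C}ech cohomology from the exponential sheaf sequence. The starting point is the exact sequence of sheaves $0\to\underline{\Z}\to{\mathcal R}\to{\mathcal S}\to 1$ (coming from $0\to\Z\to\R\to\T\to 1$), together with its analogue $0\to\underline{\Z}^n\to{\mathcal R}^n\to\hat{\mathcal N}\to 1$ for the fibre coefficients and $0\to\underline{M_n^u(\Z)}\to\mathcal{M}({\mathcal R})\to\mathcal{M}\to 1$ for the upper-triangular coefficients. Applied column by column, these give, for each degree, a short exact sequence of abelian groups
\[
0\to C^k_F(\W,\underline{\Z})\to C^k_F(\W,{\mathcal R})\to C^k_F(\W,{\mathcal S})\to 0 .
\]
Here one uses that $\W$ is a good cover, so that each $\check{C}^p(\W,{\mathcal R})\to\check{C}^p(\W,{\mathcal S})$ is surjective (lifting a continuous $\T$-valued function on a contractible — or at least simply connected — intersection to an $\R$-valued one), and similarly for the $\hat{\mathcal N}$ and $\mathcal M$ columns; the left exactness is immediate since $\Z\hookrightarrow\R$, $\Z^n\hookrightarrow\R^n$, $M_n^u(\Z)\hookrightarrow M_n^u(\R)$ are injective.

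Next I would check that these maps are \emph{chain maps} for $D_F$. This is where the specific shape of $D_F$ matters: the differential is built from the \v{C}ech coboundary $\check\partial$ together with the cup products $\cup_1 F$, $\cup_1 F$ (in the other bidegree) and $\cup_2 C(F)$, and in each of the three coefficient systems — $\underline{\Z}$, ${\mathcal R}$, ${\mathcal S}$ — these operations are defined by literally the same formulas, interpreted via the group homomorphisms $\Z\to\R\to\T$ etc. Since $F$ and $C(F)$ have \emph{integer} entries, the exponents (or the integer linear combinations, in the additive ${\mathcal R}$ and $\Z$ cases) appearing in $\cup_1 F$ and $\cup_2 C(F)$ make sense compatibly, and the maps $\R\to\T$ applied entrywise intertwine the multiplicative and additive versions of the products. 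Hence the vertical maps commute with $D_F$, and we have a short exact sequence of cochain complexes $0\to C^\bullet_F(\W,\underline{\Z})\to C^\bullet_F(\W,{\mathcal R})\to C^\bullet_F(\W,{\mathcal S})\to 0$.

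The long exact sequence in cohomology then follows from the snake lemma / zig-zag lemma applied to this short exact sequence of complexes; the connecting homomorphism $\mathbb{H}^k_F(\W,{\mathcal S})\to\mathbb{H}^{k+1}_F(\W,\underline{\Z})$ is the usual one: represent a class by a triple with ${\mathcal S}$-coefficients, lift each component to ${\mathcal R}$-coefficients, apply $D_F$ (the result lands in the image of the $\underline{\Z}$-complex since the ${\mathcal S}$-representative was a cocycle), and pull back to a $\underline{\Z}$-cochain, which is automatically $D_F$-closed. Note that this is consistent with the low-degree truncation: in degrees $0$ and $1$ the complexes $C^\bullet_F(\W,-)$ have fewer columns (the triple becomes a pair, then a single \v{C}ech cochain), but the truncation is carried out identically in all three coefficient systems, so the short exact sequence of complexes — and hence the zig-zag — is undisturbed.

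The main obstacle is the surjectivity of the right-hand vertical maps at the cochain level, i.e. verifying that on a good cover every \v{C}ech cochain valued in ${\mathcal S}$ (resp.\ $\hat{\mathcal N}$, resp.\ $\mathcal M$) lifts through ${\mathcal R}$ (resp.\ ${\mathcal R}^n$, resp.\ $\mathcal{M}({\mathcal R})$). For ${\mathcal S}$ this is the standard fact that a continuous map from a contractible space to $\T$ lifts to $\R$; for the $\hat{\mathcal N}={\mathcal S}^n$ and $\mathcal M=(\mathcal S)^{\binom n2}$ columns it follows coordinatewise, but one must be a little careful that the lift can be chosen \emph{compatibly with the group-cohomology or multilinearity structure} built into the definition of those cochains (e.g.\ a $\hat{\Z}^n$-valued function is a homomorphism $\Z^n\to\T$, and its $\R$-lift should again be additive in the $\Z^n$-variable — this is automatic once one lifts the images of the generators). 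Once this point is dispatched, everything else is the formal machinery of the snake lemma, and the chain-map verification reduces to the observation — already recorded in \cite{BouRat1} — that $\check\partial$ is a graded derivation for $\cup_1,\cup_2$ and that $F,C(F)$ have integer entries.
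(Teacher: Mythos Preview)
Your proposal is correct and is the standard argument: the short exact sequence $0\to\underline{\Z}\to\mathcal{R}\to\mathcal{S}\to 0$ (and its $\Z^n$- and $M_n^u(\Z)$-valued analogues) induces, over a good cover, a short exact sequence of the three-column complexes $C^\bullet_F(\W,-)$, and the snake lemma does the rest. The paper does not actually prove this proposition here---it is quoted from \cite{BouRat1}---but your argument is exactly the expected one, and the connecting homomorphism you describe (lift to $\mathcal{R}$, apply $D_F$, observe the result is integer-valued) is precisely the construction the paper later invokes in the proof of Lemma~\ref{Liftindependence}.
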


{{} Most of the groups ${\mathbb H}^k_F(\W,{\mathcal  R})$ are trivial. The proof
 is contained in \cite{BouRat1}.}

\begin{lemma}\label{Rcompute}
Let $\W$ be an open cover of of a $C^\infty$ manifold $Z$, and fix a 
cocycle $F\in \check{Z}^2(\W,\underline{\Z})$. Then we have group isomorphisms
\[{\mathbb H}^k_{F}(\W,{\mathcal  R})\cong\begin{cases}
C(Z,\R) & k=0\\
C(Z,\R^n)& k=1\\
C(Z,M_n^u(\R))& k=2\\
0& k\geq 3.
\end{cases}\]
\end{lemma}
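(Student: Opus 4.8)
The plan is to compute $\mathbb{H}^k_F(\W,{\mathcal R})$ by exploiting the fact that the constituent sheaves ${\mathcal R}$, ${\mathcal R}^n$ and ${\mathcal M}({\mathcal R})$ are all \emph{fine} (soft), so that on a good cover $\W$ the ordinary \v{C}ech cohomology $\check{H}^k(\W,{\mathcal R})$ vanishes for $k\geq 1$ and equals $C(Z,\R)$ for $k=0$ (and similarly for the other two sheaves with $\R^n$ and $M_n^u(\R)$ in place of $\R$). The three-column complex $C^\bullet_F(\W,{\mathcal R})$ carries a filtration by column number; the associated spectral sequence (or, more elementarily, a direct diagram chase through the three columns) has $E_1$-page concentrated in the row $k=0$, with entries $C(Z,\R)$ in column $0$, $C(Z,\R^n)$ in column $1$, and $C(Z,M_n^u(\R))$ in column $2$. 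Thus after taking vertical (\v{C}ech) cohomology only the degree-$0$ pieces survive, and the problem reduces to understanding the horizontal maps induced by $\cup_1 F$ and $\cup_2 C(F)$ on those surviving pieces.

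The key computation is therefore: on the level of \v{C}ech-closed $0$-cochains (i.e.\ global sections, since $\W$ is good and connectedness of the $W_{\mu_0}$ need not hold, but the kernel/cokernel description still works out), what are the maps
$C(Z,\R)\to 0$, $C(Z,\R^n)\to 0$ induced by $\cup_1 F$ into $\check{H}^2$ and $\check{H}^3$ (which are zero), and the internal maps $\check{C}^0(\W,{\mathcal M}({\mathcal R}))\to \check{C}^1(\W,{\mathcal R}^n)$ via $\cup_1 F$ and $\check{C}^0(\W,{\mathcal M}({\mathcal R}))\to \check{C}^2(\W,{\mathcal R})$ via $\cup_2 C(F)$? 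Because the targets $\check{H}^1(\W,{\mathcal R}^n)=0$ and $\check{H}^2(\W,{\mathcal R})=0$ already vanish, the differential $D_F$ restricted to the surviving degree-$0$ classes is forced to be zero: there is no nonzero class in the target columns for it to hit. Hence $\mathbb{H}^0_F\cong C(Z,\R)$, $\mathbb{H}^1_F\cong C(Z,\R^n)$, $\mathbb{H}^2_F\cong C(Z,M_n^u(\R))$, and $\mathbb{H}^k_F=0$ for $k\geq 3$ since every column contributes nothing in \v{C}ech degree $\geq 1$ and the complex is only three columns wide (so total degree $\geq 3$ forces at least one \v{C}ech index $\geq 1$ in each summand, after accounting for the column shift).

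The main obstacle is bookkeeping: one must check carefully that the extra terms in $D_F$ coming from $\cup_1 F$ and $\cup_2 C(F)$ do not create new cocycles or coboundaries among the degree-$0$ survivors — equivalently, that passing from the single-column (\v{C}ech) differential to the twisted total differential $D_F$ does not alter cohomology in these low degrees. The cleanest way to organize this is the filtration spectral sequence argument above: since the $E_1$-page is concentrated in a single row, the spectral sequence degenerates at $E_2$ (all higher differentials have source or target zero), and $E_2=E_\infty$ reads off exactly the stated answer. I would assume the fineness of ${\mathcal R}$, ${\mathcal R}^n$, ${\mathcal M}({\mathcal R})$ (continuous partitions of unity on the manifold $Z$) and the standard fact that \v{C}ech cohomology of a fine sheaf on a good cover is concentrated in degree $0$, then invoke the filtration spectral sequence of the three-column complex $C^\bullet_F(\W,{\mathcal R})$ and note its immediate degeneration.
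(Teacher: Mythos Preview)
Your approach is correct in spirit and would prove the lemma. The paper itself does not give a proof here; it simply says ``The proof is contained in \cite{BouRat1}'' and moves on. So there is nothing in the paper to compare against directly. That said, a filtration/spectral-sequence argument of exactly the type you describe is the natural proof, and almost certainly what is in the reference.

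Two small corrections to tighten your write-up:

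\begin{itemize}
\item You repeatedly assume $\W$ is a \emph{good} cover, but the lemma as stated does not. Goodness is not needed: the sheaves ${\mathcal R}$, ${\mathcal R}^n$, ${\mathcal M}({\mathcal R})$ are fine (continuous partitions of unity exist on the manifold $Z$), and for a fine sheaf the \v{C}ech complex over \emph{any} open cover of a paracompact space is acyclic in positive degrees. The standard contracting homotopy $(h\phi)_{\mu_0\dots\mu_{k-1}}=\sum_\mu \rho_\mu\,\phi_{\mu\mu_0\dots\mu_{k-1}}$ built from a subordinate partition of unity $\{\rho_\mu\}$ does this. So drop the word ``good'' and replace the justification accordingly.
\item Be explicit about which filtration you use, since the complex is not a genuine bicomplex (the $\cup_2 C(F)$ term jumps two columns). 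The increasing filtration $F^0\subset F^1\subset F^2$ with $F^p=\{(\phi^{k0},\phi^{(k-1)1},\phi^{(k-2)2}):\phi^{(k-j)j}=0\text{ for }j>p\}$ is preserved by $D_F$ (check directly from the formula for $D_F$: nothing maps into a higher column). The associated graded carries only $\check\partial$, giving the $E_1$ page you describe. The induced $d_1$ (from $\cup_1 F$) lands in $\check H^2$ of a fine sheaf, hence zero; the induced $d_2$ (from $\cup_2 C(F)$) lands in $\check H^3$, hence zero. Degeneration at $E_1$ follows and the result drops out.
\end{itemize}

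With those two points clarified, your argument is complete.
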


Consequently, we have the:

\begin{cor}\label{dimreducedlongexact}
Let $\W$ be a good open cover of a $C^\infty$ manifold $Z$, and fix a cocycle 
$F\in \check{Z}^2(\W,\underline{\Z}^n)$. Then we have exact sequences
\begin{align*}
0&\to C(Z,\Z)\to C(Z,\R)\to C(Z,\T)\to  {\mathbb H}^1_{F}(\W,\underline{\Z})
\to C(Z,\R^n)\to {\mathbb H}^1_{F}(\W,{\mathcal S})\to {\mathbb H}^2_{F}(\W,\underline{\Z})\\
&\quad\quad \to C(Z,M_n^u(\R))\to {\mathbb H}^2_{F}(\W,{\mathcal S})\to 
{\mathbb H}^3_{F}(\W,\underline{\Z})\to 0,
\end{align*}
and
$\quad 0\to {\mathbb H}^k_{F}(\W,{\mathcal S})\to 
{\mathbb H}^{k+1}_{F}(\W,\underline{\Z})\to 0,\quad k\geq 3.$
\end{cor}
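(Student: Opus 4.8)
The plan is to derive Corollary~\ref{dimreducedlongexact} as a direct consequence of the long exact sequence of Proposition~\ref{cohomologylongexactsequence} together with the computation of the real-coefficient groups in Lemma~\ref{Rcompute}. First I would write out the long exact sequence
\[
\cdots\to\mathbb{H}^k_F(\W,{\mathcal R})\to\mathbb{H}^k_F(\W,{\mathcal S})\to \mathbb{H}^{k+1}_F(\W,\underline{\Z})\to \mathbb{H}^{k+1}_F(\W,{\mathcal R})\to\cdots
\]
starting from the bottom. One has to be a little careful about where the sequence begins: in degree $0$ a cochain in $C^0_F(\W,\mathfrak{G})$ is just $\phi^{00}\in\check{C}^0(\W,\mathfrak{G})$, so $\mathbb{H}^0_F(\W,\underline{\Z})=C(Z,\Z)$ (for a connected $Z$, or more generally the locally constant $\Z$-valued functions, but since the excerpt writes $C(Z,\Z)$ I would follow that convention), and likewise $\mathbb{H}^0_F(\W,{\mathcal R})=C(Z,\R)$, $\mathbb{H}^0_F(\W,{\mathcal S})=C(Z,\T)$; these match Lemma~\ref{Rcompute} in degree $0$. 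So the sequence opens with $0\to C(Z,\Z)\to C(Z,\R)\to C(Z,\T)\to\mathbb{H}^1_F(\W,\underline{\Z})\to\cdots$, the exactness at $C(Z,\Z)$ and $C(Z,\R)$ coming from the obvious injectivity of $\underline{\Z}\hookrightarrow{\mathcal R}$ on sections.

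Next I would splice in the values from Lemma~\ref{Rcompute}: substitute $\mathbb{H}^0_F(\W,{\mathcal R})=C(Z,\R)$, $\mathbb{H}^1_F(\W,{\mathcal R})=C(Z,\R^n)$, $\mathbb{H}^2_F(\W,{\mathcal R})=C(Z,M_n^u(\R))$, and $\mathbb{H}^k_F(\W,{\mathcal R})=0$ for $k\geq 3$. The vanishing for $k\geq 3$ is what produces the short exact sequences $0\to\mathbb{H}^k_F(\W,{\mathcal S})\to\mathbb{H}^{k+1}_F(\W,\underline{\Z})\to 0$ for $k\geq 3$: in that range the two terms $\mathbb{H}^{k}_F(\W,{\mathcal R})$ and $\mathbb{H}^{k+1}_F(\W,{\mathcal R})$ flanking $\mathbb{H}^{k}_F(\W,{\mathcal S})\to\mathbb{H}^{k+1}_F(\W,\underline{\Z})$ both vanish, forcing that arrow to be an isomorphism. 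For the low-degree portion I would just record the initial segment of the long exact sequence verbatim, threading together degrees $0$ through $3$, which gives exactly the displayed eleven-term sequence ending $\cdots\to C(Z,M_n^u(\R))\to\mathbb{H}^2_F(\W,{\mathcal S})\to\mathbb{H}^3_F(\W,\underline{\Z})\to 0$; the final $0$ on the right is again $\mathbb{H}^3_F(\W,{\mathcal R})=0$ from Lemma~\ref{Rcompute}.

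Since all the substantive work — the construction of the comparison long exact sequence and the cohomology computation with real coefficients — has been pushed into Proposition~\ref{cohomologylongexactsequence} and Lemma~\ref{Rcompute} (both attributed to \cite{BouRat1}), the proof of the corollary is genuinely just bookkeeping. The only place where any thought is needed is the identification of the low-degree terms of the complex, i.e.\ that the truncation conventions for $C^0_F$ and $C^1_F$ give $\mathbb{H}^0$ and the beginning of the sequence the stated form, and that $\underline{\Z}\to{\mathcal R}\to{\mathcal S}$ really does induce the connecting maps appearing in Proposition~\ref{cohomologylongexactsequence} so that the first map $C(Z,\Z)\to C(Z,\R)$ is the inclusion. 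I expect that bookkeeping at the left end of the sequence to be the only potential snag; everything to the right of $\mathbb{H}^2_F(\W,{\mathcal S})$ is immediate from the vanishing in Lemma~\ref{Rcompute}. Thus the proof amounts to: (i) write down Proposition~\ref{cohomologylongexactsequence}, (ii) pin down the degree-$0$ and degree-$1$ terms, (iii) insert Lemma~\ref{Rcompute}, (iv) read off the two displayed families of exact sequences.
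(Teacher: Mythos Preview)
Your proposal is correct and matches the paper's approach exactly: the corollary is stated immediately after Lemma~\ref{Rcompute} with the phrase ``Consequently, we have the:'', and no further proof is given because it is precisely the bookkeeping you describe---substitute the values of $\mathbb{H}^k_F(\W,\mathcal{R})$ from Lemma~\ref{Rcompute} into the long exact sequence of Proposition~\ref{cohomologylongexactsequence} and read off the result.
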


\section{Covers of Groupoids}\label{Covers}

As mentioned in the introduction, in \cite{Tu}  Tu established a groupoid cohomology theory. 
It is isomorphic, in degree two, to the equivariant Brauer group.
An understanding of these cohomology groups $\check{H}^2(\R^n\ltimes X,{\mathcal S})$ and 
the isomorphism $\Upsilon:\operatorname{Br}_{\R^n}(X)\to \check{H}^2(\R^n\ltimes X,{\mathcal S})$ 
will be necessary to prove surjectivity of our map from the equivariant Brauer group to our 
dimensionally reduced cohomology group from Section \ref{DimRedCohoSection} 
(see Corollary \ref{surjectiveproof}).

\begin{defn}\label{simplicialdefn}
Let $\Delta$ be the \emph{simplicial category}, consisting of $(n+1)$-tuples $\{0,1,2,\dots,n\}$,
written $[n]$, $n\in\N^+$  and non-decreasing maps as morphisms. (A map $f:[k]\to [n]$ is 
\emph{non}-\emph{decreasing} if $f(i+1)\geq f(i)$ for all $i\in[k]$). The \emph{presimplicial 
category} $\Delta'$ is the category with the same objects and strictly increasing maps as 
morphisms. Denote the set of morphisms from $[k]$ to $[n]$ in the category  by $\Delta$ 
(resp. $\Delta'$) by $\mbox{hom}_{\Delta} ([k],[n])$  (resp. $\mbox{hom}_{\Delta'} ([k],[n])$).

A \emph{(pre)simplicial set} is a functor from the (pre)simplical category 
$\Delta$ (or $\Delta'$)  to the category $\mathcal{SET}$ of sets.
A \emph{(pre)simplicial (topological) space} is a functor from the 
(pre)simplical category $\Delta$ (or $\Delta'$)  to the category $\mathcal{TOP}$ of topological spaces.
\end{defn}

\begin{remark}\label{epsilonremark}
From, e.g.\ \cite{Mac63}, the image of the morphisms from $\Delta'$ (resp. $\Delta$) under 
a functor from the (pre)simplicial category is determined by the images of the collection of 
$\epsilon_i,i\in [n]$, the unique injective map in $\mbox{hom}_{\Delta}([n],[n+1])$ that 
omits the number $i$, and $\eta_i,i\in [n]$, the unique surjective map in 
$\mbox{hom}_{\Delta}([n],[n-1])$ that repeats the number $i$. 
Therefore, if $\mathcal{CAT}$ is an arbitrary category, and $F$ any functor 
$F:\Delta\to \mathcal{CAT}$, we  denote the images $F(\epsilon_i)$ and 
$F(\eta_i)$ of $\epsilon_i$ and $\eta_i$ by $\tilde{\epsilon}_i$ and $\tilde{\eta}_i$. 
We rely on context to make it clear which functor and which category is being used.
\end{remark}

\begin{remark}
We will often omit commas between indices to make reading indices printed in 
small fonts (e.g.\ when used as a subscript) easier on the eye.
\end{remark}

\begin{defn}
A \emph{groupoid} is a small category $\G$ in which every arrow is invertible. 
We denote the object and arrow spaces $\G^{(0)}$ and $\G^{(1)}$, and, if 
$g\in\G^{(1)}$, then $r(g),s(g)\in \G^{(0)}$ are the range and source of $g$ respectively. 
The symbol $\iota$ denotes the canonical inclusion $\iota:\G^{(0)}\to\G^{(1)}$. 
A groupoid is called a \emph{topological groupoid} if the sets $\G^{(0)}$ and $\G^{(1)}$ 
are topological spaces such that $r$, $s$, and composition of arrows are continuous maps.
\end{defn}

\begin{example}\label{groupoidsimplicial}
{Given a groupoid $\G$ there is an associated simplicial space whose object space is}
$\G^{(n)}:=\{(\gamma_0,\dots,\gamma_{n-1}): s(\gamma_{i+1})=r(\gamma_{i})\},$
the set of $n$-composable arrows. The morphisms are generated by compositions of maps 
$\tilde{\epsilon}_i$ and $\tilde{\eta}_i, i\in[n]$, where
\begin{equation}\label{epsilongammas}
\tilde{\epsilon}_i(\gamma_1,\dots,\gamma_n)=
\begin{cases}
(\gamma_1,\dots,\gamma_{n-1}) & i=0\\
(\gamma_0,\dots,\gamma_{n-2}) & i=n\\
(\gamma_0,\dots,\gamma_{i-1}\gamma_{i},\dots,\gamma_{n-1}) & \mbox{otherwise.}\\
\end{cases}
\end{equation}
and
\[
\tilde{\eta}_i(\gamma_0,\dots,\gamma_{n-1})=
\begin{cases}
(\iota(s(\gamma_0)),\gamma_0,\dots,\gamma_{n-1}) & \mbox{if } i=0\\
(\gamma_0,\dots,\gamma_{i-1},\iota(r(\gamma_{i-1})),\gamma_i,\dots,\gamma_{n-1})& \mbox{otherwise.} \\
\end{cases}
\]
\end{example}

From now on, all groupoids we deal with will be topological groupoids, so we omit   the 
``topological". In order to define Tu-\v{C}ech cohomology for a groupoid $\G$, we are going 
to need a \emph{pre-simplicial cover} of the simplicial space $\G^\bullet$ associated to $\G$. 
This will be generated from covers of $\G^{(0)}$ and $\G^{(1)}$ respectively, and will actually 
be a refinement in the sense discussed below.

Recall that if  $G$ is a group acting on a space $X$ then there is a \emph{transformation-group 
groupoid} $G\ltimes X$ with $(G\ltimes X)^{(0)}:=X$ and $(G\ltimes X)^{(1)}:=G\times X$. 
Later, for $g\in G, x\in X$, we write $(g,x)$ for elements in $G\ltimes X$, which satisfies 
$s((g,x)):=g^{-1}x$ and $r((g,x)):=x$.

\begin{example}\label{covergroupoid}
For any open cover $\U^0=\{U_{\lambda_{0}}^0\}_{\lambda_{0}\in\mathcal{I}^0}$ of the 
object space $\G^0$ of a groupoid $\G$, we may define the \emph{cover groupoid} $\G[\U^0]$ by:
\[\G[\U^0]^{(1)}:=\{(\lambda_{0},\gamma,\lambda_{1}): \gamma\in \G, s(\gamma)\in 
U^0_{\lambda_{0}}, r(\gamma)\in U^0_{\lambda_{1}}\}.\]
This groupoid has object space
$\G[\U^0]^{(0)}=\{(\lambda_{0},x,\lambda_{0}):x\in U^0_{\lambda_{0}}\}=\coprod U_{\lambda_{0}}^0$.
\end{example}

\begin{defn}
An \emph{open cover} of a presimplicial space $M^\bullet$ is a sequence of covers 
$\U^\bullet=(\U^n)_{n\in\N}$ such that $\U^n=(U^n_i)_{i\in \mathcal{I}^n}$ is an open cover 
of the space $M_n$.
The cover is said to be \emph{presimplicial} if $\mathcal{I}^\bullet=(\mathcal{I}^n)_{n\in \N}$ 
is a presimplicial set such that for all $f\in\mbox{hom}_{\Delta'}(k,n)$ and for all $i\in\mathcal{I}^n$ 
one has $\tilde{f}(U^n_i)\subset U^k_{\tilde{f}(i)}$. One defines \emph{simplicial} covers similarly.
\end{defn}
\begin{defn}
Let $\U^\bullet=\{\{U^n_i\}_{i\in \mathcal{I}^n}:n\in\N\}$ and 
$\mathcal{V}^\bullet=\{\{V^n_j\}_{j\in \mathcal{J}^n}:n\in\N\}$ be covers of the simplicial 
space $\G^\bullet$. Then we say $\mathcal{V}^\bullet$ is finer than $\U^\bullet$ if for all $n$ 
there exists a collection of refinement maps $\theta^n:\mathcal{J}^n\to \mathcal{I}^n$ 
such that $V^n_j\subset U^n_{\theta_n(j)}$ for all $j$. If the covers $\U^\bullet$ and 
$\mathcal{V}^\bullet$ are (pre)simplicial then the refinement map $\theta_\bullet$ is 
required to be (pre)simplicial, which means that for any index $j\in \mathcal{J}^n$ and 
$f:[k]\to [n]$ a non-decreasing (strictly increasing) map, one has 
$\theta_k(\tilde{f}(j))=\tilde{f}(\theta_n(j))$.
\end{defn}
\begin{remark}
It is necessary that the refinement maps respect the (pre)simplicial structure so that the 
Tu-\v{C}ech differential, defined later, commutes with refinements.
\end{remark}

Fix $n\in \N^+$, an open cover ${\mathcal U}^0=\{U^0_i\}_{i\in \mathcal{I}^0}$ and 
$\mathcal{U}^1=\{U^1_j\}_{j\in \mathcal{I}^1}$ of $\G^{(0)}$ and $\G^{(1)}$ respectively. 
We define $\Lambda_n'$ to be the set of all maps
$\lambda:\bigcup_{k\in[1]} \mbox{hom}_{\Delta'} ([k],[n])\to \bigcup_{k\in [1]} \mathcal{I}^k$
that satisfy
$\lambda(\mbox{hom}_{\Delta'}   ([k],[n]))\subset \mathcal{I}^k.$
We use Tu's notation for the elements of $\Lambda_n'$. That is, if $\lambda_l$ 
is the index in $\mathcal{I}^0$ given by $\lambda(f:0\mapsto l)$ and $\lambda_{lp}$ is 
the index in $\mathcal{I}^1$ given by $\lambda(f:0\mapsto l, 1\mapsto p)$, we write
$\lambda:=\lambda_0\lambda_1\dots\lambda_n\lambda_{01}\lambda_{02}\dots\lambda_{0n}
\lambda_{12}\lambda_{13}\dots \lambda_{(n-1)n}.$
If $n>0$, we define $U_{\lambda}^{n}$ to be the set of all 
$(\gamma_0,\dots,\gamma_{n-1})\in \G^{(n)}$ such that, for all $0\leq k\leq l\leq n-1$, 
all of the following hold: $s(\gamma_0)\in U^0_{\lambda_0}, r(\gamma_k)\in 
U^0_{\lambda_{k+1}}, \mbox{ and } \gamma_k\dots \gamma_l\in U^1_{\lambda_{k (l+1)}}$.
If $n=0$ we simply have $U_{\lambda}^0=U_{\lambda_0}^0$.

\begin{example}
In the case $n=2$ and $\mathcal{G}=G\ltimes X$ we have $(g_0,g_1,x)\in 
U^2_{\lambda_0\lambda_1\lambda_2\lambda_{01}\lambda_{02}\lambda_{12}}$ if 
and only if all of the following hold: $g_0^{-1}g_1^{-1}x\in U^0_{\lambda_0}, g_1^{-1}x\in
U^0_{\lambda_1},x\in U^0_{\lambda_2},(g_0,g^{-1}_1x)\in U^1_{\lambda_{01}},(g_0g_1, x)\in
U^1_{\lambda_{02}},\mbox{ and }(g_1,x)\in U^1_{\lambda_{12}}$.
\end{example}

\begin{remark}
The reader should be careful to note that, despite the notation above, we shall use 
simultaneously the standard notation $U^0_{\lambda_0\lambda_1\dots \lambda_n}$ to 
denote $ U^0_{\lambda_0}\cap U^0_{\lambda_1}\cap\dots\cap U^0_{\lambda_n}$. 
This shouldn't be confusing, since the superscript of 
$U^0_{\lambda_0\lambda_1\dots \lambda_n}$ is $0$ and the index 
$\lambda_0\lambda_1\dots\lambda_n$ is not in $\Lambda'_n$.
\end{remark}

\begin{lemma}\label{presimplicialcoverproof}
Given open covers $\mathcal{U}^0=\{U^0_i\}_{i\in \mathcal{I}^0}$ 
and $\mathcal{U}^1=\{U^1_j\}_{j\in \mathcal{I}^1}$ of a topological groupoid 
$\G$, the collection $\sigma\U^\bullet:=\{\{U^n_\lambda\}_{\lambda\in\Lambda_n'}\}_{n\in \N}$ 
forms a presimplicial cover of the simplicial space $\G^\bullet=\{\G^{(n)}\}_{n\in\N}$. 
Moreover, if for $n\geq 2$ we let $\U^{(n)}:=\G^{(n)}$, then $\U^\bullet$ is an open 
cover of $\G^\bullet$, and $\sigma\U^\bullet$ is a refinement of $\U^\bullet$.
\end{lemma}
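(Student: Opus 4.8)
The plan is to establish the three parts of the statement in turn: that each family $\{U^n_\lambda\}_{\lambda\in\Lambda'_n}$ is an open cover of $\G^{(n)}$; that the index sets $\Lambda'_\bullet$ form a presimplicial set for which the $U^n_\lambda$ transform correctly; and that $\sigma\U^\bullet$ refines $\U^\bullet$. The first part is quick: for $n=0$ one has $U^0_\lambda=U^0_{\lambda_0}$, so this level is literally $\mathcal{U}^0$, while for $n\geq 1$ membership of $(\gamma_0,\dots,\gamma_{n-1})\in\G^{(n)}$ in $U^n_\lambda$ is a finite conjunction of conditions of the form $s(\gamma_0)\in U^0_{\lambda_0}$, $r(\gamma_k)\in U^0_{\lambda_{k+1}}$, $\gamma_k\cdots\gamma_l\in U^1_{\lambda_{k(l+1)}}$; since $\G$ is a topological groupoid the maps $(\gamma_\bullet)\mapsto s(\gamma_0)$, $(\gamma_\bullet)\mapsto r(\gamma_k)$ and $(\gamma_\bullet)\mapsto\gamma_k\cdots\gamma_l$ are continuous, so $U^n_\lambda$ is a finite intersection of open subsets of $\G^{(n)}$ and hence open. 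Covering is immediate: given $(\gamma_0,\dots,\gamma_{n-1})\in\G^{(n)}$, choose (using that $\mathcal{U}^0,\mathcal{U}^1$ are covers) members of $\mathcal{I}^0$ containing $s(\gamma_0)$ and each $r(\gamma_k)$ and members of $\mathcal{I}^1$ containing each partial product $\gamma_k\cdots\gamma_l$; these assemble into a $\lambda\in\Lambda'_n$ with $(\gamma_0,\dots,\gamma_{n-1})\in U^n_\lambda$.

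For the presimplicial structure I would equip $\Lambda'_\bullet$ with the transition maps given by precomposition: for $f\in\mathrm{hom}_{\Delta'}([k],[n])$ put $\tilde f(\lambda)(g):=\lambda(f\circ g)$, for $g\in\mathrm{hom}_{\Delta'}([j],[k])$ with $j\in[1]$. As a composite of strictly increasing maps is strictly increasing, $\lambda(f\circ g)\in\mathcal{I}^j$ and $\tilde f(\lambda)\in\Lambda'_k$; functoriality follows from associativity of composition, so $\Lambda'_\bullet$ is a presimplicial set. It remains to prove the geometric compatibility $\tilde f(U^n_\lambda)\subset U^k_{\tilde f(\lambda)}$, and by Remark~\ref{epsilonremark} it is enough to do so for the generating face maps $\tilde\epsilon_i$ of (\ref{epsilongammas}); here $\tilde\epsilon_i(\lambda)_l=\lambda_{\epsilon_i(l)}$ and $\tilde\epsilon_i(\lambda)_{lp}=\lambda_{\epsilon_i(l)\epsilon_i(p)}$, where $\epsilon_i$ is the order-embedding omitting $i$. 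The map $\tilde\epsilon_i$ acts on the string $(\gamma_0,\dots,\gamma_{n-1})$ by deleting $\gamma_0$ (case $i=0$), deleting $\gamma_{n-1}$ (case $i=n$), or replacing the adjacent pair $\gamma_{i-1},\gamma_i$ by its composite (case $0<i<n$), with the degenerate case $n=1$ simply retaining one endpoint; in each case I would check that the source of the leading arrow, the range of each arrow, and every partial product of the image string $\tilde\epsilon_i(\gamma_0,\dots,\gamma_{n-1})$ equals the corresponding quantity for $(\gamma_0,\dots,\gamma_{n-1})$ up to a shift of endpoints by $\epsilon_i$, so that the defining clauses for membership in $U^k_{\tilde\epsilon_i(\lambda)}$ all follow from those already satisfied by $(\gamma_0,\dots,\gamma_{n-1})$ in $U^n_\lambda$. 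This shows $\sigma\U^\bullet$ is a presimplicial cover.

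For the refinement, take $\U^{(0)}=\mathcal{U}^0$, $\U^{(1)}=\mathcal{U}^1$ and, for $n\geq 2$, $\U^{(n)}$ the one-set cover $\{\G^{(n)}\}$ of $\G^{(n)}$; then each $\U^{(n)}$ is an open cover of $\G^{(n)}$, so $\U^\bullet$ is an open cover of $\G^\bullet$ (there is no inter-level requirement, and $\U^\bullet$ is not presimplicial in general). Define refinement maps $\theta_0(\lambda)=\lambda_0$, $\theta_1(\lambda)=\lambda_{01}$, and, for $n\geq 2$, $\theta_n$ the unique map into the one-point index set; then $U^0_\lambda=U^0_{\lambda_0}$, $U^1_\lambda\subset U^1_{\lambda_{01}}$ (the defining clause with $k=l=0$), and $U^n_\lambda\subset\G^{(n)}$ trivially. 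Since $\U^\bullet$ is not a presimplicial cover, no compatibility of $\theta_\bullet$ with face maps is demanded, and hence $\sigma\U^\bullet$ is a refinement of $\U^\bullet$, which finishes the argument.

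The one genuinely delicate step is the geometric compatibility in the middle part: one must keep careful track of how the order-embedding $\epsilon_i$ shifts the labels $\lambda_0,\dots,\lambda_n$ and $\lambda_{lp}$ past the omitted vertex $i$, and of how forming the composite $\gamma_{i-1}\gamma_i$ turns a partial product $\gamma_k\cdots\gamma_l$ of the face-image string back into a partial product of the original string with endpoints moved accordingly; splitting into the cases $i=0$, $i=n$, $0<i<n$ (together with $n=1$) makes this bookkeeping manageable, and every other part of the proof is routine.
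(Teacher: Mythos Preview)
Your proposal is correct and follows essentially the same approach as the paper: you define the presimplicial structure on $\Lambda'_\bullet$ by precomposition exactly as the paper does, reduce the compatibility check to the generating face maps $\tilde\epsilon_i$, and split into the cases $i=0$, $i=n$, $0<i<n$ for the index bookkeeping. The paper writes out the case $0<i<n$ in full detail and omits the open-cover and refinement verifications that you include, so your emphasis is slightly different, but the argument is the same.
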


\begin{proof}
We define the presimplicial structure on the index set $\Lambda'_n$ by taking 
$\lambda\in \Lambda'_n$ and $g\in \mbox{hom}_{\Delta'}([k],[n])$ and taking 
$\tilde{g}:\Lambda'_n\to \Lambda'_k$ to be
\begin{equation}\label{tildegdefn}
\tilde{g}(\lambda)(f):=\lambda(g\circ f).
\end{equation}
We must show for any $g\in \mbox{hom}_{\Delta'}(k,n)$ that 
$\tilde{g}(U_{\lambda}^{n})\subset U_{\tilde{g}(\lambda)}^{k}$. Now, 
Remark \ref{epsilonremark} implies it suffices to consider $g=\epsilon_i$, 
so that we need to check $\tilde{\epsilon_i}(U_{\lambda}^{n})\subset 
U_{\tilde{\epsilon_i}(\lambda)}^{n-1}$. We do this for the case $1\leq i\leq n-1$ 
(the cases $i=0$ and $i=n$ are similar). Fix $(\gamma_0,\dots,\gamma_{n-1})\in 
U_{\lambda}^{n}$, and define $(\delta_0,\dots,\delta_{n-2})$ by $(\delta_0,\dots,\delta_{n-2}):=
\tilde{\epsilon}_i(\gamma_0,\dots,\gamma_{n-1})$. Also define the index $\mu\in \Lambda'_{n-1}$ 
by $\mu=\tilde{\epsilon}_i(\lambda)$. Therefore we need to show that $s(\delta_0)\in U^0_{\mu_0}, 
r(\delta_k)\in U^0_{\mu_{k+1}}$ and $\delta_k\dots \delta_l\in U^1_{\mu_{k (l+1)}}$. By Equation 
(\ref{epsilongammas}) we have
$\delta_k=\gamma_k,\ \  k<i-1,\ \delta_{i-1}=
\gamma_{i-1}\gamma_{i},\ \ \delta_k=\gamma_{k+1},\ \  k\geq i$.
Therefore, for any $0\leq k\leq l\leq n-2$, we have
\[\delta_k\dots\delta_l=\begin{cases}
\gamma_k\dots\gamma_l & l<i-1\\
\gamma_k\dots\gamma_{l+1} & k< i-1\leq l\\
\gamma_{k+1}\dots\gamma_{l+1} & i-1<k.
\end{cases}
\]
Moreover, by Equation (\ref{tildegdefn}) we have
$\mu_k=\lambda_k,\  k<i$,  $\mu_k=\lambda_{k+1},\ k\geq i$, and
\begin{align*}
\mu_{kl}=&\begin{cases}
\lambda_{kl}& l<i,\\
\lambda_{k(l+1)} & k<i\leq l,\\
\lambda_{(k+1)(l+1)} & k\geq i.
\end{cases}
\end{align*}
Let us show that $\delta_k\dots\delta_l\in U^{n-1}_{\delta_{k(l+1)}}$. 
Certainly if $l<i-1$ then $\delta_k\dots\delta_l= \gamma_k\dots\gamma_l\in 
U^{1}_{\lambda_{k(l+1)}}=U^{1}_{\mu_{k(l+1)}}$. Also, if $k<i-1\leq l$ then 
$\delta_k\dots\delta_l= \gamma_k\dots\gamma_{l+1}\in U^{1}_{\lambda_{k(l+2)}}=
U^{1}_{\mu_{k(l+1)}}$. Finally, if $k\geq i-1$, then $\delta_k\dots\delta_l= 
\gamma_{k+1}\dots\gamma_{l+1}\in U^{1}_{\lambda_{(k+1)(l+2)}}=U^{1}_{\mu_{k(l+1)}}$. 
Proving that $s(\delta_0)\in U^0_{\mu_0}$ and $r(\delta_k)\in U^0_{\mu_{k+1}}$ is similar.
\end{proof}

\begin{remark}
One may check that if
$\lambda=\lambda_0\lambda_1\dots\lambda_n\lambda_{01}\lambda_{02}\dots\lambda_{(n-1)n},$
then $\tilde{\epsilon_i}(\lambda)$ can be obtained from $\lambda$ by deleting 
any occurrences of $\lambda_i, \lambda_{il}$ or $\lambda_{li}$ where $l$ is arbitrary. 
For example if
$\lambda=\lambda_0\lambda_1\lambda_2\lambda_{01}\lambda_{02}\lambda_{12}$,
then
$\tilde{\epsilon}_1(\lambda)=\lambda_0\lambda_2\lambda_{02}.$
\end{remark}

Given the covers $\mathcal{U}^0=\{U^0_i\}_{i\in \mathcal{I}^0}$ and 
$\mathcal{U}^1=\{U^1_j\}_{j\in \mathcal{I}^1}$ we can also generate a 
\emph{simplicial} cover of $\G^\bullet$, by modifying the definitions above. 
Let $\lambda\in\Lambda_n$, where $\Lambda_n$ is the set of all 
maps $\lambda:\bigcup_{k\in[1]} \mbox{hom}_{\Delta} ([k],[n])\to \bigcup \mathcal{I}^k$ 
that satisfy $\lambda(\mbox{hom}_{\Delta}([k],[n]))\subset \mathcal{I}^k.$
The difference between this cover and the presimplicial cover from 
Lemma \ref{presimplicialcoverproof} is that here we are working with 
$\mbox{hom}_{\Delta}([k],[n])$, as opposed to $\mbox{hom}_{\Delta'}([k],[n])$. 
In Tu's notation, elements of $\Lambda_n$ take the form
$\lambda:=\lambda_0\lambda_1\dots\lambda_n\lambda_{00}\lambda_{01}\dots\lambda_{0n}
\lambda_{11}\lambda_{12}\dots \lambda_{nn}.$
If $n>0$, we define $U_{\lambda}^{n}$ to be the set of all 
$(\gamma_0,\dots,\gamma_{n-1})\in \G^{(n)}$ such that, for 
all $0\leq k\leq l\leq n-1$, all of the following hold: $\iota(s(\gamma_0))\in \iota 
(U^0_{\lambda_0})\cap U^1_{\lambda_{00}}, \iota(r(\gamma_k))\in 
\iota(U^0_{\lambda_{k+1}})\cap U^1_{\lambda_{(k+1)(k+1)}}, \mbox{ and }
\gamma_k\dots \gamma_l\in U^1_{k (l+1)}$. Here we have used the 
canonical injection $\iota$ of $\G^{(0)}$ into $\G^{(1)}$ (from here on, 
we omit the $\iota$). If $n=0$ we have $U_{\lambda}^0=\{x\in G^{(0)}:x\in 
U_{\lambda_0}^0, \iota(x)\in U_{\lambda_{00}}^1\}$.
\begin{lemma}\label{simplicialrefinement}
Given the open covers $\mathcal{U}^0=\{U^0_i\}_{i\in \mathcal{I}_0}$ and 
$\mathcal{U}^1=\{U^1_j\}_{j\in \mathcal{I}_1}$ of a topological groupoid $\G$, 
the collection $\sigma_\infty\U^\bullet:=\{\{U^n_\lambda\}_{\lambda\in\Lambda^n}\}_{n\in \N}$ 
forms a simplicial cover of the simplicial space $\G^\bullet=\{\G^{(n)}\}_{n\in\N}$. 
Moreover, if for $n\geq 2$ we let $\U^{(n)}:=\G^{(n)}$, then $\U^\bullet$ is an 
open cover of $\G^\bullet$, and $\sigma_\infty\U^\bullet$ is a refinement of $\U^\bullet$.
\end{lemma}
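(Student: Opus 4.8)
The plan is to mimic closely the proof of Lemma~\ref{presimplicialcoverproof}, since the only essential difference is that we now work with all non-decreasing maps $\mbox{hom}_{\Delta}([k],[n])$ rather than just the strictly increasing ones, which amounts to allowing the ``diagonal'' indices $\lambda_{kk}$ and the degeneracy maps $\tilde\eta_i$ into the picture. First I would define the simplicial structure on the index sets $\Lambda_n$ by exactly the same formula as before, namely for $g\in\mbox{hom}_{\Delta}([k],[n])$ set $\tilde g(\lambda)(f):=\lambda(g\circ f)$; one checks this is functorial in $g$ (so $\Lambda_\bullet$ is a simplicial set) directly from associativity of composition in $\Delta$. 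The content of the lemma is then that $\tilde g(U^n_\lambda)\subset U^k_{\tilde g(\lambda)}$ for every morphism $g$, and by Remark~\ref{epsilonremark} it suffices to verify this for the generators $g=\epsilon_i$ and $g=\eta_i$.

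For the face maps $\epsilon_i$ the verification is \emph{identical} to the one already carried out in Lemma~\ref{presimplicialcoverproof}: the only new constraints to check are the ones involving diagonal indices, i.e.\ that $\iota(s(\delta_0))\in U^1_{\mu_{00}}$ and $\iota(r(\delta_k))\in U^1_{\mu_{(k+1)(k+1)}}$, and these follow from the corresponding statements for $(\gamma_0,\dots,\gamma_{n-1})\in U^n_\lambda$ by the same index-bookkeeping that handles $s(\delta_0)\in U^0_{\mu_0}$ and $r(\delta_k)\in U^0_{\mu_{k+1}}$ — one simply notes that $\tilde\epsilon_i$ either leaves an endpoint arrow untouched or replaces it by a composite whose source/range is that of one of the original arrows. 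The new work is the degeneracy maps. For $g=\eta_i$ one has $\tilde\eta_i(\gamma_0,\dots,\gamma_{n-1})$ equal to $(\iota(s(\gamma_0)),\gamma_0,\dots,\gamma_{n-1})$ when $i=0$ and $(\gamma_0,\dots,\gamma_{i-1},\iota(r(\gamma_{i-1})),\gamma_i,\dots,\gamma_{n-1})$ otherwise, so the new $(n+1)$-tuple $(\delta_0,\dots,\delta_n)$ is obtained from the old $n$-tuple by inserting an identity arrow at slot $i$; in particular every nonempty product $\delta_k\cdots\delta_l$ either equals an old product $\gamma_{k'}\cdots\gamma_{l'}$ or, in the degenerate case $k=l=i$, equals the identity arrow $\iota(r(\gamma_{i-1}))$ (resp.\ $\iota(s(\gamma_0))$ for $i=0$). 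I would then compute $\mu=\tilde\eta_i(\lambda)$ from $(\ref{tildegdefn})$ — this inserts a repeated index, so $\mu_{ii}$ is forced to be $\lambda_{(i-1)(i-1)}$ via the constant map $0,1\mapsto i-1$ composed with $\eta_i$ — and check that the required containment $\delta_k\cdots\delta_l\in U^1_{\mu_{k(l+1)}}$ holds in all three regimes (both indices $<i$, straddling $i$, both $\geq i$), together with the two endpoint object conditions. The straddling case is where the identity insertion matters, but $\gamma_{k}\cdots\gamma_{i-1}\,\iota(r(\gamma_{i-1}))\,\gamma_{i}\cdots = \gamma_k\cdots\gamma_l$ so nothing is lost.

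The second assertion — that with $\U^{(n)}:=\G^{(n)}$ for $n\geq 2$ one gets a genuine open cover $\U^\bullet$ of $\G^\bullet$ with $\sigma_\infty\U^\bullet$ a refinement — is then essentially formal: the $U^n_\lambda$ are visibly open (finite intersections of preimages of opens under the continuous structure maps), they cover $\G^{(n)}$ because $\mathcal U^0$ covers $\G^{(0)}$ and $\mathcal U^1$ covers $\G^{(1)}$ so every source, every range and every partial product of an $n$-composable string lands in some chart, and the refinement maps $\theta_n:\Lambda_n\to\mathcal{I}^n$ for $n\geq 2$ can be taken to be \emph{any} choice sending $\lambda$ to an index of a chart containing $U^n_\lambda$ (for $n=0$ send $\lambda$ to $\lambda_0\in\mathcal I^0$, for $n=1$ to $\lambda_{01}\in\mathcal I^1$), and for $n\geq 2$ these trivially satisfy the simplicial compatibility $\theta_k(\tilde f(\lambda))=\tilde f(\theta_n(\lambda))$ because the target cover is the total space. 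I expect the main obstacle — really just the only place demanding care — to be the degeneracy computation: getting the indices $\mu_{kl}$ and the inserted-identity products $\delta_k\cdots\delta_l$ to line up correctly through $\tilde\eta_i$, and in particular confirming that the diagonal index $\mu_{ii}$ produced by the repeated entry is exactly the one that the identity arrow $\iota(r(\gamma_{i-1}))$ is required to lie in. Everything else is a transcription of the already-given presimplicial argument with the diagonal indices carried along.
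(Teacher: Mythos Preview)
Your proposal is correct and follows exactly the approach the paper takes: the paper's own proof is a single sentence stating that the argument is identical to that of Lemma~\ref{presimplicialcoverproof} together with the additional check $\tilde\eta_j(U^n_\lambda)\subset U^{n+1}_{\tilde\eta_j(\lambda)}$, which is precisely what you outline. One small indexing slip to watch for when you write it out: with $\eta_i:[n+1]\to[n]$ given by $\eta_i(j)=j$ for $j\leq i$ and $\eta_i(j)=j-1$ for $j>i$, the constant map $f:0,1\mapsto i$ composed with $\eta_i$ is constant at $i$, so $\mu_{ii}=\lambda_{ii}$ (not $\lambda_{(i-1)(i-1)}$); compare the explicit example in the Remark following the lemma.
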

\begin{proof}
{This proof is identical to that of Lemma \ref{presimplicialcoverproof} with the 
addition $\tilde{\eta_j}(U_{\lambda}^{n})\subset U_{\tilde{\eta_j}(\lambda)}^{n+1}$.}
\end{proof}

\begin{remark}
One can check that if
$\lambda=\lambda_0\lambda_1\dots\lambda_n\lambda_{00}\lambda_{01}\dots\lambda_{nn},$
then $\tilde{\eta}_i(\lambda)$ can be obtained from $\lambda$ as follows. 
Repeat the number $i$ in the ordered set $\{0,1,\dots n\}$ to get the ordered 
set $\{0,\dots,i,i,\dots,n\}$. Now, take all non-increasing maps from $[1]$ to this 
set as the subscripts of $\tilde{\eta_i}(\lambda)$.  For example, if $\lambda\in\Lambda_1$ is 
written as $\lambda=\lambda_0\lambda_1\lambda_{00}\lambda_{01}\lambda_{11}$, then
$$
\tilde{\eta}_0(\lambda)=\lambda_0\lambda_0\lambda_1\lambda_{00}
\lambda_{00}\lambda_{01}\lambda_{00}\lambda_{01}\lambda_{11},\quad \mbox{and}\quad
\tilde{\eta}_1(\lambda)=\lambda_0\lambda_1\lambda_1\lambda_{00}
\lambda_{01}\lambda_{01}\lambda_{11}\lambda_{11}\lambda_{11}.
$$
\end{remark}

\section{Tu-\v{C}ech Cohomology}\label{cohomdefn}

\begin{defn}
Let $\mathcal{U}^0=\{U^0_i\}_{i\in \mathcal{I}^0}$ and $\mathcal{U}^1=
\{U^1_j\}_{j\in \mathcal{I}^1}$ be open covers of a topological groupoid 
$\G$, and let $\sigma\U^\bullet$ be the presimplicial cover from 
Lemma \ref{presimplicialcoverproof}. We denote by $\check{C}^n(\sigma\U^\bullet,{\mathcal S})$ 
the group of \emph{Tu-\v{C}ech} cochains of degree $n$, where a cochain 
$\varphi\in \check{C}^n(\sigma\U^\bullet,{\mathcal S})$ is a collection 
$\varphi=\{\varphi_\lambda\}_{\lambda\in\Lambda_n'}$ of continuous functions 
$\varphi_\lambda:U^n_{\lambda}\to\T$.
\end{defn}  
\begin{lemma}[{\cite[Sect 4.2]{Tu}}]
The  Tu-\v{C}ech differential, denoted $\partial_{Tu}$, is given by
\[(\partial_{Tu}\varphi)_{\mu}(\gamma_0,\gamma_1,\dots,\gamma_n)=\sum_{i=0}^{n+1}(-1)^i
\varphi_{\tilde{\epsilon_i}\mu}(\tilde{\epsilon}_i(\gamma_0,\dots,\gamma_n)),\]
where $\mu\in\Lambda_{n+1}'$ and $(\gamma_0,\gamma_1,\dots,\gamma_n)\in U^{n+1}_\mu$.
\end{lemma}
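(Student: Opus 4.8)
The plan is to check the two conditions that make the displayed formula a differential on the Tu-\v{C}ech cochain complex: that $\partial_{Tu}$ carries $\check C^n(\sigma\U^\bullet,{\mathcal S})$ into $\check C^{n+1}(\sigma\U^\bullet,{\mathcal S})$, and that $\partial_{Tu}\circ\partial_{Tu}=0$. Both are formal consequences of the fact, established in Lemma~\ref{presimplicialcoverproof}, that $\sigma\U^\bullet$ is a \emph{presimplicial} cover of the simplicial space $\G^\bullet$.

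First I would verify well-definedness. Fix $\varphi\in\check C^n(\sigma\U^\bullet,{\mathcal S})$, an index $\mu\in\Lambda'_{n+1}$, and a point $(\gamma_0,\dots,\gamma_n)\in U^{n+1}_\mu$. Being a presimplicial cover means precisely that $\tilde\epsilon_i\bigl(U^{n+1}_\mu\bigr)\subset U^n_{\tilde\epsilon_i\mu}$ for each $i\in[n+1]$ (Lemma~\ref{presimplicialcoverproof}), so each point $\tilde\epsilon_i(\gamma_0,\dots,\gamma_n)$ lies in the domain $U^n_{\tilde\epsilon_i\mu}$ of the continuous function $\varphi_{\tilde\epsilon_i\mu}$. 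Hence every summand $\varphi_{\tilde\epsilon_i\mu}\bigl(\tilde\epsilon_i(\gamma_0,\dots,\gamma_n)\bigr)$ is a well-defined element of $\T$, and since the face maps $\tilde\epsilon_i$ of Equation~(\ref{epsilongammas}) are continuous, so is the finite sum $(\partial_{Tu}\varphi)_\mu\colon U^{n+1}_\mu\to\T$. Thus $\partial_{Tu}\varphi\in\check C^{n+1}(\sigma\U^\bullet,{\mathcal S})$, and $\partial_{Tu}$ is plainly a homomorphism of abelian groups.

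Next I would prove $\partial_{Tu}^2=0$ by the usual simplicial bookkeeping. Expanding the definition twice, for $\nu\in\Lambda'_{n+2}$ and $(\gamma_0,\dots,\gamma_{n+1})\in U^{n+2}_\nu$ one obtains
\[(\partial_{Tu}^2\varphi)_\nu(\gamma_0,\dots,\gamma_{n+1})=\sum_{j=0}^{n+2}\sum_{i=0}^{n+1}(-1)^{i+j}\,\varphi_{\tilde\epsilon_i\tilde\epsilon_j\nu}\bigl(\tilde\epsilon_i\tilde\epsilon_j(\gamma_0,\dots,\gamma_{n+1})\bigr).\]
Because $\G^\bullet$ is a simplicial space (Example~\ref{groupoidsimplicial}), its face operators satisfy the simplicial identity $\tilde\epsilon_i\circ\tilde\epsilon_j=\tilde\epsilon_{j-1}\circ\tilde\epsilon_i$ whenever $i<j$; and since the presimplicial structure on the index sets $\Lambda'_\bullet$ is given by precomposition, $\tilde g(\lambda)(f):=\lambda(g\circ f)$ as in Equation~(\ref{tildegdefn}), the same identity holds on $\Lambda'_\bullet$. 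Hence the summand with $i<j$ has the same index of $\varphi$ and the same point of evaluation as the summand whose inner index is $j-1$ and whose outer index is $i$, while the signs $(-1)^{i+j}$ and $(-1)^{(j-1)+i}$ of these two summands are opposite; since this pairing is a bijection between the summands with $i<j$ and those with $i\ge j$, the double sum cancels in pairs and vanishes.

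The only point demanding any care is making sure the simplicial identities are applied consistently on the groupoid and on the index set at the same time, so that $\varphi_{\tilde\epsilon_i\tilde\epsilon_j\nu}$ and $\varphi_{\tilde\epsilon_{j-1}\tilde\epsilon_i\nu}$ genuinely name the same function; this is immediate from the functoriality of $\G^\bullet$ together with Equation~(\ref{tildegdefn}). In truth the lemma is just the standard assertion that a presimplicial object valued in abelian groups has an associated cochain complex under the alternating-sum coboundary, the only setting-specific input being Lemma~\ref{presimplicialcoverproof}; so one could equally well simply cite \cite[Sect.~4.2]{Tu}.
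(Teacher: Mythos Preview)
Your proof is correct, but note that the paper does not actually supply a proof of this lemma: it simply cites \cite[Sect.~4.2]{Tu} and moves on, treating the formula as a definition drawn from Tu's paper. So there is nothing to compare against; your argument is additional verification that the paper omits. The content you provide---well-definedness via the presimplicial cover property of Lemma~\ref{presimplicialcoverproof}, and $\partial_{Tu}^2=0$ via the simplicial identities $\tilde\epsilon_i\tilde\epsilon_j=\tilde\epsilon_{j-1}\tilde\epsilon_i$ for $i<j$ on both $\G^\bullet$ and $\Lambda'_\bullet$---is the standard and correct justification, and your closing remark that one could simply cite Tu is exactly what the authors chose to do.
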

 \begin{remark}
We work over ${\mathcal S}$ 
because our cochains take values in $\T$. In \cite{Tu}, any abelian sheaf is allowed, 
but we do not need that generality here.
\end{remark}
We denote the groups of cocycles and coboundaries by 
$\check{Z}^n(\sigma\U^\bullet,{\mathcal S})$ and  $\check{B}^n(\sigma\U^\bullet,{\mathcal S})$ 
respectively and the  cohomology group relative to $\{\U^0,\U^1\}$ using the differential on 
$\check{C}^\bullet(\sigma \U^\bullet,{\mathcal S})$ as
$\check{H}^\bullet(\sigma\U^\bullet,{\mathcal S})$.
We need of course that the maps  are independent of the choice of refinement maps.  
Thus if $\theta^\bullet: \mathcal{J}^\bullet\to\mathcal{I}^\bullet$ is a refinement map then let
$
\theta^*:\check{C}^n(\sigma \mathcal{U}^\bullet,{\mathcal S})\to 
\check{C}^n(\sigma \mathcal{V}^\bullet,{\mathcal S})$ be given by
$(\theta^*\varphi)_\lambda=\mbox{restriction of } \varphi_{\theta(\lambda)} \mbox{ to } V^n_\lambda$,
where $\theta(\lambda)=\theta^0(\lambda_0)\dots\theta^0(\lambda_n)\theta^1(\lambda_{01})\dots
\theta^1(\lambda_{(n-1)n}).$  As $\theta^*$ commutes with the differentials, it defines a map $\theta^*:
\check{H}^\bullet(\sigma\U^\bullet,{\mathcal S})\to 
\check{H}^\bullet(\sigma\mathcal{V}^\bullet,{\mathcal S})$. We now have \cite[Lemma 4.5]{Tu}, 
for our context.
\begin{lemma}\label{refinement}
Let  $\U^\bullet$ and $\mathcal{V}^\bullet$ be open covers of $\G^\bullet$ such that 
$\mathcal{V}^\bullet$ is a simplicial cover. Suppose that $\mathcal{V}^\bullet$ is finer 
than  $\U^\bullet$ and that $\theta,\vartheta:\mathcal{J}^\bullet\to\mathcal{I}^\bullet$ are 
two refinements. Then there exists $H:\check{C}^n(\sigma \mathcal{U}^\bullet,{\mathcal S})\to 
\check{C}^{n-1}(\sigma \mathcal{V}^\bullet,{\mathcal S})$ such that $\vartheta^{*}-\theta^{*}=dH+Hd$.
\end{lemma}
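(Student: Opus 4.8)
The plan is to construct an explicit chain homotopy $H$ between $\theta^{*}$ and $\vartheta^{*}$, exactly as in the classical proof that two refinements of a \v{C}ech cover induce the same map on cohomology, but transported to the simplicial world of $\G^{\bullet}$. Conceptually $H$ is the ``prism'' operator associated to a homotopy $\mathcal{V}^{\bullet}\times\Delta^{1}\to\U^{\bullet}$ that restricts to $\theta$ over one vertex of $\Delta^{1}$ and to $\vartheta$ over the other. It is precisely here that the hypothesis ``$\mathcal{V}^{\bullet}$ is a \emph{simplicial} cover'' is used: the prism formula involves a ``doubled'' simplex index, and it is the degeneracy operators $\tilde{\eta}_{j}$ of $\mathcal{V}^{\bullet}$ (absent from a merely presimplicial cover) that make sense of it; the asymmetry with $\U^{\bullet}$, which need only be presimplicial, is then harmless.

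Concretely, for $\varphi\in\check{C}^{n}(\sigma\U^{\bullet},{\mathcal S})$ I would define $H\varphi\in\check{C}^{n-1}(\sigma\mathcal{V}^{\bullet},{\mathcal S})$ by
\[
(H\varphi)_{\mu}(\gamma_{0},\dots,\gamma_{n-2})=\prod_{j=0}^{n-1}\varphi_{\rho_{j}(\mu)}\bigl(\tilde{\eta}_{j}(\gamma_{0},\dots,\gamma_{n-2})\bigr)^{(-1)^{j}},\qquad \mu\in\Lambda_{n-1},\ (\gamma_{0},\dots,\gamma_{n-2})\in V^{n-1}_{\mu},
\]
where $\tilde{\eta}_{j}$ is the $j$-th degeneracy of $\G^{\bullet}$ and $\rho_{j}(\mu)\in\Lambda'_{n}$ is the index obtained from the degenerate index $\tilde{\eta}_{j}(\mu)\in\Lambda_{n}$ by applying $\theta$ componentwise to the slots $\le j$ and $\vartheta$ componentwise to the slots $\ge j+1$ (this includes a bookkeeping rule for the edge indices $\lambda_{kl}$ straddling slot $j$, which have no analogue in ordinary \v{C}ech theory). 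Well-definedness of $(H\varphi)_{\mu}$ as a $\T$-valued function on all of $V^{n-1}_{\mu}$ uses only that $\tilde{\eta}_{j}(V^{n-1}_{\mu})\subset V^{n}_{\tilde{\eta}_{j}(\mu)}$ by simpliciality of $\mathcal{V}^{\bullet}$, and that $V^{n}_{\tilde{\eta}_{j}(\mu)}$ is contained in every $U^{n}$-set occurring because $\theta$ and $\vartheta$ are refinements; this is a routine inclusion check.

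The substance of the proof is the identity $dH+Hd=\vartheta^{*}-\theta^{*}$ (with $d=\partial_{Tu}$). Expanding $d(H\varphi)$ and $H(d\varphi)$ produces double sums, indexed by a face index $i$ and a split index $j$, of terms $\varphi$ evaluated along $\tilde{\epsilon}_{i}\tilde{\eta}_{j}$ or $\tilde{\eta}_{j}\tilde{\epsilon}_{i}$ with index $\tilde{\epsilon}_{i}(\rho_{j}(\mu))$ or $\rho_{j}(\tilde{\epsilon}_{i}(\mu))$. Using the simplicial identities between faces and degeneracies — split into the usual cases $i<j$, $i\in\{j,j+1\}$, $i>j+1$ — together with the fact that $\theta$ and $\vartheta$ commute with the face operators $\tilde{\epsilon}_{i}$ on index sets (the presimplicial compatibility of the refinement maps), the interior terms with $i\notin\{j,j+1\}$ cancel in telescoping pairs, the $i\in\{j,j+1\}$ terms collapse the degenerate slot, and what survives is exactly $\varphi_{\theta(\cdots)}$ and $\varphi_{\vartheta(\cdots)}$, i.e.\ $-\theta^{*}\varphi+\vartheta^{*}\varphi$. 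Since $\theta^{*}$, $\vartheta^{*}$ and $d$ all commute with further refinements, so does $H$, and existence of $H$ is all the lemma asserts.

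The main obstacle is entirely combinatorial: pinning down $\rho_{j}$ on the edge indices so that the simplicial/prism identities hold on the nose in Tu's notation, and carrying the telescoping cancellation through the three case-splits. There is no conceptual difficulty beyond the classical chain-homotopy argument; the only new features are that the \v{C}ech indices here carry the extra edge data $\lambda_{kl}$ and that the degeneracy slack needed for the homotopy lives on $\mathcal{V}^{\bullet}$ rather than on $\U^{\bullet}$.
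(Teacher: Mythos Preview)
The paper does not actually prove this lemma: immediately after stating it, the authors write ``We omit the definition of $H$ and the proof of the above lemma, as they are both highly non-trivial, and we do not use them,'' deferring entirely to \cite[Lemma 4.5]{Tu}. So there is no paper proof to compare against.

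Your approach is the correct one and is, in outline, the one Tu uses: the prism operator $H$ built from the degeneracies of $\mathcal{V}^{\bullet}$, with the refinement maps $\theta,\vartheta$ applied on opposite sides of the split, followed by the standard telescoping cancellation via the simplicial identities. You have also correctly isolated \emph{why} $\mathcal{V}^{\bullet}$ must be simplicial rather than merely presimplicial. The one place where your sketch is genuinely incomplete is the definition of $\rho_{j}$ on the edge indices $\lambda_{kl}$ with $k\le j<l$: you flag this as ``bookkeeping,'' but in Tu's setting it is precisely this edge data that makes the argument delicate, and getting the formula right so that the required inclusions $V^{n}_{\tilde\eta_{j}(\mu)}\subset U^{n}_{\rho_{j}(\mu)}$ hold and the cancellation goes through is the actual content of the proof. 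So your proposal is a correct strategy rather than a complete proof, which is consistent with the paper's own assessment that the details are ``highly non-trivial.''
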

\begin{remark}
We omit the definition of $H$ and the proof of the above lemma, as they are 
both highly non-trivial, and we do not use them.
\end{remark}
\begin{defn}\label{TuCechDefn}
The \emph{Tu-\v{C}ech cohomology groups $\check{H}^\bullet(\G,{\mathcal S})$ of 
a topological groupoid $\G$} are defined by
$\check{H}^\bullet(\G,{\mathcal S}):=\varinjlim_{\U} \check{H}^\bullet(\sigma\U^\bullet,{\mathcal S}),$
where the inductive limit is taken over all \emph{simplicial} covers of $\U^\bullet$ of $\G^\bullet$.
\end{defn}

Now, Lemmas \ref{refinement} and \ref{simplicialrefinement} imply that there is 
a canonical map $\check{H}^\bullet(\U^\bullet,{\mathcal S})\to \check{H}^\bullet(\G,{\mathcal S})$ 
for \emph{any} open cover $\U^\bullet$ of $\G^{\bullet}$, regardless of whether $\U^\bullet$ 
is simplicial or not. In practice, we usually work over a cover where, for $n\geq 2$, 
we have $\U^{(n)}=\G^{(n)}$, and will often assume this without comment.

\begin{example}
For later use we describe 1-cocycles and 1-coboundaries for the  groupoid 
$\G=G\ltimes X$. Choose open covers $\U^0=(U_i^0)_{i\in \mathcal{I}^0}$ and 
$\U^1=(U_j^1)_{j\in \mathcal{I}^1}$ of $\G^{(0)}= X$ and $\G^{(1)}=G\times X$. 
Then, for any index $\lambda=\lambda_0\lambda_1\lambda_{01}\in \Lambda_1'$, 
$U_{\lambda_{0}\lambda_{1}\lambda_{01}}^1$ is
the set of $(g,x)\in G\times X$ such that $g^{-1}x\in U_{\lambda_{0}}^0, x\in 
U_{\lambda_{1}}^0$ and $(g,x)\in U^1_{\lambda_{01}}$. Now, a 0-cochain is a 
collection $\psi=\{\psi_{\lambda_{0}}\}_{\lambda_0\in \mathcal{I}^0}$ of continuous functions 
$\psi_{\lambda_{0}}:U^0_{\lambda_0}\to \T$, whilst a 1-cochain 
$\varphi=\{\varphi\}_{\lambda\in\Lambda_1'}$ is a collection of continuous functions 
$\varphi_{\lambda_{0}\lambda_{1}\lambda_{01}}:U_{\lambda_{0}\lambda_{1}\lambda_{01}}^1\to\T$. 
Therefore, a 1-coboundary $\phi$ is a 1-cochain such that there exists a 0-cochain $\psi$ with
$\phi_{\lambda_{0}\lambda_{1}}(g,x)=\psi_{\lambda_{0}}(g^{-1}x)^*\psi_{\lambda_{1}}(x).$
\end{example}
To describe a 1-cocycle, we use the following lemma:
\begin{lemma}[{\cite[Sect 5.2]{Tu}}]\label{onecocycle}
Let $\varphi=(\varphi_\lambda)_{\lambda\in\Lambda^1}$ be a 1-cocycle. Then 
$\varphi_{\lambda_0\lambda_1\lambda_{01}}$ does not depend on the choice of $\lambda_{01}$.
\end{lemma}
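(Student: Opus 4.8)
The plan is to unwind what the cocycle condition $\partial_{Tu}\varphi=1$ says in degree $2$, applied to a suitable element. Recall that a $2$-cochain is indexed by $\mu\in\Lambda_2'$, which in Tu's notation is written $\mu=\mu_0\mu_1\mu_2\mu_{01}\mu_{02}\mu_{12}$, and that $(\partial_{Tu}\varphi)_\mu(\gamma_0,\gamma_1)=\prod_{i=0}^{2}\varphi_{\tilde\epsilon_i\mu}(\tilde\epsilon_i(\gamma_0,\gamma_1))^{(-1)^i}$, where $\tilde\epsilon_0(\gamma_0,\gamma_1)=(\gamma_1)$, $\tilde\epsilon_1(\gamma_0,\gamma_1)=(\gamma_0\gamma_1)$, $\tilde\epsilon_2(\gamma_0,\gamma_1)=(\gamma_0)$, and $\tilde\epsilon_i$ acts on the index $\mu$ by deleting all occurrences of the symbol $i$ (so $\tilde\epsilon_0\mu=\mu_1\mu_2\mu_{12}$, $\tilde\epsilon_1\mu=\mu_0\mu_2\mu_{02}$, $\tilde\epsilon_2\mu=\mu_0\mu_1\mu_{01}$). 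Thus the cocycle identity reads
\[
\varphi_{\mu_1\mu_2\mu_{12}}(\gamma_1)\,\varphi_{\mu_0\mu_2\mu_{02}}(\gamma_0\gamma_1)^{-1}\,\varphi_{\mu_0\mu_1\mu_{01}}(\gamma_0)=1
\]
for all $(\gamma_0,\gamma_1)$ in the appropriate intersection $U^2_\mu$.

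The key step is to specialize $\gamma_0=\iota(x)$ to be a unit, i.e. an identity arrow, say $\gamma_0=\iota(r(\gamma_1))$ so that $\gamma_0\gamma_1=\gamma_1$. For this to make sense inside a presimplicial cover we first pass to a simplicial refinement via Lemma \ref{simplicialrefinement} (the canonical map to $\check H^\bullet(\G,\mathcal S)$ exists regardless, and for the statement it suffices to argue after such a refinement, or equivalently to note that the claim is really a statement about cocycles representing classes, so we may work over $\sigma_\infty\U^\bullet$). Feeding $\gamma_0=\iota(x)$ into the displayed identity, the middle and left terms become $\varphi$ evaluated on $(\gamma_1)$ and on the unit $(\iota(x))$ respectively, with indices $\mu_0\mu_2\mu_{02}$ and $\mu_0\mu_1\mu_{01}$. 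Choosing the free index $\mu_0$ cleverly (equal to $\mu_1$, and using the degenerate index conventions) kills the degenerate contribution, or rather shows it is itself a coboundary-type term; what remains is precisely a relation of the form $\varphi_{\mu_1\mu_2\mu_{12}}(\gamma_1)=\varphi_{\mu_1\mu_2\mu_{02}}(\gamma_1)$, which is the independence of the $\lambda_{01}$-type index asserted in the lemma (after relabelling $\mu_1\mapsto\lambda_0$, $\mu_2\mapsto\lambda_1$, $\mu_{12},\mu_{02}\mapsto\lambda_{01},\lambda_{01}'$).

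The main obstacle is bookkeeping: one must be careful that when $\gamma_0$ is a unit, the point $(\gamma_0,\gamma_1)$ actually lies in $U^2_\mu$ for the chosen $\mu$, which forces constraints among $\mu_0,\mu_1,\mu_{01}$ (they must all "see" the unit $\iota(r(\gamma_1))$), and one must check these can be met — this is exactly why a simplicial (as opposed to merely presimplicial) cover is used, since its index set is closed under the degeneracy maps $\tilde\eta_i$ and hence contains indices adapted to units. A secondary subtlety is that the value of $\varphi$ on a degenerate simplex need not be $1$ a priori; but on units the cocycle identity with two unit arguments forces $\varphi_{\text{degenerate}}=1$, or at least forces it to depend only on the base point in a way that cancels. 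Once these compatibility points are dispatched, the conclusion is immediate from the two-term reduction of the cocycle equation; I would organize the write-up as (i) reduce to a simplicial cover, (ii) write out $\partial_{Tu}\varphi=1$ in degree $2$, (iii) substitute a unit for $\gamma_0$, (iv) read off the independence.
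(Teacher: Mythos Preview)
Your approach is essentially the paper's: write out the degree-$2$ cocycle identity, insert a unit arrow in one slot, first deduce that $\varphi$ evaluated on a unit equals $1$, then apply the identity once more to conclude independence from $\lambda_{01}$. Two points of friction, however.

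First, a source/range slip: with the paper's convention $s(\gamma_{i+1})=r(\gamma_i)$, composability of $(\gamma_0,\gamma_1)$ requires $r(\gamma_0)=s(\gamma_1)$, so if you want $\gamma_0$ to be a unit with $\gamma_0\gamma_1=\gamma_1$ you must take $\gamma_0=\iota(s(\gamma_1))$, not $\iota(r(\gamma_1))$. The paper in fact inserts the unit in the \emph{other} slot, taking $\gamma_1=(e,x)=\iota(r(\gamma_0))$ so that $\gamma_0\gamma_1=\gamma_0$; this is cleaner notationally and gives directly
\[
\varphi_{\lambda_1\lambda_1\lambda_{12}}(e,x)\,\varphi_{\lambda_0\lambda_1\lambda_{02}}(g,x)^{-1}\,\varphi_{\lambda_0\lambda_1\lambda_{01}}(g,x)=1.
\]
Second, the detour through a simplicial refinement is unnecessary. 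The index set $\Lambda_2'$ already allows repeated entries (nothing forbids $\mu_1=\mu_2$ or $\mu_{01}=\mu_{02}$), and all you need for the unit argument is \emph{some} $\lambda_{12}\in\mathcal{I}^1$ with $(e,x)\in U^1_{\lambda_{12}}$, which exists because $\U^1$ covers $G\times X$. The paper's two-line proof works entirely within the presimplicial cover $\sigma\U^\bullet$: first choose $\mu_{01}=\mu_{02}$ to force $\varphi_{\lambda_1\lambda_1\lambda_{12}}(e,x)=1$, then let $\mu_{01},\mu_{02}$ be arbitrary to get $\varphi_{\lambda_0\lambda_1\lambda_{02}}(g,x)=\varphi_{\lambda_0\lambda_1\lambda_{01}}(g,x)$.
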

\begin{proof}
The fact that $\partial_{Tu}\varphi = 1$ implies that
$$
1=\varphi_{\lambda_1\lambda_1\lambda_{12}}(e,x)\varphi_{\lambda_0
\lambda_1\lambda_{01}}(g,x)\varphi_{\lambda_0\lambda_1\lambda_{01}}(g,x)^*
=\varphi_{\lambda_1\lambda_1\lambda_{12}}(e,x).
$$
Therefore, using $\partial_{Tu}\varphi =1$ again implies that
$$
\varphi_{\lambda_0\lambda_1\lambda_{02}}(g,x)=
\varphi_{\lambda_1\lambda_1\lambda_{12}}(e,x)\varphi_{\lambda_0\lambda_1\lambda_{01}}(g,x)
=\varphi_{\lambda_0\lambda_1\lambda_{01}}(g,x).
$$
\end{proof}
\begin{remark}
Note that the proof does not use commutativity of $\T$, so that the lemma applies equally to 
maps taking values in $U(\H)$ that satisfy the 1-cocycle identity.
\end{remark}
Therefore, $\varphi$ is a 1-cocycle if for any $(g,h,x)=((g,h^{-1}x),(h,x))\in \G^{(2)}$ that satisfy
$s(g,h^{-1}x)=(gh)^{-1}x\in U_{\lambda_{0}}^0$,
$r(g,h^{-1}x)=h^{-1}x\in U_{\lambda_{1}}^0$,
$r(h,x)=x\in U_{\lambda_{2}}^0$,
we have
$\varphi_{\lambda_{1}\lambda_{2}}(h,x)\varphi_{\lambda_{0}\lambda_{2}}(gh,x)^{*}
\varphi_{\lambda_{0}\lambda_{1}}(g,h^{-1}x)=1$.

The rest of this Section is devoted to  proving:
\begin{theorem}[{\cite[Cor 5.9]{Tu}}]\label{tusbigtheorem}
Let $G$ be an abelian second countable locally compact Hausdorff topological 
group (with Haar measure) acting on a second countable locally compact 
Hausdorff space $X$. Then there is an isomorphism
$\Upsilon:\mathrm{Br}_G(X)\to \check{H}^2(G\ltimes X,{\mathcal S})$.
\end{theorem}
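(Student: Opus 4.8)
The plan is to construct the isomorphism $\Upsilon$ explicitly at the level of cocycles, descending through all the structure developed in the previous sections, and then to check it is a well-defined group isomorphism. Given $[A,\alpha]\in\mathrm{Br}_G(X)$, we know by Dixmier-Douady that $A\cong CT(X,\delta)=\Gamma_0(E,X)$ for a $\K$-bundle $E$ with transition unitaries $v_{\lambda_0\lambda_1}\in C(U^0_{\lambda_0\lambda_1},U(\H))$; after refining, we obtain local trivialisations $\Phi_{\lambda_0}:A|_{U^0_{\lambda_0}}\to C_0(U^0_{\lambda_0},\K)$ so that $\Phi_{\lambda_1}\circ\Phi_{\lambda_0}^{-1}=\operatorname{Ad}v_{\lambda_0\lambda_1}$. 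Because $G$ acts on the spectrum, after further refinement the composite action $\Phi_{\lambda_0}\circ\alpha_g\circ\Phi_{\lambda_0}^{-1}$ on $C_0(U^0_{\lambda_0},\K)$ is inner (using the structure of $\operatorname{Aut}\K$ via \eqref{Uexactsequence} and local sections), so there exist unitary-valued functions $u_{\lambda_0}(g,-)$; choosing a cover $\U^1$ of $G\times X$ on which these are defined coherently is exactly the data of $\sigma\U^\bullet$. First I would define the candidate 2-cocycle on $\sigma\U^\bullet$: roughly, $\varphi_{\lambda_0\lambda_1\lambda_2\lambda_{01}\lambda_{02}\lambda_{12}}(g_0,g_1,x)$ records the failure of $u$ and $v$ to be simultaneously consistent — the scalar discrepancy among $v_{\lambda_0\lambda_1}(x)$, the $u_{\lambda_j}(g_i,-)$, and the cocycle condition \eqref{exteriorcondition2}. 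One checks $\partial_{Tu}\varphi=1$ using that $\alpha$ is a homomorphism and the $v$'s satisfy the \v Cech cocycle identity; this produces a class $\Upsilon[A,\alpha]\in\check H^2(G\ltimes X,\mathcal S)$.

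Next I would verify $\Upsilon$ is well-defined. Changing the local trivialisations $\Phi_{\lambda_0}$, the refining cover, or the implementing unitaries $u_{\lambda_0}(g,-)$ all change $\varphi$ by a Tu-\v Cech coboundary (plus a refinement, which is harmless by Lemma \ref{refinement} and the remark after Definition \ref{TuCechDefn}); replacing $(A,\alpha)$ by an outer conjugate $(B,\beta)$ — i.e.\ precomposing by a $C_0(X)$-isomorphism and exterior-equivalence cocycle $w:G\to UM(A)$ — modifies the $u_{\lambda_0}(g,-)$ by the locally-lifted scalar data of $w$, again altering $\varphi$ only by a coboundary. Then $\Upsilon$ is a homomorphism: for $[A,\alpha]\cdot[B,\beta]=[A\otimes_{C_0(X)}B,\alpha\otimes_X\beta]$ one takes tensor-product trivialisations and tensor-product implementing unitaries $u^A_{\lambda_0}(g,-)\otimes u^B_{\lambda_0}(g,-)$, so the discrepancy 2-cocycle is the pointwise product, i.e.\ $\varphi^{A\otimes B}=\varphi^A\varphi^B$ in the $\T$-valued Tu-\v Cech complex.

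For injectivity: if $\Upsilon[A,\alpha]=1$, then $\varphi=\partial_{Tu}\psi$ for some 1-cochain $\psi$ on a refinement; I would absorb $\psi_{\lambda_0\lambda_1\lambda_{01}}(g,x)$ — which by Lemma \ref{onecocycle}-type reasoning splits into a $\T$-valued correction of the $u$'s and of the $v$'s — into new trivialisations and new implementing unitaries, making $A$ genuinely $C_0(X)$-isomorphic to $C_0(X,\K)$ with the $u$'s assembling into a global unitary $\alpha$-cocycle $w$ witnessing exterior equivalence of $\alpha$ with $\tau$; hence $[A,\alpha]=0$. For surjectivity, given a Tu-\v Cech 2-cocycle $\varphi$ I would reverse-engineer the data: the ``$\lambda_{01}$-independent'' part builds a $\K$-bundle $E$ over $X$ (its class in $\check H^2(X,\mathcal S)\cong\check H^1(X,\mathcal A)$) and the $G$-dependence builds local implementing unitaries whose inconsistency is governed precisely by $\varphi$, hence an action $\alpha$ on $A=\Gamma_0(E,X)$ covering the $G$-action, with $\Upsilon[A,\alpha]=[\varphi]$.

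The main obstacle will be the bookkeeping of the covers and the precise form of the 2-cocycle $\varphi$: one must manage three species of indices ($\lambda_i\in\mathcal I^0$, $\lambda_{ij}\in\mathcal I^1$, and $\lambda_{ijk}$ trivial for the $\G^{(2)}=$ everything convention), keep straight the presimplicial face maps $\tilde\epsilon_i$ acting on $(g_0,g_1,x)$ as in Example after \eqref{epsilongammas}, and check that the many simultaneous refinements needed (to trivialise $A$, to make $\alpha|$ inner, to define $u$ on a cover of $G\times X$, to split coboundaries) can be performed compatibly — this is where \cite[Cor 5.9]{Tu}'s proof is genuinely delicate, and where Lemma \ref{refinement} does the heavy lifting to ensure nothing depends on the choices. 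The surjectivity direction is the other serious point, since it requires building a C*-dynamical system from cohomological data, i.e.\ gluing $C_0(U^0_{\lambda_0},\K)$'s along the $\lambda_{01}$-independent part of $\varphi$ and then defining a compatible $G$-action — both standard in spirit (this is how one proves the Dixmier-Douady and Echterhoff-Nest-type results) but requiring care that the action is strictly continuous and covers the given one on $X$.
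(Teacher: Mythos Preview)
Your overall strategy --- build $\varphi$ from local unitary lifts, check well-definedness under outer conjugacy, then prove injectivity and surjectivity --- matches the paper's. But there are two genuine gaps.

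\textbf{Construction of the implementing unitaries.} You write that ``$\Phi_{\lambda_0}\circ\alpha_g\circ\Phi_{\lambda_0}^{-1}$ on $C_0(U^0_{\lambda_0},\K)$ is inner'' and speak of $u_{\lambda_0}(g,-)$ indexed by a single $\lambda_0$. This is the Raeburn--Williams picture, which only applies when the action on the spectrum is trivial (the $N$-principal case). For general $G$, $\alpha_g$ does not preserve $A|_{U^0_{\lambda_0}}$: if $x\in U^0_{\lambda_1}$ and $g^{-1}x\in U^0_{\lambda_0}$, you must compare $\Phi_{\lambda_1}$ at $x$ with $\Phi_{\lambda_0}$ at $g^{-1}x$. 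The correct object is the map $\beta^{\alpha,\Phi}:(G\ltimes X[\U^0])^{(1)}\to\operatorname{Aut}\K$ given by $\beta^{\alpha,\Phi}_{(\lambda_0(g,x)\lambda_1)}(T)=\Phi_{\lambda_1}(\alpha_g(a))(x)$ where $\Phi_{\lambda_0}(a)(g^{-1}x)=T$; its local unitary lifts are $u_{\lambda_0\lambda_1\lambda_{01}}$ indexed by $\Lambda_1'$, and the cocycle is
\[
\varphi_{\lambda_0\lambda_1\lambda_2\lambda_{01}\lambda_{02}\lambda_{12}}(g_0,g_1,x)=u_{\lambda_1\lambda_2\lambda_{12}}(g_1,x)\,u_{\lambda_0\lambda_1\lambda_{01}}(g_0,g_1^{-1}x)\,u_{\lambda_0\lambda_2\lambda_{02}}(g_0g_1,x)^*.
\]
Your $v_{\lambda_0\lambda_1}$ are subsumed into this as the values $u_{\lambda_0\lambda_1\bullet}(e,x)$; there is no separate \v Cech/group-cohomology decomposition at this stage.

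\textbf{Surjectivity.} Given an abstract $\T$-valued Tu--\v Cech $2$-cocycle $\varphi$, you cannot directly ``reverse-engineer'' $U(\H)$-valued maps $u_{\lambda_0\lambda_1\lambda_{01}}$ satisfying the identity above --- there is no reason a priori that such lifts exist. The paper invokes two external results: Tu's identification $\mbox{Tw}_{\U}(\G[\U^0],\T)\cong\check H^2(\sigma\U^\bullet,\mathcal S)$, and the Kumjian--Muhly--Renault--Williams theorem that every twist over a second countable locally compact groupoid with Haar system is isomorphic to $E(\pi)$ for some $\pi\in\mathcal R(\G)$. Only then does $\pi$ furnish the required unitary lifts, after which your bundle-gluing construction goes through. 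Without this input (which uses the Haar-measure hypothesis in an essential way), your surjectivity argument has no starting point.

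Your injectivity sketch and the homomorphism/well-definedness checks are essentially in line with the paper's.
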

We need our own proof as Tu's does not provide the right approach for
our main result. We first outline the construction of $\Upsilon$ and then justify the steps.
Let $(CT(X,\delta),\alpha)\in \mathfrak{Br}_G(X)$, and choose an open cover 
$\U^0=\{U_{\lambda_0}^0\}_{\lambda_0\in\mathcal{I}^0}$ of $X$ such that there exist 
$C_0(U_{\lambda_0}^0)$-isomorphisms (local trivialisations)
$\Phi_{\lambda_0}:CT(X,\delta)|_{U_{\lambda_0}^0}\to C_0(U_{\lambda_0}^0,\K).$
Then, perhaps after a refinement, there are continuous maps 
$u_{\lambda_0\lambda_1}:U^0_{\lambda_0\lambda_1}\to U(\H)$ such that 
$\Phi_{\lambda_1}\circ \Phi_{\lambda_0}^{-1}=\operatorname{Ad}u_{\lambda_0\lambda_1}$.
Recall the ``cover groupoid" from Example \ref{covergroupoid}. Fix $T\in\K$ and 
$a\in CT(X,\delta)$ satisfying $\Phi_{\lambda_0}(a)(g^{-1}x)=T$. Then there is a 
continuous map $\beta^{\alpha,\Phi}:(G\ltimes X[\U^0])^{(1)}\to\operatorname{Aut}\K$:
$\beta^{\alpha,\Phi}_{(\lambda_0(g,x)\lambda_1)}(T)=\Phi_{\lambda_1}(\alpha_g(a))(x)$.
Lemma \ref{betawelledfined} below will show that this is well-defined while 
Lemma \ref{u1coverexists} shows that there exists an open cover 
$\U^1=\{U^1_{\lambda_{01}}\}_{\lambda_{01}\in\mathcal{I}^1}$ of 
$(G\ltimes X)^{(1)}=G\times X$ and functions $u_{\lambda_0\lambda_1\lambda_{01}}\in 
C (U^1_{\lambda_0\lambda_1\lambda_{01}}, U(\H))$ such that
$\beta^{\alpha,\Phi}|_{U_{\lambda_0\lambda_1\lambda_{01}}^1}=
\operatorname{Ad}u_{\lambda_0\lambda_1\lambda_{01}}$. 
Finally, if $G$ is an abelian group, a simple computation using the definitions shows that 
for any $(g_0,g_1,x)\in U^2_{\lambda_0\lambda_1\lambda_2\lambda_{01}\lambda_{02}\lambda_{12}}:$
\[\operatorname{Ad}u_{\lambda_1\lambda_2\lambda_{12}}(g_1,x)
\operatorname{Ad}u_{\lambda_0\lambda_1\lambda_{01}}(g_0,g_1^{-1}x)=
\operatorname{Ad}u_{\lambda_0\lambda_2\lambda_{02}}(g_0g_1,x).\]
Therefore we may define a Tu-\v{C}ech 2-cocycle 
$\varphi(\alpha)\in \check{Z}(\sigma\U^\bullet,{\mathcal S})$, by
\begin{align}\label{cocycledefn}
&\varphi(\alpha)_{\lambda_0\lambda_1\lambda_2\lambda_{01}\lambda_{02}\lambda_{12}}(g_0,g_1,x)
:=&u_{\lambda_1\lambda_2\lambda_{12}}(g_1,x)u_{\lambda_0\lambda_1\lambda_{01}}
(g_0,g_1^{-1}x)u_{\lambda_0\lambda_2\lambda_{02}}(g_0g_1,x)^*,
\end{align}
and the map $\Upsilon$ is given by
$
\Upsilon:[CT(x,\delta),\alpha]\mapsto [\varphi(\alpha)].
$
We now provide the missing proofs.

\begin{lemma}\label{betawelledfined}
$\beta^{\alpha,\Phi}_{(\lambda_0(g,x)\lambda_1)}(T)$ is independent of the 
choice of $a\in CT(X,\delta)$ satisfying $\Phi_{\lambda_0}(a)(g^{-1}x)=T$.
\end{lemma}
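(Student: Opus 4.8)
The plan is to observe that $\Phi_{\lambda_1}(\alpha_g(a))(x)$ depends on $a$ only through the image of $a$ in the fibre of $CT(X,\delta)$ over the point $g^{-1}x$, hence only through $T=\Phi_{\lambda_0}(a)(g^{-1}x)$. For $y\in X$ write $J_y\subset CT(X,\delta)$ for the closed two-sided ideal of elements vanishing at $y$, so that the fibre $CT(X,\delta)(y):=CT(X,\delta)/J_y$ is $*$-isomorphic to $\K$, and for $y\in U^0_{\lambda_0}$ the evaluation $a\mapsto\Phi_{\lambda_0}(a)(y)$ factors as the quotient map $CT(X,\delta)\to CT(X,\delta)(y)$ followed by the $*$-isomorphism $CT(X,\delta)(y)\to\K$ induced by the $C_0(U^0_{\lambda_0})$-isomorphism $\Phi_{\lambda_0}$ (a $C_0(U)$-linear $*$-isomorphism carries $J_y$ onto the kernel of evaluation at $y$ in $C_0(U,\K)$, which is $\overline{\{h\cdot t:h\in C_0(U),\,h(y)=0,\,t\in C_0(U,\K)\}}$). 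Note also that $x\in U^0_{\lambda_1}$ and $g^{-1}x\in U^0_{\lambda_0}$ by definition of the cover groupoid $G\ltimes X[\U^0]$.

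First I would reduce the statement to an ideal inclusion. If $a,a'\in CT(X,\delta)$ both satisfy $\Phi_{\lambda_0}(a)(g^{-1}x)=T=\Phi_{\lambda_0}(a')(g^{-1}x)$, then, the fibre isomorphism being injective, the previous paragraph gives $a-a'\in J_{g^{-1}x}$. It therefore suffices to show $\alpha_g(J_{g^{-1}x})\subseteq J_x$: granting this, $\alpha_g(a)-\alpha_g(a')=\alpha_g(a-a')\in J_x$, and passing to the fibre over $x$ via $\Phi_{\lambda_1}$ yields $\Phi_{\lambda_1}(\alpha_g(a))(x)=\Phi_{\lambda_1}(\alpha_g(a'))(x)$, which is the assertion.

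For the inclusion $\alpha_g(J_{g^{-1}x})\subseteq J_x$ I would use precisely the hypothesis that $\alpha$ preserves the action on the spectrum. The ideal $J_{g^{-1}x}$ is the closure of $\{f\cdot b:f\in C_0(X),\,f(g^{-1}x)=0,\,b\in CT(X,\delta)\}$; for such $f$ and $b$ one has $\alpha_g(f\cdot b)=\tau_g(f)\cdot\alpha_g(b)$ with $\tau_g(f)(x)=f(g^{-1}x)=0$, so $\alpha_g(f\cdot b)\in J_x$. Since $\alpha_g$ is continuous and $J_x$ is closed, $\alpha_g(J_{g^{-1}x})\subseteq J_x$ (in fact equality holds, by the same argument applied to $\alpha_{g^{-1}}$, but only the inclusion is needed). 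This completes the argument.

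The proof is thus essentially an unwinding of the $C_0(X)$-algebra structure, and the only steps requiring any care — the closest thing to an obstacle — are the two bookkeeping facts used above: that a $C_0(U)$-isomorphism intertwines the fibrewise evaluations of $CT(X,\delta)|_U$ and $C_0(U,\K)$, and that the kernel of point evaluation in $C_0(U,\K)$ is the stated ideal. Both are routine consequences of the continuous trace structure (cf.\ \cite{RaeWill98}).
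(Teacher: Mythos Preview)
Your proof is correct and follows essentially the same approach as the paper. Both arguments reduce to the observation that if $a-a'$ vanishes at $g^{-1}x$ then it lies in (the closure of) the span of products $f\cdot c$ with $f(g^{-1}x)=0$, and then use the spectrum-preserving identity $\alpha_g(f\cdot c)=\tau_g(f)\cdot\alpha_g(c)$ together with $\tau_g(f)(x)=f(g^{-1}x)=0$; the only cosmetic difference is that the paper invokes a Cohen-type factorization (\cite[Lemma 2.1]{EchWil98}) to write $a-a'=f\cdot c$ as a single product, whereas you work with the closure and continuity of $\alpha_g$, which is equally valid.
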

\begin{proof}
Let $b$ be another element such that $\Phi_{\lambda_0}(b)(g^{-1}x)=T$. Then
$\Phi_{\lambda_0}(a-b)(g^{-1}x)=0.$
By \cite[Lemma 2.1]{EchWil98} and its proof, we can identify $CT(X,\delta)|_{X\backslash\{g^{-1}x\}}$ with
\[C_0(X\backslash\{g^{-1}x\})\cdot CT(X,\delta),\]
and there exists $f\in C_0(X)$ and $c\in CT(X,\delta)$ such that $f(g^{-1}x)=0$ and $f\cdot c=a-b$.

 Then the calculation
$$
\Phi_{\lambda_1}(\alpha_g(a-b))(x)=\Phi_{\lambda_1}(\alpha_g(f\cdot c))(x)
=\Phi_{\lambda_1}(\tau_g(f)\alpha_g(c))(x)
=f(g^{-1}x)\Phi_{\lambda_1}(\alpha_g(c))(x)
=0,
$$
shows that $\Phi_{\lambda_1}(\alpha_g(a))(x)=\Phi_{\lambda_1}(\alpha_g(b))(x)$.
\end{proof}

\begin{lemma}\label{u1coverexists}
There exists an open cover $\U^1=\{U^1_{\lambda_{01}}\}_{\lambda_{01}\in\mathcal{I}^1}$ 
of $(G\ltimes X)^{(1)}=G\times X$ and functions $u_{\lambda_0\lambda_1\lambda_{01}}\in 
C(U^1_{\lambda_0\lambda_1\lambda_{01}}, U(\H))$ such that
$\beta^{\alpha,\Phi}|_{U_{\lambda_0\lambda_1\lambda_{01}}^1}=
\operatorname{Ad}u_{\lambda_0\lambda_1\lambda_{01}}.$
\end{lemma}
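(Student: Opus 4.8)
The plan is to lift the continuous map $\beta^{\alpha,\Phi}$ through the quotient $q\colon U(\H)\to\operatorname{Aut}\K$ of the exact sequence \eqref{Uexactsequence}, which admits local continuous sections. The only genuine difficulty is bookkeeping: $\U^1$ must be an honest open cover of all of $G\times X$, while simultaneously, for \emph{every} admissible triple $\lambda_0\lambda_1\lambda_{01}$, one must produce a unitary implementing $\beta^{\alpha,\Phi}$ on $U^1_{\lambda_0\lambda_1\lambda_{01}}$, not merely for a single pair of $\U^0$-indices tied to $\lambda_{01}$.

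First I would introduce, for each ordered pair $(\mu_0,\mu_1)\in\mathcal{I}^0\times\mathcal{I}^0$, the open set
\[
V_{\mu_0\mu_1}:=\{(g,x)\in G\times X\;:\;g^{-1}x\in U^0_{\mu_0},\ x\in U^0_{\mu_1}\}\subseteq G\times X,
\]
and note that $\bigcup_{\mu_0,\mu_1}V_{\mu_0\mu_1}=G\times X$ because $\U^0$ covers $X$. Under the identification of $(G\ltimes X[\U^0])^{(1)}$ with $\coprod_{\mu_0,\mu_1}V_{\mu_0\mu_1}$, the restriction of $\beta^{\alpha,\Phi}$ to the $(\mu_0,\mu_1)$-piece is the continuous map $(g,x)\mapsto\beta^{\alpha,\Phi}_{(\mu_0(g,x)\mu_1)}\in\operatorname{Aut}\K$. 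Since $q$ has local continuous sections, $V_{\mu_0\mu_1}$ admits an open cover $\{W^{\mu_0\mu_1}_k\}_{k}$ on each member of which this map lifts to a continuous function into $U(\H)$. I then set $\mathcal{I}^1:=\{(\mu_0,\mu_1,k)\}$ and $U^1_{(\mu_0,\mu_1,k)}:=W^{\mu_0\mu_1}_k$, so that $\U^1=\{U^1_{\lambda_{01}}\}_{\lambda_{01}\in\mathcal{I}^1}$ is an open cover of $G\times X$; I denote by $v_{\lambda_{01}}\colon U^1_{\lambda_{01}}\to U(\H)$ the chosen lift and call $(\mu_0,\mu_1)$ the \emph{distinguished pair} of $\lambda_{01}=(\mu_0,\mu_1,k)$.

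Next I would record the transition rule relating the various components of $\beta^{\alpha,\Phi}$. Using $\Phi_{\mu_i'}\circ\Phi_{\mu_i}^{-1}=\operatorname{Ad}u_{\mu_i\mu_i'}$ on $U^0_{\mu_i\mu_i'}$, the defining relation $\beta^{\alpha,\Phi}_{(\mu_0(g,x)\mu_1)}(T)=\Phi_{\mu_1}(\alpha_g(a))(x)$ with $\Phi_{\mu_0}(a)(g^{-1}x)=T$, and Lemma \ref{betawelledfined}, a direct computation shows that for $(g,x)\in V_{\mu_0\mu_1}\cap V_{\mu_0'\mu_1'}$,
\[
\beta^{\alpha,\Phi}_{(\mu_0'(g,x)\mu_1')}=\operatorname{Ad}\!\bigl(u_{\mu_1\mu_1'}(x)\bigr)\circ\beta^{\alpha,\Phi}_{(\mu_0(g,x)\mu_1)}\circ\operatorname{Ad}\!\bigl(u_{\mu_0\mu_0'}(g^{-1}x)^{*}\bigr).
\]
Now fix an admissible triple $\lambda_0\lambda_1\lambda_{01}$ with $\lambda_{01}=(\nu_0,\nu_1,k)$; on $U^1_{\lambda_0\lambda_1\lambda_{01}}=V_{\lambda_0\lambda_1}\cap U^1_{\lambda_{01}}$ one has $x\in U^0_{\lambda_1}\cap U^0_{\nu_1}$ and $g^{-1}x\in U^0_{\lambda_0}\cap U^0_{\nu_0}$ (since $U^1_{\lambda_{01}}\subseteq V_{\nu_0\nu_1}$). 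Applying the transition rule with $(\mu_0,\mu_1)=(\nu_0,\nu_1)$, $(\mu_0',\mu_1')=(\lambda_0,\lambda_1)$, and $\beta^{\alpha,\Phi}_{(\nu_0(g,x)\nu_1)}=\operatorname{Ad}\!\bigl(v_{\lambda_{01}}(g,x)\bigr)$, one sees that
\[
u_{\lambda_0\lambda_1\lambda_{01}}(g,x):=u_{\nu_1\lambda_1}(x)\,v_{\lambda_{01}}(g,x)\,u_{\nu_0\lambda_0}(g^{-1}x)^{*}
\]
is a well-defined continuous $U(\H)$-valued function on $U^1_{\lambda_0\lambda_1\lambda_{01}}$ with $\operatorname{Ad}u_{\lambda_0\lambda_1\lambda_{01}}=\beta^{\alpha,\Phi}|_{U^1_{\lambda_0\lambda_1\lambda_{01}}}$, which is exactly what the lemma asserts.

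The main obstacle is precisely the indexing flagged in the first paragraph: tagging each member of $\U^1$ with a distinguished pair of $\U^0$-indices and then using the transition rule to spread the lift over all admissible triples is what reconciles ``$\U^1$ covers $G\times X$'' with ``a unitary exists for every triple''. Continuity of $(g,x)\mapsto\beta^{\alpha,\Phi}_{(\mu_0(g,x)\mu_1)}$ is used but not reproved here, being part of the construction of $\beta^{\alpha,\Phi}$ recalled just above.
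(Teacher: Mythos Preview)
Your proof is correct and follows essentially the same approach as the paper's: cover each $V_{\mu_0\mu_1}$ by open sets where $\beta^{\alpha,\Phi}$ lifts through $U(\H)\to\operatorname{Aut}\K$, tag each member of $\U^1$ with its distinguished pair, and then use the transition functions $u_{\mu_i\mu_i'}$ to extend the lift to arbitrary admissible triples. Your notation for the distinguished pair and the explicit transition rule make the bookkeeping slightly cleaner, but the argument is the same.
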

\begin{proof}
Fix a pair of indices $\lambda_0,\lambda_1\in\mathcal{I}^0$. Then, because 
$U(\H)\to\operatorname{Aut}\K$ has continuous local sections, there is a collection of 
open sets
$U_{\mu_i}^1\subset \{(g,x):g^{-1}x\in U^0_{\lambda_0}, x\in U^0_{\lambda_1}\}$
and continuous maps $u_{\mu_i}:U_{\mu_i}^1\to U(\H)$ such that $\{U_{\mu_i}^1\}$ 
covers $\{(g,x):g^{-1}x\in U^0_{\lambda_0}, x\in U^0_{\lambda_1}\}$ and
$\beta^{\alpha,\Phi}_{(\lambda_0(g,x)\lambda_1)}=\operatorname{Ad}u_{\mu_i}(g,x)$
for all $(g,x)$ such that $(g,x)\in U_{\mu_i}$. Note that, for any $i$, $U_{\mu_i}^1$ is an 
open set in $(G\ltimes X)^{(1)}$, and we define
$u_{\lambda_0\lambda_1\mu_i}(g,x):=u_{\mu_i}(g,x).$
This definition is close to our goal, except the open set $U_{\mu_i}^1$ is \emph{a priori} 
dependent on $\lambda_0$ and $\lambda_1$, so it is not obvious that for any other pair 
of indices, $\lambda_2,\lambda_3\in\mathcal{I}^0$, there is a continuous map 
$u_{\lambda_2\lambda_3\mu_i}$ defined on
$\{(g,x):g^{-1}x\in U^0_{\lambda_2}, x\in U^0_{\lambda_3} \mbox{ and } (g,x)\in U_{\mu_i}^1\}.$
On the other hand, if $\lambda_2$ and $\lambda_3$ are another pair of indices such that
$U_{\mu_i}^1\cap\{(g,x):g^{-1}x\in U^0_{\lambda_2}, x\in U^0_{\lambda_3}\}\neq \emptyset,$
we may define a continuous map $u_{\lambda_2\lambda_3\mu_i}$ on
$\{(g,x):g^{-1}x\in U^0_{\lambda_2}, x\in U^0_{\lambda_3} \mbox{ and } (g,x)\in U_{\mu_1}^1\}$
by
\[u_{\lambda_2\lambda_3\mu_i}(g,x):=u_{\lambda_1\lambda_3}(x)
u_{\lambda_0\lambda_1\mu_i}(g,x)u_{\lambda_0\lambda_2}(g^{-1}x)^*.\]
We then claim that
\begin{equation}\label{beta23claim}
\beta^{\alpha,\Phi}_{(\lambda_2(g,x)\lambda_3)}=\operatorname{Ad}u_{\lambda_2\lambda_3\mu_i}(g,x)
\end{equation}
for all $(g,x)\in \{(g,x):g^{-1}x\in U^0_{\lambda_2},x\in U^0_{\lambda_3}\mbox{ and } 
(g,x)\in U_{\mu_i}^1\}$. Indeed, for any such element $(g,x)$ we have by definition
$\beta^{\alpha,\Phi}_{(\lambda_2(g,x)\lambda_3)}(T):=\Phi_{\lambda_3}(\alpha_g(a))(x),$
where $\Phi_{\lambda_2}(a)(g^{-1}x)=T$. Of course, by inserting $\Phi_{\lambda_0}^{-1}\circ 
\Phi_{\lambda_0}$ we have
\begin{align*}
&\Phi_{\lambda_2}\circ \Phi_{\lambda_0}^{-1}\circ \Phi_{\lambda_0}(a)(g^{-1}x)=T
\implies u_{\lambda_0\lambda_2}(x)\Phi_{\lambda_0}(a)(g^{-1}x)u_{\lambda_0\lambda_2}(g^{-1}x)^*=T
\end{align*}
which implies $ \Phi_{\lambda_0}(a)(g^{-1}x)=u_{\lambda_0\lambda_2}(x)^*
Tu_{\lambda_0\lambda_2}(g^{-1}x)$.
Then, by inserting $\Phi_{\lambda_1}^{-1}\circ \Phi_{\lambda_1}$ we have
\begin{align*}
&\beta^{\alpha,\Phi}_{(\lambda_2(g,x)\lambda_3)}(T)
=\Phi_{\lambda_3}\circ\Phi_{\lambda_1}^{-1}\circ \Phi_{\lambda_1}(\alpha_g(a))(x)
=u_{\lambda_1\lambda_3}(x)\Phi_{\lambda_1}(\alpha_g(a))(x)u_{\lambda_1\lambda_3}(x)^*\\
\quad\quad&=u_{\lambda_1\lambda_3}(x)u_{\lambda_0\lambda_1\mu_i}(g,x)
u_{\lambda_0\lambda_2}(g^{-1}x)^*Tu_{\lambda_0\lambda_2}(g^{-1}x)
u_{\lambda_0\lambda_1\mu_i}(g,x)^*u_{\lambda_1\lambda_3}(x)^*.
\end{align*}
This proves claim (\ref{beta23claim}). Therefore, to define the open cover 
$\U^1=\{U^1_{\lambda_{01}}\}$ of $G\times X$ such that $\beta^{\alpha,\Phi}$ is 
implemented by unitaries defined on sets of the form $U^1_{\lambda_0\lambda_1\lambda_{01}}$, 
it suffices to take $\U^1$ to be the union of all the $\{U_{\mu_i}\}$ generated by all 
pairs of indices $\lambda_0,\lambda_1$.
\end{proof}
\begin{prop}\label{mymaptotucech}
Let $G$ be a second countable locally compact Hausdorff abelian group 
acting on a second countable locally compact space $X$. Then the map $\Upsilon$
given by (\ref{cocycledefn}) induces a homomorphism (denoted by the same symbol)
$\Upsilon:\mathrm{Br}_G(X)\to \check{H}^2(G\ltimes X,{\mathcal S}).$
\end{prop}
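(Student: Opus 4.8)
The plan is to establish three things in turn: that the class $[\varphi(\alpha)]\in\check H^2(\sigma\U^\bullet,{\mathcal S})$, and hence its canonical image in $\check H^2(G\ltimes X,{\mathcal S})$, is independent of all the choices made in its construction (the covers $\U^0,\U^1$, the trivialisations $\Phi_{\lambda_0}$, and the implementing unitaries $u_{\lambda_0\lambda_1\lambda_{01}}$); that it depends only on the outer conjugacy class of $(CT(X,\delta),\alpha)$; and that the resulting assignment is multiplicative. That $\varphi(\alpha)$ is a Tu-\v{C}ech $2$-cocycle was already noted before Lemma~\ref{betawelledfined}, so it is only these three points that remain.

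\emph{Independence of the choices.} Keep $\U^0$ and $\U^1$ fixed. By~(\ref{Uexactsequence}) any other system of trivialisations has the form $\Phi'_{\lambda_0}=\operatorname{Ad}w_{\lambda_0}\circ\Phi_{\lambda_0}$ with $w_{\lambda_0}\in C(U^0_{\lambda_0},U(\H))$; a direct computation from the definition of $\beta^{\alpha,\Phi}$ gives $\beta^{\alpha,\Phi'}_{(\lambda_0(g,x)\lambda_1)}=\operatorname{Ad}w_{\lambda_1}(x)\circ\beta^{\alpha,\Phi}_{(\lambda_0(g,x)\lambda_1)}\circ\operatorname{Ad}w_{\lambda_0}(g^{-1}x)^{*}$, so $(g,x)\mapsto w_{\lambda_1}(x)\,u_{\lambda_0\lambda_1\lambda_{01}}(g,x)\,w_{\lambda_0}(g^{-1}x)^{*}$ implements $\beta^{\alpha,\Phi'}$; substituting this into~(\ref{cocycledefn}) the $w_{\lambda_0}$- and $w_{\lambda_1}$-factors cancel telescopically, and since $\varphi(\alpha)$ is $\T$-valued the leftover conjugation by $w_{\lambda_2}(x)$ is trivial, so $\varphi'(\alpha)=\varphi(\alpha)$ on the nose. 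Two systems of implementing unitaries over a common refinement of $\U^1$ differ by functions $z_{\lambda_0\lambda_1\lambda_{01}}\in C(U^1_{\lambda_0\lambda_1\lambda_{01}},\T)$, and substitution into~(\ref{cocycledefn}) shows $\varphi'(\alpha)=\varphi(\alpha)\cdot\partial_{Tu}z$, so the classes agree. Finally, refining $\U^0$ or $\U^1$ leaves the image in the inductive limit unchanged by Lemmas~\ref{simplicialrefinement} and~\ref{refinement} and the discussion after Definition~\ref{TuCechDefn}; since any two covers have a common refinement, $[\varphi(\alpha)]\in\check H^2(G\ltimes X,{\mathcal S})$ is well defined.

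\emph{Outer conjugacy invariance.} Let $(A,\alpha)$ and $(B,\beta)$ be outer conjugate, with a $C_0(X)$-isomorphism $\phi:A\to B$ such that $\alpha$ is exterior equivalent to $\gamma:=\phi^{-1}\circ\beta\circ\phi$ (note that $A$ and $B$ share the same Dixmier--Douady class). Choosing trivialisations $\Phi^B_{\lambda_0}$ of $B$ and putting $\Phi^A_{\lambda_0}:=\Phi^B_{\lambda_0}\circ\phi$, one checks directly that $\beta^{\gamma,\Phi^A}=\beta^{\beta,\Phi^B}$, so the same $\U^1$ and implementing unitaries serve both and $\varphi(\gamma)=\varphi(\beta)$. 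It therefore suffices to treat exterior equivalent actions $\gamma_g=\operatorname{Ad}w_g\circ\alpha_g$ on a single algebra $A$, with $w:G\to UM(A)$ satisfying~(\ref{exteriorcondition1})--(\ref{exteriorcondition2}). Applying $\Phi_{\lambda_0}$, extended to multipliers, to $w_g$ produces strictly continuous maps $W_{\lambda_0}(g,\cdot):U^0_{\lambda_0}\to U(\H)$, related on overlaps by conjugation by the $u_{\lambda_0\lambda_1}$, and~(\ref{exteriorcondition1}) gives $\beta^{\gamma,\Phi}_{(\lambda_0(g,x)\lambda_1)}=\operatorname{Ad}W_{\lambda_1}(g,x)\circ\beta^{\alpha,\Phi}_{(\lambda_0(g,x)\lambda_1)}$, so $(g,x)\mapsto W_{\lambda_1}(g,x)\,u_{\lambda_0\lambda_1\lambda_{01}}(g,x)$ implements $\beta^{\gamma,\Phi}$. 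Substituting into~(\ref{cocycledefn}) and using the cocycle identity~(\ref{exteriorcondition2})---transported through the $\Phi_{\lambda_0}$, possibly after a further refinement of $\U^1$---together with Lemma~\ref{cyclicpermute} to rearrange the $\T$-valued factors, one obtains an explicit Tu-\v{C}ech $1$-cochain $\psi$, built from the $W_{\lambda_0}(g,\cdot)$, with $\varphi(\gamma)=\varphi(\alpha)\cdot\partial_{Tu}\psi$. Hence $[\varphi(\gamma)]=[\varphi(\alpha)]$ and $\Upsilon$ descends to a well-defined map on $\mathrm{Br}_G(X)$.

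\emph{Multiplicativity and conclusion.} Fix a unitary $V:\H\otimes\H\to\H$, identifying $\K\otimes\K\cong\K$. Given classes $[CT(X,\delta_A),\alpha]$ and $[CT(X,\delta_B),\beta]$, pick trivialisations $\Phi^A_{\lambda_0},\Phi^B_{\lambda_0}$ over a common cover $\U^0$ and set $\Phi^{A\otimes B}_{\lambda_0}:=\operatorname{Ad}V\circ(\Phi^A_{\lambda_0}\otimes_{C_0(U^0_{\lambda_0})}\Phi^B_{\lambda_0})$, a trivialisation of $CT(X,\delta_A)\otimes_{C_0(X)}CT(X,\delta_B)$. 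From the definitions, $\beta^{\alpha\otimes_X\beta,\Phi^{A\otimes B}}_{(\lambda_0(g,x)\lambda_1)}=\operatorname{Ad}V\circ\bigl(\beta^{\alpha,\Phi^A}_{(\lambda_0(g,x)\lambda_1)}\otimes\beta^{\beta,\Phi^B}_{(\lambda_0(g,x)\lambda_1)}\bigr)\circ\operatorname{Ad}V^{*}$, so over a common refinement $\U^1$ of the two auxiliary covers, $(g,x)\mapsto V\bigl(u^\alpha_{\lambda_0\lambda_1\lambda_{01}}(g,x)\otimes u^\beta_{\lambda_0\lambda_1\lambda_{01}}(g,x)\bigr)V^{*}$ implements $\beta^{\alpha\otimes_X\beta,\Phi^{A\otimes B}}$. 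Substituting into~(\ref{cocycledefn}), the $V^{*}V$-factors cancel and, since each $\varphi(\alpha)_\lambda$ and $\varphi(\beta)_\lambda$ is a scalar, the surviving term equals $\varphi(\alpha)_\lambda\,\varphi(\beta)_\lambda$; thus $\varphi(\alpha\otimes_X\beta)=\varphi(\alpha)\cdot\varphi(\beta)$ on this cover, so $\Upsilon$ is multiplicative, and preservation of the identity class $[C_0(X,\K),\tau]$ and of inverses follows automatically. The main obstacle is the exterior-equivalence computation in the second step: exhibiting the correcting cochain $\psi$ and verifying that $\partial_{Tu}\psi$ gives the required ratio demands careful bookkeeping with the Tu-\v{C}ech index sets $\Lambda_n'$ and repeated use of Lemma~\ref{cyclicpermute}.
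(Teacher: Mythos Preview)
Your proposal is correct and follows essentially the same strategy as the paper: verify that the cocycle class is independent of the auxiliary choices, then show invariance under outer conjugacy by explicit manipulation of the implementing unitaries using the cocycle identity~(\ref{exteriorcondition2}) and Lemma~\ref{cyclicpermute}.

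The organisation differs, however, and each ordering has its advantages. The paper treats the full outer-conjugacy computation first (isomorphism $\Phi$ and $1$-cocycle $w$ simultaneously), obtaining a correcting Tu-\v{C}ech $1$-cochain $\tau$, and then observes that well-definedness is the degenerate case $\Phi=\operatorname{id}$, $w=1$; this avoids repeating any calculation. You instead separate the ingredients: your observation that a change of trivialisations alone leaves $\varphi(\alpha)$ unchanged \emph{on the nose} (not merely up to coboundary) is a genuine simplification over the paper's treatment, and your reduction of the outer-conjugacy step to (i)~a pure $C_0(X)$-isomorphism, handled by transporting trivialisations, followed by (ii)~a pure exterior equivalence on a fixed algebra, is cleaner bookkeeping. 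In fact, in step~(ii) your chosen implementers $W_{\lambda_1}(g,x)\,u_{\lambda_0\lambda_1\lambda_{01}}(g,x)$ actually give $\varphi(\gamma)=\varphi(\alpha)$ exactly once one uses $\overline{\Phi}_{\lambda_2}(\overline{\alpha}_{g_1}(w_{g_0}))(x)=\operatorname{Ad}u_{\lambda_1\lambda_2\lambda_{12}}(g_1,x)\bigl(W_{\lambda_1}(g_0,g_1^{-1}x)\bigr)$, so the cochain $\psi$ you anticipate is trivial. Finally, you supply the multiplicativity argument via balanced tensor products and a fixed identification $V:\H\otimes\H\to\H$; the paper's proof of this proposition does not address multiplicativity at all (it is implicit, and an analogous computation appears only later, in Corollary~\ref{XiHomomorphism}), so on this point your write-up is more complete.
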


\begin{proof}
In constructing $\Upsilon:\mathfrak{Br}_G(X)\to \check{H}^2(G\ltimes X,\mathcal{S})$, 
we had to choose local trivialisations $\{\Phi_\lambda\}$ of $CT(X,\delta)$, and the unitary 
lifts $\{u_{\lambda_0\lambda_1\lambda_{01}}\}$ of $\beta^{\alpha,\Phi}_{(\lambda_0(g,x)\lambda_1)}$, 
and thus it is not clear that $\Upsilon$ is well-defined.  First we assume that it is, and prove it 
descends to a map on $\operatorname{Br}_{\R^n}(X)$. Below, we will omit mentioning that 
we may need to refine the cover at each stage.

Let $(CT(X,\delta),\chi)$ be outer conjugate to $(CT(X,\delta),\alpha)$. Choose local 
trivialisations $\Psi_{\lambda_0}:CT(X,\delta)|_{U_{\lambda_0}}$ $\to C_0(U_{\lambda_0},\K)$, and let
$\beta^{\chi,\Psi}_{(\lambda_0(g,x)\lambda_1)}(T):=\Psi_{\lambda_1}(\chi_g(b))(x),$
where $b\in CT(X,\delta)$ satisfies $\Psi_{\lambda_0}(b)(g^{-1}x)=T$. Then there exists 
$\{\tilde{u}_{\lambda_0\lambda_1\lambda_{01}}\}$ such that
$\beta^{\chi,\Psi}_{(\lambda_0(s,x)\lambda_1)}=\operatorname{Ad}
\tilde{u}_{\lambda_0\lambda_1\lambda_{01}}(s,x)$.  By the definition of outer conjugacy, 
there exists a $C_0(X)$-isomorphism $\Phi:CT(X,\delta)\to CT(X,\delta)$ and a continuous 
map $w:G\to UM(CT(X,\delta))$ such that
\begin{align}
\label{extcondition1ch3}\Phi^{-1}\circ \chi_s\circ \Phi(a)&=w_s\alpha_s(a)w_s^*,\\
\label{extcondition2ch3}w_{s+t}&=w_s\overline{\alpha}_s(w_t).
\end{align}
Finally, we may also suppose there are continuous maps $\nu_{\lambda_0}:U_{\lambda_0}\to 
U(\H)$ such that
\begin{equation}\label{4PointBlah}
\Psi_{\lambda_0}\circ \Phi\circ \Phi_{\lambda_0}^{-1}=\operatorname{Ad} \nu_{\lambda_0}.
\end{equation}

Using all these relations, we re-examine $\beta^{\chi,\Psi}$. Fix $T\in \K$ and let 
$b\in CT(X,\delta)$ satisfy $\Psi_{\lambda_0}(b)(-s\cdot x)=T$. Define 
$a\in CT(X,\delta)$ by $a:=\Phi^{-1}(b)$, which must satisfy
$\Psi_{\lambda_0}\circ\Phi(a)(-s\cdot x)=T.$
Notice that, since
$\Psi_{\lambda_0}\circ\Phi=\Psi_{\lambda_0}\circ\Phi\circ \Phi_{\lambda_0}^{-1}\circ\Phi_{\lambda_0},$
the element $a$ satisfies
\begin{equation}\label{4PointBlah2}
\Phi_{\lambda_0}(a)(-s\cdot x)=\operatorname{Ad}\nu_{\lambda_0}(-s\cdot x)^*(T).
\end{equation}
\begin{align*}\mbox{Then \quad\quad\quad\quad\quad }
\beta^{\chi,\Psi}_{(\lambda_0(s,x)\lambda_1)}(T)&:=\Psi_{\lambda_1}(\chi_g(b))(x)
=\Psi_{\lambda_1}\circ \Phi\circ\Phi^{-1}(\chi_g(b))(x)\\
=&\Psi_{\lambda_1}\circ \Phi\circ \Phi_{\lambda_1}^{-1}
\circ\Phi_{\lambda_1}\circ\Phi^{-1}(\chi_g(\Phi(a)))(x)\\
=&\operatorname{Ad} \nu_{\lambda_1}(x) \Phi_{\lambda_1}(w_{s}\alpha_s(a)
w_s^*)(x)\quad \mbox{by } (\ref{extcondition1ch3}) \mbox{ and } 
(\ref{4PointBlah})\quad\quad\quad\quad\quad\\
=&[\operatorname{Ad} \nu_{\lambda_1}(x)\overline{\Phi_{\lambda_1}}(w_{s})(x)]
\Phi_{\lambda_1}(\alpha_s(a))(x).
\end{align*}
From this calculation, using (\ref{4PointBlah2}) we obtain the relation:
\begin{align*}
\beta^{\chi,\Psi}_{(\lambda_0(s,x)\lambda_1)}(T)=&\operatorname{Ad} 
[\nu_{\lambda_1}(x)\overline{\Phi_{\lambda_1}}(w_{s})(x)u_{\lambda_0
\lambda_1\lambda_{01}}(s,x)\nu_{\lambda_0}(-s\cdot x)^*](T).
\end{align*}
Therefore there exist continuous functions 
$\tau_{\lambda_0\lambda_1\lambda_{01}}:U^1_{\lambda_0\lambda_1\lambda_{01}}\to \T$ and
\[
\tilde{u}_{\lambda_0\lambda_1\lambda_{01}}(s,x)=\tau_{\lambda_0\lambda_1\lambda_{01}}(s,x)
\nu_{\lambda_1}(x)\overline{\Phi_{\lambda_1}}(w_{s})(x)u_{\lambda_0\lambda_1\lambda_{01}}(s,x)
\nu_{\lambda_0}(-s\cdot x)^*.
\]
Finally, let $\overline{\Phi}_{\lambda_1}(w_{g_0})(g_{1}^{-1}x)=T$ so that 
$\overline{\beta}_{\lambda_1(g_1,x)\lambda_2}(T)=\overline{\Phi_{\lambda_2}}
(\overline{\alpha}(w_{g_0}))(x)$, and then using Lemma \ref{cyclicpermute}, the 
fact that $G$ is abelian, and Equation (\ref{extcondition2ch3}) we obtain
\begin{align*}
\varphi(\chi)&_{\lambda_0\lambda_1\lambda_2\lambda_{01}\lambda_{02}
\lambda_{12}}(g_0,g_1,x)=\tau_{\lambda_1\lambda_2\lambda_{12}}(g_1,x)
\nu_{\lambda_2}(x)\overline{\Phi_{\lambda_2}}(w_{g_1})(x)
u_{\lambda_1\lambda_2\lambda_{12}}(g_1,x)\nu_{\lambda_1}(g_1^{-1}x)^*\\
&\times\tau_{\lambda_0\lambda_1\lambda_{01}}(g_0,g_1^{-1}x)
\nu_{\lambda_1}(g_1^{-1}x)\overline{\Phi_{\lambda_1}}(w_{g_0})
(g_1^{-1}x)u_{\lambda_0\lambda_1\lambda_{01}}(g_0,g_1^{-1}x)
\nu_{\lambda_0}(g_0^{-1}g_1^{-1}x)^*\\
&\times\nu_{\lambda_0}(g_1^{-1}g_0^{-1}x)u_{\lambda_0\lambda_2\lambda_{02}}(g_0g_1,x)^*
\overline{\Phi_{\lambda_2}}(w_{g_0g_1})(x)^*\nu_{\lambda_2}(x)^*
\tau_{\lambda_0\lambda_2\lambda_{01}}(g_0g_1,x)^*\\
=&\partial_{Tu}\tau_{\lambda_0\lambda_1\lambda_2\lambda_{01}\lambda_{02
}\lambda_{12}}(g_0,g_1,x)\overline{\Phi_{\lambda_2}}(w_{g_0g_1})(x)^*
\overline{\Phi_{\lambda_2}}(w_{g_1})(x)u_{\lambda_1\lambda_2\lambda_{12}}(g_0,x)
\overline{\Phi_{\lambda_1}}(w_{g_0})(g_1^{-1}x)\\
&\times u_{\lambda_0\lambda_1\lambda_{01}}(g_0,g_1^{-1}x)
u_{\lambda_0\lambda_2\lambda_{02}}(g_0g_1,x)^*\\
=&\varphi(\alpha)_{\lambda_0\lambda_1\lambda_2\lambda_{01}
\lambda_{02}\lambda_{12}}(g_0,g_1,x)\partial_{Tu}
\tau_{\lambda_0\lambda_1\lambda_2\lambda_{01}\lambda_{02}
\lambda_{12}}(g_0,g_1,x)\overline{\Phi_{\lambda_2}}(\overline{\alpha}(w_{g_0}))(x)^*\\
&\times u_{\lambda_1\lambda_2\lambda_{12}}(g_0,x)\overline{\Phi_{\lambda_1}}(w_{g_0})
(g_1^{-1}x)u_{\lambda_1\lambda_2\lambda_{12}}(g_1,x)\\
=&(\varphi(\alpha)\partial_{Tu}
\tau)_{\lambda_0\lambda_1\lambda_2\lambda_{01}\lambda_{02}\lambda_{12}}(g_0,g_1,x)
\overline{\Phi_{\lambda_2}}(\overline{\alpha}(w_{g_0}))(x)^*
\overline{\beta}_{\lambda_1(g_1,x)\lambda_2}(T)\\
=&(\varphi(\alpha)\partial_{Tu}
\tau)_{\lambda_0\lambda_1\lambda_2\lambda_{01}\lambda_{02}\lambda_{12}}(g_0,g_1,x).
\end{align*}
Therefore $\Upsilon(CT(X,\delta),\chi)=\Upsilon(CT(X,\delta),\alpha)$, and 
$\Upsilon$ is constant on outer conjugacy classes.

Finally, to see that $\Upsilon$ is well-defined, we use the same proof as above, 
except we set $\Phi=\operatorname{id}$ and $w_g(x)=1$.
\end{proof}

We claim the map $\Upsilon$ is an isomorphism. 
The proof of injectivity is self-contained while surjectivity depends on results external to this paper.
First we show that if $[CT(X,\delta),\alpha]$ $\mapsto 1\in \check{H}^2(G\ltimes X,{\mathcal S})$, 
then $CT(X,\delta)$ is $C_0(X)$-isomorphic to $C_0(X,\K)$.
\begin{lemma}\label{forgetfullemma}
Suppose $\Upsilon:(CT(X,\delta),\alpha)\mapsto [\varphi(\alpha)]\in 
\check{H}^2(\sigma\U^\bullet,{\mathcal S})$, where $\U^\bullet$ is a 
\emph{simplicial} open cover $\mathcal{U^\bullet}$ of $(G\ltimes X)^\bullet$ 
(from Lemma \ref{simplicialrefinement}). Let $\Delta$ be the isomorphism 
$\check{H}^2(X,{\mathcal S})\stackrel{\cong}{\to} \check{H}^3(X,\underline{\Z})$ 
and define $F(\varphi)\in \check{Z}^2(\U^0,{\mathcal S})$ to be the \v{C}ech 
cohomology 2-cocycle  given by
\[F(\varphi):x\mapsto \varphi(\alpha)_{\lambda_0\lambda_1\lambda_2
\tilde{\eta}(\lambda_1)\tilde{\eta}(\lambda_2)\tilde{\eta}(\lambda_2)}(e,e,x), \quad x\in 
U_{\lambda_0\lambda_1\lambda_2}^0.\]
Then $\Delta([F(\varphi(\alpha))])=\delta$.
\end{lemma}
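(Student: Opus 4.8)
The plan is to unwind the definition of $\varphi(\alpha)$ restricted to the identity arrows and show that the resulting \v{C}ech $2$-cocycle on $X$ is precisely the cocycle $x\mapsto v_{\lambda_1\lambda_2}(x)v_{\lambda_0\lambda_2}(x)^*v_{\lambda_0\lambda_1}(x)$ already known (from Section~\ref{Mackeysectiononeonetwo}) to represent $\delta$ under $\Delta$. First I would record that, for a simplicial cover, the degeneracy indices $\tilde\eta(\lambda_j)$ correspond to taking the identity arrow $e\in G$ over a point of $U^0_{\lambda_j}$, so that $U^1_{\tilde\eta(\lambda_j)}$ contains $\iota(U^0_{\lambda_j})\subset\{e\}\times X$, and the unitary $u_{\lambda_0\lambda_1\tilde\eta(\lambda_1)}(e,x)$ implements $\beta^{\alpha,\Phi}_{(\lambda_0(e,x)\lambda_1)}$. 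The key point is to identify this $\beta$: since $g=e$, we have $\beta^{\alpha,\Phi}_{(\lambda_0(e,x)\lambda_1)}(T)=\Phi_{\lambda_1}(a)(x)$ where $\Phi_{\lambda_0}(a)(x)=T$, i.e. $\beta^{\alpha,\Phi}_{(\lambda_0(e,x)\lambda_1)}=\Phi_{\lambda_1}\circ\Phi_{\lambda_0}^{-1}|_x=\operatorname{Ad}v_{\lambda_0\lambda_1}(x)$, where $v_{\lambda_0\lambda_1}$ is the transition unitary. Hence, after a refinement absorbing the ambiguity, we may take $u_{\lambda_0\lambda_1\tilde\eta(\lambda_1)}(e,x)=v_{\lambda_0\lambda_1}(x)$ up to a $\T$-valued function, which does not affect the \v{C}ech cohomology class of the triple product.

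Next I would substitute $g_0=g_1=e$ into Equation~(\ref{cocycledefn}). Using $e^{-1}x=x$, $e\cdot e=e$, this gives
\[
F(\varphi)(x)=u_{\lambda_1\lambda_2\tilde\eta(\lambda_2)}(e,x)\,u_{\lambda_0\lambda_1\tilde\eta(\lambda_1)}(e,x)\,u_{\lambda_0\lambda_2\tilde\eta(\lambda_2)}(e,x)^*
=v_{\lambda_1\lambda_2}(x)v_{\lambda_0\lambda_1}(x)v_{\lambda_0\lambda_2}(x)^*
\]
(up to a $\check\partial$-coboundary of $\T$-valued functions coming from the scalar ambiguities in the $u$'s). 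Comparing with the representative of $\delta$ from Section~\ref{Mackeysectiononeonetwo}, namely $x\mapsto v_{\lambda_1\lambda_2}(x)v_{\lambda_0\lambda_2}(x)^*v_{\lambda_0\lambda_1}(x)$, one sees these differ only by reordering the (central quotient of) unitaries; Lemma~\ref{cyclicpermute} applied to the product $v_{\lambda_1\lambda_2}v_{\lambda_0\lambda_1}v_{\lambda_0\lambda_2}^*$, which lands in $\T$ by the cocycle condition on transitions, shows the two differ by a cyclic permutation and hence agree as $\T$-valued functions, so they represent the same class in $\check H^2(\U^0,{\mathcal S})$.

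Finally, applying the connecting isomorphism $\Delta\colon\check H^2(X,{\mathcal S})\to\check H^3(X,\underline\Z)$ to this common class yields $\delta$ by the Dixmier--Douady construction recalled in Section~\ref{Mackeysectiononeonetwo}; since $\Delta$ is induced on cocycles (after a lift to $\R$-valued functions and applying $\check\partial$), and since $F(\varphi(\alpha))$ is cohomologous to the standard Dixmier--Douady representative, we get $\Delta([F(\varphi(\alpha))])=\delta$.

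\textbf{Main obstacle.} The genuine work is bookkeeping: making precise that the simplicial degeneracy index $\tilde\eta(\lambda_j)$ really does give an open set over which one may choose the implementing unitary to be (a scalar multiple of) the transition function $v_{\lambda_0\lambda_1}$, and controlling the scalar $\T$-valued discrepancies so they contribute only a \v{C}ech coboundary. Once the evaluation $g_0=g_1=e$ is performed cleanly, the remaining identification with the Dixmier--Douady cocycle is just Lemma~\ref{cyclicpermute} and the definition of $\Delta$.
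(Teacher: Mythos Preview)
Your proposal is correct and follows essentially the same route as the paper's proof: identify $\beta^{\alpha,\Phi}_{(\lambda_0(e,x)\lambda_1)}$ with $\operatorname{Ad}v_{\lambda_0\lambda_1}(x)$, so that $u_{\lambda_0\lambda_1\lambda_{01}}(e,x)$ differs from $v_{\lambda_0\lambda_1}(x)$ by a scalar function, then substitute $g_0=g_1=e$ in (\ref{cocycledefn}) and observe that the scalar discrepancies assemble into a \v{C}ech coboundary once the simplicial degeneracy indices are used. The paper is just slightly more explicit than you are: it names the scalar functions $p_{\lambda_0\lambda_1\lambda_{01}}$ and writes out that $x\mapsto p_{\lambda_1\lambda_2\tilde\eta(\lambda_2)}(x)p_{\lambda_0\lambda_1\tilde\eta(\lambda_1)}(x)p_{\lambda_0\lambda_2\tilde\eta(\lambda_2)}(x)^*$ is $\check\partial$ of the $1$-cochain $q_{\lambda_0\lambda_1}(x):=p_{\lambda_0\lambda_1\tilde\eta(\lambda_1)}(x)$, which is exactly your ``scalar ambiguities contribute only a coboundary'' claim made precise.
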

\begin{proof}
Notice that, for fixed $T\in\K$, if $a\in CT(X,\delta)$ is such that $\Phi_{\lambda_0}(a)(x)=T$, then
\begin{align*}
\beta^{\alpha,\Phi}_{(\lambda_{0}(e,x)\lambda_{1})}(T):=\Phi_{\lambda_1}(\alpha_e(a))(x)
=\Phi_{\lambda_1}\circ\Phi_{\lambda_0}^{-1}(T).
\end{align*}
That is $\beta^{\alpha,\Phi}_{(\lambda_{0}(e,x)\lambda_{1})}(T)=
\operatorname{Ad}v_{\lambda_{0}\lambda_{1}}(x)(T)$
for some transition function $v_{\lambda_{0}\lambda_{1}}\in 
C(U_{\lambda_{0}\lambda_{1}}^0,U(\H))$ corresponding to a vector bundle 
$p:E\to X$ with fibre $\K$, such that $\delta=[\check\partial v]$. 
Therefore, given the maps $u_{\lambda_0\lambda_1\lambda_{01}}\in 
C(U^1_\lambda, U(\H))$ that implement $\beta^{\alpha,\Phi}$, 
there must exist continuous functions $p_{\lambda_0\lambda_1\lambda_{01}}:
U^1_\lambda\cap \iota(\G^{(0)})\to\T$ such that
$u_{\lambda_0\lambda_1\lambda_{01}}(e,x)=p_{\lambda_0\lambda_1\lambda_{01}}
(x)v_{\lambda_0\lambda_1}(x).$
Thus, if $\nu$ denotes a 2-cocycle such that $\Delta([\nu])=\delta$, we must have
\begin{align*}
\varphi(\alpha)_{\lambda_0\lambda_1\lambda_2\lambda_{01}\lambda_{02}\lambda_{12}}(e,e,x)
&=u_{\lambda_1\lambda_2\lambda_{12}}(e,x)u_{\lambda_0\lambda_1\lambda_{01}}
(e,x)u_{\lambda_0\lambda_2\lambda_{02}}(e,x)^* \\
&=p_{\lambda_1\lambda_2\lambda_{12}}(x)v_{\lambda_1\lambda_2}
(x)p_{\lambda_0\lambda_1\lambda_{01}}(x)v_{\lambda_0\lambda_1}
(x)p_{\lambda_0\lambda_2\lambda_{02}}(x)^*v_{\lambda_0\lambda_2}(x)^*\\
&=p_{\lambda_1\lambda_2\lambda_{12}}(x)p_{\lambda_0\lambda_1\lambda_{01}}
(x)p_{\lambda_0\lambda_2\lambda_{02}}(x)^*\nu_{\lambda_0\lambda_1\lambda_2}(x).
\end{align*}
In particular, if we now refine the cover to be simplicial, then the map defined by
$x\mapsto p_{\lambda_1\lambda_2\tilde{\eta}(\lambda_2)}(x)p_{\lambda_0\lambda_1\tilde{\eta}
(\lambda_1)}(x)p_{\lambda_0\lambda_2\tilde{\eta}(\lambda_2)}(x)^*$
is a coboundary in (the group of ordinary \v{C}ech cochains) $\check{C}^2(\U^0,{\mathcal S})$, 
so the map $F:\check{Z}^2(\sigma\U^\bullet,{\mathcal S})\mapsto \check{Z}^2(\U^0,{\mathcal S})$ given
in the statement of the lemma satisfies $\Delta[F(\varphi)]=\delta$.
\end{proof}

\begin{cor}\label{mymapinjective}
The homomorphism $\Upsilon: \operatorname{Br}_G(X)\mapsto 
\check{H}^2(G\ltimes X,{\mathcal S})$ is injective.
\end{cor}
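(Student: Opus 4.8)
The plan is to verify that $\ker\Upsilon$ is trivial, which suffices because $\Upsilon$ is a homomorphism (Proposition~\ref{mymaptotucech}). So suppose $\Upsilon([CT(X,\delta),\alpha])=1$. Fix a simplicial cover $\U^\bullet$ together with the data $\Phi_\bullet,u_\bullet$ computing a representative $\varphi(\alpha)$; by the definition of the inductive limit (Definition~\ref{TuCechDefn}) we may, after refining $\U^\bullet$ to a finer simplicial cover, assume that $\varphi(\alpha)=\partial_{Tu}\psi$ for some $\psi\in\check{C}^1(\sigma\U^\bullet,{\mathcal S})$.

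First I would show the underlying algebra is trivial. Lemma~\ref{forgetfullemma} gives $\Delta([F(\varphi(\alpha))])=\delta$, and evaluating $\varphi(\alpha)=\partial_{Tu}\psi$ at the ``identity $2$-simplex'' $((e,x),(e,x))$ exhibits $F(\varphi(\alpha))$ as an ordinary \v Cech coboundary in $\check{C}^2(\U^0,{\mathcal S})$ (this is the computation in the last lines of the proof of Lemma~\ref{forgetfullemma}); hence $\delta=\Delta([1])=0$ and $CT(X,\delta)\cong_{C_0(X)}C_0(X,\K)$. Transporting $\alpha$ along this isomorphism does not alter its outer conjugacy class, so we may assume $A=C_0(X,\K)$, and, since $\delta=0$, that every local trivialisation is a restriction of the global identity. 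Then all transition functions $v_{\lambda_0\lambda_1}$ are trivial and $\beta^{\alpha,\Phi}_{(\lambda_0(g,x)\lambda_1)}$ equals a single continuous map $\beta\colon G\times X\to\operatorname{Aut}\K$, namely $\beta_{(g,x)}(T)=\alpha_g(a)(x)$ with $a(g^{-1}x)=T$, which satisfies $\beta_{(g_1,x)}\beta_{(g_0,g_1^{-1}x)}=\beta_{(g_0g_1,x)}$ because $\alpha$ is an action and $G$ is abelian.

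Next I would lift $\beta$ to a global unitary $1$-cocycle. Replacing each local lift $u_{\lambda_{01}}$ by $\tilde u_{\lambda_{01}}:=\overline{\psi}\,u_{\lambda_{01}}$ (with $\psi$ evaluated at the appropriate face) leaves $\operatorname{Ad}\tilde u_{\lambda_{01}}=\beta$ unchanged, while the relation $\varphi(\alpha)=\partial_{Tu}\psi$ becomes precisely the $U(\H)$-valued $1$-cocycle identity for $\tilde u$. Since the transition functions are trivial, the remark following Lemma~\ref{onecocycle} (whose proof does not use commutativity of $\T$) forces $\tilde u_{\lambda_{01}}$ to be independent of the choice of cover indices, so these patch to a single continuous $\tilde u\colon G\times X\to U(\H)$ with $\operatorname{Ad}\tilde u(g,x)=\beta_{(g,x)}$ and $\tilde u(g_1,x)\tilde u(g_0,g_1^{-1}x)=\tilde u(g_0g_1,x)$; equivalently $\alpha_g(a)(x)=\tilde u(g,x)\,a(g^{-1}x)\,\tilde u(g,x)^*$ for all $a\in C_0(X,\K)$. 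Setting $w_g(x):=\tilde u(g,x)^*$, so that $w_g\in UM(C_0(X,\K))$, one checks $g\mapsto w_g$ is strictly continuous and (using $G$ abelian) that $\tau_g(a)=w_g\alpha_g(a)w_g^*$ and $w_{gh}=w_g\,\overline{\alpha}_g(w_h)$. Thus $\alpha$ is exterior equivalent to $\tau$, so $[C_0(X,\K),\alpha]=[C_0(X,\K),\tau]=0$, and $\Upsilon$ is injective.

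The hard part is the lifting step: converting the cohomological fact ``$\varphi(\alpha)$ is a coboundary after some simplicial refinement'' into a \emph{globally defined} cocycle $\tilde u$ on $G\times X$. The obstruction is that a refinement may introduce a nontrivial open cover of the base $X$, so the modified lifts initially carry base-cover indices; the reason for first forcing $\delta=0$ (hence trivial transition functions) is precisely to eliminate that base dependence, after which Lemma~\ref{onecocycle} collapses the remaining $G\times X$-cover index to produce the single map $\tilde u$. The verifications of strict continuity and of the exterior-equivalence identities are then routine.
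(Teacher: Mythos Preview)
Your strategy tracks the paper's, but there is a genuine gap at the patching step. Lemma~\ref{onecocycle} (and its remark for $U(\H)$-valued maps) only removes the index $\lambda_{01}$ coming from the cover $\U^1$ of $G\times X$; it says nothing about the base indices $\lambda_0,\lambda_1$ from $\U^0$. Even after you force trivial transition functions and choose the lifts $u$ to be independent of the base indices, the $1$-cochain $\psi$ with $\partial_{Tu}\psi=\varphi(\alpha)$ still carries them, so $\tilde u_{\lambda_0\lambda_1}:=\overline{\psi}\,u$ does too. Concretely, set $c_{\lambda_0\lambda_1}(x):=\tilde u_{\lambda_0\lambda_1}(e,x)$, which lies in $\T$ since $\beta_{(e,x)}=\mathrm{id}$; evaluating the $1$-cocycle identity for $\tilde u$ at $h=e$ and at $g=e$ gives $c\in\check Z^1(\U^0,\mathcal S)$ and
\[
\tilde u_{\lambda_0'\lambda_1'}(g,x)=c_{\lambda_1\lambda_1'}(x)\,c_{\lambda_0'\lambda_0}(g^{-1}x)\,\tilde u_{\lambda_0\lambda_1}(g,x),
\]
so the $\tilde u$'s glue to a single map on $G\times X$ only when $c\equiv 1$. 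The class $[c]\in\check H^1(\U^0,\mathcal S)\cong\check H^2(X,\underline{\Z})$ has no reason to vanish, so ``trivial transition functions'' does not eliminate the base dependence as you claim.

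This obstruction is exactly what the paper's functions $w_{\lambda_0}$ handle. The paper does not attempt to make $c$ trivial as a $\T$-valued class; instead, since $v_{\lambda_0\lambda_1}(e,\cdot)$ is an honest $U(\H)$-valued $1$-cocycle and $U(\H)$ is contractible, it is a $U(\H)$-coboundary $w_{\lambda_1}w_{\lambda_0}^*$, and the paper then verifies directly that $(g,x)\mapsto w_{\lambda_1}(x)^*\,v_{\lambda_0\lambda_1}(g,x)\,w_{\lambda_0}(g^{-1}x)$ is index-independent and is the required unitary $\tau$-cocycle. Your reduction to trivial transition functions does not bypass this: you still need a $U(\H)$-valued (not $\T$-valued) trivialisation of $c$, which is precisely the $w_\bullet$ correction the paper supplies.
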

\begin{proof}
Let $(CT(X,\delta),\alpha)\in\mathfrak{Br}_G(X)$ and let $\U^0$  be an open cover
of $X$ such that there are trivialisations $\Phi_{\lambda_0}:CT(X,\delta)
|_{U_{\lambda_0}}\to C_0(U_{\lambda_0},\K)$. Choose an open cover $\U^1$ 
of $G\times X$ and continuous maps 
$u_{\lambda_0\lambda_1\lambda_{01}}:U^1_{\lambda_0\lambda_1\lambda_{01}}\to U(\H)$ so that
$\beta^{\alpha,\Phi}|_{U^1_{\lambda_0\lambda_1\lambda_{01}}}=\operatorname{Ad} 
u_{\lambda_0\lambda_1\lambda_{01}}$, and then define $\varphi(\alpha)$ as in Equation 
(\ref{cocycledefn}). Let $\tau$ denote the $G$-action on $X$ and
assume, perhaps after a refinement, that $\varphi(\alpha)$ is a coboundary.  
We have to show $[CT(X,\delta),\alpha]=[C_0(X,\K),\tau]=0$ in $\mbox{Br}_G(X)$. 
This means finding an isomorphism $\Phi: CT(X,\delta)\to C_0(X,\K)$ such that 
$\Phi\circ \alpha\circ \Phi^{-1}$ is exterior equivalent to $\tau$. For this purpose, 
we may assume $\U$ is simplicial.
Now, as $\varphi(\alpha)$ is a Tu-\v{C}ech coboundary, there exists a 
Tu-\v{C}ech 1-cochain $\phi$ such that
\[\varphi(\alpha)_{\lambda_0\lambda_1\lambda_{2}\lambda_{01}
\lambda_{02}\lambda_{12}}(g,h,x)=\phi_{\lambda_1\lambda_2\lambda_{12}}(h,x)
\phi_{\lambda_0\lambda_1\lambda_{01}}(g,h^{-1}x)\phi_{\lambda_0\lambda_2\lambda_{02}}(gh,x)^*.\]
Observe that $x\mapsto \phi_{\lambda_0\lambda_1\tilde{\eta}(\lambda_1)}(e,x)$ defines a 
\v{C}ech 1-cochain in $\check{C}^1(\U^0,{\mathcal S})$, and moreover that the cocycle 
$F(\varphi(\alpha))$ from Lemma \ref{forgetfullemma} satisfies
\begin{align*}
F(\varphi(\alpha))_{\lambda_0\lambda_1\lambda_2}(x)=&
\varphi_{\lambda_0\lambda_1\lambda_2\tilde{\eta}(\lambda_1)\tilde{\eta}(\lambda_2)\tilde{\eta}
(\lambda_2)}(e,e,x)\\
=&\phi_{\lambda_1\lambda_2\tilde{\eta}(\lambda_2)}(e,x)\phi_{\lambda_0\lambda_1\tilde{\eta}
(\lambda_1)}(e,x)\phi_{\lambda_0\lambda_2\tilde{\eta}(\lambda_2)}(e,x)^*.
\end{align*}
Therefore $F(\varphi(\alpha))$ is a coboundary, and $\delta=0$. Consequently, 
there exists a $C_0(X)$-isomorphism $\Phi:CT(X,\delta)\to C_0(X,\K)$.

That the continuous maps $\phi^*u_{\lambda_0\lambda_1\lambda_{01}}:(g,x)\mapsto 
\phi^*_{\lambda_0\lambda_1\lambda_{01}}(g,x)u_{\lambda_0\lambda_1\lambda_{01}}(g,x)$ 
defined on $U^1_{\lambda_0\lambda_1\lambda_{01}}$ are independent of $\lambda_{01}$  
follows directly from the proof of Lemma \ref{onecocycle}, and we write 
$\{\phi^*u_{\lambda_0\lambda_1\lambda_{01}}\}=:\{v_{\lambda_0\lambda_1}\}$. Moreover, 
$\{v_{\lambda_0\lambda_1}\}$ satisfies
\begin{equation}\label{vequation}
v_{\lambda_1\lambda_2}(g,x)v_{\lambda_0\lambda_1}(h,g^{-1}x)v_{\lambda_0\lambda_2}(hg,x)^*=1.
\end{equation}
Now, since $\beta^{\alpha,\Phi}_{(\lambda_0(e,x)\lambda_1)}=\operatorname{Ad}
v_{\lambda_0\lambda_1}(e,x)$ are the transition functions for some (locally trivial) bundle 
$p:E\to X$ with fibre $\K$
and $CT(X,\delta)\stackrel{\cong}{\to} C_0(X,\K)$, it follows that $p:E\to X$ is isomorphic 
to the trivial bundle $X\times \K\to X$. Therefore, (perhaps after a refinement) there 
exist continuous maps $w:U^0_{\lambda_0}\to U(\H)$ such that for all $h\in C_0(X,\K)$,
$\Phi_{\lambda_0}\circ \Phi^{-1}(h)(x)=\operatorname{Ad} w_{\lambda_0}(x)(h),$
which implies that
\begin{equation}\label{vandw}
v_{\lambda_0\lambda_1}(x)=w_{\lambda_1}(x)w_{\lambda_0}(x)^*.
\end{equation}
Let $(g,x)\in G\times X$, and let $\lambda_0$ and $\lambda_1$ be such 
that $g^{-1}x\in U^0_{\lambda_0}$ and $x\in U^0_{\lambda_{1}}$. We claim that
\begin{equation}\label{unitaryalphacocycle}
(g,x)\mapsto w_{\lambda_1}(x)^* v_{\lambda_0\lambda_1}(g,x) w_{\lambda_0}(g^{-1}x)
\end{equation}
is independent of the indices $\lambda_0$ and $\lambda_1$, and in 
fact is a unitary $\tau$ cocycle implementing an exterior equivalence 
between $\Phi\circ\alpha_g\circ\Phi^{-1}$ and $\tau$. Indeed, let 
$\lambda_0'$ and $\lambda_1'$ be two other indices such that $g^{-1}x\in 
U^0_{\lambda_0'}$ and $x\in U^0_{\lambda_{1}'}$. Two applications of 
Equation (\ref{vequation}) give the identities
$$
v_{\lambda_0\lambda_1}(g,x)v_{\lambda_0'\lambda_0}(e,g^{-1}x)=
v_{\lambda_0'\lambda_1}(g,x),\quad \mbox{and}\ \
v_{\lambda_0'\lambda_1}(g,x)v_{\lambda_0'\lambda_1'}(g,x)^*=v_{\lambda_1\lambda_1'}(g,x)^*.
$$

Applying these as well as Equation (\ref{vandw}) shows
\begin{align*}
&[w_{\lambda_1}(x)^* v_{\lambda_0\lambda_1}(g,x) w_{\lambda_0}(g^{-1}x)][w_{\lambda_0'}
(g^{-1}x)^*v_{\lambda_0'\lambda_1'}(g,x)^* w_{\lambda_1'}(x)]  \\
&=w_{\lambda_1}(x)^* v_{\lambda_0\lambda_1}(g,x) v_{\lambda_0'\lambda_0}
(g^{-1}x)v_{\lambda_0'\lambda_1'}(g,x)^* w_{\lambda_1'}(x)\\
&=w_{\lambda_1}(x)^* v_{\lambda_0'\lambda_1}(g,x)v_{\lambda_0'\lambda_1'}(g,x)^* 
w_{\lambda_1'}(x)
=w_{\lambda_1}(x)^* v_{\lambda_1\lambda_1'}(g,x)^* w_{\lambda_1}(x)=1\,,
\end{align*}
proving that (\ref{unitaryalphacocycle}) is independent of any choices of indices. 
To see that (\ref{unitaryalphacocycle}) is a unitary $\tau$ cocycle, we need to 
check the conditions (\ref{exteriorcondition1}) and (\ref{exteriorcondition2}). 
For  (\ref{exteriorcondition1}) we have for any $f\in C_0(X,\K)$,

\begin{align*}
\Phi\circ\alpha_g\circ\Phi^{-1}(f)(x)=&\Phi\circ\Phi_{\lambda_1}^{-1}\circ 
\Phi_{\lambda_1}\circ\alpha_g\circ\Phi_{\lambda_0}^{-1}\circ \Phi_{\lambda_0}\circ\Phi^{-1}(f)(x)
\end{align*}\begin{align*}
=&\operatorname{Ad}[w_{\lambda_1}(x)^* v_{\lambda_0\lambda_1}(g,x) 
w_{\lambda_0}(g^{-1}x)]f(g^{-1}x)\\
=&\operatorname{Ad}[w_{\lambda_1}(x)^* v_{\lambda_0\lambda_1}(g,x) 
w_{\lambda_0}(g^{-1}x)](\tau_g f)(x).
\end{align*}
For  (\ref{exteriorcondition2}), let $(h,g,x)\in (G\ltimes X)^{(2)}$, and choose indices 
such that $h^{-1}g^{-1}x\in U^0_{\lambda_0}$, $g^{-1}x\in U^0_{\lambda_1}$ and 
$x\in U^0_{\lambda_2}$. Then we may complete the argument as 
Lemma \ref{cyclicpermute} and Equation (\ref{vequation}) imply
\begin{align*}
&[w_{\lambda_2}(x)^* v_{\lambda_1\lambda_2}(g,x) 
w_{\lambda_1}(g^{-1}x)][w_{\lambda_1}(g^{-1}x)^* 
v_{\lambda_0\lambda_1}(h,g^{-1}x) w_{\lambda_0}(h^{-1}g^{-1}x)]\\
&=w_{\lambda_2}(x)^* v_{\lambda_1\lambda_2}(g,x) 
v_{\lambda_0\lambda_1}(h,g^{-1}x) w_{\lambda_0}(h^{-1}g^{-1}x)\\
&=w_{\lambda_2}(x)^* v_{\lambda_0\lambda_2}(hg,x)w_{\lambda_0}(h^{-1}g^{-1}x)
=w_{\lambda_2}(x)^* v_{\lambda_0\lambda_2}(gh,x)w_{\lambda_0}(h^{-1}g^{-1}x).
\end{align*}
\end{proof}

Before we prove surjectivity, we need a digression that will ensure, given a 
Tu-\v{C}ech cocycle $\varphi$, there exist continuous unitary-valued maps satisfying (\ref{cocycledefn}).

\begin{defn}
A \emph{twist $E$ over a groupoid $\G$} is a principal $\T$-bundle $j:E\to \G^{(1)}$ 
equipped with a groupoid structure such that $E$ is a groupoid extension of $\G^{(1)}$ 
by $\G^{(0)}\times\T$:
\[\G^{(0)}\to\G^{(0)}\times\T\stackrel{i}{\hookrightarrow }E\stackrel{j}{\twoheadrightarrow} \G^{(1)}.\]
\end{defn}
Two twists $E\stackrel{j_1}{\to} \G^{(1)}$ and $F\stackrel{j_2}{\to} \G^{(1)}$ over $\G$ are 
\emph{isomorphic} if there is groupoid homomorphism $\phi:E\to F$ such that $\phi$ is a 
$\T$-bundle isomorphism and the following diagram commutes:

\centerline{\xymatrix{
E\ar[rr]^{\phi}\ar[dr]^{j^1}&& F\ar[dl]_{j_2}\\
&\G^{(1)}&
}}

\noindent The ``Baer" sum of twists $E$ and $F$ over $\G$,  is the twist $E\oplus F$ over $\G$
defined by:
\[E\oplus F=\{(\gamma_1,\gamma_2)\in E\times F: j_1(\gamma_1)=j_2(\gamma_2)\},\]
with the obvious projection to $\G^{(1)}$. With this operation, the set of isomorphism 
classes of twists over $\G$ forms a group denoted $Tw(\G)$. To give an example of a twist, we need the:

\begin{defn}
Let $\G$ be a topological groupoid. We define ${\mathcal  R}(\G)$ to be the set 
of continuous groupoid homomorphisms $\pi:\G^{(1)}\to \operatorname{Aut}\K$.
\end{defn}

\begin{example}[{\cite[Sect 8]{KumMuhRenWil98}}]
Let $\G$ be a groupoid and let $\pi\in {\mathcal  R}(\G)$. Then there is a twist $E(\pi)$ over $\G$ given by
$E(\pi):=\{(\gamma,u)\in \G\times U(\H): \pi_{\gamma}=\operatorname{Ad}u\}.$
The $\T$ action is the map $t\cdot(\gamma,u):=(\gamma,tu), t\in\T$, 
and the bundle projection is given by $(\gamma,u)\mapsto \gamma$.
\end{example}
The importance of the above example is captured in the next result.
\begin{prop}[{\cite[Prop 8.7]{KumMuhRenWil98}}]\label{kmrw87}
Let $\G$ be a second countable locally compact groupoid with Haar system. 
Then if $E\in \mbox{Tw}(\G)$ there is a $\pi\in {\mathcal  R}(\G)$ such that $E\cong E(\pi)$.
\end{prop}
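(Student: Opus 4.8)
The plan is to realise $E$ concretely as $E(\pi)$ by equipping the twist with a unitary representation. First I would construct a continuous $\T$-equivariant unitary representation of the groupoid $E$: a continuous map $V$ from $E$ into the unitaries of a fixed separable infinite-dimensional Hilbert space $\H$ satisfying $V(ef)=V(e)V(f)$, $V(te)=tV(e)$ for $t\in\T$, and sending the central copy $\G^{(0)}\times\T$ to scalars. Granting such a $V$, the remainder is formal. Since $j\colon E\to\G^{(1)}$ has local sections and any two elements of $j^{-1}(\gamma)$ differ by a unique element of $\T$, the assignment $\pi_\gamma:=\operatorname{Ad}(V(e))$ for any $e\in j^{-1}(\gamma)$ is a well-defined continuous map $\G^{(1)}\to\operatorname{Aut}\K$ (continuity of $\operatorname{Ad}\colon U(\H)\to\operatorname{Aut}\K$ is the same fact underlying the exact sequence (\ref{Uexactsequence})), and because $V$ is a homomorphism so is $\pi$; thus $\pi\in{\mathcal R}(\G)$. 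The map $\Theta\colon E\to E(\pi)$, $e\mapsto(j(e),V(e))$, lands in $E(\pi)$ by construction, is continuous, $\T$-equivariant and a groupoid homomorphism covering $\operatorname{id}_{\G^{(1)}}$; a $\T$-bundle morphism over the identity that respects the groupoid structure is automatically an isomorphism of twists, so $E\cong E(\pi)$.

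To build $V$ I would use the left regular representation of the twist. Fix a Haar system $\{\lambda^u\}_{u\in\G^{(0)}}$ for $\G$ and form the complex line bundle ${\mathcal L}:=(E\times\C)/\T$ over $\G^{(1)}$, where $\T$ acts by $t\cdot(e,z)=(te,t^{-1}z)$. For each $u\in\G^{(0)}$ set $\H_u:=L^2(\G_u,{\mathcal L}|_{\G_u},\lambda^u)$, the square-integrable sections of ${\mathcal L}$ over the source fibre $\G_u=s^{-1}(u)$. For $e\in E$ lying over $\gamma=j(e)$ with $r(\gamma)=u$ and $s(\gamma)=w$, define a unitary $L_e\colon\H_w\to\H_u$ by $(L_e\xi)(\eta):=e\cdot\xi(\gamma^{-1}\eta)$ for $\eta\in\G_u$, using multiplication in $E$, which identifies the fibre of ${\mathcal L}$ over $\gamma^{-1}\eta$ with the fibre over $\eta$. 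One checks directly that $L_{ef}=L_eL_f$, that $L_{te}=tL_e$, and that elements of the central $\G^{(0)}\times\T$ act by the corresponding scalars; the real content is verifying that $e\mapsto L_e$ is continuous for the natural continuous-field structure on $\{\H_u\}$, and this is where second countability and the Haar system are used, so that $\{\H_u\}$ is a genuine continuous field and the twisted sections behave well.

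Finally I would pass from the field $\{\H_u\}$ to a single Hilbert space. After tensoring every $\H_u$ with a fixed infinite-dimensional Hilbert space — harmless for all the relations above — we may assume each $\H_u$ is separable and infinite-dimensional, so that $\{\H_u\}$ is a continuous field with structure group $U(\H)$ in the strong operator topology. Since that group is contractible, the field is trivialisable, by the same mechanism used in Section \ref{ctstracesection} to identify $\check{H}^1(X,{\mathcal A})$ with $\check{H}^2(X,{\mathcal S})$; hence there is a continuous family of unitaries $W_u\colon\H_u\to\H$, and conjugating, $V_e:=W_{r(j(e))}\,L_e\,W_{s(j(e))}^{*}$ is the required $V\colon E\to U(\H)$. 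The main obstacle is this middle step: setting up the twisted regular representation rigorously — the continuous-field structure on $\{\H_u\}$ together with the continuity of $e\mapsto L_e$ in that topology — and then trivialising the field over the merely locally compact, second countable unit space $\G^{(0)}$. Granting those, the construction of $\pi$ and the isomorphism $E\cong E(\pi)$ are purely formal, as indicated in the first paragraph.
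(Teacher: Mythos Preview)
The paper does not prove this proposition; it is quoted verbatim from \cite[Prop~8.7]{KumMuhRenWil98} and used as a black box in the surjectivity argument for $\Upsilon$. So there is no ``paper's own proof'' to compare against.

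Your outline is essentially the argument in \cite{KumMuhRenWil98}: build the twisted left regular representation of $E$ on the continuous field $\{\H_u\}_{u\in\G^{(0)}}$, stabilise so all fibres are infinite-dimensional, trivialise the field using contractibility of $U(\H)$ (Dixmier--Douady over the paracompact base $\G^{(0)}$), and then read off $\pi$ and the twist isomorphism formally. The formal steps in your first paragraph are correct as stated, and you have correctly identified the only substantive work: the continuous-field structure on $\{\H_u\}$, continuity of $e\mapsto L_e$, and the trivialisation over a merely locally compact second countable base. Those are exactly the points handled in \cite{KumMuhRenWil98}, and there is nothing missing from your sketch beyond carrying them out.
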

The relevance of twists over a groupoid to Tu-\v{C}ech cohomology is the following:
\begin{prop}[{\cite[Prop 5.6]{Tu}}]\label{Tu56}
Let $\G$ be a topological groupoid with Haar system. Then there is a canonical group isomorphism
$\mbox{Tw}_{\U}(\G[\U^0],\T)\cong \check{H}^2(\sigma\U^\bullet,{\mathcal S}),$
for each open cover $\U^{\bullet}$ of $\G^\bullet$
where $\mbox{Tw}_{\U}(\G[\U^0],\T)$ denotes the subgroup of $\mbox{Tw}(\G[\U^0])$ 
consisting of extensions
$(\G[\U^0])^{(0)}\to(\G[\U^0])^{(0)}\times \T\hookrightarrow 
E\stackrel{j}{\twoheadrightarrow} (\G[\U^0])^{(1)}$
such that $j$ admits a continuous lifting over each open set $U^1_{\lambda}$, $\lambda\in \Lambda_1'$.
\end{prop}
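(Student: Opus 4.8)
The plan is to construct mutually inverse group homomorphisms. In one direction this is the abstract form of the passage $[CT(X,\delta),\alpha]\mapsto[\varphi(\alpha)]$ carried out in Section \ref{cohomdefn}; in the other it is a groupoid analogue of the classical argument (the one behind $\check{H}^1(X,{\mathcal A})\cong\check{H}^2(X,{\mathcal S})$) that builds a $\T$-bundle together with structure from \v{C}ech data.

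\emph{Twists to cohomology.} Let $E\stackrel{j}{\twoheadrightarrow}(\G[\U^0])^{(1)}$ be a twist lying in $\mbox{Tw}_\U$. Writing arrows of $\G[\U^0]$ as triples $(\lambda_0,\gamma,\lambda_1)$, the sets $\widetilde{U}_\lambda:=\{(\lambda_0,\gamma,\lambda_1):\gamma\in U^1_\lambda\}$ for $\lambda=\lambda_0\lambda_1\lambda_{01}\in\Lambda_1'$ form an open cover of $(\G[\U^0])^{(1)}$, and by hypothesis $j$ has a continuous section $s_\lambda$ over each $\widetilde{U}_\lambda$. For $\mu\in\Lambda_2'$ and a composable pair of arrows in $U^2_\mu$, the groupoid product in $E$ of the sections over the two outer faces, times the inverse of the section over the composed face, lies over a unit arrow, hence in $(\G[\U^0])^{(0)}\times\T$; calling this scalar $\varphi_\mu$ defines a Tu-\v{C}ech $2$-cochain, by the same recipe as (\ref{cocycledefn}). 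Associativity of multiplication in $E$ together with the presimplicial identities among the face maps gives $\partial_{Tu}\varphi=1$; replacing each $s_\lambda$ by $t_\lambda s_\lambda$ with $t_\lambda:\widetilde{U}_\lambda\to\T$ changes $\varphi$ by $\partial_{Tu}t$, so the class $[\varphi]$ depends only on the isomorphism class of $E$; and the Baer sum of twists corresponds to the pointwise product of the $\varphi$'s, so $E\mapsto[\varphi]$ is a homomorphism.

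\emph{Cohomology to twists.} Given a Tu-\v{C}ech $2$-cocycle $\varphi$, I would use Proposition \ref{kmrw87} in reverse: it suffices to produce a continuous $\pi\in{\mathcal  R}(\G[\U^0])$ that admits a continuous $U(\H)$-valued lift over each $U^1_\lambda$, and then set $E=E(\pi)$, which lies in $\mbox{Tw}_\U$ precisely because $\gamma\mapsto(\gamma,u_\lambda(\gamma))$ is a continuous section over $U^1_\lambda$. To build $\pi$: using the exact sequence (\ref{Uexactsequence}) and local sections of $U(\H)\to\operatorname{Aut}\K$, choose (after refining $\U^1$ if necessary, exactly as in Lemma \ref{u1coverexists}) continuous maps $u_\lambda:U^1_\lambda\to U(\H)$; the components of $\varphi$ then prescribe how the local automorphisms $\operatorname{Ad}u_\lambda$ are to be glued, and the argument that the result is a single well-defined continuous groupoid homomorphism $\pi$ is the computation underlying Lemma \ref{onecocycle} together with the cocycle identity $\partial_{Tu}\varphi=1$. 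A change of the $u_\lambda$ or of the representative of $[\varphi]$ only alters $E(\pi)$ within its isomorphism class, so this is well defined, and it is inverse to the first map: feeding $\gamma\mapsto(\gamma,u_\lambda(\gamma))$ into the construction of the previous paragraph returns $\varphi$ up to a coboundary, and running the two constructions in the opposite order recovers a twist up to isomorphism.

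The genuinely delicate point is this globalization, and it is where the hypotheses are used up. One needs both that the local $U(\H)$-valued lifts of a $\T$-valued cocycle exist --- which rests on the liftability condition built into the definition of $\mbox{Tw}_\U$, the contractibility of $U(\H)$, and the existence of local sections of $U(\H)\to\operatorname{Aut}\K$ --- and that these lifts assemble into a genuine continuous groupoid homomorphism into $\operatorname{Aut}\K$; the latter is exactly what Proposition \ref{kmrw87} provides, and is the reason $\G$ is required to be second countable, locally compact and equipped with a Haar system. Everything else is \v{C}ech bookkeeping of the same flavour as the verification of the properties of $\Upsilon$ in Section \ref{cohomdefn}.
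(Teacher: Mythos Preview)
The paper does not actually prove this proposition; it is quoted from \cite{Tu}, and only the forward map is described (in the paragraph following the statement): given $E\in\mbox{Tw}_\U$, first invoke Proposition~\ref{kmrw87} to write $E\cong E(\pi)$ for some $\pi\in{\mathcal R}(\G[\U^0])$, then lift $\pi$ locally to $u_\lambda\in C(U^1_\lambda,U(\H))$, and finally define $\varphi$ by formula~(\ref{cocycledefn}). Your forward direction takes a different and more elementary route: you use the continuous sections $s_\lambda$ of the $\T$-bundle $j$ directly and read off $\varphi$ from the groupoid multiplication in $E$, without passing through $\pi$ or $U(\H)$. That is correct and, incidentally, does not use the Haar system at all.

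Your reverse direction, however, has a genuine gap. You propose to build $\pi\in{\mathcal R}(\G[\U^0])$ by ``choosing'' maps $u_\lambda:U^1_\lambda\to U(\H)$ and letting $\varphi$ ``prescribe how the local automorphisms $\operatorname{Ad}u_\lambda$ are to be glued''. But there is nothing to lift yet: the exact sequence~(\ref{Uexactsequence}) and local sections of $U(\H)\to\operatorname{Aut}\K$ are only useful once one already has a map into $\operatorname{Aut}\K$, and that map is the very $\pi$ you are trying to construct. Moreover, $\varphi$ is a $2$-cocycle --- a function of composable \emph{pairs} of arrows --- not a $1$-cocycle on overlaps $U^1_\lambda\cap U^1_{\lambda'}$, so it does not glue locally defined automorphisms of $\K$ into a global one; Lemma~\ref{onecocycle} goes the other way (it extracts independence from an existing cocycle, it does not manufacture unitaries). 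What is actually needed is a direct construction of the twist from $\varphi$: form $\coprod_\lambda U^1_\lambda\times\T$, use $\varphi$ on degenerate $2$-simplices (those with one arrow a unit) to identify fibres over the same arrow of $\G[\U^0]$, and use $\varphi$ on generic $2$-simplices to define the multiplication; the cocycle identity makes both well-defined and associative. Proposition~\ref{kmrw87} then serves only to recognise this abstractly built twist as some $E(\pi)$ --- exactly how the paper uses it in Corollary~\ref{UpsilonSurjective} --- rather than as a device for producing $\pi$ out of $\varphi$ in the first place. Your closing paragraph inherits this confusion: the ``local $U(\H)$-valued lifts of a $\T$-valued cocycle'' are not a meaningful object here, and Proposition~\ref{kmrw87} does not assemble lifts into a homomorphism, it takes a twist as input.
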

To write down this canonical isomorphism, first let $E\in \mbox{Tw}_{\U}(\G[\U^0],\T)$ be a twist. 
By Proposition \ref{kmrw87}, we may assume $E=E(\pi)$ for some $\pi\in {\mathcal  R}(\G)$. 
Then, because $E(\pi)\to (\G[\U^0])^{(1)}$ has continuous liftings over each open set 
$U^1_{\lambda}$, there exist continuous maps 
$u_{\lambda_0\lambda_1\lambda_{01}}:U^1_{\lambda_0\lambda_1\lambda_{01}}\to U(\H)$ 
such that $\pi_{(g,x)}=\operatorname{Ad}u_{\lambda_0\lambda_1\lambda_{01}}(g,x)$. 
One then defines a cocycle $\varphi\in \check{Z}^2(\sigma\U^\bullet,{\mathcal S})$ as in 
Equation (\ref{cocycledefn}).

\begin{cor}\label{UpsilonSurjective}
The homomorphism $\Upsilon:\operatorname{Br}_G(X)\mapsto 
\check{H}^2(G\ltimes X,{\mathcal S})$ is surjective.
\end{cor}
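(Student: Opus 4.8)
The plan is to run the construction of $\Upsilon$ in reverse, using Propositions \ref{Tu56} and \ref{kmrw87} to manufacture a system $(CT(X,\delta),\alpha)$ out of a given cohomology class. Starting from a class in $\check{H}^2(G\ltimes X,{\mathcal S})$, I would first pass to a countable refinement and represent it by a cocycle in $\check{Z}^2(\sigma\U^\bullet,{\mathcal S})$ generated by open covers $\U^0$ of $X$ and $\U^1$ of $G\times X$. By Proposition \ref{Tu56} this cocycle corresponds to a twist $E\in\mbox{Tw}_{\U}(\G[\U^0],\T)$ over the cover groupoid $\G[\U^0]$ of $\G=G\ltimes X$, and, as $\G[\U^0]$ is second countable locally compact with Haar system, Proposition \ref{kmrw87} yields a continuous groupoid homomorphism $\pi:(\G[\U^0])^{(1)}\to\operatorname{Aut}\K$ with $E\cong E(\pi)$.

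Next I would extract a continuous trace algebra from $\pi$. The restriction of $\pi$ to the unit-type arrows $(\lambda_0,(e,x),\lambda_1)$, $x\in U^0_{\lambda_0\lambda_1}$, is by the homomorphism property a \v{C}ech $1$-cocycle $x\mapsto\pi_{(\lambda_0,(e,x),\lambda_1)}\in\check{Z}^1(\U^0,{\mathcal A})$; lifting it (after a further refinement) through $U(\H)\to\operatorname{Aut}\K$ produces unitaries $v_{\lambda_0\lambda_1}$ with $\operatorname{Ad}v_{\lambda_0\lambda_1}=\pi_{(\lambda_0,(e,\cdot),\lambda_1)}$. I would then set $\delta:=\Delta([\check\partial v])\in\check{H}^3(X,\underline{\Z})$ and take $CT(X,\delta)$ to be the corresponding continuous trace algebra, with trivialisations $\Phi_{\lambda_0}$ satisfying $\Phi_{\lambda_1}\circ\Phi_{\lambda_0}^{-1}=\operatorname{Ad}v_{\lambda_0\lambda_1}$. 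The action $\alpha$ would then be defined, for $g\in G$ and $a\in CT(X,\delta)$, by
\[\Phi_{\lambda_1}(\alpha_g(a))(x):=\pi_{(\lambda_0,(g,x),\lambda_1)}\bigl(\Phi_{\lambda_0}(a)(g^{-1}x)\bigr),\qquad g^{-1}x\in U^0_{\lambda_0},\ x\in U^0_{\lambda_1}.\]
Using that $\pi$ is a groupoid homomorphism and that its values on unit-type arrows are the transition unitaries, one checks that the right-hand side does not depend on $\lambda_0$ or $\lambda_1$, so $\alpha_g(a)$ is a well-defined continuous section of the bundle underlying $CT(X,\delta)$, with $g\mapsto\alpha_g$ point-norm continuous; the identities $\alpha_e=\operatorname{id}$ and $\alpha_g\alpha_h=\alpha_{gh}$ are exactly the composition law for $\pi$ in $\G[\U^0]$, and since automorphisms of $\K$ are $\C$-linear, $\alpha$ preserves the given action $\tau$ on the spectrum. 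Hence $(CT(X,\delta),\alpha)\in\mathfrak{Br}_G(X)$.

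It then remains to recognise $\Upsilon([CT(X,\delta),\alpha])$ as the class we started with. By the displayed formula, $\beta^{\alpha,\Phi}_{(\lambda_0(g,x)\lambda_1)}=\pi_{(\lambda_0,(g,x),\lambda_1)}$, so any unitary lifts $u_{\lambda_0\lambda_1\lambda_{01}}$ produced by Lemma \ref{u1coverexists} are unitary lifts of $\pi$, and the cocycle $\varphi(\alpha)$ of (\ref{cocycledefn}) is precisely the cocycle attached to $E(\pi)\cong E$ by the canonical isomorphism of Proposition \ref{Tu56}; since that isomorphism is compatible with refinement of the cover, $[\varphi(\alpha)]$ is the original class in $\check{H}^2(G\ltimes X,{\mathcal S})$. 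The hard part will be the bookkeeping in the middle steps — verifying carefully that $\alpha$ really is a well-defined, point-norm continuous $G$-action preserving the spectral action, and arranging the successive refinements of $\U^0$ and $\U^1$ coherently so that the resulting Tu-\v{C}ech cocycle genuinely represents the original class in the direct limit.
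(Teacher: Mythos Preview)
Your proposal is correct and follows essentially the same route as the paper: both arguments invoke Propositions \ref{Tu56} and \ref{kmrw87} to extract from a Tu--\v{C}ech cocycle a groupoid homomorphism $\pi:(\G[\U^0])^{(1)}\to\operatorname{Aut}\K$ (equivalently, unitary lifts $u_{\lambda_0\lambda_1\lambda_{01}}$ with $\pi=\operatorname{Ad}u$), and then build the continuous trace algebra and the $G$-action from this data. The only cosmetic difference is that the paper constructs the bundle $E\to X$ explicitly as a quotient by the equivalence relation $(\lambda_0,x,T)\sim(\lambda_1,x,\operatorname{Ad}u_{\lambda_0\lambda_1\bullet}(e,x)T)$ and takes $\Gamma_0(E,X)$, whereas you extract the transition cocycle, compute the Dixmier--Douady class $\delta$, and invoke the classification to obtain $CT(X,\delta)$ abstractly; these are equivalent.
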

\begin{proof}
Fix a class $c\in \check{H}^2(G\ltimes X,{\mathcal S})$, and  an open cover 
$\U^\bullet$ of $\G^\bullet$ such that there is a cocycle $\varphi$ with 
$c=[\varphi]\in\check{H}^2(\sigma\U^\bullet,{\mathcal S})$. From the discussion 
just prior to this corollary, we know there exist continuous maps 
$u_{\lambda_0\lambda_1\lambda_{01}}:U^1_{\lambda_0\lambda_1\lambda_{01}}\to U(\H)$ such that
\[
\varphi_{\lambda_0\lambda_1\lambda_2\lambda_{01}\lambda_{02}\lambda_{12}}(g_0,g_1,x)
:=u_{\lambda_1\lambda_2\lambda_{12}}(g_1,x)u_{\lambda_0\lambda_1\lambda_{01}}
(g_0,g_1^{-1}x)u_{\lambda_0\lambda_2\lambda_{02}}(g_0g_1,x)^*.
\]
We use this data to construct a (locally trivial) bundle over $X$ with fibre $\K$, 
and then define an action of $G$ on the C*-algebra of sections.
First, note the above identity implies
\begin{align}
\operatorname{Ad}u_{\lambda_1\lambda_2\lambda_{12}}(g_1,x)
\operatorname{Ad}u_{\lambda_0\lambda_1\lambda_{01}}(g_0,g_1^{-1}x)
\operatorname{Ad}u_{\lambda_0\lambda_2\lambda_{02}}(g_0g_1,x)^*=1.
\end{align}
Then, the proof of Lemma \ref{onecocycle} shows that 
$\operatorname{Ad}u_{\lambda_0\lambda_1\lambda_{01}}$ is independent of 
$\lambda_{01}$. Now, we define a bundle $E\stackrel{p}{\to}X$ as the set
$E:= \{(\lambda_0,x,T):\lambda_0\in\mathcal{I}^0,x\in U^0_{\lambda_0},$ and $T\in\K \}/\sim,$
where $\sim$ is the equivalence relation
$(\lambda_0,x,T)\sim (\lambda_1,x,\operatorname{Ad}u_{\lambda_0\lambda_1\bullet}(e,x)T).$
The C*-algebra of sections $\Gamma_0(E,X)$ of $E$ is then a continuous trace 
C*-algebra. To define an action, let $a\in \Gamma_0(E,X)$,  $a(g^{-1}x)=[\lambda_0,g^{-1}x,T]$ 
and set
$(\alpha_ga)(x):=[\lambda_1,x,\operatorname{Ad}u_{\lambda_0\lambda_1\bullet}(g,x)T].$
To show that the definition is independent of the choice of representative 
$(\lambda_0,g^{-1}x,T)$ and index $\lambda_1$, choose another representative 
$(\lambda_0',g^{-1}x,T')$ and index $\lambda_1'$. By the definition of $\sim$ we have:
$(\lambda_0,g^{-1}x,T)\sim (\lambda_0',g^{-1}x,\operatorname{Ad}
u_{\lambda_0\lambda_0'\bullet}(e,g^{-1}x)T),$
which implies $k'=\operatorname{Ad}u_{\lambda_0\lambda_0'\bullet}(e,g^{-1}x)k$. 
This fact and the cocycle identities
$\operatorname{Ad}u_{\lambda_1'\lambda_1\bullet}(e,x)
\operatorname{Ad}u_{\lambda_0\lambda_1'\bullet}(g,x)
\operatorname{Ad}u_{\lambda_0\lambda_1\bullet}(g,x)^*=1,$ and
$\operatorname{Ad}u_{\lambda_0'\lambda_1'\bullet}(g,x)
\operatorname{Ad}u_{\lambda_0\lambda_0'\bullet}(e,g^{-1}x)
\operatorname{Ad}u_{\lambda_0\lambda_1'\bullet}(g,x)^*=1,$ imply
\begin{align*}
\quad\quad
[\lambda_1',x,\operatorname{Ad}u_{\lambda_0'\lambda_1'\bullet}(g,x)T']=[\lambda_1,x,
&\operatorname{Ad}u_{\lambda_1'\lambda_1\bullet}(e,x)u_{\lambda_0'\lambda_1'\bullet}
(g,x)u_{\lambda_0\lambda_0'\bullet}(e,g^{-1}x)T]\quad\quad\quad\quad\quad\quad\quad\quad\\
=[\lambda_1,x,&\operatorname{Ad}u_{\lambda_1'\lambda_1\bullet}(e,x)
u_{\lambda_0\lambda_1'\bullet}(g,x)T]\\
=[\lambda_1,x,&\operatorname{Ad}u_{\lambda_0\lambda_1\bullet}(g,x)T].
\end{align*}
Therefore $\alpha$ is well-defined, and we have an element 
$(\Gamma_0(E,X),\alpha)\in \mathfrak{Br}_G(X)$. By reversing this construction, 
it is easy to see  that $(\Gamma_0(E,X),\alpha)$ maps to the cocycle $\varphi$ 
under the homomorphism from Proposition \ref{mymaptotucech}.
\end{proof}

Thus we have injectivity (Corollary \ref{mymapinjective}) and surjectivity 
(Corollary \ref{UpsilonSurjective}) of $\Upsilon$
proving Theorem \ref{tusbigtheorem}.

\section{Raeburn and Williams' Equivariant Cohomology}\label{inclusion}

Let $G$ be a locally compact abelian group and $N$ a closed subgroup such that 
$G\to G/N$ and $\hat{G}\to\hat{N}$ admit local sections  and let $\pi:X\to Z$ be a 
locally trivial principal $G/N$-bundle over a paracompact space $Z$. Recall the 
definition of $H^k_{G}(X,{\mathcal S})$ from the introduction, which we know from 
Lemma \ref{IsotoKerM} satisfies $H^k_{G}(X,{\mathcal S})\cong \ker \operatorname{M}$.
\begin{theorem}[{\cite[Thm 2.1]{RaeWil93}}]\label{invarianttheorem}
With $G$, $N$ as above, if $(CT(X,\delta),\alpha)$ is an $N$-principal system such that the 
$G$-action $\tau$ on $X$ induces a principal $G/N$-bundle $\pi:X\to Z$, then there is a 
locally finite cover $\{W_{\lambda_0}\}_{\lambda_0\in \mathcal{I}}$ of $Z$ by relatively 
compact open sets such that
\begin{enumerate}
\item[\emph{(1)}] for each $\lambda_0\in \mathcal{I}$ there is a $C_0(\pi^{-1}(W_{\lambda_0}))$-
isomorphism $\Phi_{\lambda_0}$ of $CT(X,\delta)|_{\pi^{-1}(W_{\lambda_0})}$ onto $C_0(\pi^{-1}
(W_{\lambda_0}),\K)$ which carries $\alpha$ to an action exterior equivalent to $\tau$, and
\item[\emph{(2)}] for each pair $\lambda_0,\lambda_1\in \mathcal{I}$, there is a unitary 
$v_{\lambda_0\lambda_1}\in UM(C_0(\pi^{-1}(W_{\lambda_0\lambda_1}),\K))$ such that 
$\Phi_{\lambda_1}\circ \Phi_{\lambda_0}^{-1}=\operatorname{Ad} v_{\lambda_0\lambda_1}$.
\end{enumerate}
\end{theorem}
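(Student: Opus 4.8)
The plan is to reduce the statement to a local one over $Z$, trivialise the restricted system over a bundle chart by an imprimitivity argument, and then extract the compatible transition functions from the $G$-equivariance of the resulting trivialisations. Since $Z$ is paracompact and $\pi\colon X\to Z$ is a locally trivial principal $G/N$-bundle, I would first choose a locally finite cover $\{W_{\lambda_0}\}_{\lambda_0\in\mathcal I}$ of $Z$ by relatively compact open sets over each of which $\pi$ trivialises, $\pi^{-1}(W_{\lambda_0})\cong W_{\lambda_0}\times G/N$ as principal $G/N$-bundles. As $N$ acts trivially on $X$ in this situation, the $G$-action $\tau$ factors through $q\colon G\to G/N$, so this identification is automatically $G$-equivariant, $G$ acting on $W_{\lambda_0}\times G/N$ through $q$ and translation of the second factor. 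It then suffices to produce, for each $\lambda_0$, a $C_0(\pi^{-1}(W_{\lambda_0}))$-isomorphism of $CT(X,\delta)|_{\pi^{-1}(W_{\lambda_0})}$ onto $C_0(\pi^{-1}(W_{\lambda_0}),\K)$ carrying $\alpha$ to an action exterior equivalent to $\tau$; equivalently, to show that the restricted system is the zero class of $\Br_G(\pi^{-1}(W_{\lambda_0}))$.

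Fix $\lambda_0$, write $W=W_{\lambda_0}$, and recall that on $\pi^{-1}(W)=W\times G/N$ the group $G/N$ acts freely and properly with orbit space $W$. Green's imprimitivity theorem — equivalently, the equivariant Morita equivalence obtained by dividing out this free proper action — then identifies $\Br_G(W\times G/N)$ with $\Br_N(W)$, where $N$ acts trivially on $W$, in a way that intertwines the Mackey obstruction maps. Since $(CT(X,\delta),\alpha)$ is $N$-principal, the image of $[CT(X,\delta)|_{\pi^{-1}(W)},\alpha]$ in $\Br_N(W)$ has trivial Mackey obstruction, so by the Dixmier--Douady description of the trivial-action equivariant Brauer group recalled in the introduction it lies in $\check H^2(W,\mathcal S)\oplus\check H^1(W,\mathcal N_{ab})$, with $\check H^2(W,\mathcal S)\cong\check H^3(W,\underline\Z)$ and $\mathcal N_{ab}$ the sheaf of germs of continuous $N_{ab}$-valued functions. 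Refining the cover so that these two groups vanish for every $W_{\lambda_0}$ — for $Z$ a manifold one may take all finite intersections of the $W_{\lambda_0}$ contractible — the restricted system becomes zero in $\Br_G(\pi^{-1}(W))$, and unwinding the outer conjugacy gives the isomorphism $\Phi_{\lambda_0}$ of $(1)$ together with a unitary $\tau$-cocycle implementing the exterior equivalence of $\Phi_{\lambda_0}\circ\alpha\circ\Phi_{\lambda_0}^{-1}$ with $\tau$. In particular $\delta$ restricts trivially to each $\pi^{-1}(W_{\lambda_0})$.

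On $W_{\lambda_0\lambda_1}=W_{\lambda_0}\cap W_{\lambda_1}$ the composite $\Phi_{\lambda_1}\circ\Phi_{\lambda_0}^{-1}$ is a $C_0(\pi^{-1}(W_{\lambda_0\lambda_1}))$-automorphism of $C_0(\pi^{-1}(W_{\lambda_0\lambda_1}),\K)$, i.e.\ $\operatorname{Ad}\psi_{\lambda_0\lambda_1}$ for a continuous $\operatorname{Aut}\K$-valued $\psi_{\lambda_0\lambda_1}$ on $W_{\lambda_0\lambda_1}\times G/N$; the obstruction to lifting $\psi_{\lambda_0\lambda_1}$ through the sequence (\ref{Uexactsequence}), and hence to writing $\Phi_{\lambda_1}\circ\Phi_{\lambda_0}^{-1}=\operatorname{Ad}v_{\lambda_0\lambda_1}$ with $v_{\lambda_0\lambda_1}\in UM(C_0(\pi^{-1}(W_{\lambda_0\lambda_1}),\K))$, lies in $\check H^2(\pi^{-1}(W_{\lambda_0\lambda_1}),\mathcal S)$. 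Because the $\Phi_{\lambda_0}$ were produced $G$-equivariantly up to exterior equivalence and $G/N$ acts transitively on the fibres of $\pi$, one checks $\psi_{\lambda_0\lambda_1}$ is constant on $G/N$-orbits up to an inner perturbation, so its obstruction is pulled back from $\check H^2(W_{\lambda_0\lambda_1},\mathcal S)$, which vanishes after the refinement above; this gives $(2)$. I expect the imprimitivity identification $\Br_G(W\times G/N)\cong\Br_N(W)$, together with the verification that it respects the Mackey obstruction map, to be the main difficulty, since this is the step that actually forces the restricted system to be trivial once its Mackey obstruction vanishes; the reduction of the lifting obstruction for $\psi_{\lambda_0\lambda_1}$ to a class on the base is a secondary technical point.
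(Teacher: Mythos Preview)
The paper does not prove this theorem: it is quoted verbatim from \cite[Thm~2.1]{RaeWil93} and used as a black box, so there is no ``paper's own proof'' to compare against.

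Your outline is along the standard lines of the Raeburn--Williams argument (local triviality of the bundle, reduction via imprimitivity to the trivial $N$-action on the base, and then killing the remaining invariants by shrinking the cover), and part~(1) is essentially correct in spirit. Two points deserve attention, however.

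First, a degree slip: the obstruction to lifting a single continuous map $\psi_{\lambda_0\lambda_1}\colon \pi^{-1}(W_{\lambda_0\lambda_1})\to\operatorname{Aut}\K$ through (\ref{Uexactsequence}) lies in $\check H^{1}(\pi^{-1}(W_{\lambda_0\lambda_1}),\mathcal S)\cong\check H^{2}(\pi^{-1}(W_{\lambda_0\lambda_1}),\underline{\Z})$, not in $\check H^{2}(\cdot,\mathcal S)$; a global section of $\mathcal A$ is a $0$-cochain, and the connecting map lands one degree up.

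Second, and more substantively, your handling of~(2) is incomplete. Even with $W_{\lambda_0\lambda_1}$ contractible, $\pi^{-1}(W_{\lambda_0\lambda_1})\simeq W_{\lambda_0\lambda_1}\times G/N$ has $\check H^{2}(\cdot,\underline{\Z})\cong\check H^{2}(G/N,\underline{\Z})$, which is nonzero for $G/N=\T^n$, $n\geq 2$. Your observation that $\psi_{\lambda_0\lambda_1}$ is constant on $G/N$-orbits up to inner automorphisms is correct and is the key, but it does not by itself force the obstruction to be pulled back from $W_{\lambda_0\lambda_1}$. What one actually does is lift $\psi_{\lambda_0\lambda_1}$ along a section $\sigma\colon W_{\lambda_0\lambda_1}\to\pi^{-1}(W_{\lambda_0\lambda_1})$ (possible since $W_{\lambda_0\lambda_1}$ is contractible) and then \emph{propagate} that lift over the fibres using the already unitary-valued exterior-equivalence cocycles $u^{\lambda_0}_{g},u^{\lambda_1}_{g}$ via the identity $\psi(x)=\operatorname{Ad}u^{\lambda_1}_{g}(x)\circ\psi(g^{-1}x)\circ\operatorname{Ad}u^{\lambda_0}_{g}(x)^{-1}$; the $1$-cocycle conditions on the $u^{\lambda_i}$ are precisely what make this extension single-valued. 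You should make this step explicit rather than asserting that the obstruction descends.
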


So for each $\lambda_0\in \mathcal{I}$ the isomorphism $\Phi_{\lambda_0}$ 
maps $\alpha$ to an action exterior equivalent to $\tau$ on $C_0(\pi^{-1}(W_{\lambda_0}),\K)$. 
This means there are (strictly) continuous maps 
$u_{\lambda_0}:G\to C(\pi^{-1}(W_{\lambda_0}),U(\H))$ such that for any 
section $h_{\lambda_0}$ in $C_0(\pi^{-1}(W_{\lambda_0}),\K)$
\begin{align}\label{g1g2uequation}
\Phi_{\lambda_0}\circ\alpha_{g_1}\circ \Phi_{\lambda_0}^{-1}(h_{\lambda_0})= 
\operatorname{Ad}u_{\lambda_0}^{g_1}\circ \tau_{g_1}(h_{\lambda_0}),\quad\quad
u_{\lambda_0}^{g_1g_0}(x)=u_{\lambda_0}^{g_1}(x)u_{\lambda_0}^{g_0}(g_1^{-1}x).
\end{align}
It is also evident that for any $\lambda_0,\lambda_1\in \mathcal{I}$ and appropriate 
sections in $CT(X,\delta)|_{W_{\lambda_0\lambda_1}}$ that
{$\Phi_{\lambda_0}^{-1}\circ \operatorname{Ad}u_{\lambda_0}^{g_1}\circ \tau_{g_1}
\circ \Phi_{\lambda_0}=\Phi_{\lambda_1}^{-1}\circ \operatorname{Ad}u_{\lambda_1}^{g_1}
\circ \tau_{g_1}\circ \Phi_{\lambda_1}$,
which implies that we have the identity
$\Phi_{\lambda_1}\circ\Phi_{\lambda_0}^{-1}\circ \operatorname{Ad}u_{\lambda_0}^{g_1}\circ 
\tau_{g_1}\circ \Phi_{\lambda_0}\circ \Phi_{\lambda_1}^{-1}=\operatorname{Ad}u_{\lambda_1}^{g_1}
\circ \tau_{g_1}.$
This means that, if $h$ is a section in $C_0(\pi^{-1}(W_{\lambda_0\lambda_1}),\K)$ then
\begin{align*}
&\Phi_{\lambda_1}\circ\Phi_{\lambda_0}^{-1}\circ \operatorname{Ad}u_{\lambda_0}^{g_1}\circ 
\tau_{g_1}\circ \Phi_{\lambda_0}\circ \Phi_{\lambda_1}^{-1}(h)(x)
\\
= &v_{\lambda_0\lambda_1}(x)u_{\lambda_0}^{g_1}
(x)v_{\lambda_0\lambda_1}^*(g_1^{-1}x)h(g_1^{-1}x)v_{\lambda_0\lambda_1}
(g_1^{-1}x)u_{\lambda_0}^{g_1}(x)^*v_{\lambda_0\lambda_1}^*=u_{\lambda_1}^{g_1}
(x)h(g_1^{-1}x)u_{\lambda_1}^{g_1}(x)^*.
\end{align*}}
Hence there is a collection of continuous functions $\eta_{\lambda_0\lambda_1}:G\times \pi^{-1}
(W_{\lambda_0\lambda_1})\to \T$ defined by
\begin{equation}\label{rwetaeqn}
\eta_{\lambda_0\lambda_1}(g_1,x):=u_{\lambda_1}^{g_1}(x)v_{\lambda_0\lambda_1}
(g_1^{-1}x)u_{\lambda_0}^{g_1}(x)^*v_{\lambda_0\lambda_1}(x)^*.
\end{equation}
Finally, there is the usual cocycle $\nu_{\lambda_0\lambda_1\lambda_2}:\pi^{-1}
(W_{\lambda_0\lambda_1\lambda_2})\to \T$ given by
\begin{equation}\label{rwnueqn}
\nu_{\lambda_0\lambda_1\lambda_2}(x):=v_{\lambda_1\lambda_2}(x)v_{\lambda_0\lambda_1}
(x)v_{\lambda_0\lambda_2}^*(x),
\end{equation}
which clearly maps to the Dixmier-Douady class under 
$\Delta:\check{H}^2(X,{\mathcal S})\to \check{H}^3(X,\underline{\Z})$.

\begin{prop}
The pair $(\nu,\eta)$ defines a class $[\nu,\eta]$ in 
$H^2_{G}(\{\pi^{-1}(W_{\lambda_0})\},{\mathcal S})$ that is independent of the choice 
of local trivialisations $\Phi_{\lambda_0}$ and continuous maps 
$u_{\lambda_0}:G\to UM(C_0(\pi^{-1}(W_{\lambda_0}),\K))$.
\end{prop}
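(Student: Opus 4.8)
The plan is, after noting that $(\nu,\eta)$ is genuinely a $2$-cocycle, to compare two systems of choices and exhibit their difference as an explicit coboundary. That $\check\partial\nu=1$ is the usual Dixmier--Douady identity for \eqref{rwnueqn}; that $\eta$ lies in $Z^1_G(G,\check{C}^1)$ is a short manipulation of \eqref{rwetaeqn} using the second identity of \eqref{g1g2uequation} together with the abelianness of $G$ and centrality of $\T$; and the compatibility $\check\partial\eta=\partial_G\nu$ (up to the sign conventions of the total differential) comes from substituting \eqref{rwetaeqn} and \eqref{rwnueqn}, where the $u$-factors telescope away. The choices entering the construction are the local trivialisations $\Phi_{\lambda_0}$, the transition unitaries $v_{\lambda_0\lambda_1}$ with $\Phi_{\lambda_1}\circ\Phi_{\lambda_0}^{-1}=\operatorname{Ad}v_{\lambda_0\lambda_1}$, and the implementing unitaries $u_{\lambda_0}$ of \eqref{g1g2uequation}.

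I would factor the passage from one system $(\Phi,v,u)$ to another $(\Phi',v',u')$ into three moves. \emph{Move 1 (change of $\Phi$).} Writing $\Phi'_{\lambda_0}=\operatorname{Ad}z_{\lambda_0}\circ\Phi_{\lambda_0}$ for continuous $z_{\lambda_0}\colon\pi^{-1}(W_{\lambda_0})\to U(\H)$, one checks that $\tilde v_{\lambda_0\lambda_1}:=z_{\lambda_1}v_{\lambda_0\lambda_1}z_{\lambda_0}^*$ and $\tilde u^{g}_{\lambda_0}(x):=z_{\lambda_0}(x)u^{g}_{\lambda_0}(x)z_{\lambda_0}(g^{-1}x)^*$ are admissible choices for $\Phi'$: the first implements $\Phi'_{\lambda_1}\circ\Phi'^{-1}_{\lambda_0}$, and the second implements $\Phi'_{\lambda_0}\circ\alpha_g\circ\Phi'^{-1}_{\lambda_0}$ and still satisfies the group $1$-cocycle identity of \eqref{g1g2uequation} (the $z$-factors form a $\tau$-twisted coboundary, so the identity is preserved). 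Substituting these into \eqref{rwnueqn} and \eqref{rwetaeqn}, the intermediate $z$-factors telescope and one is left conjugating the $\T$-valued quantities $\nu_{\lambda_0\lambda_1\lambda_2}(x)$, $\eta_{\lambda_0\lambda_1}(g,x)$ by a single unitary $z_{\lambda_2}(x)$, resp.\ $z_{\lambda_1}(x)$, which does nothing; hence $(\tilde\nu,\tilde\eta)=(\nu,\eta)$ \emph{on the nose}, so changing $\Phi$ (and passing to the induced $v,u$) does not change the cocycle at all. \emph{Move 2 (change of $v$ for fixed $\Phi'$).} Since $\tilde v_{\lambda_0\lambda_1}$ and $v'_{\lambda_0\lambda_1}$ both implement $\Phi'_{\lambda_1}\circ\Phi'^{-1}_{\lambda_0}$, they differ by a \v{C}ech $1$-cochain $c\in\check{C}^1(\{\pi^{-1}(W_{\lambda_0})\},\mathcal S)$; feeding $v'=c\cdot\tilde v$ into \eqref{rwnueqn} and \eqref{rwetaeqn} and pulling the scalars out gives $\nu'=\nu\cdot\check\partial c$ and $\eta'=\eta\cdot\partial_G c$, i.e.\ the change is $D(c,0)$. \emph{Move 3 (change of $u$ for fixed $\Phi',v'$).} Since $\tilde u^{g}_{\lambda_0}$ and $u'^{g}_{\lambda_0}$ both implement the exterior equivalence of $\Phi'_{\lambda_0}\circ\alpha_g\circ\Phi'^{-1}_{\lambda_0}$ with $\tau_g$, they differ by scalars $b_{\lambda_0}\colon G\times\pi^{-1}(W_{\lambda_0})\to\T$ which, being a ratio of two group $1$-cocycles, lie in $Z^1_G(G,\check{C}^0)$; then $\nu$ is unchanged and $\eta'=\eta\cdot\check\partial b$, i.e.\ the change is $D(0,b)$.

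Stringing the three moves together gives $(\nu',\eta')=(\nu,\eta)\cdot D(c,b)$ in $C^2_G(\{\pi^{-1}(W_{\lambda_0})\},\mathcal S)$, whence $[\nu',\eta']=[\nu,\eta]$ and the class depends on none of the choices. The work is almost entirely bookkeeping rather than conceptual; the points needing care are carrying the base-point shifts $x\mapsto g^{-1}x$ correctly through the $z$-, $c$- and $b$-substitutions, verifying that $\tilde u^{g}_{\lambda_0}$ really inherits the $u$-cocycle identity (the sole place where abelianness of $G$ and the $\tau$-twist genuinely enter), and reconciling signs so that the changes in Moves 2 and 3 are literally the differential $D$ applied to $(c,0)$ and $(0,b)$.
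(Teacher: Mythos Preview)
Your proposal is correct and follows essentially the same route as the paper: both verify the three closedness conditions for $(\nu,\eta)$ and then show that a change of data alters the cocycle by a coboundary $D(\gamma,\sigma)$ (your $(c,b)$). The paper carries out the comparison in one pass---writing $\tilde v_{\lambda_0\lambda_1}=\gamma_{\lambda_0\lambda_1}w_{\lambda_1}v_{\lambda_0\lambda_1}w_{\lambda_0}^*$ and $\tilde u_{\lambda_0}^{g}=\sigma_{\lambda_0}(g,\cdot)\,w_{\lambda_0}u_{\lambda_0}^{g}\,\tau_g(w_{\lambda_0})^*$ and computing $\tilde\nu=\nu\cdot\check\partial\gamma$, $\tilde\eta=\eta\cdot\check\partial\sigma\cdot(\partial_G\gamma)^{-1}$ directly---whereas you factor this into your three moves; your Move~1 isolates exactly the observation that the $w$-conjugation (your $z$) leaves $(\nu,\eta)$ literally unchanged, after which Moves~2 and~3 recover the scalar contributions $\gamma$ and $\sigma$ separately. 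One small correction: the verification that $\tilde u^{g}_{\lambda_0}$ inherits the group $1$-cocycle identity in Move~1 is a straightforward telescoping and does \emph{not} require $G$ abelian; abelianness is genuinely used only in establishing $\partial_G\eta=1$ for the original cocycle (via Lemma~\ref{cyclicpermute}), which you correctly flag elsewhere.
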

\begin{proof}
It is easy to see that $(\nu,\eta)$ defines a cochain in
$C_G^2(\{\pi^{-1}(W_{\lambda_0})\}_{\lambda_0},{\mathcal S})$.
Now we check the cocycle identities
\begin{equation}\label{cocycle1}
\check{\partial}\phi^{20}=1;\quad\partial_G\phi^{20}\check{\partial}\phi^{11}= 1; 
\quad{\mbox{and}}\quad
\partial_G\phi^{11}=1.
\end{equation}
That $\check\partial\nu=1$ is an easy computation. For the second identity, in (\ref{cocycle1}),
 we use the fact that certain groupings of terms take values in $ZU(\H)$, and so can be commuted: {
\begin{align*}
&\check\partial \eta(g_1,\cdot)_{\lambda_0\lambda_1\lambda_2}(x)
:=\eta_{\lambda_0\lambda_1}(g_1,x)\eta_{\lambda_1\lambda_2}(g_1,x)
\eta_{\lambda_0\lambda_2}(g_1,x)^*\quad\quad\quad\quad\\
&=u_{\lambda_1}^{g_1}(x)v_{\lambda_0\lambda_1}(g_1^{-1}x)u_{\lambda_0}^{g_1}
(x)^*v_{\lambda_0\lambda_1}(x)^* u_{\lambda_2}^{g_1}(x)v_{\lambda_1\lambda_2}
(g_1^{-1}x)u_{\lambda_1}^{g_1}(x)^*v_{\lambda_1\lambda_2}(x)^*\quad\quad\quad\quad\\
&\quad\times v_{\lambda_0\lambda_2}(x)u_{\lambda_0}^{g_1}(x)v_{\lambda_0\lambda_2}
(g_1^{-1}x)^*u_{\lambda_2}^{g_1}(x)^*\quad\quad\quad\quad\\
&=\nu_{\lambda_0\lambda_1\lambda_2}(x)^*[u_{\lambda_1}^{g_1}(x)v_{\lambda_0\lambda_1}
(g_1^{-1}x)u_{\lambda_0}^{g_1}(x)^*v_{\lambda_0\lambda_1}(x)^*] u_{\lambda_2}^{g_1}
(x)v_{\lambda_1\lambda_2}(g_1^{-1}x)u_{\lambda_1}^{g_1}(x)^*v_{\lambda_0\lambda_1}(x)\\
&\quad\times u_{\lambda_0}^{g_1}(x)v_{\lambda_0\lambda_2}(g_1^{-1}x)^*u_{\lambda_2}^{g_1}(x)^*\\
&=\nu_{\lambda_0\lambda_1\lambda_2}(x)^*u_{\lambda_2}^{g_1}(x)
v_{\lambda_1\lambda_2}(g_1^{-1}x)u_{\lambda_1}^{g_1}(x)^* [u_{\lambda_1}^{g_1}
(x)v_{\lambda_0\lambda_1}(g_1^{-1}x)u_{\lambda_0}^{g_1}(x)^*v_{\lambda_0\lambda_1}
(x)^*]v_{\lambda_0\lambda_1}(x)\\
&\quad\times u_{\lambda_0}^{g_1}(x)v_{\lambda_0\lambda_2}(g_1^{-1}x)^*u_{\lambda_2}^{g_1}(x)^*\\
&=\nu_{\lambda_0\lambda_1\lambda_2}(x)^*
u_{\lambda_2}^{g_1}(x)v_{\lambda_1\lambda_2}(g_1^{-1}x)v_{\lambda_0\lambda_1}(g_1^{-1}x) 
v_{\lambda_0\lambda_2}(g_1^{-1}x)^*u_{\lambda_2}^{g_1}(x)^*
\quad\quad\quad\quad\quad\quad\quad\quad\quad\\
&=\nu_{\lambda_0\lambda_1\lambda_2}(x)^*\nu_{\lambda_0\lambda_1\lambda_2}
(g_1^{-1}x)=(\partial_G\nu_{\lambda_0\lambda_1\lambda_2}(g_1,x))^{-1}.
\quad\quad\quad\quad\quad\quad\quad\quad\quad\quad\quad\quad\quad\quad
\end{align*}}
For the last identity, in (\ref{cocycle1}), we use the same technique as above, 
Lemma \ref{cyclicpermute} and the fact that $G$ is abelian:
\begin{align*}
&\partial_G\eta_{\lambda_0\lambda_1}(g_0,g_1,x)
:=u_{\lambda_1}^{g_0}(g_1^{-1}x)v_{\lambda_0\lambda_1}(g_0^{-1}g_1^{-1}x)
u_{\lambda_0}^{g_0}(g_1^{-1}x)^*v_{\lambda_0\lambda_1}(g_1^{-1}x)^*
u_{\lambda_1}^{g_1}(x)\\&\quad\times v_{\lambda_0\lambda_1}(g_1^{-1}x)
u_{\lambda_0}^{g_1}(x)^*v_{\lambda_0\lambda_1}(x)^*
v_{\lambda_0\lambda_1}(x)u_{\lambda_0}^{g_0g_1}(x)
v_{\lambda_0\lambda_1}(g_0^{-1}g_1^{-1}x)^*u_{\lambda_1}^{g_0g_1}(x)^*\\
&=[u_{\lambda_1}^{g_0}(g_1^{-1}x)v_{\lambda_0\lambda_1}(g_0^{-1}g_1^{-1}x)
u_{\lambda_0}^{g_0}(g_1^{-1}x)^*v_{\lambda_0\lambda_1}(g_1^{-1}x)^*]
u_{\lambda_1}^{g_1}(x)v_{\lambda_0\lambda_1}(g_1^{-1}x)u_{\lambda_0}^{g_1}(x)^*\\
&\quad\times u_{\lambda_0}^{g_0g_1}(x)v_{\lambda_0\lambda_1}
(g_0^{-1}g_1^{-1}x)^*u_{\lambda_1}^{g_0g_1}(x)^*\\
&=u_{\lambda_1}^{g_1}(x)[u_{\lambda_1}^{g_0}(g_1^{-1}x)v_{\lambda_0\lambda_1}
(g_0^{-1}g_1^{-1}x)u_{\lambda_0}^{g_0}(g_1^{-1}x)^*v_{\lambda_0\lambda_1}
(g_1^{-1}x)^*]v_{\lambda_0\lambda_1}(g_1^{-1}x)u_{\lambda_0}^{g_1}(x)^*\\
&\quad\times u_{\lambda_0}^{g_0g_1}(x)v_{\lambda_0\lambda_1}
(g_0^{-1}g_1^{-1}x)^*u_{\lambda_1}^{g_0g_1}(x)^*\\
&=u_{\lambda_1}^{g_1}(x)u_{\lambda_1}^{g_0}(g_1^{-1}x)v_{\lambda_0\lambda_1}
(g_0^{-1}g_1^{-1}x)u_{\lambda_0}^{g_0}(g_1^{-1}x)^*u_{\lambda_0}^{g_1}(x)^* 
u_{\lambda_0}^{g_0g_1}(x)v_{\lambda_0\lambda_1}(g_0^{-1}g_1^{-1}x)^*
u_{\lambda_1}^{g_0g_1}(x)^*\\
&= 1\quad\quad\mbox{by } (\ref{g1g2uequation}) \mbox{ and Lemma } \ref{cyclicpermute}.
\end{align*}
To see that the class $[\nu,\eta]$ is independent of the choice of $\{\Phi_{\lambda_0}\}$ and 
$\{u_{\lambda_0}\}$, take local trivialisations 
$\Psi_{\lambda_0}:CT(X,\delta)|_{\pi^{-1}(W_{\lambda_0})}\to C_0(\pi^{-1}(W_{\lambda_0}),\K)$,  
maps $\tilde{v}_{\lambda_0\lambda_1}\in C_0(\pi^{-1}(W_{\lambda_0}),\K)$, 
$w_{\lambda_0}\in C(\pi^{-1}(W_{\lambda_0}),U(\H))$ and strictly continuous maps 
$\tilde{u}_{\lambda_0}:G\to C_0(\pi^{-1}(W_{\lambda_0}),\K)$ with
\begin{align}
\label{Difference1}&\Psi_{\lambda_1}\circ \Psi_{\lambda_0}^{-1}=\operatorname{Ad} 
\tilde{v}_{\lambda_0\lambda_1},\\
\label{Difference2}&\Psi_{\lambda_0}\circ \Phi_{\lambda_0}^{-1}=\operatorname{Ad}w_{\lambda_0},\\
\label{Difference3}&\Psi_{\lambda_0}\circ\alpha_{g_1}\circ \Psi_{\lambda_0}^{-1}
(h_{\lambda_0})= \operatorname{Ad}\tilde{u}_{\lambda_0}^{g_1}\circ \tau_{g_1}(h_{\lambda_0}),\\
\label{Difference4}&\tilde{u}_{\lambda_0}^{g_1g_0}(x)=\tilde{u}_{\lambda_0}^{g_1}(x)
\tilde{u}_{\lambda_0}^{g_0}(g_1^{-1}x).
\end{align}
Let $(\tilde{\nu},\tilde{\eta})$ be the cocycle associated to the choices $\{\Psi_{\lambda_0}\}$ 
and $\{\tilde{u}_{\lambda_0}\}$ then using Equations (\ref{Difference1}) and (\ref{Difference2}) 
we see that there exist continuous functions $\gamma_{\lambda_0\lambda_1}\in C(\pi^{-1}
(W_{\lambda_0\lambda_1}),\T)$ such that
$\tilde{v}_{\lambda_0\lambda_1}=
\gamma_{\lambda_0\lambda_1}w_{\lambda_1}v_{\lambda_0\lambda_1}w_{\lambda_0}^*$.
Moreover, Equations (\ref{Difference3}) and (\ref{Difference2}) imply

\begin{align*}
\operatorname{Ad}\tilde{u}_{\lambda_0}^{g_1}\circ \tau_{g_1}(h_{\lambda_0})=
&\Psi_{\lambda_0}\circ\alpha_{g_1}\circ \Psi_{\lambda_0}^{-1}(h_{\lambda_0})
=\Psi_{\lambda_0}\circ\Phi_{\lambda_0}^{-1}\circ \operatorname{Ad}u_{\lambda_0}^{g_1}
\circ \tau_{g_1}\circ \Phi_{\lambda_0}^{-1}\circ \Psi_{\lambda_0}^{-1}(h_{\lambda_0})\\
=&\operatorname{Ad}w_{\lambda_0}\circ \operatorname{Ad}u_{\lambda_0}^{g_1}\circ 
\tau_{g_1}\circ \operatorname{Ad}w_{\lambda_0}^*(h_{\lambda_0}).
\end{align*}
It follows that there exist continuous functions $\sigma_{\lambda_0}:G\to 
C(\pi^{-1}(W_{\lambda_0}),\T)$ such that
\begin{equation}\label{sigmaeqn}
\tilde{u}_{\lambda_0}^{g_1}(x)=\sigma_{\lambda_0}(g_1,x)
w_{\lambda_0}(x)u_{\lambda_0}^{g_1}(x)w_{\lambda_0}(g_1^{-1}x)^*.
\end{equation}

We now claim that $(\gamma,\sigma)$ is a cochain and $(\tilde{\nu},\tilde{\eta})$ 
differs from $(\nu,\eta)$ by the differential of $(\gamma,\sigma)$.  That $\partial_G\sigma=1$ 
follows from a calculation identical to the proof that $\partial_G\eta=1$, using Equation (\ref{sigmaeqn}).
As it is straightforward to check $\tilde{\nu}=(\check\partial\gamma) \nu$, to 
prove our claim consider the other component
$\tilde{\eta}_{\lambda_0\lambda_1}(g_1,x):=\tilde{u}_{\lambda_1}^{g_1}(x)
\tilde{v}_{\lambda_0\lambda_1}(g_1^{-1}x)\tilde{u}_{\lambda_0}^{g_1}(x)^*
\tilde{v}_{\lambda_0\lambda_1}(x)^*$, which can be written
\begin{align*}
&\sigma_{\lambda_1}(g_1,x)w_{\lambda_1}(x)u_{\lambda_1}^{g_1}(x)
w_{\lambda_1}^*(g_1^{-1}x)\gamma_{\lambda_0\lambda_1}(g_1^{-1}x)
w_{\lambda_1}(g_1^{-1}x)v_{\lambda_0\lambda_1}(g_1^{-1}x)w_{\lambda_0}^*(g_1^{-1}x)\\
&\quad\times w_{\lambda_0}(g_1^{-1}x)u_{\lambda_0}^{g_1}(x)^*w_{\lambda_0}(x)^*
\sigma_{\lambda_0}(g_1,x)^* w_{\lambda_0}(x)v_{\lambda_0\lambda_1}(x)^*
w_{\lambda_1}(x)^*\gamma_{\lambda_0\lambda_1}(x)^*\\
&=\sigma_{\lambda_1}(g_1,x)\gamma_{\lambda_0\lambda_1}(g_1^{-1}x)
\sigma_{\lambda_0}(g_1,x)^*\gamma_{\lambda_0\lambda_1}(x)^* 
u_{\lambda_1}^{g_1}(x) v_{\lambda_0\lambda_1}(g_1^{-1}x)u_{\lambda_0}^{g_1}(x)^* 
v_{\lambda_0\lambda_1}(x)^* \\
&=\sigma_{\lambda_1}(g_1,x)\gamma_{\lambda_0\lambda_1}(g_1^{-1}x)
\sigma_{\lambda_0}(g_1,x)^*\gamma_{\lambda_0\lambda_1}(x)^*\eta_{\lambda_0\lambda_1}(g_1,x)\\
&=\check\partial\sigma (g_1,\cdot)_{\lambda_0\lambda_1}(x)(\partial_G
\gamma_{\lambda_0\lambda_1}(g_1,x))^{-1}\eta_{\lambda_0\lambda_1}(g_1,x).
\end{align*}
\end{proof}

We have established that the image of $(CT(X,\delta),\alpha)\in\mathfrak{Br}_G(X)$ in 
$H^2_{G}(X,{\mathcal S})$ is $[\nu,\eta]$. One can then check, using the techniques in 
the proof above and those in the proof of Proposition \ref{mymaptotucech}, that this induces 
a map from $\ker \operatorname{M}\subset \operatorname{Br}_G(X)$ to $H^2_{G}(X,{\mathcal S})$ 
(that is, the image is constant on outer conjugacy classes).

\begin{theorem}\label{PRWagreement}
Let $\pi:X\to Z$ be a principal $\R^n/\Z^n$-bundle over a $C^\infty$ 
manifold $Z$, and let $F\in\check{Z}^2(\W,\underline{\Z}^n)$ be a representative of the 
Euler vector defined over a good open cover $\W$ of $Z$. Then there is a commutative 
diagram with exact rows:

\centerline{\xymatrix{
\ar[r]&\check{H}^k(\W,{\mathcal S})\ar[r]^{\pi^*_{\R^n}}\ar[d]^{\operatorname{id}}& 
H^k_{\R^n}(\pi^{-1}(\W),{\mathcal S})\ar[r]^{\pi_*}\ar[d]&\check{H}^{k-1}(\W,\hat{{\mathcal N}})
\ar[r]^{\cup F}\ar[d]&\check{H}^{k+1}(\W,{\mathcal S})\ar[r]\ar[d]^{\operatorname{id}}&\\
\ar[r]&\check{H}^{k}(\W,{\mathcal S})\ar[r]^{\pi^*}& {\mathbb H}^{k}_F(\W,{\mathcal S})\ar[r]^{\pi_*}
& \overline{{\mathbb H}}^{k-1}_{F}(\W,{\mathcal S})\ar[r]^{\cup F}& \check{H}^{k+1}(\W,{\mathcal S})
\ar[r]&}}

\end{theorem}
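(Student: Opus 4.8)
The plan is to realise both rows as cohomology long exact sequences and to compare them via a morphism of short exact sequences of complexes. Fix the good cover $\W$ and the representative $F$ of the Euler vector, so that $\check H^k(\W,-)=\check H^k(Z,-)$ and, by the Five Lemma argument recalled just after Theorem \ref{PRWGysin}, the cover-level group $H^k_{\R^n}(\pi^{-1}(\W),{\mathcal S})$ already computes the space-level one; Theorem \ref{PRWGysin} then supplies the exact top row. All signs below are to be fixed, once and for all at the outset, so as to be consistent with the conventions of \cite{BouRat1} and \cite{PacRaeWill96}; this normalisation is the only genuinely delicate point of the proof.

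\emph{The bottom row.} The differential $D_F$ of Section \ref{DimRedCohoSection} is block-triangular for the three-column structure: the column $\check C^\bullet(\W,{\mathcal S})$ is a $D_F$-subcomplex (with $\check\partial$ up to a harmless sign twist), and the projection onto the remaining two columns is a chain map onto the complex $\overline C^{\bullet}_F(\W,{\mathcal S})$ with $\overline C^{m}_F(\W,{\mathcal S}):=\check C^{m}(\W,\hat{\mathcal N})\oplus\check C^{m-1}(\W,{\mathcal M})$ and the induced differential; write $\overline{\mathbb H}^{m}_F(\W,{\mathcal S})$ for its cohomology. Thus
\[
0\longrightarrow \check C^\bullet(\W,{\mathcal S})\xrightarrow{\ \pi^*\ }\mathbb C^\bullet_F(\W,{\mathcal S})\xrightarrow{\ \pi_*\ }\overline C^{\bullet-1}_F(\W,{\mathcal S})\longrightarrow 0
\]
is a short exact sequence of complexes, and its associated long exact sequence is the bottom row. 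Lifting a cocycle $(\psi,\chi)$ to $(0,\psi,\chi)$ and reading off the ${\mathcal S}$-component of $D_F(0,\psi,\chi)$, which is $(\psi\cup_1 F)(\chi\cup_2 C(F))^{(-1)^{k+1}}$, identifies the connecting homomorphism with the map labelled $\cup F$ in the statement.

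\emph{The comparison.} The Introduction already packages the Packer--Raeburn--Williams isomorphism as a commutative ladder between the top (PRW) Gysin sequence and the long exact sequence of the two-column short exact sequence $0\to\check C^\bullet(\W,{\mathcal S})\to C^\bullet_F(\W,{\mathcal S})\to\check C^{\bullet-1}(\W,\hat{\mathcal N})\to 0$, with vertical maps $\operatorname{id}$, $[\nu,\eta]\mapsto[\mu(\nu,\eta),\pi_*(\eta)]$, $\operatorname{id}$, $\operatorname{id}$; its commutativity — in particular the fact $\mu(\pi^*\phi,1)\sim\phi$ behind the left-hand square — is precisely the input that makes the Five Lemma argument of \cite{PacRaeWill96} work. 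Next, this two-column short exact sequence maps to the three-column one above by $\operatorname{id}$ on $\check C^\bullet(\W,{\mathcal S})$, the subcomplex inclusion $C^\bullet_F(\W,{\mathcal S})\hookrightarrow\mathbb C^\bullet_F(\W,{\mathcal S})$, and $\psi\mapsto(\psi,0)$ from $\check C^{\bullet}(\W,\hat{\mathcal N})$ into $\overline C^{\bullet}_F(\W,{\mathcal S})$; a two-line check (using again the block-triangular shape of $D_F$) shows this is a morphism of short exact sequences of complexes, so by naturality of the long exact sequence it induces a second commutative ladder between the two ``$\pi_*$-type'' sequences. Splicing the two ladders yields a commutative ladder from the PRW Gysin sequence to the bottom row of the previous paragraph, whose vertical maps are $\operatorname{id}$, $[\nu,\eta]\mapsto[\mu(\nu,\eta),\pi_*(\eta),0]$, $[\psi]\mapsto[\psi,0]$, $\operatorname{id}$; this is the asserted diagram.

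I expect the main obstacle to be the sign bookkeeping rather than anything structural. The formula for $D_F$, the products $\cup_1 F$ and $\cup_2 C(F)$, the PRW cup product $\cup c$, the differential of the two-column complex, and the two connecting homomorphisms each acquire a $k$-dependent sign, and one must select a single convention in which the connecting maps literally equal the arrows written $\cup F$ and in which the Packer--Raeburn--Williams ladder commutes verbatim; keeping these mutually consistent across all three squares simultaneously is where the care is needed. Everything else — exactness of the bottom row, the chain-map and morphism-of-SES verifications, and the final commutativity of the spliced ladder — is formal.
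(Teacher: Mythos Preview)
Your approach is correct but genuinely different from the paper's. You realise the bottom row as the long exact sequence of a column filtration of the three-column complex, then produce the vertical maps by splicing two ladders: the PRW-Gysin-to-two-column ladder asserted in the Introduction, followed by the naturality ladder for the inclusion of the two-column complex into the three-column one. This is clean and structural, and isolates the sign bookkeeping as the only genuine work.

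The paper instead writes down the vertical map $H^k_{\R^n}(\pi^{-1}(\W),\mathcal S)\to\mathbb H^k_F(\W,\mathcal S)$ by an explicit cochain-level formula built from local sections $\sigma_{\mu_k}$ of the bundle and their transition data $s_{\mu_{k-1}\mu_k}$: it sends $(\nu,\eta)$ to the triple with $k0$-component $\nu_{\mu_0\dots\mu_k}(\sigma_{\mu_k}(z))\,\eta_{\mu_0\dots\mu_{k-1}}(-s_{\mu_{k-1}\mu_k}(z),\sigma_{\mu_k}(z))$, $(k-1)1$-component $\eta_{\mu_0\dots\mu_{k-1}}(m,\sigma_{\mu_{k-1}}(z))$, and trivial $(k-2)2$-component, and then verifies by a direct (page-long) computation that this commutes with differentials. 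Commutativity of the diagram is then immediate from the shape of these formulas. Your $[\mu(\nu,\eta),\pi_*(\eta),0]$ is this same map at the level of cohomology --- the paper's $\phi(\nu,\eta)^{k0}$ is a concrete realisation of the abstract $\mu$ from \cite[Lemma~4.2]{PacRaeWill96}, and its $(k-1)1$-part literally equals $\pi_*(\eta)$.

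What each buys: your route avoids the chain-map computation entirely and makes the origin of the exact bottom row transparent, but it leans on the commutativity of the PRW-to-two-column ladder as a black box from the Introduction (ultimately \cite{PacRaeWill96}), so it is not self-contained within this paper. The paper's route is computational but self-contained, and --- crucially for its later purposes --- produces explicit cochain-level formulas that are reused verbatim in Proposition~\ref{BrauerCommuting} to compare $\Xi_{\U,s}$ with the PRW map. If you adopt your argument, you would still need to record these explicit formulas separately before Section~8.
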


\begin{proof}
We need only define the downward arrows $H^k_{\R^n}(\pi^{-1}(\W),{\mathcal S})\to 
{\mathbb H}^{k}_F(\W,{\mathcal S})$ and $\check{H}^{k-1}(\W,\hat{{\mathcal N}})\to 
\overline{{\mathbb H}}^k_F(\W,{\mathcal S})$, and the commutativity will follow 
immediately from the definitions. Indeed, if $(\nu,\eta)$ is a cochain in 
$C^k_{\R^n}(\pi^{-1}(\W),{\mathcal S})$, its image in $C^{k}_F(\W,{\mathcal S})$ 
is defined to be the triple $(\phi^{k0}(\nu,\eta),\phi^{(k-1)1}(\nu,\eta),\phi^{(k-2)2}(\nu,\eta))$ given by
\begin{align*}
\phi(\nu,\eta)^{k0}_{\mu_0\dots\mu_k}(z):=&\nu_{\mu_0\dots\mu_k}(\sigma_{\mu_k}(z))
\eta_{\mu_0\dots\mu_{k-1}}(-s_{\mu_{k-1}\mu_k}(z),\sigma_{\mu_k}(z)),\\
\phi(\nu,\eta)^{(k-1)1}_{\mu_0\dots\mu_{k-1}}(m,z):=&\eta_{\mu_0\dots\mu_{k-1}}(m,
\sigma_{\mu_{k-1}}(z)),\quad\mbox{and}\quad
\phi(\nu,\eta)^{(k-2)2}:=1.
\end{align*}
In order for this map to be well-defined on cohomology, it needs to commute with the 
respective differentials. To see that it does, we compute:

\begin{align*}
&[\check\partial\phi(\nu,\eta)^{k0}+(-1)^{k+1}\phi(\nu,\eta)^{(k-1)1}\cup_1 F + (-1)^{k+1}
\phi(\nu,\eta)^{(k-2)2}\cup_2 C(F)]_{\mu_0\dots\mu_{k+1}}(z)  \\
&=\oplus_{i=0}^k (-1)^i\nu_{\mu_0\dots\hat{\mu_i}\dots\mu_{k+1}}
(\sigma_{\mu_{k+1}}(z)) +(-1)^{k+1}\nu_{\mu_0\dots\mu_k}(\sigma_{\mu_k}(z))\\
&\quad+\oplus_{i=0}^{k-1}(-1)^i\eta_{\mu_0\dots\hat{\mu_i}\dots\mu_{k}}(-s_{\mu_{k}\mu_{k+1}}(z),
\sigma_{\mu_{k+1}}(z))\\
&\quad +(-1)^k\eta_{\mu_0\dots\mu_{k-1}}(-s_{\mu_{k-1}\mu_{k+1}},
\sigma_{\mu_{k+1}}(z)) +(-1)^{k+1}\eta_{\mu_0\dots\mu_{k-1}}(-s_{\mu_{k-1}\mu_{k}}(z),
\sigma_{\mu_{k}}(z))\\
&\quad+(-1)^{k+1}\eta_{\mu_0\dots\mu_{k-1}}(F_{\mu_{k-1}\mu_{k}\mu_{k+1}}(z),\sigma_{\mu_{k-1}}(z))\\
&=\check\partial\nu _{\mu_0\dots\mu_{k+1}}(\sigma_{\mu_{k+1}}(z))+(-1)^k
\partial_{\R^n}(\nu_{\mu_0\dots\mu_{k}}) (-s_{\mu_{k}\mu_{k+1}}(z),\sigma_{\mu_{k+1}}(z)) \\
&\quad+\check\partial\eta_{\mu_0\dots\mu_{k}}(-s_{\mu_{k}\mu_{k+1}},
\sigma_{\mu_{k+1}}(z))+(-1)^{k+1}\eta_{\mu_0\dots\mu_{k-1}}(-s_{\mu_{k}\mu_{k+1}},
\sigma_{\mu_{k+1}}(z))\quad\quad\quad\\
&\quad  (-1)^{k}\eta_{\mu_0\dots\mu_{k-1}}(-s_{\mu_{k-1}\mu_{k}}(z)-
s_{\mu_{k}\mu_{k+1}}(z)-F_{\mu_{k-1}\mu_k\mu_{k+1}}(z),\sigma_{\mu_{k+1}}(z)) \\
&\quad+(-1)^{k+1}\eta_{\mu_0\dots\mu_{k-1}}(-s_{\mu_{k-1}\mu_{k}}(z),
s_{\mu_{k}\mu_{k+1}}(z)\cdot\sigma_{\mu_{k+1}}(z))\\
&\quad+(-1)^{k}\eta_{\mu_0\dots\mu_{k-1}}(-F_{\mu_{k-1}\mu_{k}\mu_{k+1}}(z),\sigma_{\mu_{k+1}}(z)).
\end{align*}
In the last line above we have used  Equation (\ref{etainvariantonorbits}) 
(i.e. $\eta|_{\Z^n}$ is constant on orbits). Now we twice use the fact that 
$\partial_{\R^n}\eta=1$ so the previous expression is equal to
\begin{align*}
&\check\partial\nu _{\mu_0\dots\mu_{k+1}}(\sigma_{\mu_{k+1}}(z))+(-1)^{k}
\partial_{\R^n}(\nu_{\mu_0\dots\mu_{k}}) (-s_{\mu_{k}\mu_{k+1}}(z),\sigma_{\mu_{k+1}}(z)) \\
&\quad+\check\partial\eta_{\mu_0\dots\mu_{k}}(-s_{\mu_{k}\mu_{k+1}}(z),\sigma_{\mu_{k+1}}(z))
=\phi^{(k+1)0}(\check\partial\nu,(-1)^{k}\partial_{\R^n}\nu+\check\partial\eta)_{\mu_0\dots\mu_{k+1}}(z).
\end{align*}
On the other hand, we also have
\begin{align*}
[\check\partial\phi(\nu,\eta)^{(k-1)1}+&(-1)^{k}\phi(\nu,\eta)^{(k-2)2}\cup_1 F]_{\mu_0\dots\mu_{k}}(m,z)
=\oplus_{i=0}^{k}(-1)^i\phi(\nu,\eta)^{(k-1)1}_{\mu_0\dots\hat{\mu_i}\dots\mu_{k}}(m,z)\\
=&\oplus_{i=0}^{k-1}(-1)^i\eta_{\mu_0\dots\hat{\mu_i}\dots\mu_{k}}(m,
\sigma_{\mu_{k}}(z))-\eta_{\mu_0\dots\mu_{k-1}}(m,\sigma_{\mu_{k-1}}(z))\quad\\
=&\oplus_{i=0}^{k-1}(-1)^i\eta_{\mu_0\dots\hat{\mu_i}\dots\mu_{k}}(m,\sigma_{\mu_{k}}(z))-
\eta_{\mu_0\dots\mu_{k-1}}(m,\sigma_{\mu_{k}}(z))\quad\quad\\
=&\check\partial_{\mu_0\dots\mu_{k}}\eta(m,\sigma_{\mu_{k}}(z))
=\phi^{k1}(\check\partial\nu,(-1)^{k}\partial_{\R^n}\nu+\check\partial\eta)_{\mu_0\dots\mu_{k}}(m,z)
\end{align*}
giving the required property of
$(\nu,\eta)\mapsto(\phi(\nu,\eta)^{k0},\phi(\nu,\eta)^{(k-1)1},\phi(\nu,\eta)^{(k-2)2})$.

For the other vertical arrow $\check{H}^{k-1}(\W,\hat{{\mathcal N}})\to 
\overline{{\mathbb H}}^k_F(\W,{\mathcal S})$, the image of 
$\gamma\in \check{Z}^{k-1}(\W,\hat{{\mathcal N}})$ is
$\varphi(\gamma)^{(k-1)1}_{\mu_0\dots\mu_{k-1}}(m,z):=\gamma_{\mu_0\dots\mu_{k-1}}(m,z)$ and $
\varphi(\gamma)^{(k-2)2}:=1$.
This clearly induces a homomorphism on cohomology groups, and commutativity 
of the diagram is immediate.
\end{proof}

\section{Good Covers of Principal $\T^n$-Bundles}\label{goodcoverssection}

For this Section we work only with Riemannian manifolds. So, let $\pi:X\to Z$ 
be a $C^\infty$ principal $\R^n/\Z^n$-bundle over a Riemannian manifold $Z$. 
We will show there is a good cover of $X$ that pushes down to a good cover of $Z$.
\newpage 

\begin{defn}[\cite{Spi79}] $\quad$
\begin{itemize}
\item[(i)] Let $(M,g)$ be a Riemannian manifold, and $U\subset M$. Then $U$ is 
said to be \emph{geodesically convex} if every two points $x,y\in U$ have a unique 
geodesic of minimum length between them, and that geodesic lies entirely within $U$.
\item[(ii)] Let $\U$ be an open cover of a Riemannian manifold $(M,g)$. Then 
we call $\U$ \emph{geodesically convex} if every open set in $\U$ is geodesically convex.
\end{itemize}
\end{defn}

Note that every geodesically convex open set is contractible, and every 
geodesically convex open cover is good \cite{Spi79}.

\begin{lemma}\label{geoconvex}
Let $(Z,dz^2)$ be a Riemannian manifold and $\pi:X\to Z$ a $C^\infty$ 
principal $\R^n/\Z^n$-bundle. Then $X$ has a geodesically convex open cover 
$\U=\{U_{\lambda}\}_{\lambda\in\mathcal{I}}$ such that $\pi(\U):=\{\pi(U_{\lambda})\}_{\lambda\in
\mathcal{I}}$ is a geodesically convex open cover of  $Z$. Moreover, there exist $C^\infty$ local 
sections $\sigma_{\lambda}:\pi(U_\lambda)\to U_\lambda$.
\end{lemma}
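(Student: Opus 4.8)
The plan is to first put on $X$ a Riemannian metric for which $\pi$ is a Riemannian submersion, and then to build the cover out of sufficiently small geodesic balls in $X$. Since $Z$ is a manifold it is paracompact, so the principal $\T^n$-bundle $\pi:X\to Z$ admits a principal connection; let $H\subset TX$ be the associated $\T^n$-invariant horizontal distribution, complementary to the vertical distribution $V=\ker d\pi$. Equip the fibre $\T^n=\R^n/\Z^n$ with its standard flat metric and define a metric $g_X$ on $X$ by declaring $H\perp V$, $g_X|_V$ equal to the fibre metric on each fibre, and $g_X|_H=\pi^{*}(dz^{2})$. Then $\pi:(X,g_X)\to(Z,dz^{2})$ is a Riemannian submersion; in particular $\pi$ is distance non-increasing, and (a standard fact) the horizontal lift of a geodesic of $Z$ is a geodesic of $X$ of the same length.

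The key point is that for every $x\in X$ and every $\epsilon<\operatorname{inj}_Z(\pi(x))$ one has $\pi\!\left(B_X(x,\epsilon)\right)=B_Z(\pi(x),\epsilon)$. The inclusion ``$\subseteq$'' follows because $d_Z(\pi(y),\pi(x))\le d_X(y,x)$ for all $y$. For ``$\supseteq$'', given $w\in B_Z(\pi(x),\epsilon)$ write $w=\exp_{\pi(x)}(v)$ with $|v|<\epsilon$ and lift the geodesic $t\mapsto\exp_{\pi(x)}(tv)$ horizontally starting at $x$; the lift is a geodesic of $X$ of length $|v|$, so its endpoint $y$ satisfies $d_X(y,x)\le|v|<\epsilon$ and $\pi(y)=w$. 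Now for each $x\in X$ choose $\epsilon_x>0$ small enough that (i) $B_X(x,\epsilon_x)$ is geodesically convex in $(X,g_X)$, (ii) $B_Z(\pi(x),\epsilon_x)$ is geodesically convex in $(Z,dz^{2})$, and (iii) $\epsilon_x<\operatorname{inj}_Z(\pi(x))$; this is possible because in any Riemannian manifold each point has arbitrarily small geodesically convex metric balls (\cite{Spi79}), so the relevant convexity radii are positive. Set $\U:=\{B_X(x,\epsilon_x)\}_{x\in X}$. This is a geodesically convex open cover of $X$, and by the identity above $\pi(\U)=\{B_Z(\pi(x),\epsilon_x)\}_{x\in X}$ consists of geodesically convex open sets; it covers $Z$ since $\pi$ is onto. (If a locally finite cover is wanted, pass to a locally finite subcover by paracompactness.)

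For the smooth sections, given $U_\lambda=B_X(x,\epsilon_x)$ define $\sigma_\lambda:\pi(U_\lambda)=B_Z(\pi(x),\epsilon_x)\to X$ by $\sigma_\lambda(w):=\exp_x\!\big(\widetilde{\exp_{\pi(x)}^{-1}(w)}\,\big)$, where $\widetilde{(\,\cdot\,)}:T_{\pi(x)}Z\to H_x$ denotes the horizontal lift. Since $\exp_{\pi(x)}$ is a diffeomorphism from the $\epsilon_x$-ball in $T_{\pi(x)}Z$ onto $B_Z(\pi(x),\epsilon_x)$, the horizontal lift is a linear isomorphism, and $\exp_x$ is a diffeomorphism on the $\epsilon_x$-ball of $T_xX$ (as $B_X(x,\epsilon_x)$ is convex), $\sigma_\lambda$ is $C^\infty$; because horizontal lifts of geodesics are geodesics, $\pi\circ\sigma_\lambda=\mathrm{id}$ and $d_X\!\big(\sigma_\lambda(w),x\big)\le d_Z(\pi(x),w)<\epsilon_x$, so $\sigma_\lambda$ takes values in $U_\lambda$, as required.

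I expect the only genuine work to be the bookkeeping around the convexity and injectivity radii — checking that a single $\epsilon_x$ can be chosen to meet requirements (i)--(iii) simultaneously — together with correctly invoking the two submersion facts used above ($\pi$ being distance non-increasing, and horizontal lifts of geodesics being geodesics of equal length); everything else is routine.
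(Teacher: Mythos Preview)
Your proof is correct and follows essentially the same route as the paper: equip $X$ with a connection-type metric making $\pi$ a Riemannian submersion, take small geodesic balls in $X$ as the cover, use the submersion facts (horizontal lifts of geodesics are geodesics of equal length, and $\pi$ is distance non-increasing) to conclude $\pi(B_X(x,\epsilon))=B_Z(\pi(x),\epsilon)$, and define the sections by horizontal lifting of radial geodesics. Your write-up is in fact a bit more explicit than the paper's about the two inclusions giving $\pi(B_X(x,\epsilon))=B_Z(\pi(x),\epsilon)$ and about the formula for $\sigma_\lambda$, but the underlying argument is the same.
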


\begin{proof}
Fix a connection 1-form $\omega$ on $\pi:X\to Z$, and let $<,>$ denote a bi-invariant 
metric on the Lie algebra $t$ of $\R^n/\Z^n$. Therefore we have a $\R^n$-bi-invariant 
Riemannian metric on $X$ defined by $g:=\pi^*dz^2+<\omega,\omega>$. Moreover, 
with respect to the metrics $g$ and $dz^2$, the projection $\pi: X\to Z$ is a Riemannian submersion.

Now let $\U=\{U_{\lambda}\}$ be an open cover of $X$ consisting of geodesically 
convex sets defined as follows. For each $x\in X$ choose $\epsilon_0>0$ so that
$B_x(\epsilon_0)=\exp\{v\in TX_x:||v||<\epsilon_0\}$
is geodesically convex (this is possible by \cite[Vol. 1, Ex. 32, p.491]{Spi79}). 
We claim, perhaps after choosing a smaller $\epsilon_0$, 
that $\pi(\U)=\{\pi(U_\lambda)\}$ is a geodesically convex open cover of $Z$.

To see that this is the case, observe that \cite[Vol. 2, Ch. 8, Prop 7]{Spi79} and 
\cite[Cor 1.1]{FalStePas04} show that a curve $\gamma$ in $Z$ is a geodesic in $Z$ 
if and only if its unique lift to a horizontal curve in $X$ is a geodesic. Moreover, 
\cite[Prop 1.10]{FalStePas04} says that, if $\gamma:I\to X$ is a geodesic such that 
$\dot{\gamma}(t_0)$ is horizontal at $x=\gamma(t_0)$, then $\gamma$ is horizontal. 
Finally, since $\pi$ is a \emph{Riemannian} submersion, the map $d\pi:TX_x\to TZ_{\pi(x)}$ 
is a surjection that preserves the length of horizontal vectors. Therefore, if $\epsilon_1>0$ 
is such that $B_{\epsilon_1}(\pi(x))$ is geodesically convex in $Z$, let $\epsilon=\min
\{\epsilon_0,\epsilon_1\}$, and then $\pi(B_{\epsilon}(x))=B_{\epsilon}(\pi(x))$.

For the last claim, if $z_0\in B_{\epsilon}(\pi(x))$, then there exists a unique geodesic $\xi$ 
contained in $B_{\epsilon}(\pi(x))$ for which there exists $t\in [0,\epsilon)$ with 
$\xi(0)=\pi(x), \xi(t)=z_0$. From the above, there is a unique (horizontal) geodesic 
$\gamma$ in $B_{\epsilon}(x)$ such that $\pi(\gamma)=\xi$. Then we define 
$\sigma_{\lambda}(z_0):=\gamma(t)$.  This is a $C^\infty$ section by 
\cite[Vol. 1, Ch. 9, Thm 14(2)]{Spi79}.
\end{proof}

\section[Isomorphism With The Equivariant Brauer Group]{Isomorphism 
With The Equivariant Brauer Group}

In this Section we prove our main result, Theorem \ref{MainSquare}.
Recall the projection $\pi^{0,3}$ of $\check{H}^3(X,\underline{\Z})$ onto the 
$E^{0,3}_\infty$ term of the Leray-Serre spectral sequence of $\pi:X\to Z$. Since
\[E^{0,3}_\infty=\{f\in C(Z,\check{H}^3(\R^n/\Z^n,\underline{\Z})):d_2f=d_3f=d_4f=0\},\]
we can view $\pi^{0,3}$ as a map $\pi^{0,3}:\check{H}^3(X,\underline{\Z})\to 
C(Z,\check{H}^3(\R^n/\Z^n,\underline{\Z}))$, that can be calculated as follows. 
For each $x\in X$, let $\iota_x:\R^n/\Z^n\to X$ be the fibre inclusion $\iota_x:[t]\mapsto [-t]\cdot x$. 
Then, for $\delta\in\check{H}^3(X,\underline{\Z})$ we have
$(\pi^{0,3}\delta)(\pi(x))=\iota^*_x\delta.$
We will denote the kernel of $\pi^{0,3}$ by $\check{H}^3(X,\underline{\Z})|_{\pi^{0,3}=0}$.  
We begin our argument by introducing some notation.

\begin{defn}
Let $\pi:X\to Z$ be a $C^\infty$ principal $\R^n/\Z^n$-bundle over a Riemannian 
manifold $Z$, and let $\U=\{U_{\lambda_0}\}_{\lambda_0\in\mathcal{I}}$ be a fixed 
good open cover of $X$ such that $\pi(\U)$ is a good open cover of $Z$. Also fix $C^\infty$ 
local sections $\sigma_{\lambda_0}:\pi(U_{\lambda_0})\to U_{\lambda_0}$ and a 
cochain $s\in \check{C}^1(\pi(\U),\mathcal{\R}^n)$ such that, for all indices 
$\lambda_0,\lambda_1\in\mathcal{I}$, and all $z\in \pi(U_{\lambda_0})\cap \pi(U_{\lambda_1})$, 
we have
$s_{\lambda_0\lambda_1}(z)\cdot\sigma_{\lambda_1}(z)=\sigma_{\lambda_0}(z),$
and
$s_{\lambda_1\lambda_1}(z)=0.$
Then we say the $(X,\U,s)$ is \emph{in the standard setup} (with the space $Z$, projection 
$\pi:X\to Z$ and local sections $\{\sigma_{\lambda_0}\}$ implicit).
\end{defn}

Note that Lemma \ref{geoconvex} implies that every $C^\infty$ principal 
$\R^n/\Z^n$-bundle over a Riemannian manifold can be put in the standard setup.

Our major task is to define a homomorphism $\Xi_{\U,s}:\mbox{Br}_{\R^n}(X)
\to {\mathbb H}^2_{\check{\partial}s}(\pi(\U),{\mathcal S})$
that will eventually provide our required isomorphism. We first recall:
\begin{lemma}[{\cite[Prop 4.27]{RaeWill98}}]\label{zetavanishremark}
Let $U$ be a contractible paracompact locally compact space, and 
$\beta:U\to\operatorname{Aut}\K$ a continuous map. 
Then there exists a continuous map $u:U\to U(\H)$ such that $\beta=\operatorname{Ad}u$.
\end{lemma}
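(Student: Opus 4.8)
The plan is to reduce the statement to the vanishing of a \v{C}ech obstruction class in $\check H^1(U,{\mathcal S})$ and then to compute that group. First I would use that the quotient map $q:U(\H)\to\operatorname{Aut}\K$ in the exact sequence (\ref{Uexactsequence}) admits local continuous sections: together with the continuity of $\beta$ this yields an open cover $\{V_i\}_{i\in I}$ of $U$ and continuous maps $u_i:V_i\to U(\H)$ with $\operatorname{Ad}u_i=\beta|_{V_i}$. On each overlap $V_i\cap V_j$ the unitary $t_{ij}:=u_iu_j^*$ satisfies $\operatorname{Ad}t_{ij}=\operatorname{id}$, hence lies in the centre $ZU(\H)\cong\T$, and a one-line computation gives $t_{ij}t_{jk}t_{ki}=1$. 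So $t=(t_{ij})$ is a \v{C}ech $1$-cocycle in $\check Z^1(\{V_i\},{\mathcal S})$, and the key observation is that $\beta$ admits a global lift $u$ if and only if, after passing to a refinement, $t$ is a coboundary: given $c_i\in C(V_i,\T)$ with $t_{ij}=c_ic_j^*$, the maps $c_i^*u_i$ agree on overlaps (since $\T$ is central in $U(\H)$) and glue to a continuous $u:U\to U(\H)$ with $\operatorname{Ad}u=\beta$.

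It then remains to prove $\check H^1(U,{\mathcal S})=0$ when $U$ is contractible and paracompact. For this I would invoke the long exact sequence in \v{C}ech cohomology attached to the short exact sequence of sheaves $0\to\underline{\Z}\to{\mathcal R}\to{\mathcal S}\to 0$. Since $U$ is paracompact Hausdorff, ${\mathcal R}$ is a fine (hence acyclic) sheaf — it admits partitions of unity subordinate to any open cover — so $\check H^k(U,{\mathcal R})=0$ for $k\geq 1$, and the sequence collapses to an isomorphism $\check H^1(U,{\mathcal S})\cong\check H^2(U,\underline{\Z})$. Finally, as $U$ is contractible and paracompact, \v{C}ech cohomology with constant coefficients is homotopy invariant, so $\check H^2(U,\underline{\Z})\cong\check H^2(\{\ast\},\underline{\Z})=0$. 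Feeding this back into the first paragraph produces the desired $u$.

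The only point requiring care is the bookkeeping with refinements in the \v{C}ech argument: one shrinks $\{V_i\}$ once to obtain the local lifts $u_i$ and again, if necessary, to realise $t$ as a coboundary, all of which is harmless by paracompactness. The remaining ingredients — acyclicity of the fine sheaf ${\mathcal R}$ and homotopy invariance of $\check H^*(-,\underline{\Z})$ for paracompact Hausdorff spaces — are standard. An equivalent but slicker packaging, which I would keep in reserve, is purely bundle-theoretic: $q$ is a principal $\T$-bundle, $\beta^*q$ is a principal $\T$-bundle over $U$ whose continuous sections are exactly the maps $u$ with $\operatorname{Ad}u=\beta$, and every principal $\T$-bundle over a contractible paracompact base is trivial.
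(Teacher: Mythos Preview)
Your argument is correct. Note, however, that the paper does not actually prove this lemma: it is stated with a citation to \cite[Prop 4.27]{RaeWill98} and invoked as a known result, with no proof given in the text. What you have written is the standard proof --- local lifts via the local sections of $U(\H)\to\operatorname{Aut}\K$, the resulting \v{C}ech $1$-cocycle in ${\mathcal S}$, and the vanishing $\check H^1(U,{\mathcal S})\cong\check H^2(U,\underline{\Z})=0$ for contractible paracompact $U$ --- and your alternative bundle-theoretic packaging is equally valid.
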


Now let $(CT(X,\delta),\alpha)$ be an element of $\mathfrak{Br}_{\R^n}(X)$. 
In order to define $\Xi_{\U,s}([CT(X,\delta),\alpha])$ we need to revisit the construction 
of the map $\beta^{\alpha,\Phi}:(\R^n\ltimes X[\U])^{(1)}\to\operatorname{Aut}\K$ that was 
used as an intermediate stage in proving the isomorphism of $\mbox{Br}_{\R^n}(X)$ with 
$\check{H}^2(\R^n\ltimes X,{\mathcal S})$.
\begin{defn}\label{prelims}
(i) As $CT(X,\delta)$ is trivialised over $\U$, we fix local trivialisations 
$\Phi_{\lambda}:CT(X,\delta)|_{U_{\lambda}}\to C_0(U_{\lambda},\K)$ so that 
$\beta^{\alpha,\Phi}$ may be defined in this setting by
$\beta^{\alpha,\Phi}_{(\lambda_0(s,x)\lambda_1)}(T):=\Phi_{\lambda_1}(\alpha_s(a))(x),$
where $a\in CT(X,\delta)$ is any element such that $\Phi_{\lambda_0}(a)(-s\cdot x)=T$. \\
(ii) Introduce unitary valued lifts of $\beta^{\alpha,\Phi}$ by letting $\{e_i\}$ be 
generators of $\Z^n$ and recalling from Section \ref{Mackeysectiononeonetwo} that, 
for the  trivialisations $\{\Phi_{\lambda_0}\}$, the maps 
$\Phi_{\lambda_0}\circ\alpha|_{\Z^n}\circ\Phi_{\lambda_0}^{-1}$ are locally inner,
so by {Lemma} \ref{zetavanishremark} there exist continuous maps 
$v_{\lambda_0}^i: U_{\lambda_0}\to U(\H)$ such that 
$\Phi_{\lambda_0}\circ\alpha_{e_i}\circ\Phi_{\lambda_0}^{-1}=
\operatorname{Ad}v_{\lambda_0}^i$. Then we may define for any $m\in \Z^n$
\begin{equation}\label{vsplit}
v_{\lambda_0}^m(z):=(v_{\lambda_0}^1(\sigma_{\lambda_0}(z)))^{m_1}
(v_{\lambda_0}^2(\sigma_{\lambda_0}(z)))^{m_2}\dots 
(v_{\lambda_0}^n(\sigma_{\lambda_0}(z)))^{m_n},
\end{equation}
which gives
\begin{equation}\label{betav01}
\beta^{\alpha,\Phi}_{(\lambda_0(m,\sigma_{\lambda_0}(z))\lambda_0)}(T)=
\operatorname{Ad}v_{\lambda_0}^m(z)(T).
\end{equation}
\end{defn}
There is a map from $\pi(U_{\lambda_0})\cap \pi(U_{\lambda_1})\to \operatorname{Aut}\K$ given by
$z\mapsto \beta^{\alpha,\Phi}_{(\lambda_0(-s_{\lambda_0\lambda_1}(z),
\sigma_{\lambda_1}(z))\lambda_1)}.$
Since $\pi(\U)$ is a good cover of $Z$, the open set $\pi(U_{\lambda_0})\cap \pi(U_{\lambda_1})$ 
is contractible, and there exists a continuous map $v_{\lambda_0\lambda_1}:\pi(U_{\lambda_0})\cap
\pi(U_{\lambda_1})\to U(\H)$ that satisfies
\begin{equation}\label{betav10}
\beta^{\alpha,\Phi}_{(\lambda_0(-s_{\lambda_0\lambda_1}(z),\sigma_{\lambda_1}(z))\lambda_1)}=
\operatorname{Ad}v_{\lambda_0\lambda_1}(z).
\end{equation}

Next, $\Xi_{\U,s}[CT(X,\delta),\alpha]$ will be a triple $[\phi(\alpha)^{20},\phi(\alpha)^{11},
\phi(\alpha)^{02}]$.
Define the last component $\phi(\alpha)^{02}\in \check{C}^0(\pi(\U),\mathcal{M})$ by:
\begin{equation}\phi(\alpha)^{02}_{\lambda_0}(z)_{ij}:=v_{\lambda_0}^j(\sigma_{\lambda_0}
(z))v_{\lambda_0}^i(\sigma_{\lambda_0}(z))v_{\lambda_0}^{e_i+e_j}(\sigma_{\lambda_0}(z))^*,
\end{equation}
(where $v_{\lambda_0}^{e_i+e_j}(\sigma_{\lambda_0}(z))^*$ is defined using Equation (\ref{vsplit})).  
For the middle component, note that since $\alpha$ is a homomorphism and $\R^n$ is abelian
\begin{align*}
\operatorname{Ad} v_{\lambda_0\lambda_1}(z)v_{\lambda_0}^m(z)=&
\beta^{\alpha,\Phi}_{(\lambda_0(-s_{\lambda_0\lambda_1}(z),
\sigma_{\lambda_1}(z))\lambda_1)}\beta^{\alpha,\Phi}_{(\lambda_0(m,\sigma_{\lambda_0}(z))
\lambda_0)}
=\beta^{\alpha,\Phi}_{(\lambda_0(-s_{\lambda_0\lambda_1}(z)+m,\sigma_{\lambda_1}(z))\lambda_1)}\\
=&\beta^{\alpha,\Phi}_{(\lambda_0(m-s_{\lambda_0\lambda_1}(z),\sigma_{\lambda_1}(z))\lambda_1)}
=\beta^{\alpha,\Phi}_{(\lambda_1(m,\sigma_{\lambda_1}(z))\lambda_1)}
\beta^{\alpha,\Phi}_{(\lambda_0(-s_{\lambda_0\lambda_1}(z),\sigma_{\lambda_1}(z))\lambda_1)}\\
=&\operatorname{Ad} v_{\lambda_1}^m(z)v_{\lambda_0\lambda_1}(z).
\end{align*}
We prove in  Lemma \ref{triplecocycle} that the map
$m\mapsto v_{\lambda_1}^m(z)v_{\lambda_0\lambda_1}
(z)v_{\lambda_0}^m(z)^*v_{\lambda_0\lambda_1}(z)^*$
is a homomorphism from $\Z^n$ to $\T$. This being the case, we 
may define $\phi(\alpha)^{11}\in\check{C}^1(\pi(\U),\hat{{\mathcal N}})$ by

\begin{equation}\label{phi11defn}
\phi(\alpha)^{11}_{\lambda_0\lambda_1}(m,z):=v_{\lambda_1}^m(z)
v_{\lambda_0\lambda_1}(z)v_{\lambda_0}^m(z)^*v_{\lambda_0\lambda_1}(z)^*.
\end{equation}
Finally,  to define the  third 
component, we use a similar calculation to the previous one:
\begin{align*}
&\operatorname{Ad}  v_{\lambda_1\lambda_2}(z)v_{\lambda_0\lambda_1}(z)=
\beta^{\alpha,\Phi}_{(\lambda_1(-s_{\lambda_1\lambda_2}(z),
\sigma_{\lambda_2}(z))\lambda_2)}\beta^{\alpha,\Phi}_{(\lambda_0(-s_{\lambda_0\lambda_1}(z),
\sigma_{\lambda_1}(z))\lambda_1)}\\
&\quad=\beta^{\alpha,\Phi}_{(\lambda_0(-s_{\lambda_0\lambda_1}(z)-
s_{\lambda_1\lambda_2}(z),\sigma_{\lambda_2}(z))\lambda_2)}
=\beta^{\alpha,\Phi}_{(\lambda_0(-s_{\lambda_0\lambda_2}(z)-\check\partial 
s_{\lambda_0\lambda_1\lambda_2}(z),\sigma_{\lambda_2}(z))\lambda_2)}\\
&\quad=\beta^{\alpha,\Phi}_{(\lambda_0(-s_{\lambda_0\lambda_2}(z),\sigma_{\lambda_2}(z))
\lambda_2)}\beta^{\alpha,\Phi}_{(\lambda_0(-\check\partial s_{\lambda_0\lambda_1\lambda_2}(z),
\sigma_{\lambda_2}(z))\lambda_0)}
=\operatorname{Ad} v_{\lambda_0\lambda_2}(z)v_{\lambda_0}^{-\check\partial
 s_{\lambda_0\lambda_1\lambda_2}(z)}(z) .
\end{align*}
Therefore we may define the cochain $\phi(\alpha)^{20}\in\check{C}^2(\pi(\U),{\mathcal S})$ by
\begin{equation}\label{phi20defn}
\phi(\alpha)^{20}_{\lambda_0\lambda_1\lambda_2}(z):=v_{\lambda_1\lambda_2}
(z)v_{\lambda_0\lambda_1}(z)v_{\lambda_0}^{-\check\partial s_{\lambda_0\lambda_1
\lambda_2}(z)}(z)^*v_{\lambda_0\lambda_2}(z)^*.
\end{equation}
It is then immediate that the triple $(\phi(\alpha)^{20},\phi(\alpha)^{11},\phi(\alpha)^{02})$ 
is a cochain in $C^2_{\check\partial s}(\pi(\U),{\mathcal S})$. Later
we will see that  it is $D_{\check\partial s}$-closed and hence gives our required map
\begin{equation}\label{Xi}\Xi_{\U,s}:(CT(X,\delta),\alpha)\mapsto[\phi(\alpha)^{20},
\phi(\alpha)^{11},\phi(\alpha)^{02}].\end{equation}
Presently though,  we state a lemma relating $\phi(\alpha)^{02}$ to the Mackey obstruction.
\begin{lemma}\label{phi02identity}
Let $f\in C(Z,M_n^u(\T))$ be the Mackey obstruction of $(CT(X,\delta),\alpha)$. 
Then $\phi(\alpha)^{02}_{\lambda_0}=f|_{\pi(U_{\lambda_0})}$\ \  and
$\quad\prod_{1\leq i<j\leq n}\phi(\alpha)^{02}_{\lambda_0}
(z)_{ij}^{m_il_j}=v_{\lambda_0}^l(\sigma_{\lambda_0}(z))
v_{\lambda_0}^m(\sigma_{\lambda_0}(z))v_{\lambda_0}^{m+l}(\sigma_{\lambda_0}(z))^*.$
\end{lemma}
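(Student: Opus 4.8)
The plan is to prove the two asserted identities separately, deriving the second from the multiplicativity of $v_{\lambda_0}^m$ up to the cocycle $\phi(\alpha)^{02}$, and identifying the first with the Mackey obstruction via the formulas of Section \ref{Mackeysectiononeonetwo}.

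First I would establish the ``skew-multiplicativity'' of the local lifts. From the definition (\ref{vsplit}), $v_{\lambda_0}^m(z)$ is the ordered product $(v_{\lambda_0}^1)^{m_1}\cdots(v_{\lambda_0}^n)^{m_n}$ evaluated at $\sigma_{\lambda_0}(z)$. The point is that $\operatorname{Ad}v_{\lambda_0}^i(\sigma_{\lambda_0}(z))$ all commute (they equal $\beta^{\alpha,\Phi}_{(\lambda_0(e_i,\sigma_{\lambda_0}(z))\lambda_0)}$ and $\R^n$ is abelian, as in (\ref{betav01})), so each commutator $v_{\lambda_0}^i(v_{\lambda_0}^j)(v_{\lambda_0}^i)^{-1}(v_{\lambda_0}^j)^{-1}$ lies in $ZU(\H)\cong\T$; this is precisely what makes $\phi(\alpha)^{02}_{\lambda_0}(z)_{ij}$ a $\T$-valued function. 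A straightforward induction, moving factors past one another and collecting the resulting central scalars (exactly the computation behind Eqn.\ (\ref{mfequation}) and the displayed identity $u^l_{\lambda_0}u^m_{\lambda_0}=\prod f_{ij}^{m_il_j}u^{m+l}_{\lambda_0}$ in Section \ref{Mackeysectiononeonetwo}), yields
\[
v_{\lambda_0}^l(\sigma_{\lambda_0}(z))\,v_{\lambda_0}^m(\sigma_{\lambda_0}(z))
=\Big(\prod_{1\le i<j\le n} c_{ij}(z)^{m_il_j}\Big)\,v_{\lambda_0}^{m+l}(\sigma_{\lambda_0}(z)),
\]
where $c_{ij}(z):=v_{\lambda_0}^j(\sigma_{\lambda_0}(z))v_{\lambda_0}^i(\sigma_{\lambda_0}(z))v_{\lambda_0}^{e_i+e_j}(\sigma_{\lambda_0}(z))^{-1}$. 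Comparing this definition of $c_{ij}$ with $\phi(\alpha)^{02}_{\lambda_0}(z)_{ij}:=v_{\lambda_0}^j v_{\lambda_0}^i (v_{\lambda_0}^{e_i+e_j})^*$ evaluated at $\sigma_{\lambda_0}(z)$ shows $c_{ij}(z)=\phi(\alpha)^{02}_{\lambda_0}(z)_{ij}$, which gives the second displayed identity.

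Next I would identify $\phi(\alpha)^{02}_{\lambda_0}$ with the Mackey obstruction $f$. Recall from (\ref{chapter0eq0}) that one may take $u_{\lambda_0}^i(x)=v_{\lambda_0}^i(x)$ as the implementing unitaries for $\Phi_{\lambda_0}\circ\alpha_{e_i}\circ\Phi_{\lambda_0}^{-1}$; with this choice $u_{\lambda_0}^m$ from Section \ref{Mackeysectiononeonetwo} agrees with $v_{\lambda_0}^m(z)$ at $x=\sigma_{\lambda_0}(z)$. The function $f_{\lambda_0}$ was defined by $f_{\lambda_0}(x)_{ij}u_{\lambda_0}^i(x)u_{\lambda_0}^j(x)=u_{\lambda_0}^j(x)u_{\lambda_0}^i(x)$ for $i\le j$ (Eqn.\ (\ref{fdefn})), i.e.\ $f_{\lambda_0}(x)_{ij}=u_{\lambda_0}^j(x)^*u_{\lambda_0}^i(x)^*u_{\lambda_0}^j(x)u_{\lambda_0}^i(x)$; applying Lemma \ref{cyclicpermute} to the central element $u^j u^i (u^i)^{-1}(u^j)^{-1}$ one rewrites this as $u_{\lambda_0}^j(x)u_{\lambda_0}^i(x)(u_{\lambda_0}^i(x)u_{\lambda_0}^j(x))^{-1}=u_{\lambda_0}^j(x)u_{\lambda_0}^i(x)(u_{\lambda_0}^{e_i+e_j}(x))^{-1}$ once one notes $u_{\lambda_0}^i(x)u_{\lambda_0}^j(x)=u_{\lambda_0}^{e_i+e_j}(x)$ for $i<j$ by the ordering in (\ref{vsplit}). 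That last expression, at $x=\sigma_{\lambda_0}(z)$, is exactly $\phi(\alpha)^{02}_{\lambda_0}(z)_{ij}$. Since $f$ is independent of $\lambda_0$ (Section \ref{Mackeysectiononeonetwo}, cf.\ (\ref{mfequation2})) and $f(\pi(x))_{ij}=f_{\lambda_0}(x)_{ij}$, and since $\sigma_{\lambda_0}$ is a section so $\pi(\sigma_{\lambda_0}(z))=z$, we conclude $\phi(\alpha)^{02}_{\lambda_0}(z)_{ij}=f_{\lambda_0}(\sigma_{\lambda_0}(z))_{ij}=f(z)_{ij}$, i.e.\ $\phi(\alpha)^{02}_{\lambda_0}=f|_{\pi(U_{\lambda_0})}$.

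I expect the main obstacle to be the bookkeeping in the skew-multiplicativity step: one must be careful about the order of the generators and the sign in the exponent $m_il_j$ versus $m_jl_i$, and about which commutators are genuinely central (this uses the commutativity of the $\operatorname{Ad}v_{\lambda_0}^i$, hence of the $\R^n$-action). The cleanest route is probably to prove the identity first for $m,l$ supported on two coordinates $i<j$ by direct computation with Lemma \ref{cyclicpermute}, then induct on the number of nonzero components, at each stage pulling a single generator power through the accumulated product and absorbing the scalar it picks up; the centrality ensures these scalars multiply independently of order, so no higher commutators intervene. Everything else is then a matter of matching definitions.
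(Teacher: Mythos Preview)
Your proposal is correct and follows essentially the same route as the paper. The paper's proof is a one-line reference: ``Apply the definition of the Mackey obstruction and (\ref{mfequation2}) and (\ref{fullmackey}) in Section \ref{Mackeysectiononeonetwo}.'' Your argument is precisely the unpacking of that citation: you re-derive the skew-multiplicativity formula $v^l v^m = \big(\prod_{i<j}\phi(\alpha)^{02}_{ij}{}^{m_il_j}\big)v^{m+l}$ (which is (\ref{fullmackey}) combined with (\ref{mfequation2}) applied at $x=\sigma_{\lambda_0}(z)$), and then match $\phi(\alpha)^{02}_{\lambda_0}(z)_{ij}$ with $f(z)_{ij}$ via the definition (\ref{fdefn}) and the observation that $u_{\lambda_0}^i u_{\lambda_0}^j = u_{\lambda_0}^{e_i+e_j}$ for $i<j$ by the ordering in (\ref{vsplit}). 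The only difference is that the paper treats the identities of Section \ref{Mackeysectiononeonetwo} as already established in the needed generality (so the first claim is immediate and the second follows by substitution), whereas you re-prove the commutator bookkeeping on the spot; the content is the same.
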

\begin{proof}
Apply the definition of the Mackey obstruction and (\ref{mfequation2}) 
and (\ref{fullmackey}) in Section \ref{Mackeysectiononeonetwo}.
\end{proof}
Next we need to see how $(\phi(\alpha)^{20},\phi(\alpha)^{11},\phi(\alpha)^{02})$ 
compares with the Tu-\v{C}ech cocycle $\varphi(\alpha)$ that is, the image of 
$(CT(X,\delta),\alpha)$ under
 $\Upsilon:\mbox{Br}_{\R^n}(X)\to \check{H}^2(\R^n\ltimes X,{\mathcal S})$.
Recalling the construction of $\Upsilon$ 
from Theorem \ref{tusbigtheorem}, we know there is an open cover 
$\U^1=\{U_{\lambda_{01}}^1\}$ of $\R^n\ltimes X$ and continuous 
maps $u_{\lambda_0\lambda_1\lambda_{01}}:U^1_{\lambda_0\lambda_1
\lambda_{01}}\to U(\H)$ such that
$\beta^{\alpha,\Phi}|_{U^1_{\lambda_0\lambda_1\lambda_{01}}}=
\operatorname{Ad}u_{\lambda_0\lambda_1\lambda_{01}},$
and so $\Upsilon(CT(X,\delta),\alpha)\mapsto[\varphi(\alpha)]$ is given by Equation (\ref{cocycledefn}).

\begin{lemma}\label{cocyclerelationships}
Let $(X,\U,s)$ be in the standard setup, fix $(CT(X,\delta),\alpha)$ and local trivialisations 
$\{\Phi_{\lambda_0}\}$ defined over  $\U$. With these choices define
$\Upsilon:(CT(X,\delta),\alpha)\mapsto [\varphi(\alpha)]$.   Assume
$(CT(X,\delta),\alpha)$ generates $(\phi(\alpha)^{20},\phi(\alpha)^{11},\phi(\alpha)^{02})$
as preceding (\ref{Xi}). Then, for any open sets $U_{\lambda_{01}}^1, U_{\lambda_{00}}^1\in \U^1$ 
such that
$(-s_{\lambda_0\lambda_1}(z),\sigma_{\lambda_1}(z))\in U^1_{\lambda_0\lambda_1\lambda_{01}}$ 
and $(m,\sigma_{\lambda_0})\in U^1_{\lambda_0\lambda_0\lambda_{00}}$,
there are continuous maps
$\tau^{10}_{\lambda_0\lambda_1\lambda_{01}}: \{z\in 
\pi(U_{\lambda_0})\cap \pi(U_{\lambda_1}): (-s_{\lambda_0\lambda_1}(z),
\sigma_{\lambda_1}(z))\in U^1_{\lambda_{01}}\}\to \T$
and
$\tau^{01}_{\lambda_0\lambda_{00}}(m,\cdot): \{z\in \pi(U_{\lambda_0}): 
(m,\sigma_{\lambda_0}(z))\in U_{\lambda_{00}}^1\}\to \T$
such that for arbitrary indices $\lambda_{01}',\lambda_{02}'$ satisfying
$(-s_{\lambda_0\lambda_1}(z)+m,\sigma_{\lambda_1}(z))\in U^1_{\lambda_{01}'}$, 
$ (-s_{\lambda_0\lambda_1}(z)-s_{\lambda_1\lambda_2}(z),\sigma_{\lambda_2}(z))\in
 U^1_{\lambda_{02}'}$
the following three identities hold:
\begin{align*}
\prod_{1\leq i<j\leq n}\phi(\alpha)^{02}_{\lambda_0}(z)_{ij}^{m_il_j}=
&\tau^{01}_{\lambda_0\lambda_{12}}(l,z)\tau^{01}_{\lambda_0\lambda_{01}}(m,z)
\tau^{01}_{\lambda_0\lambda_{02}}(m+l,z)^*
\varphi(\alpha)_{\lambda_0\lambda_0\lambda_0\lambda_{01}\lambda_{02}\lambda_{12}}(m,l,
\sigma_{\lambda_0}(z)),\\
\phi(\alpha)^{11}_{\lambda_0\lambda_1}(m,z)=&\tau^{01}_{\lambda_1\lambda_{11}}(m,z)
\tau^{01}_{\lambda_0\lambda_{00}}(m,z)^*
\varphi(\alpha)_{\lambda_0\lambda_1\lambda_1\lambda_{01}\lambda_{01}'\lambda_{11}}
(-s_{\lambda_{0}\lambda_{1}}(z),m,\sigma_{\lambda_1}(z))\\
&\quad\times \varphi(\alpha)_{\lambda_0\lambda_0\lambda_1\lambda_{00}\lambda_{01}'
\lambda_{01}}(m,-s_{\lambda_{0}\lambda_{1}}(z),\sigma_{\lambda_1}(z))^*,
\end{align*}\begin{align*}
\phi(\alpha)^{20}_{\lambda_0\lambda_1\lambda_2}(z)=&
\tau^{10}_{\lambda_1\lambda_2\lambda_{12}}(z)
\tau^{10}_{\lambda_0\lambda_1\lambda_{01}}(z)
\tau^{01}_{\lambda_0\lambda_{00}}(-\check\partial 
s_{\lambda_0\lambda_1\lambda_2}(z),z)^*\tau^{10}_{\lambda_0\lambda_2\lambda_{02}}(z)^*\\
&\quad\times \varphi(\alpha)_{\lambda_0\lambda_1\lambda_2
\lambda_{01}\lambda_{02}'\lambda_{12}}(-s_{\lambda_0\lambda_1}(z),
-s_{\lambda_1\lambda_2}(z),\sigma_{\lambda_2}(z))\\
&\quad\times \varphi(\alpha)_{\lambda_0\lambda_0\lambda_2\lambda_{00}
\lambda_{02}'\lambda_{02}}(-\check\partial s_{\lambda_0\lambda_1\lambda_2}(z),
-s_{\lambda_0\lambda_2}(z),\sigma_{\lambda_2}(z))^*.
\end{align*}
\end{lemma}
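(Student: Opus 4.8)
The plan is to exploit that \emph{every} unitary occurring in the formulas (\ref{phi20defn}), (\ref{phi11defn}) and in Lemma~\ref{phi02identity} is a continuous local lift through $U(\H)\to\operatorname{Aut}\K$ of the single groupoid homomorphism $\beta^{\alpha,\Phi}$, evaluated at an arrow of $\R^n\ltimes X[\U]$. Concretely, $v^m_{\lambda_0}(z)$ implements $\beta^{\alpha,\Phi}_{(\lambda_0(m,\sigma_{\lambda_0}(z))\lambda_0)}$ by (\ref{betav01}), $v_{\lambda_0\lambda_1}(z)$ implements $\beta^{\alpha,\Phi}_{(\lambda_0(-s_{\lambda_0\lambda_1}(z),\sigma_{\lambda_1}(z))\lambda_1)}$ by (\ref{betav10}), and $u_{\lambda_0\lambda_1\lambda_{01}}(g,x)$ implements $\beta^{\alpha,\Phi}_{(\lambda_0(g,x)\lambda_1)}$ on $U^1_{\lambda_0\lambda_1\lambda_{01}}$. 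Since two unitaries implementing the same element of $\operatorname{Aut}\K$ differ by a scalar in $\T=ZU(\H)$, and all the maps involved are continuous, there exist continuous $\T$-valued functions $\tau^{01}_{\lambda_0\lambda_{00}}(m,\cdot)$ and $\tau^{10}_{\lambda_0\lambda_1\lambda_{01}}$ on the indicated domains with
\[v^m_{\lambda_0}(z)=\tau^{01}_{\lambda_0\lambda_{00}}(m,z)\,u_{\lambda_0\lambda_0\lambda_{00}}(m,\sigma_{\lambda_0}(z)),\qquad v_{\lambda_0\lambda_1}(z)=\tau^{10}_{\lambda_0\lambda_1\lambda_{01}}(z)\,u_{\lambda_0\lambda_1\lambda_{01}}(-s_{\lambda_0\lambda_1}(z),\sigma_{\lambda_1}(z)).\]
These are the functions named in the statement.

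I would then obtain the three identities by substituting these two relations into the defining formulas and collecting the (central) $\tau$-factors. For the first identity, Lemma~\ref{phi02identity} already expresses $\prod_{i<j}\phi(\alpha)^{02}_{\lambda_0}(z)_{ij}^{m_il_j}$ as $v^l_{\lambda_0}(\sigma_{\lambda_0}(z))v^m_{\lambda_0}(\sigma_{\lambda_0}(z))v^{m+l}_{\lambda_0}(\sigma_{\lambda_0}(z))^*$; replacing each $v$ by $\tau^{01}u$, pulling the $\tau^{01}$'s out front, and reading off (\ref{cocycledefn}) for the index triple $\lambda_0\lambda_0\lambda_0$ produces the stated expression. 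For the second, I use that $v^m_{\lambda_1}(z)v_{\lambda_0\lambda_1}(z)$ and $v_{\lambda_0\lambda_1}(z)v^m_{\lambda_0}(z)$ both implement $\beta^{\alpha,\Phi}_{(\lambda_0(m-s_{\lambda_0\lambda_1}(z),\sigma_{\lambda_1}(z))\lambda_1)}$ (the computation immediately preceding (\ref{phi11defn})), so they agree up to the factor $\phi(\alpha)^{11}_{\lambda_0\lambda_1}(m,z)$; substituting the $\tau$-relations on both sides, the two ways of factoring the arrow $(m-s_{\lambda_0\lambda_1}(z),\sigma_{\lambda_1}(z))$ — as $(m,\sigma_{\lambda_0}(z))$ followed by $(-s_{\lambda_0\lambda_1}(z),\sigma_{\lambda_1}(z))$, and as $(-s_{\lambda_0\lambda_1}(z),\sigma_{\lambda_1}(z))$ followed by the loop $(m,\sigma_{\lambda_1}(z))$ (a loop because $\Z^n$ acts trivially on $X$) — turn the two residual $u$-products into the two $\varphi(\alpha)$-values of the statement, after which the common $\tau^{10}$- and $u$-factors cancel. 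The third identity is treated identically, starting from (\ref{phi20defn}) and the relation $\operatorname{Ad}v_{\lambda_1\lambda_2}(z)v_{\lambda_0\lambda_1}(z)=\operatorname{Ad}v_{\lambda_0\lambda_2}(z)v^{-\check\partial s_{\lambda_0\lambda_1\lambda_2}(z)}_{\lambda_0}(z)$ proved just before (\ref{phi20defn}), using $-s_{\lambda_0\lambda_1}(z)-s_{\lambda_1\lambda_2}(z)=-s_{\lambda_0\lambda_2}(z)-\check\partial s_{\lambda_0\lambda_1\lambda_2}(z)$ to line up arguments. Throughout, Lemma~\ref{cyclicpermute} and the commutativity of $\R^n$ and $\Z^n$ are used to move the central factors into position.

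The genuinely delicate point is the bookkeeping in the last two identities: each $\varphi(\alpha)$-value is itself a triple product of $u$'s at shifted base points and chart indices, so one must factor every arrow coming from $v^m_{\lambda_0}$ and $v_{\lambda_0\lambda_1}$ into exactly the shape of (\ref{cocycledefn}) and match the leftover $u$-value on both sides. That this leftover $u$-value — attached to $(m-s_{\lambda_0\lambda_1}(z),\sigma_{\lambda_1}(z))$ in the second identity, and to the analogous points in the third — may be taken over the same auxiliary chart $\lambda_{01}'$ (resp.\ $\lambda_{02}'$) on both sides, independently of the other auxiliary indices, follows from the argument in the proof of Lemma~\ref{onecocycle}. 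No idea beyond those already used in Proposition~\ref{mymaptotucech} and the computations preceding (\ref{phi20defn}) is needed; the work is entirely in tracking which $\tau$-factor cancels against which.
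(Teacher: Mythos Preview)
Your proposal is correct and follows essentially the same approach as the paper. You define $\tau^{10}$ and $\tau^{01}$ exactly as the paper does (as the scalar discrepancy between the $v$'s and the $u$'s, which exists because both are unitary lifts of the same $\beta^{\alpha,\Phi}$-value), and then substitute into the defining formulas for $\phi(\alpha)^{02}$, $\phi(\alpha)^{11}$, $\phi(\alpha)^{20}$; the paper's mechanism for producing the two $\varphi(\alpha)$-factors in the second and third identities is precisely to insert $u_{\lambda_0\lambda_1\lambda_{01}'}(\cdots)^*u_{\lambda_0\lambda_1\lambda_{01}'}(\cdots)=1$ at the appropriate spot, which is the algebraic realisation of your ``two factorisations of the same arrow'' description.
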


\begin{proof}
By the definition of $\beta^{\alpha,\Phi}$ and the local lifts 
$\{u_{\lambda_0\lambda_1\lambda_{01}}\}$ we know
\begin{align*}
\operatorname{Ad}v_{\lambda_0\lambda_1}(z)= &\beta^{\alpha,\Phi}_{(\lambda_0
(-s_{\lambda_0\lambda_1}(z),\sigma_{\lambda_1}(z))\lambda_0)}
=\operatorname{Ad}u_{\lambda_0\lambda_1\lambda_{01}}(-s_{\lambda_0\lambda_1}(z),
\sigma_{\lambda_1}(z)).
\end{align*}
Therefore there exists a continuous function $\tau^{10}_{\lambda_0\lambda_1\lambda_{01}}$, 
as required, defined by
\[v_{\lambda_0\lambda_1}(z)=\tau^{10}_{\lambda_0\lambda_1\lambda_{01}}
(z)u_{\lambda_0\lambda_1\lambda_{01}}(-s_{\lambda_0\lambda_1}(z),\sigma_{\lambda_1}(z)).\]
Similarly, define $\tau^{01}_{\lambda_0\lambda_{00}}$ by the equation
$v_{\lambda_0}^m(z)=\tau^{01}_{\lambda_0\lambda_{00}}(m,z)
u_{\lambda_0\lambda_0\lambda_{00}}(m,\sigma_{\lambda_0}(z)).$
Then, using Lemma \ref{phi02identity}, we compute:
\begin{align*}
&\prod_{1\leq i<j\leq n}\phi(\alpha)^{02}_{\lambda_0}(z)_{ij}^{m_il_j}
=\prod_{1\leq i<j\leq n}f(z)_{ij}^{m_il_j}
=v_{\lambda_0}^l(\sigma_{\lambda_0}(z))v_{\lambda_0}^m
(\sigma_{\lambda_0}(z))v_{\lambda_0}^{m+l}(\sigma_{\lambda_0}(z))^*\\
&=\tau^{01}_{\lambda_0\lambda_{12}}(l,z)u_{\lambda_0\lambda_0\lambda_{12}}
(l,\sigma_{\lambda_0}(z))\tau^{01}_{\lambda_0\lambda_{01}}(m,z)
u_{\lambda_0\lambda_0\lambda_{01}}(m,\sigma_{\lambda_0}(z)) 
u_{\lambda_0\lambda_0\lambda_{02}}(m+l,\sigma_{\lambda_0}(z))^*\\ 
&\quad\quad\quad\quad\times\tau^{01}_{\lambda_0\lambda_{02}}(m+l,z)^*\\
&=\tau^{01}_{\lambda_0\lambda_{12}}(l,z)\tau^{01}_{\lambda_0\lambda_{01}}(m,z)
\tau^{01}_{\lambda_0\lambda_{02}}(m+l,z)^*\varphi(\alpha)_{\lambda_0
\lambda_0\lambda_0\lambda_{01}
\lambda_{02}\lambda_{12}}(m,l,\sigma_{\lambda_0}(z)).
\end{align*}
The second identity is a similar computation, but we need to choose an index 
$\lambda_{01}'$ such that $(-s_{\lambda_0\lambda_1}(z)+m,\sigma_{\lambda_1}(z))\in 
U^1_{\lambda_{01}'}$ and introduce the terms \[u_{\lambda_0\lambda_1\lambda_{01}'}
(m-s_{\lambda_0\lambda_1}(z),\sigma_{\lambda_1}(z))^*u_{\lambda_0\lambda_1\lambda_{01}'}
(m-s_{\lambda_0\lambda_1}(z),\sigma_{\lambda_1}(z)).\]
Doing so after the third equality sign below shows
\begin{align*}
&\phi(\alpha)^{11}_{\lambda_0\lambda_1}(m,z)
=v_{\lambda_1}^m(z)v_{\lambda_0\lambda_1}(z)v_{\lambda_0}^m(z)^*
v_{\lambda_0\lambda_1}(z)^*\quad\quad\quad\quad\quad\quad\quad\quad \\
&=\tau^{01}_{\lambda_1\lambda_{11}}(m,z)u_{\lambda_1\lambda_1\lambda_{11}}
(m,\sigma_{\lambda_1}(z))\tau^{10}_{\lambda_0\lambda_1\lambda_{01}}
(z)u_{\lambda_0\lambda_1\lambda_{01}}(-s_{\lambda_0\lambda_1}(z),\sigma_{\lambda_1}(z))\\
&\quad\times u_{\lambda_0\lambda_0\lambda_{00}}(m,\sigma_{\lambda_0}(z))^*
\tau^{01}_{\lambda_0\lambda_{00}}(m,z)^* u_{\lambda_0\lambda_1\lambda_{01}}
(-s_{\lambda_0\lambda_1}(z),\sigma_{\lambda_1}(z))^*
\tau^{10}_{\lambda_0\lambda_1\lambda_{01}}(z)^* \\
&=\tau^{01}_{\lambda_1\lambda_{11}}(m,z)\tau^{01}_{\lambda_0\lambda_{00}}(m,z)^*\\
\quad &\quad\times u_{\lambda_1\lambda_1\lambda_{11}}(m,\sigma_{\lambda_1}
(z))u_{\lambda_0\lambda_1\lambda_{01}}(-s_{\lambda_0\lambda_1}(z),\sigma_{\lambda_1}
(z))u_{\lambda_0\lambda_1\lambda_{01}'}(m-s_{\lambda_0\lambda_1}(z),\sigma_{\lambda_1}(z))^*\\
\quad &\quad\times u_{\lambda_0\lambda_1\lambda_{01}'}(m-s_{\lambda_0\lambda_1}(z),
\sigma_{\lambda_1}(z))u_{\lambda_0\lambda_0\lambda_{00}}(m,\sigma_{\lambda_0}
(z))^*u_{\lambda_0\lambda_1\lambda_{01}}(-s_{\lambda_0\lambda_1}(z),\sigma_{\lambda_1}(z))^*\\
&=\tau^{01}_{\lambda_1\lambda_{11}}(m,z)\tau^{01}_{\lambda_0\lambda_{00}}(m,z)^* 
\varphi(\alpha)_{\lambda_0\lambda_1\lambda_1\lambda_{00}\lambda_{01}'\lambda_{01}}
(-s_{\lambda_{0}\lambda_{1}}(z),m,\sigma_{\lambda_1}(z))\\
&\quad\times \varphi(\alpha)_{\lambda_0\lambda_0\lambda_1\lambda_{01}
\lambda_{01}'\lambda_{11}}(m,-s_{\lambda_{0}\lambda_{1}}(z),\sigma_{\lambda_1}(z))^*.
\end{align*}

Finally, for the last identity we choose an index $\lambda_{02}'$ such that 
$(-s_{\lambda_0\lambda_1}(z)-s_{\lambda_1\lambda_2}(z),\sigma_{\lambda_2}(z))\in 
U^1_{\lambda_{02}'}$, and introduce the term:
\[u_{\lambda_0\lambda_1\lambda_{02}'}(-s_{\lambda_0\lambda_1}(z)-
s_{\lambda_1\lambda_2}(z),\sigma_{\lambda_2}(z))^*
u_{\lambda_0\lambda_1\lambda_{02}'}(-s_{\lambda_0\lambda_2}(z)-\check\partial 
s_{\lambda_0\lambda_1\lambda_2}(z),\sigma_{\lambda_2}(z)).\]
Then we see that

\begin{align*}
&\phi(\alpha)^{20}_{\lambda_0\lambda_1\lambda_2}(z)
=v_{\lambda_1\lambda_2}(z)v_{\lambda_0\lambda_1}(z)v_{\lambda_0}^{-\check\partial 
s_{\lambda_0\lambda_1\lambda_2}(z)}(z)^*v_{\lambda_0\lambda_2}(z)^*
\quad\quad\quad\quad\quad\quad\quad\quad\quad\quad\quad\\
&=\tau^{10}_{\lambda_1\lambda_2\lambda_{12}}(z)u_{\lambda_1\lambda_2\lambda_{12}}
(-s_{\lambda_1\lambda_2}(z),\sigma_{\lambda_2}(z))\tau^{10}_{\lambda_0\lambda_1\lambda_{01}}
(z)u_{\lambda_0\lambda_1\lambda_{01}}(-s_{\lambda_0\lambda_1}(z),\sigma_{\lambda_1}(z))
\quad\quad\quad\\
&\quad\times u_{\lambda_0\lambda_0\lambda_{00}}(-\check\partial s_{\lambda_0
\lambda_1\lambda_2}(z),\sigma_{\lambda_0}(z))^*\tau^{01}_{\lambda_0
\lambda_{00}}(-\check\partial s_{\lambda_0\lambda_1\lambda_2}(z),z)^*\\
&\quad\times u_{\lambda_0\lambda_2\lambda_{02}}(-s_{\lambda_0\lambda_2}(z),
\sigma_{\lambda_2}(z))^*\tau^{10}_{\lambda_0\lambda_2\lambda_{02}}(z)^*\\
&=\tau^{10}_{\lambda_1\lambda_2\lambda_{12}}(z)\tau^{10}_{\lambda_0\lambda_1\lambda_{01}}(z)
\tau^{01}_{\lambda_0\lambda_{00}}(-\check\partial s_{\lambda_0\lambda_1\lambda_2}(z),z)^*
\tau^{10}_{\lambda_0\lambda_2\lambda_{02}}(z)^*\\
&\quad\times u_{\lambda_1\lambda_2\lambda_{12}}(-s_{\lambda_1\lambda_2}(z),
\sigma_{\lambda_2}(z))u_{\lambda_0\lambda_1\lambda_{01}}(-s_{\lambda_0\lambda_1}(z),
\sigma_{\lambda_1}(z))\\
&\quad\times u_{\lambda_0\lambda_1\lambda_{02}'}(-s_{\lambda_0\lambda_1}(z)-
s_{\lambda_1\lambda_2}(z),\sigma_{\lambda_2}(z))^*u_{\lambda_0\lambda_1\lambda_{02}'}(-
s_{\lambda_0\lambda_2}(z)-\check\partial s_{\lambda_0\lambda_1\lambda_2}(z),
\sigma_{\lambda_2}(z))\\
&\quad\times u_{\lambda_0\lambda_0\lambda_{00}}(-\check\partial s_{\lambda_0
\lambda_1\lambda_2}(z),\sigma_{\lambda_0}(z))^*u_{\lambda_0\lambda_2\lambda_{02}}(-
s_{\lambda_0\lambda_2}(z),\sigma_{\lambda_2}(z))^* \\
&=\tau^{10}_{\lambda_1\lambda_2\lambda_{12}}(z)\tau^{10}_{\lambda_0\lambda_1\lambda_{01}}(z)
\tau^{01}_{\lambda_0\lambda_{00}}(-\check\partial s_{\lambda_0\lambda_1\lambda_2}(z),z)^*
\tau^{10}_{\lambda_0\lambda_2\lambda_{02}}(z)^*\\
&\quad\times \varphi(\alpha)_{\lambda_0\lambda_1\lambda_2\lambda_{01}\lambda_{02}'
\lambda_{12}}(-s_{\lambda_0\lambda_1}(z),-s_{\lambda_1\lambda_2}(z),\sigma_{\lambda_2}(z))\\
&\quad\times \varphi(\alpha)_{\lambda_0\lambda_0\lambda_2\lambda_{00}\lambda_{02}'
\lambda_{02}}(-\check\partial s_{\lambda_0\lambda_1\lambda_2}(z),-s_{\lambda_0\lambda_2}(z),
\sigma_{\lambda_2}(z))^*.\quad\quad\quad\quad\quad\quad\quad\quad\quad\quad
\end{align*}
\end{proof}

\begin{lemma}\label{triplecocycle}
The triple $(\phi(\alpha)^{20},\phi(\alpha)^{11},\phi(\alpha)^{02})$ defines a cocycle 
in $Z^2_{\check\partial s}(\pi(\U),{\mathcal S})$.
\end{lemma}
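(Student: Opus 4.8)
The plan is to verify the three cocycle identities defining $Z^2_{\check\partial s}(\pi(\U),\mathcal S)$, namely that $D_{\check\partial s}(\phi(\alpha)^{20},\phi(\alpha)^{11},\phi(\alpha)^{02})=1$ componentwise. Rather than attack each component by a brute-force manipulation of the unitaries $v_{\lambda_0}^m$ and $v_{\lambda_0\lambda_1}$, I would exploit Lemma \ref{cocyclerelationships}, which expresses $\phi(\alpha)^{20}$, $\phi(\alpha)^{11}$ and (the relevant power product of) $\phi(\alpha)^{02}$ in terms of the Tu-\v Cech cocycle $\varphi(\alpha)$ and correction $\tau$-terms, together with the fact that $\varphi(\alpha)$ is genuinely a Tu-\v Cech cocycle, i.e.\ $\partial_{Tu}\varphi(\alpha)=1$ (established in the construction of $\Upsilon$ in Section \ref{cohomdefn}). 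The strategy is: push each of the three cocycle equations through the dictionary of Lemma \ref{cocyclerelationships}, so that the left-hand side becomes a product of $\partial_{Tu}\varphi(\alpha)$-type expressions (which vanish) times a coboundary in the $\tau$-variables (which also vanishes after one checks the $\tau$'s assemble into an ordinary \v Cech/Tu-\v Cech cochain of the right degree).

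Concretely, first I would handle $\check\partial\phi(\alpha)^{02}=1$. Since $\phi(\alpha)^{02}_{\lambda_0}=f|_{\pi(U_{\lambda_0})}$ for the \emph{globally} defined Mackey obstruction $f\in C(Z,M_n^u(\T))$ by Lemma \ref{phi02identity}, its \v Cech differential is trivially $1$; so the $\phi^{02}$-slot of $D_{\check\partial s}$ is immediate. Next, the $\phi^{11}$-slot of $D_{\check\partial s}$ requires $\check\partial\phi(\alpha)^{11}\cdot(\phi(\alpha)^{02}\cup_1\check\partial s)^{(-1)^{k}}=1$ on triple overlaps. Here I would substitute the second identity of Lemma \ref{cocyclerelationships} for $\phi(\alpha)^{11}$ into $\check\partial\phi(\alpha)^{11}$; the $\varphi(\alpha)$-terms reorganize, using $\partial_{Tu}\varphi(\alpha)=1$ applied to the composable strings $(-s_{\lambda_i\lambda_j}, m, \cdot)$ and $(m,-s_{\lambda_i\lambda_j},\cdot)$, into exactly the factor $(\phi(\alpha)^{02}\cup_1\check\partial s)^{\pm1}$ coming from the power-product identity (the first identity of the lemma, with $l=-\check\partial s$-type arguments), while the $\tau^{01}$-terms cancel in a \v Cech-coboundary pattern. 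The key computational point is that commuting the central $\varphi(\alpha)$-values past the $u$'s is legitimate because they are scalars, and that the combinatorics of the $\Z^n$-arguments matches the definition of $\cup_1$.

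Finally, the $\phi^{20}$-slot requires $\check\partial\phi(\alpha)^{20}\cdot(\phi(\alpha)^{11}\cup_1\check\partial s)^{(-1)^{k+1}}\cdot(\phi(\alpha)^{02}\cup_2 C(\check\partial s))^{(-1)^{k+1}}=1$ on quadruple overlaps; this is the heart of the matter and where the mimicking of the $E_2$-page enters. I would again feed the third identity of Lemma \ref{cocyclerelationships} into $\check\partial\phi(\alpha)^{20}$: the eight resulting $\varphi(\alpha)$-values at the four-fold intersection organize, via several applications of $\partial_{Tu}\varphi(\alpha)=1$ and Lemma \ref{cyclicpermute} (to move central elements), and via the cocycle relation $s_{\lambda_0\lambda_1}+s_{\lambda_1\lambda_2}=s_{\lambda_0\lambda_2}+\check\partial s_{\lambda_0\lambda_1\lambda_2}$ to rewrite the $\R^n$-arguments, into precisely the combination $(\phi(\alpha)^{11}\cup_1\check\partial s)^{\pm1}\cdot(\phi(\alpha)^{02}\cup_2 C(\check\partial s))^{\pm1}$ — the $\cup_2 C(\check\partial s)$ factor appearing exactly because of the Steenrod-type identity in Lemma \ref{commutecoboudary} applied to the components $\check\partial s_i$, which is what $C(F)$ was designed to encode. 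The $\tau^{10}$ and $\tau^{01}$ correction terms cancel in a Tu-\v Cech coboundary pattern. The main obstacle, as expected, is bookkeeping in this last step: keeping the many $\R^n$-valued and $\Z^n$-valued arguments, the index strings $\lambda_{ij}$, $\lambda_{ij}'$, and the signs straight so that the non-commutative $u$-rearrangements land exactly on the scalar expressions $\varphi(\alpha)$, after which everything is reduced to identities already proved. I would organize this by first doing the purely combinatorial reduction of the $\R^n$-arguments using the $\check\partial s$-cocycle relation, then extracting the scalar $\varphi(\alpha)$-factors one composable triple at a time, and only at the end collecting the $\tau$-terms.
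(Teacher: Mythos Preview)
Your approach is essentially the same as the paper's for the hardest component, the $\phi^{20}$-identity: the paper too passes through Lemma \ref{cocyclerelationships} and reduces everything to a long chain of $\partial_{Tu}\varphi(\alpha)=1$ identities (six of them, applied in sequence), after which the $\tau$-correction terms cancel and one is left with exactly $\phi^{11}\cup_1\check\partial s$ times $\phi^{02}\cup_2 C(\check\partial s)$. So on that step you and the paper agree.

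There are two places where you diverge from the paper. First, you omit the verification that the triple is even a \emph{cochain}: one must check that $m\mapsto\phi(\alpha)^{11}_{\lambda_0\lambda_1}(m,z)$ is a group homomorphism $\Z^n\to\T$ (this was deferred to the present lemma in the text preceding (\ref{phi11defn})). The paper does this by a short direct computation with the $v$'s and Lemma \ref{phi02identity}; you should include it. Second, for the $\phi^{11}$-identity (\ref{last1}) you propose to go through Lemma \ref{cocyclerelationships} and the Tu-\v Cech cocycle condition, whereas the paper instead does a direct manipulation of the unitaries $v_{\lambda_0}^m$, $v_{\lambda_0\lambda_1}$ using Lemma \ref{cyclicpermute} and the relation $v_{\lambda_0}^{-\check\partial s_{\lambda_0\lambda_1\lambda_2}}=\gamma\cdot v_{\lambda_0\lambda_2}^*v_{\lambda_1\lambda_2}v_{\lambda_0\lambda_1}$ coming from the homomorphism property of $\beta^{\alpha,\Phi}$. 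Your route should work in principle, and has the virtue of uniformity, but it entails reorganising six $\varphi(\alpha)$-factors via several applications of $\partial_{Tu}\varphi(\alpha)=1$; the paper's direct argument here is noticeably shorter (a handful of lines), precisely because the non-commutativity is mild enough at this stage that Lemma \ref{cyclicpermute} suffices. The paper reserves the Tu-\v Cech detour for (\ref{last2}), where the direct method genuinely breaks down.
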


\begin{proof}
First we check that $(\phi(\alpha)^{20},\phi(\alpha)^{11},\phi(\alpha)^{02})$ is a cochain. 
This is clear except for whether $\phi(\alpha)^{11}$ homomorphism from $\Z^n$ to $\T$. 
To see this apply Lemma \ref{phi02identity} to the definition of 
$\phi(\alpha)^{11}_{\lambda_0\lambda_1}$. Using that $\T$ is central we have the 
required relation:
\begin{align*}
\phi(\alpha)^{11}_{\lambda_0\lambda_1}(m+l,z):=&v_{\lambda_1}^{m+l}(z)
v_{\lambda_0\lambda_1}(z)v_{\lambda_0}^{m+l}(z)^*v_{\lambda_0\lambda_1}(z)^* \\
=&v_{\lambda_1}^l(z)v_{\lambda_1}^m(z)v_{\lambda_0\lambda_1}
(z)v_{\lambda_0}^m(z)^*v_{\lambda_0}^l(z)^*v_{\lambda_0\lambda_1}(z)^*\\
=&v_{\lambda_1}^l(z)v_{\lambda_1}^m(z)v_{\lambda_0\lambda_1}
(z)v_{\lambda_0}^m(z)^*v_{\lambda_0\lambda_1}(z)^*v_{\lambda_0\lambda_1}
(z)v_{\lambda_0}^l(z)^*v_{\lambda_0\lambda_1}(z)^* \\
=&\phi(\alpha)^{11}_{\lambda_0\lambda_1}(m,z)v_{\lambda_1}^l(z)v_{\lambda_0\lambda_1}
(z)v_{\lambda_0}^l(z)^*v_{\lambda_0\lambda_1}(z)^* \\
=&\phi(\alpha)^{11}_{\lambda_0\lambda_1}(m,z)\phi(\alpha)^{11}_{\lambda_0\lambda_1}(l,z).
\end{align*}

To check that $(\phi(\alpha)^{20},\phi(\alpha)^{11},\phi(\alpha)^{02})$ is a cocycle requires
first that $\check\partial \phi(\alpha)^{02}_{\lambda_0\lambda_1}(z)_{ij}=1$, 
which is immediate from  $\phi(\alpha)^{02}_{\lambda_0}=f|_{U_{\lambda_0}}$ 
(Lemma \ref{phi02identity}).  Then we need
\begin{align}
\label{last1}
\check\partial \phi(\alpha)^{11}_{\lambda_0\lambda_1\lambda_2}(m,z)^*=&\prod_{1\leq i<j\leq n}
\phi(\alpha)^{02}_{\lambda_0}(z)_{ij}^{\check\partial s_{\lambda_{0}\lambda_1
\lambda_{2}}(z)_im_j-m_i\check\partial s_{\lambda_{0}\lambda_1\lambda_{2}}(z)_j}, 
\quad \mbox{ and}
\end{align}\begin{align}
\label{last2}\check\partial \phi(\alpha)^{20}_{\lambda_0\lambda_1\lambda_2\lambda_3}(z)=&
\phi(\alpha)^{11}_{\lambda_0\lambda_1}(\check\partial s_{\lambda_1\lambda_2\lambda_3}(z),z)
\prod_{1\leq i<j\leq n}\phi(\alpha)^{02}(z)_{ij}^{C_{\lambda_{0}\lambda_1\lambda_{2}
\lambda_3}(z)_{ij}}.
\end{align}
For identity (\ref{last1}), we use
\begin{align*}
\beta^{\alpha,\Phi}_{(\lambda_0(-\check\partial s_{\lambda_{0}\lambda_1\lambda_{2}}(z),
\sigma_{\lambda_2}(z))\lambda_0)}
=(\beta^{\alpha,\Phi}_{(\lambda_0(-s_{\lambda_{0}\lambda_2}(z),\sigma_{\lambda_2}(z))
\lambda_2)})^{-1}\beta^{\alpha,\Phi}_{(\lambda_1(-s_{\lambda_{1}\lambda_2}(z),\sigma_{\lambda_2}(z))
\lambda_2)}\beta^{\alpha,\Phi}_{(\lambda_0(-s_{\lambda_{0}\lambda_1}(z),\sigma_{\lambda_1}(z))
\lambda_1)}
\end{align*}
implying that there is a continuous map $\gamma_{\lambda_{0}\lambda_1\lambda_{2}}: 
\pi(U_{\lambda_0})\cap \pi(U_{\lambda_1})\cap\pi(U_{\lambda_2})\to \T$ such that

\begin{align*}
v_{\lambda_0}^{-\check\partial s_{\lambda_{0}\lambda_1\lambda_{2}}(z)}(z)=
\gamma_{\lambda_0\lambda_1\lambda_2}(z)v_{\lambda_0\lambda_2}(z)^*v_{\lambda_1\lambda_2}
(z)v_{\lambda_0\lambda_1}(z).
\end{align*}

Using this fact, we deduce
\begin{align*}
&\prod_{1\leq i<j\leq n}\phi(\alpha)^{02}_{\lambda_0}(z)_{ij}^{\check\partial 
s_{\lambda_{0}\lambda_1\lambda_{2}}(z)_im_j-m_i\check\partial 
s_{\lambda_{0}\lambda_1\lambda_{2}}(z)_j}
=\left[\prod_{1\leq i<j\leq n}\phi(\alpha)^{02}_{\lambda_0}(z)_{ij}^{-\check\partial 
s_{\lambda_{0}\lambda_1\lambda_{2}}(z)_im_j+m_i\check\partial 
s_{\lambda_{0}\lambda_1\lambda_{2}}(z)_j}\right]^*\\
&\quad=\left[v_{\lambda_0}^m(z)v_{\lambda_0}^{-\check\partial s_{\lambda_{0}\lambda_1
\lambda_{2}}(z)}(z)v_{\lambda_0}^{-\check\partial s_{\lambda_{0}\lambda_1
\lambda_{2}}(z)+m}(z)^*\right.
\left. v_{\lambda_0}^{-\check\partial s_{\lambda_{0}\lambda_1\lambda_{2}}(z)+m}
(z)v_{\lambda_0}^m(z)^*v_{\lambda_0}^{-\check\partial s_{\lambda_{0}\lambda_1
\lambda_{2}}(z)}(z)^*\right]^* \\
&\quad=\left[v_{\lambda_0}^m(z)v_{\lambda_0}^{-\check\partial 
s_{\lambda_{0}\lambda_1\lambda_{2}}(z)}(z)v_{\lambda_2}^m(z)^*
v_{\lambda_0}^{-\check\partial s_{\lambda_{0}\lambda_1
\lambda_{2}}(z)}(z)^*\right]^*\quad\quad\ \quad\quad\\
&\quad=v_{\lambda_0\lambda_2}(z)^*v_{\lambda_1\lambda_2}(z)
v_{\lambda_0\lambda_1}(z)v_{\lambda_0}^m(z) v_{\lambda_0\lambda_1}(z)^*
v_{\lambda_1\lambda_2}(z)^*v_{\lambda_0\lambda_2}(z)
v_{\lambda_0}^m(z)^*.\quad\quad\ \quad\quad\quad\quad\ \quad\quad
\end{align*}
At this point, we introduce the terms $v_{\lambda_1}^m(z)^*v_{\lambda_1}^m(z)$ 
and $v_{\lambda_2}^m(z)^*v_{\lambda_2}^m(z)$ and use Lemma \ref{cyclicpermute}, 
so that the previous calculation now  produces  (cf. (\ref{last1})).
\begin{align*}
&v_{\lambda_0\lambda_2}(z)v_{\lambda_0}^m(z)^*v_{\lambda_0\lambda_2}(z)^*
v_{\lambda_1\lambda_2}(z)
[v_{\lambda_0\lambda_1}(z)v_{\lambda_0}^m(z)v_{\lambda_0\lambda_1}(z)^*v_{\lambda_1}^m(z)^*]
v_{\lambda_1}^m(z)v_{\lambda_1\lambda_2}(z)^*v_{\lambda_2}^m(z)^*v_{\lambda_2}^m(z)\\
&\quad= \phi(\alpha)^{11}_{\lambda_0\lambda_1}(m,z)^*
\phi(\alpha)^{11}_{\lambda_0\lambda_2}(m,z)\phi(\alpha)^{11}_{\lambda_1\lambda_2}(m,z)^*.
\end{align*}

As (\ref{last2}) involves noncommuting unitary operators we appeal to the relationship of 
$(\phi(\alpha)^{20},\phi(\alpha)^{11},\phi(\alpha)^{02})$ with the Tu-\v{C}ech cocycle 
$\varphi(\alpha)$ from Lemma \ref{cocyclerelationships}.
We use six identities that come directly from the Tu-\v{C}ech cocycle identity. 
To simplify the notation,  denote indices $\lambda_i$ indexing sets in the 
cover $\pi(\U)$ of $Z$ by just $i$. We will also write $F_{012},s_{01}$ and $z_0$ 
instead of $\check\partial s_{\lambda_0\lambda_1\lambda_2}(z)$, 
$s_{\lambda_0\lambda_1}(z)$ and $\sigma_{\lambda_0}(z)$ respectively. 
The six identities are as follows:
\begin{align}\label{lastidentity0}
&\varphi(\alpha)_{023\lambda_{12}\lambda_{13}'\lambda_{23}}(-s_{12},-
s_{23},z_3)\varphi(\alpha)_{003\lambda_{01}\lambda_{02}'\lambda_{12}}(-s_{01},-s_{12},z_2)^*\\
&=\varphi(\alpha)_{023\lambda_{02}'\lambda_{0123}\lambda_{23}}(-F_{012}-s_{02},-
s_{23},z_3)\varphi(\alpha)_{013\lambda_{01}\lambda_{0123}\lambda_{13}'}(-s_{01},-F_{123}-
s_{13},z_3)^*,\notag
\end{align}\begin{align}
\label{lastidentity1}
&\varphi(\alpha)_{023\lambda_{02}'\lambda_{0123}\lambda_{23}}(-F_{012}-s_{02},-
s_{23},z_3)\varphi(\alpha)_{023\lambda_{02}\lambda_{03}''\lambda_{23}}(-s_{02},-s_{23},z_3)^*\\
&=\varphi(\alpha)_{003\lambda_{012}\lambda_{0123}\lambda_{03}''}(-F_{012},-s_{02}-
s_{23},z_3)\varphi(\alpha)_{002\lambda_{012}\lambda_{02}'
\lambda_{02}}(-F_{012},-s_{02},z_2)^*,\notag
\end{align}\begin{align}
\label{lastidentity1a}
&\varphi(\alpha)_{013\lambda_{01}\lambda_{0123}\lambda_{13}'}(-s_{01},-F_{123}-s_{13},z_3)^*
= \varphi(\alpha)_{113\lambda_{123}\lambda_{13}'\lambda_{13}}(-F_{123},-s_{13},z_3)\\
&\quad\times\varphi(\alpha)_{013\lambda_{0123}'\lambda_{0123}\lambda_{13}}(-s_{01}-F_{123},-
s_{13},z_3)^*\times\varphi(\alpha)_{011\lambda_{01}\lambda_{0123}'\lambda_{123}'}(-s_{01},-
F_{123},z_1)^*\notag
\end{align}\begin{align}
\label{lastidentity2}
&\varphi(\alpha)_{013\lambda_{0123}'\lambda_{13}'\lambda_{13}}(-F_{123}-s_{01},-
s_{13},z_3)\varphi(\alpha)_{013\lambda_{01}\lambda_{03}'\lambda_{13}}(-s_{01},-s_{13},z_3)^*\\
&\quad=\varphi(\alpha)_{003\lambda_{123}\lambda_{0123}\lambda_{03}'}(-F_{123},-s_{01}-
s_{13},z_3)\varphi(\alpha)_{001\lambda_{123}\lambda_{0123}'\lambda_{01}}(-F_{123},-s_{01},z_1)^*,
\notag
\end{align}\begin{align}
\label{lastidentity3}
&\varphi(\alpha)_{003\lambda_{123}\lambda_{0123}\lambda_{03}'}(-F_{123},-F_{013}-s_{03},z_3)^*\\
&\quad=\varphi(\alpha)_{003\lambda_{0123}''\lambda_{0123}\lambda_{03}}(-F_{123}-F_{013},-
s_{03},z_3)^*\notag\\
&\quad\times\varphi(\alpha)_{000\lambda_{123}\lambda_{0123}''\lambda_{013}}(-F_{123},-
F_{013},z_0)^*\varphi(\alpha)_{003\lambda_{013}\lambda_{03}'\lambda_{03}}(-F_{013},-
s_{03},z_3),\notag
\end{align}\begin{align}
\label{lastidentity4}
&\varphi(\alpha)_{003\lambda_{0123}''\lambda_{0123}\lambda_{03}}(-F_{012}-F_{023},
-s_{03},z_3)^*=\varphi(\alpha)_{003\lambda_{023}\lambda_{03}''\lambda_{03}}(-F_{023},-s_{03},z_3)^*\\
&\quad\times\varphi(\alpha)_{003\lambda_{012}\lambda_{0123}\lambda_{03}''}(-F_{012},-F_{023}-
s_{03},z_3)^*
\varphi(\alpha)_{000\lambda_{012}\lambda_{01}\lambda_{023}}(-F_{012},-F_{023},z_0).\notag
\end{align}

We use these in the order they are presented to perform the computation below.
\begin{align*}
&\varphi(\alpha)_{123\lambda_{12}\lambda_{13}'\lambda_{23}}(-s_{12},-
s_{23},z_3)\varphi(\alpha)_{023\lambda_{02}\lambda_{03}''\lambda_{23}}(-s_{02},-s_{23},z_3)^*\\
&\quad\times\varphi(\alpha)_{013\lambda_{01}\lambda_{03}'\lambda_{13}}(-s_{01},-
s_{13},z_3)\varphi(\alpha)_{012\lambda_{01}\lambda_{02}'\lambda_{12}}(-s_{01},-s_{12},z_2)^*\\
=&\varphi(\alpha)_{023\lambda_{02}'\lambda_{0123}\lambda_{23'}}(-F_{012}-s_{02},-
s_{23},z_3)\varphi(\alpha)_{023\lambda_{02}\lambda_{03}''\lambda_{23}}(-s_{02},-s_{23},z_3)^*\\
&\quad\times\varphi(\alpha)_{013\lambda_{01}\lambda_{03}'\lambda_{13}}(-s_{01},-
s_{13},z_3)\varphi(\alpha)_{013\lambda_{01}\lambda_{0123}\lambda_{13}'}(-s_{01},-F_{123}-
s_{13},z_3)^*\quad \mbox{by } (\ref{lastidentity0})\\
=&\varphi(\alpha)_{003\lambda_{012}\lambda_{0123}\lambda_{03}''}(-F_{012},-s_{02}-
s_{23},z_3)\varphi(\alpha)_{002\lambda_{012}\lambda_{02}'\lambda_{02}}(-F_{012},-s_{02},z_2)^*\\
&\quad\times\varphi(\alpha)_{013\lambda_{01}\lambda_{03}'\lambda_{13}}(-s_{01},-
s_{13},z_3)\varphi(\alpha)_{013\lambda_{01}\lambda_{0123}\lambda_{13}'}(-s_{01},-F_{123}-
s_{13},z_3)^*\quad \mbox{by } (\ref{lastidentity1})\\
=&\varphi(\alpha)_{003\lambda_{012}\lambda_{0123}\lambda_{03}''}(-F_{012},-s_{02}-
s_{23},z_3)\varphi(\alpha)_{002\lambda_{012}\lambda_{02}'\lambda_{02}}(-F_{012},-s_{02},z_2)^*\\
&\quad\times\varphi(\alpha)_{013\lambda_{01}\lambda_{03}'\lambda_{13}}(-s_{01},-
s_{13},z_3)\varphi(\alpha)_{113\lambda_{123}\lambda_{13}'\lambda_{13}}(-F_{123},-s_{13},z_3)\\
&\quad\times\varphi(\alpha)_{013\lambda_{0123}'\lambda_{0123}\lambda_{13}}(-s_{01}-F_{123},-
s_{13},z_3)^*\varphi(\alpha)_{011\lambda_{01}\lambda_{0123}'\lambda_{123}'}(-s_{01},-
F_{123},z_1)^* \mbox{ by } (\ref{lastidentity1a})\\
=&\varphi(\alpha)_{003\lambda_{012}\lambda_{0123}\lambda_{03}''}(-F_{012},-s_{02}-
s_{23},z_3)\varphi(\alpha)_{002\lambda_{012}\lambda_{02}'\lambda_{02}}(-F_{012},-s_{02},z_2)^*\\
&\quad\times\varphi(\alpha)_{113\lambda_{123}\lambda_{13}'\lambda_{13}}(-F_{123},-
s_{13},z_3)\varphi(\alpha)_{011\lambda_{01}\lambda_{0123}'\lambda_{123}'}(-s_{01},-F_{123},z_1)^*\\
&\quad\times\varphi(\alpha)_{003\lambda_{123}\lambda_{0123}\lambda_{03}'}(-F_{123},-s_{01}-
s_{13},z_3)^*\varphi(\alpha)_{001\lambda_{123}\lambda_{0123}'\lambda_{01}}(-F_{123},-
s_{01},z_1)\quad \mbox{by } (\ref{lastidentity2})\\
=&\varphi(\alpha)_{003\lambda_{012}\lambda_{0123}\lambda_{03}''}(-F_{012},-F_{023}-
s_{03},z_3)\varphi(\alpha)_{002\lambda_{012}\lambda_{02}'\lambda_{02}}(-F_{012},-s_{02},z_2)^*\\
&\quad\times\varphi(\alpha)_{113\lambda_{123}\lambda_{13'}\lambda_{13}}(-F_{123},-
s_{13},z_3)\varphi(\alpha)_{011\lambda_{01}\lambda_{0123}'\lambda_{123}'}(-s_{01},-F_{123},z_1)^*\\
&\quad\times\varphi(\alpha)_{003\lambda_{123}\lambda_{0123}\lambda_{03}'}(-F_{123},-F_{013}-
s_{03},z_3)^*\varphi(\alpha)_{001\lambda_{123}\lambda_{0123}'\lambda_{01}}(-F_{123},-s_{01},z_1)
\ (\check\partial s=F)\\
=&\varphi(\alpha)_{003\lambda_{012}\lambda_{0123}\lambda_{03}''}(-F_{012},-F_{023}-
s_{03},z_3)\varphi(\alpha)_{002\lambda_{012}\lambda_{02}'\lambda_{02}}(-F_{012},-s_{02},z_2)^*\\
&\quad\times\varphi(\alpha)_{113\lambda_{123}\lambda_{13}'\lambda_{13}}(-F_{123},-
s_{13},z_3)\varphi(\alpha)_{011\lambda_{01}\lambda_{0123}'\lambda_{123}'}(-s_{01},-F_{123},z_1)^*\\
&\quad\times\varphi(\alpha)_{003\lambda_{013}\lambda_{03}'\lambda_{03}}(-F_{013},-
s_{03},z_3)\varphi(\alpha)_{003\lambda_{0123}''\lambda_{0123}\lambda_{03}}(-F_{123}-F_{013},-
s_{03},z_3)^*\\
&\quad\times\varphi(\alpha)_{000\lambda_{123}\lambda_{0123}''\lambda_{013}}(-F_{123},-
F_{013},z_0)^*\varphi(\alpha)_{001\lambda_{123}\lambda_{0123}'\lambda_{01}}(-F_{123},-
s_{01},z_1)\quad \mbox{by } (\ref{lastidentity3})\\
=&\varphi(\alpha)_{002\lambda_{012}\lambda_{02}'\lambda_{02}}(-F_{012},-s_{02},z_2)^*
\varphi(\alpha)_{113\lambda_{123}\lambda_{13}'\lambda_{13}}(-F_{123},-s_{13},z_3)\\
&\quad\times\varphi(\alpha)_{003\lambda_{013}\lambda_{03}'\lambda_{03}}(-F_{013},-
s_{03},z_3)\varphi(\alpha)_{003\lambda_{023}\lambda_{03}''\lambda_{03}}(-F_{023},-s_{03},z_3)^*\\
&\quad\times\varphi(\alpha)_{000\lambda_{123}\lambda_{0123}''\lambda_{013}}(-F_{123},-F_{013},z_0)^*
\varphi(\alpha)_{000\lambda_{012}\lambda_{0123}''\lambda_{023}}(-F_{012},-F_{023},z_0)\\
&\quad\times\varphi(\alpha)_{001\lambda_{123}\lambda_{0123}'\lambda_{01}}(-F_{123},-
s_{01},z_1)\varphi(\alpha)_{011\lambda_{01}\lambda_{0123}'\lambda_{123}'}(-s_{01},-F_{123},z_1)^*
\quad \mbox{by } (\ref{lastidentity4}).
\end{align*}
Summarizing the above computation we have
\begin{align*}
&\varphi(\alpha)_{123\lambda_{12}\lambda_{13}'\lambda_{23}}(-s_{12},-
s_{23},z_3)\varphi(\alpha)_{113\lambda_{123}\lambda_{13}'\lambda_{13}}(-F_{123},-s_{13},z_3)^*\\
&\quad\times\varphi(\alpha)_{023\lambda_{02}\lambda_{03}''\lambda_{23}}(-s_{02},-s_{23},z_3)^*
\varphi(\alpha)_{003\lambda_{023}\lambda_{03}''\lambda_{03}}(-F_{023},-s_{03},z_3)\\
&\quad\times\varphi(\alpha)_{013\lambda_{01}\lambda_{03}'\lambda_{13}}(-s_{01},-
s_{13},z_3)\varphi(\alpha)_{003\lambda_{013}\lambda_{03}'\lambda_{03}}(-F_{013},-s_{03},z_3)^*\\
&\quad\times\varphi(\alpha)_{012\lambda_{01}\lambda_{02}'\lambda_{12}}(-s_{01},-s_{12},z_2)^*
\varphi(\alpha)_{002\lambda_{012}\lambda_{02}'\lambda_{02}}(-F_{012},-s_{02},z_2)\\
\quad\quad\quad=&\varphi(\alpha)_{000\lambda_{123}\lambda_{0123}''\lambda_{013}}(-F_{123},-
F_{013},z_0)^*\varphi(\alpha)_{000\lambda_{012}\lambda_{0123}''\lambda_{023}}
(-F_{012},-F_{023},z_0)\\
&\quad\times\varphi(\alpha)_{011\lambda_{01}\lambda_{0123}'\lambda_{123}'}(-s_{01},-
F_{123},z_1)^*\varphi(\alpha)_{001\lambda_{123}\lambda_{0123}'\lambda_{01}}(-F_{123},-s_{01},z_1).
\end{align*}
To see what this has to do with identity (\ref{last2}), we invoke Lemma \ref{cocyclerelationships}. 
Using those identities in the above equation, and canceling out the $\tau$ terms gives

\begin{align*}
\check\partial\phi(\alpha)^{20}_{\lambda_0\lambda_1\lambda_2\lambda_3}&
=\phi(\alpha)^{11}_{\lambda_0\lambda_1}(F_{\lambda_1\lambda_2\lambda_3}(z),z)
\prod_{1\leq i<j\leq n}\phi(\alpha)^{02}_{\lambda_0}(z)_{ij}^{F_{\lambda_0\lambda_1\lambda_2}
(z)_iF_{\lambda_0\lambda_2\lambda_3}(z)_j-F_{\lambda_1\lambda_2\lambda_3}
(z)_iF_{\lambda_0\lambda_1\lambda_3}(z)_j}\\
&=\phi(\alpha)^{11}_{\lambda_0\lambda_1}(F_{\lambda_1\lambda_2\lambda_3}(z),z)
\prod_{1\leq i<j\leq n}\phi(\alpha)^{02}_{\lambda_0}
(z)_{ij}^{C(F)_{\lambda_0\lambda_1\lambda_2\lambda_3}(z)_{ij}},
\end{align*}
which is what we wanted to show.
\end{proof}

\begin{lemma}
Let $(X,\U,s)$ be in the standard setup. Then the map $\Xi_{\U,s}$
 in (\ref{Xi}) 
is well-defined  and constant on outer conjugacy classes, and therefore defines a 
map (denoted by the same symbol) $\Xi_{\U,s}:\operatorname{Br}_{\R^n}(X)\to 
\mathbb{H}^2_{\check\partial s}(\pi(\U),\mathcal{S})$.
\end{lemma}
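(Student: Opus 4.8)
The plan is to prove the two assertions separately: first that the class $[\phi(\alpha)^{20},\phi(\alpha)^{11},\phi(\alpha)^{02}]$ is independent of the auxiliary data entering its construction, namely the local trivialisations $\{\Phi_{\lambda_0}\}$, the unitary implementers $\{v^i_{\lambda_0}\}$ of $\Phi_{\lambda_0}\circ\alpha_{e_i}\circ\Phi_{\lambda_0}^{-1}$, and the lifts $\{v_{\lambda_0\lambda_1}\}$ of $\beta^{\alpha,\Phi}$; and second that replacing $(CT(X,\delta),\alpha)$ by an outer conjugate pair changes the triple by a $D_{\check\partial s}$-coboundary. Since the cover $\U$ and the cochain $s$ are fixed by the standard setup, these are the only things to check, and in both cases the strategy is to exhibit an explicit primitive in $C^1_{\check\partial s}(\pi(\U),{\mathcal S})$ for the discrepancy. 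Throughout, the residual scalar groupings will be handled by Lemma \ref{cyclicpermute}, and the fact that $D_{\check\partial s}$ of a degree-$1$ cochain $(\psi^{10},\psi^{01})$ has components $\bigl(\check\partial\psi^{10}\cdot(\psi^{01}\cup_1\check\partial s),\ \check\partial\psi^{01},\ 1\bigr)$ will be used to match terms.

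For independence of the data: on the contractible sets of the good covers $\U$ and $\pi(\U)$, and using Lemma \ref{zetavanishremark}, two choices of trivialisation are related by continuous $w_{\lambda_0}\colon U_{\lambda_0}\to U(\H)$ with $\Phi'_{\lambda_0}\circ\Phi_{\lambda_0}^{-1}=\operatorname{Ad}w_{\lambda_0}$, and there are $\T$-valued functions $\zeta^i_{\lambda_0}$ on $U_{\lambda_0}$ and $\xi_{\lambda_0\lambda_1}$ on $\pi(U_{\lambda_0})\cap\pi(U_{\lambda_1})$ with $(v')^i_{\lambda_0}=\zeta^i_{\lambda_0}\,w_{\lambda_0}v^i_{\lambda_0}w_{\lambda_0}^*$ and $v'_{\lambda_0\lambda_1}(z)=\xi_{\lambda_0\lambda_1}(z)\,w_{\lambda_1}(\sigma_{\lambda_1}(z))\,v_{\lambda_0\lambda_1}(z)\,w_{\lambda_0}(\sigma_{\lambda_0}(z))^*$ (no refinement of the cover is needed). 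Substituting these into (\ref{phi11defn}) and (\ref{phi20defn}), the scalars pull out, the $w$'s telescope, and conjugation by the single remaining $w$ is removed because the inner grouping is central; one then reads off that the triple is multiplied exactly by $D_{\check\partial s}(\psi^{10},\psi^{01})$ with $\psi^{10}_{\lambda_0\lambda_1}:=\xi_{\lambda_0\lambda_1}$ and $\psi^{01}_{\lambda_0}(m,z):=\prod_i\zeta^i_{\lambda_0}(\sigma_{\lambda_0}(z))^{m_i}$. The component $\phi(\alpha)^{02}$ is literally unchanged because it equals the restriction of the Mackey obstruction $f$ by Lemma \ref{phi02identity}, which does not see these choices, consistently with the $(0,2)$-component of $D_{\check\partial s}(\psi^{10},\psi^{01})$ being trivial.

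For constancy on outer conjugacy classes: having established well-definedness, I am free to compute $\Xi_{\U,s}(CT(X,\delta),\chi)$ with any convenient trivialisations. If $\Phi\colon CT(X,\delta)\to CT(X,\delta)$ and $w\colon\R^n\to UM(CT(X,\delta))$ implement the exterior equivalence as in (\ref{extcondition1ch3})--(\ref{extcondition2ch3}), take $\Psi_{\lambda_0}:=\Phi_{\lambda_0}\circ\Phi^{-1}$; a short computation then gives $\beta^{\chi,\Psi}_{(\lambda_0(s,x)\lambda_1)}=\operatorname{Ad}\bigl[\overline{\Phi_{\lambda_1}}(w_s)(x)\bigr]\,\beta^{\alpha,\Phi}_{(\lambda_0(s,x)\lambda_1)}$, so natural implementers for $\chi$ are $v^i_{\lambda_0,\chi}:=\overline{\Phi_{\lambda_0}}(w_{e_i})\,v^i_{\lambda_0}$ and $v_{\lambda_0\lambda_1,\chi}(z):=\overline{\Phi_{\lambda_1}}(w_{-s_{\lambda_0\lambda_1}(z)})(\sigma_{\lambda_1}(z))\,v_{\lambda_0\lambda_1}(z)$. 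Using $w_{s+t}=w_s\overline{\alpha}_s(w_t)$ and the intertwining property of $\overline{\Phi_{\lambda_0}}$ one checks that $v^m_{\lambda_0,\chi}(z)$ agrees with $\overline{\Phi_{\lambda_0}}(w_m)(\sigma_{\lambda_0}(z))\,v^m_{\lambda_0}(z)$ up to a $\T$-scalar; feeding this into (\ref{phi11defn}), (\ref{phi20defn}) and performing the same bookkeeping as at the end of the proof of Proposition \ref{mymaptotucech} --- commuting central groupings with Lemma \ref{cyclicpermute} and using that $\R^n$ is abelian --- the operator-valued correction factors $\overline{\Phi_{\lambda_\bullet}}(w_\bullet)$ cancel in pairs, and what survives is $(\phi(\alpha)^{20},\phi(\alpha)^{11},\phi(\alpha)^{02})$ multiplied by $D_{\check\partial s}$ of an explicit degree-$1$ cochain built from the tracked $\T$-scalars. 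The $(0,2)$-component is unchanged because the Mackey obstruction is an outer conjugacy invariant. (Additivity of $\Xi_{\U,s}$, although not required by the statement, follows by the same construction applied to $\alpha\otimes_X\beta$ with tensor-product trivialisations and implementers.)

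The main obstacle is the outer-conjugacy computation: unlike the independence-of-data step, the correcting factors $\overline{\Phi_{\lambda_\bullet}}(w_s)$ are genuinely operator-valued, so one must track their twisted-cocycle behaviour $\overline{\Phi_\lambda}(\overline{\alpha}_s(w_t))(x)=u(s,x)\,\overline{\Phi_\lambda}(w_t)(-s\cdot x)\,u(s,x)^*$ carefully enough to see that every operator factor cancels and that the surviving $\T$-valued discrepancy is precisely a $D_{\check\partial s}$-coboundary --- in particular that the split of $v^m_{\lambda_0,\chi}$ into an operator part and a scalar part is valid for all $m\in\Z^n$, including negative components. This is exactly the calculation carried out at the end of the proof of Proposition \ref{mymaptotucech}, now done one degree lower in each of the three columns of the dimensionally reduced complex, so it is lengthy but presents no new difficulty of principle.
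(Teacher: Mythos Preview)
Your proposal is correct and follows essentially the same strategy as the paper: exhibit an explicit $1$-cochain $(\psi^{10},\psi^{01})\in C^1_{\check\partial s}(\pi(\U),\mathcal{S})$ whose $D_{\check\partial s}$-differential is the discrepancy between the two triples, with the $(0,2)$-component unchanged because it is the Mackey obstruction (Lemma~\ref{phi02identity}).

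The only structural difference is the order. The paper does the outer-conjugacy computation first, with \emph{arbitrary} trivialisations $\{\Psi_{\lambda_0}\}$ for $(CT(X,\delta),\chi)$ and hence with a unitary $\nu_{\lambda_0}$ satisfying $\Psi_{\lambda_0}\circ\Phi\circ\Phi_{\lambda_0}^{-1}=\operatorname{Ad}\nu_{\lambda_0}$; it then obtains well-definedness as the special case $\Phi=\operatorname{id}$, $w\equiv 1$. You instead prove well-definedness directly and then exploit that freedom in the outer-conjugacy step by choosing $\Psi_{\lambda_0}=\Phi_{\lambda_0}\circ\Phi^{-1}$, i.e.\ $\nu_{\lambda_0}=1$ in the paper's notation. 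This buys you a cleaner outer-conjugacy calculation: with your ``natural'' implementers $v^i_{\lambda_0,\chi}=\overline{\Phi_{\lambda_0}}(w_{e_i})\,v^i_{\lambda_0}$ and $v_{\lambda_0\lambda_1,\chi}=\overline{\Phi_{\lambda_1}}(w_{-s_{\lambda_0\lambda_1}})\,v_{\lambda_0\lambda_1}$, the cocycle identity $w_{s+t}=w_s\overline{\alpha}_s(w_t)$ together with the intertwining $v^m_{\lambda_0}\,\overline{\Phi_{\lambda_0}}(w_s)=\overline{\Phi_{\lambda_0}}(\alpha_m(w_s))\,v^m_{\lambda_0}$ in fact forces $v^m_{\lambda_0,\chi}=\overline{\Phi_{\lambda_0}}(w_m)\,v^m_{\lambda_0}$ \emph{on the nose} (no residual scalar), so the $1$-cochain you end up with is actually trivial and $(\phi(\chi)^{20},\phi(\chi)^{11},\phi(\chi)^{02})=(\phi(\alpha)^{20},\phi(\alpha)^{11},\phi(\alpha)^{02})$ exactly. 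The paper's single computation covers both cases at once at the cost of carrying $\nu_{\lambda_0}$ throughout; your ordering trades that for two separate (but individually simpler) calculations.
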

\begin{proof}
In constructing $\Xi_{\U,s}$
we had to choose local trivialisations $\{\Phi_\lambda\}$ of $CT(X,\delta)$, and the unitary lifts 
$\{v_{\lambda_0}\}$ and $\{v_{\lambda_0\lambda_1}\}$ of $\beta^{\alpha,\Phi}_{(\lambda_0
(m,\sigma_{\lambda_0}(z))\lambda_0)}$ and $\beta^{\alpha,\Phi}_{(\lambda_0
(-s_{\lambda_0\lambda_1}(z),\sigma_{\lambda_1}(z))\lambda_1)}$ respectively. 
Let us first assume that $\Xi_{\U,s}$ is independent of these choices and prove
it descends to $\operatorname{Br}_{\R^n}(X)$. 

Let $(CT(X,\delta),\chi)$ be outer conjugate to $(CT(X,\delta),\alpha)$. 
Choose local trivialisations $\Psi_{\lambda_0}:CT(X,\delta)|_{U_{\lambda_0}}$ 
$\to C_0(U_{\lambda_0},\K)$ (since $\U$ is good), so that there exists 
$\tilde{v}^m_{\lambda_0}$ and $\tilde{v}_{\lambda_0\lambda_1}$ such that
$\beta^{\chi,\Psi}_{\lambda_0(m,\sigma_{\lambda_0}(z))\lambda_0}=\operatorname{Ad}
\tilde{v}^m_{\lambda_0}(z)$ and
$\beta^{\chi,\Psi}_{\lambda_0(-s_{\lambda_0\lambda_1}(z),\sigma_{\lambda_1}(z))\lambda_1}=
\operatorname{Ad}\tilde{v}_{\lambda_0\lambda_1}(z).$
Then $\Xi_{\U,s}((CT(X,\delta),\chi))=[\phi(\chi)^{20},\phi(\chi)^{11},\phi(\chi)^{02}]$ is defined by
\begin{align*}
\phi(\chi)^{02}_{\lambda_0}(z)_{ij}:=&\tilde{v}_{\lambda_0}^j(\sigma_{\lambda_0}(z))
\tilde{v}_{\lambda_0}^i(\sigma_{\lambda_0}(z))\tilde{v}_{\lambda_0}^{e_i+e_j}
(\sigma_{\lambda_0}(z))^*,\\
\phi(\chi)^{11}(m,z):=&\tilde{v}_{\lambda_1}^m(z)\tilde{v}_{\lambda_0\lambda_1}(z)
\tilde{v}_{\lambda_0}^m(z)^*\tilde{v}_{\lambda_0\lambda_1}(z)^*,\\
\phi(\chi)^{20}(z):=&\tilde{v}_{\lambda_1\lambda_2}(z)\tilde{v}_{\lambda_0\lambda_1}(z)
\tilde{v}_{\lambda_0}^{-\check\partial s_{\lambda_0\lambda_1\lambda_2}(z)}(z)^*
\tilde{v}_{\lambda_0\lambda_2}(z)^*.
\end{align*}

Observe that the definitions of $\beta^{\alpha,\Phi}$, $\{v_{\lambda_0}\}$ and 
$\{v_{\lambda_0\lambda_1}\}$ in (\ref{betav01}) and (\ref{betav10}) imply for any 
$a\in CT(X,\delta)$ and any indices $\lambda_0,\lambda_1$ that
\begin{align}
\label{phialpham}(\Phi_{\lambda_0}\circ\alpha_{m}(a))(\sigma_{\lambda_0}(z))=&
v_{\lambda_0}^m(z)\Phi_{\lambda_0}(a)(\sigma_{\lambda_0}(z))v_{\lambda_0}^m(z)^*, 
\quad  m\in \Z^n,\\
\label{phialphas}(\Phi_{\lambda_1}\circ\alpha_{-{s_{\lambda_0\lambda_1}(z)}}(a))
(\sigma_{\lambda_1}(z))=&v_{\lambda_0\lambda_1}(z)\Phi_{\lambda_0}(a)
(\sigma_{\lambda_0}(z))v_{\lambda_0\lambda_1}(z)^*.
\end{align}
As in the proof of Proposition \ref{mymaptotucech}, we find there exists 
$\nu_{\lambda_0}:U_{\lambda_0}\to U(\H)$ such that $\Psi_{\lambda_0}\circ \Phi\circ 
\Phi_{\lambda_0}^{-1}=\operatorname{Ad} \nu_{\lambda_0}$, and
\begin{align*}
\beta^{\chi,\Psi}_{(\lambda_0(m,\sigma_{\lambda_0}(z))\lambda_0)}(T)=&
\operatorname{Ad}[ \nu_{\lambda_0}(\sigma_{\lambda_0}(z))
\overline{\Phi_{\lambda_0}}(w_{m})(\sigma_{\lambda_0}(z))
v^m_{\lambda_0}(z)\nu_{\lambda_0}(\sigma_{\lambda_0}(z))^*](T),\\
\beta^{\chi,\Psi}_{(\lambda_0(-s_{\lambda_0\lambda_1}(z),\sigma_{\lambda_1}(z))\lambda_1)}(T)=&
\operatorname{Ad} [\nu_{\lambda_1}(\sigma_{\lambda_1}(z))\overline{\Phi_{\lambda_1}}
(w_{-s_{\lambda_0\lambda_1}(z)})(\sigma_{\lambda_1}(z))v_{\lambda_0\lambda_1}(z)
\nu_{\lambda_0}(\sigma_{\lambda_0}(z))^*](T).
\end{align*}
Therefore there exist continuous functions $\tau^{01}_{\lambda_0}(m,\cdot):
\pi(U_{\lambda_0})\to \T$ and $\tau_{\lambda_0\lambda_1}^{10}:\pi(U_{\lambda_0})\cap 
\pi(U_{\lambda_0})\to \T$ such that
\begin{align*}
\tilde{v}^m_{\lambda_0}(z)=&\tau^{01}_{\lambda_0}(m,z)\nu_{\lambda_0}(\sigma_{\lambda_0}(z))
\overline{\Phi_{\lambda_0}}(w_{m})(\sigma_{\lambda_0}(z))v^m_{\lambda_0}(z)\nu_{\lambda_0}
(\sigma_{\lambda_0}(z))^*,\\
\tilde{v}_{\lambda_0\lambda_1}(z)=&\tau_{\lambda_0\lambda_1}^{10}(z)\nu_{\lambda_1}
(\sigma_{\lambda_1}(z))\overline{\Phi_{\lambda_1}}(w_{-s_{\lambda_0\lambda_1}(z)})
(\sigma_{\lambda_1}(z))v_{\lambda_0\lambda_1}(z)\nu_{\lambda_0}(\sigma_{\lambda_0}(z))^*.
\end{align*}
Now we recalculate $(\phi(\chi)^{20},\phi(\chi)^{11},\phi(\chi)^{02})$ using these equations, 
with the purpose being to show $(\tau^{10},\tau^{01})$ is in fact a 1-cochain in 
$C^1_{\check\partial s}(\pi(\U),{\mathcal S})$ such that

\begin{equation}
(\phi(\chi)^{20},\phi(\chi)^{11},\phi(\chi)^{02})=(\phi(\alpha)^{20},\phi(\alpha)^{11},
\phi(\alpha)^{02})D_{\check\partial s}(\tau^{10},\tau^{01}).
\end{equation}
First we note that Lemma \ref{phi02identity} says $\phi(\chi)^{20}$ is \emph{exactly} the 
Mackey obstruction of $(CT(X,\delta),\chi)$, which we recall is constant on outer conjugacy 
classes. Therefore
\begin{equation}\label{diff02}
\phi(\chi)^{20}=\phi(\alpha)^{20}.
\end{equation}
This suffices to show that $m\mapsto\tau^m_{\lambda_0}(z)$ is a homomorphism from 
$\Z^n$ to $\T$, which will imply $(\tau^{10},\tau^{01})\in C^1_{\check\partial s}(\pi(\U),{\mathcal S})$.
\begin{align*}
&\prod_{1\leq i<j\leq n} \phi(\chi)^{02}(z)_{ij}^{m_il_j}
:=\tilde{v}^l_{\lambda_0}(z)\tilde{v}^m_{\lambda_0}(z)\tilde{v}^{m+l}_{\lambda_0}(z)^* \quad 
\mbox{by Lemma } \ref{phi02identity}\quad\quad\quad\quad\quad \\
=&\tau^{01}_{\lambda_0}(l,z)\nu_{\lambda_0}(\sigma_{\lambda_0}(z))
\overline{\Phi_{\lambda_0}}(w_{l})(\sigma_{\lambda_0}(z))v^l_{\lambda_0}(z)\nu_{\lambda_0}
(\sigma_{\lambda_0}(z))^*\\
&\quad\times \tau^{01}_{\lambda_0}(m,z)\nu_{\lambda_0}(\sigma_{\lambda_0}(z))
\overline{\Phi_{\lambda_0}}(w_{m})(\sigma_{\lambda_0}(z))v^m_{\lambda_0}(z)\nu_{\lambda_0}
(\sigma_{\lambda_0}(z))^*\\
&\quad\times\nu_{\lambda_0}(\sigma_{\lambda_0}(z))v^{m+l}_{\lambda_0}(z)^*
\overline{\Phi_{\lambda_0}}(w_{m+l})(\sigma_{\lambda_0}(z))^*\nu_{\lambda_0}
(\sigma_{\lambda_0}(z))^*\tau^{01}_{\lambda_0}(m+l,z)^*\quad\\
=&\tau^{01}_{\lambda_0}(l,z)\tau^{01}_{\lambda_0}(m,z)\tau^{01}_{\lambda_0}(m+l,z)^*\ 
\overline{\Phi_{\lambda_0}}(w_{m+l})(\sigma_{\lambda_0}(z))^*\overline{\Phi_{\lambda_0}}(w_{l})
(\sigma_{\lambda_0}(z))v^l_{\lambda_0}(z)\\
&\quad\times \overline{\Phi_{\lambda_0}}(w_{m})(\sigma_{\lambda_0}(z))v^m_{\lambda_0}(z)
v^{m+l}_{\lambda_0}(z)^* \quad \mbox{by Lemma } \ref{cyclicpermute} \\
=&\tau^{01}_{\lambda_0}(l,z)\tau^{01}_{\lambda_0}(m,z)\tau^{01}_{\lambda_0}(m+l,z)^*
 \overline{\Phi_{\lambda_0}}(w_{m+l})(\sigma_{\lambda_0}(z))^*
 \overline{\Phi_{\lambda_0}}(w_{l})(\sigma_{\lambda_0}(z))\\
&\quad\times v^l_{\lambda_0}(z)\overline{\Phi_{\lambda_0}}(w_{m})
(\sigma_{\lambda_0}(z))v^l_{\lambda_0}(z)^* \prod_{1\leq i<j\leq n} \phi^{02}(z)_{ij}^{m_il_j} 
\quad \mbox{by Lemma } \ref{phi02identity}\\
=&\tau^{01}_{\lambda_0}(l,z)\tau^{01}_{\lambda_0}(m,z)\tau^{01}_{\lambda_0}(m+l,z)^*
\prod_{1\leq i<j\leq n} \phi^{02}(z)_{ij}^{m_il_j} \quad \mbox{by }(\ref{phialpham}) \mbox{ and } 
(\ref{extcondition2ch3}).
\end{align*}
As $\phi(\chi)^{02}=\phi(\alpha)^{02}$, this implies that 
$\tau^{01}_{\lambda_0}(l,z)\tau^{01}_{\lambda_0}(m,z)\tau^{01}_{\lambda_0}(m+l,z)^*=1$.
Therefore  $(\tau^{10},\tau^{01})\in C^1_{\check\partial s}(\pi(\U),{\mathcal S})$.
Now, for $\phi(\chi)^{11}_{\lambda_0\lambda_1}(m,z):=\tilde{v}_{\lambda_1}^m(z)
\tilde{v}_{\lambda_0\lambda_1}(z)\tilde{v}_{\lambda_0}^m(z)^*\tilde{v}_{\lambda_0\lambda_1}(z)^*.
$
\begin{align*}
&\phi(\chi)^{11}_{\lambda_0\lambda_1}(m,z)=\tau^{01}_{\lambda_1}(m,z)\nu_{\lambda_1}
(\sigma_{\lambda_1}(z))\overline{\Phi_{\lambda_1}}(w_{m})(\sigma_{\lambda_1}(z))
v^m_{\lambda_1}(z)\nu_{\lambda_1}(\sigma_{\lambda_1}(z))^*\notag\\
&\quad\times\tau^{10}_{\lambda_0\lambda_1}(z)\nu_{\lambda_1}(\sigma_{\lambda_1}(z))
\overline{\Phi_{\lambda_1}}(w_{-s_{\lambda_0\lambda_1}(z)})(\sigma_{\lambda_1}
(z))v_{\lambda_0\lambda_1}(z)\nu_{\lambda_0}(\sigma_{\lambda_0}(z))^*\notag\\
&\quad\times\nu_{\lambda_0}(\sigma_{\lambda_0}(z))v^m_{\lambda_0}(z)^*
\overline{\Phi_{\lambda_0}}(w_{m})(\sigma_{\lambda_0}(z))^*
\nu_{\lambda_0}(\sigma_{\lambda_0}(z))^*\tau^{01}_{\lambda_0}(m,z)^*\notag\\
&\quad\times\nu_{\lambda_0}(\sigma_{\lambda_0}(z))v_{\lambda_0\lambda_1}(z)^*
\overline{\Phi_{\lambda_1}}(w_{-s_{\lambda_0\lambda_1}(z)})(\sigma_{\lambda_1}(z))^*
\nu_{\lambda_1}(\sigma_{\lambda_1}(z))^*\tau^{10}_{\lambda_0\lambda_1}(z)^*\\
&=\tau^{01}_{\lambda_1}(m,z)\tau^{01}_{\lambda_0}(m,z)^*\overline{\Phi_{\lambda_1}}(w_{m})
(\sigma_{\lambda_1}(z))v^m_{\lambda_1}(z)\overline{\Phi_{\lambda_1}}
(w_{-s_{\lambda_0\lambda_1}(z)})(\sigma_{\lambda_1}(z))\notag\\
&\quad\times v_{\lambda_0\lambda_1}(z)v^m_{\lambda_0}(z)^*
\overline{\Phi_{\lambda_0}}(w_{m})(\sigma_{\lambda_0}(z))^* 
v_{\lambda_0\lambda_1}(z)^*\overline{\Phi_{\lambda_1}}(w_{-s_{\lambda_0\lambda_1}(z)})
(\sigma_{\lambda_1}(z))^*\quad \mbox{by Lemma } \ref{cyclicpermute}\notag\\
&=\phi^{11}_{\lambda_0\lambda_1}(m,z)\tau^{01}_{\lambda_1}(m,z)\tau^{01}_{\lambda_0}(m,z)^*
\overline{\Phi_{\lambda_1}}(w_{m})(\sigma_{\lambda_1}(z))v^m_{\lambda_1}(z)
\overline{\Phi_{\lambda_1}}(w_{-s_{\lambda_0\lambda_1}(z)})(\sigma_{\lambda_1}(z))\notag\\
&\quad\times v^m_{\lambda_1}(z)^*v_{\lambda_0\lambda_1}(z)\overline{\Phi_{\lambda_0}}(w_{m})
(\sigma_{\lambda_0}(z))^*v_{\lambda_0\lambda_1}(z)^*\overline{\Phi_{\lambda_1}}(w_{-
s_{\lambda_0\lambda_1}(z)})(\sigma_{\lambda_1}(z))^*\quad \mbox{by }(\ref{phi11defn})
\quad\quad
\notag\\
&=\phi(\alpha)^{11}_{\lambda_0\lambda_1}(m,z)\tau^{01}_{\lambda_1}
(m,z)\tau^{01}_{\lambda_0}(m,z)^*\overline{\Phi_{\lambda_1}}(w_{m})(\sigma_{\lambda_1}(z))
\overline{\Phi_{\lambda_1}}(\alpha_m(w_{-s_{\lambda_0\lambda_1}(z)}))(\sigma_{\lambda_1}(z))\notag\\
&\quad\times\overline{\Phi_{\lambda_1}}(\alpha_{-s_{\lambda_0\lambda_1}(z)}(w_{m}))
(\sigma_{\lambda_1}(z))^*\overline{\Phi_{\lambda_1}}(w_{-s_{\lambda_0\lambda_1}(z)})
(\sigma_{\lambda_1}(z))^*\quad \mbox{by }(\ref{phialpham}) \mbox{ and } (\ref{phialphas})
\quad\quad\quad\quad\notag\\
&=\phi(\alpha)^{11}_{\lambda_0\lambda_1}(m,z)\tau^{01}_{\lambda_1}(m,z)
\tau^{01}_{\lambda_0}(m,z)^*
\overline{\Phi_{\lambda_1}}(w_{m}\alpha_m(w_{-s_{\lambda_0\lambda_1}(z)})
\alpha_{-s_{\lambda_0\lambda_1}(z)}(w_{m})^*
w_{-s_{\lambda_0\lambda_1}(z)})(\sigma_{\lambda_1}(z))\notag
\end{align*}\begin{align*}
&=\phi(\alpha)^{11}_{\lambda_0\lambda_1}(m,z)\tau^{01}_{\lambda_1}(m,z)
\tau^{01}_{\lambda_0}(m,z)^*\quad \mbox{by }(\ref{extcondition2ch3}).
\end{align*}

That is, we have shown
\begin{equation}\label{diff11}
\phi(\chi)^{11}_{\lambda_0\lambda_1}(m,z)=\phi(\alpha)^{11}_{\lambda_0\lambda_1}
(m,z)\tau^{01}_{\lambda_1}(m,z)\tau^{01}_{\lambda_0}(m,z)^*.
\end{equation}
Finally we work on the last component
\begin{align*}
\phi(\chi)&^{20}_{\lambda_0\lambda_1\lambda_2}(z)
:=\tilde{v}_{\lambda_1\lambda_2}(z)\tilde{v}_{\lambda_0\lambda_1}(z)
\tilde{v}_{\lambda_0}^{-\check\partial s_{\lambda_0\lambda_1\lambda_2}(z)}(z)^*
\tilde{v}_{\lambda_0\lambda_2}(z)^*\quad\quad\quad\quad\\
=\tau_{\lambda_1\lambda_2}^{10}&(z)\nu_{\lambda_2}(\sigma_{\lambda_2}(z))
\overline{\Phi_{\lambda_2}}(w_{-s_{\lambda_1\lambda_2}(z)})(\sigma_{\lambda_2}
(z))v_{\lambda_1\lambda_2}(z)\nu_{\lambda_1}(\sigma_{\lambda_1}(z))^*
\tau_{\lambda_0\lambda_1}^{10}(z)\nu_{\lambda_1}(\sigma_{\lambda_1}(z))\\\times&
\overline{\Phi_{\lambda_1}}(w_{-s_{\lambda_0\lambda_1}(z)})(\sigma_{\lambda_1}
(z))v_{\lambda_0\lambda_1}(z)\nu_{\lambda_0}(\sigma_{\lambda_0}(z))^*
\nu_{\lambda_0}(\sigma_{\lambda_0}(z))v^{-\check\partial s_{\lambda_0\lambda_1\lambda_2}
(z)}_{\lambda_0}(z)^*\\\times&\overline{\Phi_{\lambda_0}}(w_{- \check\partial 
s_{\lambda_0\lambda_1\lambda_2}(z)})(\sigma_{\lambda_0}(z))^*\nu_{\lambda_0}(\sigma_{\lambda_0}
(z))^*\tau^{01}_{\lambda_0}(-\check\partial s_{\lambda_0\lambda_1\lambda_2}(z),z)^*\nu_{\lambda_0}
(\sigma_{\lambda_0}(z))v_{\lambda_0\lambda_2}(z)^*\\
\times&\overline{\Phi_{\lambda_2}}(w_{-s_{\lambda_0\lambda_2}(z)})(\sigma_{\lambda_2}(z))^*
 \nu_{\lambda_2}(\sigma_{\lambda_2}(z))^*\tau_{\lambda_0\lambda_2}^{10}(z)^*  \\
=\tau_{\lambda_1\lambda_2}^{10}&(z)\tau_{\lambda_0\lambda_1}^{10}(z)\tau^{01}_{\lambda_0}
(-\check\partial s_{\lambda_0\lambda_1\lambda_2}(z),z)^*\tau_{\lambda_0\lambda_2}^{10}(z)^*
\overline{\Phi_{\lambda_2}}(w_{-s_{\lambda_0\lambda_2}(z)})(\sigma_{\lambda_2}(z))^*\\\times&
\overline{\Phi_{\lambda_2}}(w_{-s_{\lambda_1\lambda_2}(z)})(\sigma_{\lambda_2}
(z))v_{\lambda_1\lambda_2}(z)\overline{\Phi_{\lambda_1}}(w_{-s_{\lambda_0\lambda_1}(z)})
(\sigma_{\lambda_1}(z))v_{\lambda_0\lambda_1}(z)v^{-\check\partial 
s_{\lambda_0\lambda_1\lambda_2}(z)}_{\lambda_0}(z)^*\\
\times&\overline{\Phi_{\lambda_0}}(w_{-\check\partial s_{\lambda_0\lambda_1\lambda_2}(z)})
(\sigma_{\lambda_0}(z))^*v_{\lambda_0\lambda_2}(z)^* \quad \mbox{by Lemma } \ref{cyclicpermute}\\
=\phi(\alpha)&^{20}_{\lambda_0\lambda_1\lambda_2}(z)\tau_{\lambda_1\lambda_2}^{10}(z)
\tau_{\lambda_0\lambda_1}^{10}(z)\tau^{01}_{\lambda_0}(-\check\partial 
s_{\lambda_0\lambda_1\lambda_2}(z),z)^*\tau_{\lambda_0\lambda_2}^{10}(z)^*\\
\times&\overline{\Phi_{\lambda_2}}(w_{-s_{\lambda_0\lambda_2}(z)})(\sigma_{\lambda_2}(z))^*
\overline{\Phi_{\lambda_2}}(w_{-s_{\lambda_1\lambda_2}(z)})(\sigma_{\lambda_2}(z)) 
v_{\lambda_1\lambda_2}(z)\\
\times&\overline{\Phi_{\lambda_1}}(w_{-s_{\lambda_0\lambda_1}(z)})(\sigma_{\lambda_1}
(z))v_{\lambda_1\lambda_2}(z)^* v_{\lambda_0\lambda_2}(z)\overline{\Phi_{\lambda_0}}(w_{-\check
\partial s}^*)(\sigma_{\lambda_0}(z))v_{\lambda_0\lambda_2}(z)^* \quad \mbox{by }(\ref{phi20defn})\\
=\phi(\alpha)&^{20}_{\lambda_0\lambda_1\lambda_2}(z)\tau_{\lambda_1\lambda_2}^{10}(z)
\tau_{\lambda_0\lambda_1}^{10}(z)\tau^{01}_{\lambda_0}(-\check\partial 
s_{\lambda_0\lambda_1\lambda_2}(z),z)^*\tau_{\lambda_0\lambda_2}^{10}(z)^*
\overline{\Phi_{\lambda_2}}(w_{-s_{\lambda_0\lambda_2}(z)})(\sigma_{\lambda_2}(z))^*\\\times&
\overline{\Phi_{\lambda_2}}(w_{-s_{\lambda_1\lambda_2}(z)})(\sigma_{\lambda_2}(z))
\overline{\Phi_{\lambda_2}}(\alpha_{-s_{\lambda_1\lambda_2}(z)}(w_{-s_{\lambda_0\lambda_1}(z)}))
(\sigma_{\lambda_2}(z))\\
\times&\overline{\Phi_{\lambda_2}}(\alpha_{-s_{\lambda_0\lambda_2}(z)}(w_{-\check\partial 
s_{\lambda_0\lambda_1\lambda_2}(z)}^*))(\sigma_{\lambda_2}(z))^* \quad \mbox{by }(\ref{phialphas})
\times 2  \\
=\phi(\alpha)&^{20}_{\lambda_0\lambda_1\lambda_2}(z)\tau_{\lambda_1\lambda_2}^{10}(z)
\tau_{\lambda_0\lambda_1}^{10}(z)\tau^{01}_{\lambda_0}(-\check\partial 
s_{\lambda_0\lambda_1\lambda_2}(z),z)^*\tau_{\lambda_0\lambda_2}^{10}(z)^*\\
\times&\overline{\Phi_{\lambda_2}}(w_{-s_{\lambda_0\lambda_2}(z)}^*
w_{-s_{\lambda_1\lambda_2}(z)})(\sigma_{\lambda_2}(z))\\
\times&\overline{\Phi_{\lambda_2}}(\alpha_{-s_{\lambda_1\lambda_2}(z)}
(w_{-s_{\lambda_0\lambda_1}(z)})\alpha_{-s_{\lambda_0\lambda_2}(z)}(w_{-\check\partial 
s_{\lambda_0\lambda_1\lambda_2}(z)}))(\sigma_{\lambda_2}(z))  \\
=\phi(\alpha)&^{20}_{\lambda_0\lambda_1\lambda_2}(z)\tau_{\lambda_1\lambda_2}^{10}(z)
\tau_{\lambda_0\lambda_1}^{10}(z)\tau^{01}_{\lambda_0}(-\check\partial 
s_{\lambda_0\lambda_1\lambda_2}(z),z)^*\tau_{\lambda_0\lambda_2}^{10}(z)^*\\
\times&\overline{\Phi_{\lambda_2}}(w_{-s_{\lambda_0\lambda_2}(z)}^*
w_{-s_{\lambda_1\lambda_2}(z)})(\sigma_{\lambda_2}(z))\\
\times&\overline{\Phi_{\lambda_2}}(w_{-s_{\lambda_1\lambda_2}(z)}^*
w_{-s_{\lambda_1\lambda_2}(z)-s_{\lambda_0\lambda_1}(z)}
w_{-s_{\lambda_1\lambda_2}(z)-s_{\lambda_0\lambda_1}(z)}^*
w_{-s_{\lambda_0\lambda_2}(z)})\mbox{ by }(\ref{extcondition2ch3})\\
=\phi(\alpha)&^{20}_{\lambda_0\lambda_1\lambda_2}(z)\tau_{\lambda_1\lambda_2}^{10}(z)
\tau_{\lambda_0\lambda_1}^{10}(z)\tau^{01}_{\lambda_0}(-\check\partial 
s_{\lambda_0\lambda_1\lambda_2}(z),z)^*\tau_{\lambda_0\lambda_2}^{10}(z)^*\\
=\phi(\alpha)&^{20}_{\lambda_0\lambda_1\lambda_2}(z)\tau_{\lambda_1\lambda_2}^{10}(z)
\tau_{\lambda_0\lambda_1}^{10}(z)\tau^{01}_{\lambda_0}(\check\partial 
s_{\lambda_0\lambda_1\lambda_2}(z),z)\tau_{\lambda_0\lambda_2}^{10}(z)^*.\quad
\end{align*}
Therefore we have shown
\begin{equation}\label{diff20}
\phi(\chi)^{20}_{\lambda_0\lambda_1\lambda_2}(z)=
\phi(\alpha)^{20}_{\lambda_0\lambda_1\lambda_2}(z)
\tau_{\lambda_1\lambda_2}^{10}(z)\tau_{\lambda_0\lambda_1}^{10}(z)
\tau^{01}_{\lambda_0}(\check\partial s_{\lambda_0\lambda_1\lambda_2}(z),z)
\tau_{\lambda_0\lambda_2}^{10}(z)^*.
\end{equation}
Putting together the three equations (\ref{diff02}), (\ref{diff11}) and (\ref{diff20}), we have proved

\begin{equation}
(\phi(\chi)^{20},\phi(\chi)^{11},\phi(\chi)^{02})=(\phi(\alpha)^{20},\phi(\alpha)^{11},
\phi(\alpha)^{02})D_{\check\partial s}(\tau^{10},\tau^{01}).
\end{equation}
Finally, we need to prove that $\Xi_{\U,s}:(CT(X,\delta),\alpha))\mapsto 
[\phi(\alpha)^{20},\phi(\alpha)^{11},\phi(\alpha)^{02}]$ is well-defined. 
We use the same proof as above, except we set $\Phi=\operatorname{id}$ and $w_g(x)=1$.
\end{proof}

\begin{cor}\label{XiHomomorphism}
Let $(X,\U,s)$ be in the standard setup. Then the map $[CT(X,\delta),\alpha]$ 
$\mapsto [\phi(\alpha)^{20},\phi(\alpha)^{11},\phi(\alpha)^{02}]$ defines a homomorphism of groups
$\Xi_{\U,s}:\mathrm{Br}_{\R^n}(X)\mapsto {\mathbb H}^2_{\check\partial s}(\pi(\U),{\mathcal S}).$
\end{cor}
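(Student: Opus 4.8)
The plan is as follows. The preceding lemma already establishes that $\Xi_{\U,s}$ is a well-defined map of sets $\mathrm{Br}_{\R^n}(X)\to\mathbb{H}^2_{\check\partial s}(\pi(\U),\mathcal{S})$, independent of the auxiliary choices (local trivialisations and unitary lifts) used in its construction and constant on outer conjugacy classes. Since a map between groups that respects the binary operation is automatically a homomorphism, it remains only to verify that $\Xi_{\U,s}$ carries the product of $\mathrm{Br}_{\R^n}(X)$ to the group operation of $\mathbb{H}^2_{\check\partial s}(\pi(\U),\mathcal{S})$, which is induced componentwise from the operations on the cochain groups $\check{C}^2(\pi(\U),\mathcal{S})$, $\check{C}^1(\pi(\U),\hat{\mathcal{N}})$ and $\check{C}^0(\pi(\U),\mathcal{M})$. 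Because $\Xi_{\U,s}$ does not depend on the choices entering its construction, I am free to compute its value on a product using any convenient data.

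Fix representatives $(CT(X,\delta),\alpha)$ and $(CT(X,\delta'),\beta)$, so that their product is $\bigl(CT(X,\delta)\otimes_{C_0(X)}CT(X,\delta'),\ \alpha\otimes_X\beta\bigr)$; by the bundle description $\Gamma_0(E,X)\otimes_{C_0(X)}\Gamma_0(E',X)\cong\Gamma_0(E\otimes_XE',X)$ of Section~\ref{Mackeysectionchapter0} and the Dixmier-Douady classification, the underlying algebra is $CT(X,\delta+\delta')$, which, being a continuous trace algebra over $X$, is trivialised over the good cover $\U$. Fix once and for all a unitary $\theta\colon\H\otimes\H\to\H$; it induces an isomorphism $\K\otimes\K\cong\K$ and a strongly continuous map $\iota\colon U(\H)\times U(\H)\to U(\H)$, $\iota(a,b):=\theta(a\otimes b)\theta^{-1}$, which is a homomorphism in each variable and satisfies $\iota(t1,t'1)=tt'1$ for $t,t'\in\T$. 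Given local trivialisations $\Phi_{\lambda_0}$ of $CT(X,\delta)$ and $\Psi_{\lambda_0}$ of $CT(X,\delta')$ over $\U$, I take the $C_0(U_{\lambda_0})$-tensor products $\Phi_{\lambda_0}\otimes\Psi_{\lambda_0}$, composed with the isomorphism induced by $\theta$, as trivialisations of the product algebra. Unwinding the definition of $\beta^{\alpha,\Phi}$ gives $\beta^{\alpha\otimes\beta,\Phi\otimes\Psi}_{(\lambda_0(g,x)\lambda_1)}=\iota\bigl(\beta^{\alpha,\Phi}_{(\lambda_0(g,x)\lambda_1)},\ \beta^{\beta,\Psi}_{(\lambda_0(g,x)\lambda_1)}\bigr)$, so I may use $\iota(v^i_{\lambda_0},w^i_{\lambda_0})$ and $\iota(v_{\lambda_0\lambda_1},w_{\lambda_0\lambda_1})$ as the unitary lifts for the product system, where $\{v^i_{\lambda_0}\},\{v_{\lambda_0\lambda_1}\}$ and $\{w^i_{\lambda_0}\},\{w_{\lambda_0\lambda_1}\}$ are the lifts chosen for $\alpha$ and $\beta$ in Definition~\ref{prelims} and in (\ref{betav10}). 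These choices are admissible: they exist by Lemma~\ref{zetavanishremark}, and since $\iota$ is a homomorphism in each variable, the splitting convention (\ref{vsplit}) applied to the generators $\iota(v^i_{\lambda_0},w^i_{\lambda_0})$ produces precisely $\iota(v^m_{\lambda_0},w^m_{\lambda_0})$ for every $m\in\Z^n$.

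Substituting this data into the defining formulas for the three components — the unlabelled display defining $\phi(\cdot)^{02}$, together with (\ref{phi11defn}) for $\phi(\cdot)^{11}$ and (\ref{phi20defn}) for $\phi(\cdot)^{20}$ — and repeatedly using $\iota(a,b)\iota(c,d)=\iota(ac,bd)$, each of $\phi(\alpha\otimes\beta)^{02}_{\lambda_0}(z)_{ij}$, $\phi(\alpha\otimes\beta)^{11}_{\lambda_0\lambda_1}(m,z)$ and $\phi(\alpha\otimes\beta)^{20}_{\lambda_0\lambda_1\lambda_2}(z)$ collapses to $\iota$ applied to the corresponding pair $\bigl(\phi(\alpha)^{\bullet},\phi(\beta)^{\bullet}\bigr)$; since these values are scalars in $\T$, the identity $\iota(t1,t'1)=tt'1$ identifies them with the products $\phi(\alpha)^{\bullet}\phi(\beta)^{\bullet}$, which are exactly the group operations (componentwise on $M^u_n(\T)$ for $\phi^{02}$, pointwise multiplication of characters $\Z^n\to\T$ for $\phi^{11}$, pointwise on $\T$ for $\phi^{20}$). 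Hence the cochain triple for $\alpha\otimes\beta$ is the product of those for $\alpha$ and $\beta$ already at the cochain level, and passing to cohomology classes yields $\Xi_{\U,s}\bigl([CT(X,\delta),\alpha]\cdot[CT(X,\delta'),\beta]\bigr)=\Xi_{\U,s}\bigl([CT(X,\delta),\alpha]\bigr)\,\Xi_{\U,s}\bigl([CT(X,\delta'),\beta]\bigr)$, completing the proof. I expect the main obstacle to lie not in this final arithmetic but in the bookkeeping of the middle paragraph: checking that the tensor-product trivialisations and lifts are a legitimate choice of data for the construction of $\Xi_{\U,s}$ — in particular their compatibility with the splitting convention (\ref{vsplit}) — and that $\beta^{\alpha\otimes\beta}$ is genuinely the $\iota$-image of the pair $(\beta^{\alpha,\Phi},\beta^{\beta,\Psi})$, so that all three defining formulas factor through $\iota$.
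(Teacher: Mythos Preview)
Your proposal is correct and follows essentially the same approach as the paper: both arguments use tensor-product trivialisations $\Phi_{\lambda_0}\otimes\Psi_{\lambda_0}$, observe that $\beta^{\alpha\otimes\beta,\Phi\otimes\Psi}$ factors as the tensor of $\beta^{\alpha,\Phi}$ and $\beta^{\beta,\Psi}$, take the tensor (equivalently, your $\iota$-image) of the individual unitary lifts as lifts for the product system, and conclude that the cochain triple for the product is the componentwise product of the two triples. Your explicit handling of the identification $\H\otimes\H\cong\H$ via a fixed unitary $\theta$ and the verification that the splitting convention (\ref{vsplit}) is respected are minor elaborations of points the paper leaves implicit.
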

\begin{proof}
Let $[CT(X,\delta),\alpha],[CT(X,\delta'),\chi]\in \mathrm{Br}_{\R^n}(X)$. We have to show
\[\Xi_{\U,s}[CT(X,\delta),\alpha]+ \Xi_{\U,s}[CT(X,\delta'),\chi]=
\Xi_{\U,s}[CT(X,\delta)\otimes_{C_0(X)}CT(X,\delta'),\alpha\otimes\chi],\] 
where $\otimes_{C_0(X)}$ denotes the balanced tensor product from 
Section \ref{ctstracesection}. Suppose $\{\Phi_{\lambda_0}\}$ and 
$\{\Psi_{\lambda_0}\}$ are local trivialisations of $CT(X,\delta)$ and 
$CT(X,\delta')$ respectively. Suppose also $(\phi(\alpha)^{20},\phi(\alpha)^{11},\phi(\alpha)^{02})$ 
and $(\phi(\chi)^{20},\phi(\chi)^{11},\phi(\chi)^{02})$ are such that
\begin{align*}
(CT(X,\delta),\alpha)\mapsto [\phi(\alpha)^{20},\phi(\alpha)^{11},\phi(\alpha)^{02}] \mbox{ and}\quad
(CT(X,\delta'),\chi)\mapsto [\phi(\chi)^{20},\phi(\chi)^{11},\phi(\chi)^{02}].
\end{align*}
We will assume that $(\phi(\alpha)^{20},\phi(\alpha)^{11},\phi(\alpha)^{02})$ and 
$(\phi(\chi)^{20},\phi(\chi)^{11},\phi(\chi)^{02})$ are defined using unitary valued lifts 
$\{v_{\lambda_0}\}, \{v_{\lambda_0\lambda_1}\}$ and $\{\tilde{v}_{\lambda_0}\}, 
\{\tilde{v}_{\lambda_0\lambda_1}\}$ respectively.

Now, since $\K(\H)\otimes \K(\H)\cong \K(\H\otimes \H)$, it is clear that \[\Phi_{\lambda_0}
\otimes_{C_0(X)} \Psi_{\lambda_0}(a\otimes a')(x):=\Phi_{\lambda_0}(a)(x)\otimes
\Psi_{\lambda_0}(a')(x)\]
defines a local trivialisation of $CT(X,\delta)\otimes_{C_0(X)}CT(X,\delta')$. 
Moreover, it is straightforward to see that, if we define $\beta^{\alpha\otimes\chi,\Phi\otimes\Psi}$ by
\[\beta^{\alpha\otimes\chi,\Phi\otimes\Psi}_{(\lambda_0(s,x)\lambda_1)}(T)=
\Phi_{\lambda_1}\otimes_{C_0(X)} \Psi_{\lambda_1}(a\otimes a')(x)\quad T\in \K(\H\otimes \H),\]
for $\Phi_{\lambda_0}\otimes_{C_0(X)} \Psi_{\lambda_0}(a\otimes a')(-s\cdot x)=T$, then
\begin{align*}
\beta^{\alpha\otimes\chi,\Phi\otimes\Psi}_{(\lambda_0(-s_{\lambda_0\lambda_1}(z),
\sigma_{\lambda_1}(z))\lambda_1)}=\operatorname{Ad}v_{\lambda_0\lambda_1}(z)\otimes
 \tilde{v}_{\lambda_0\lambda_1}(z),\ \mbox{and}\
\beta^{\alpha\otimes\chi,\Phi\otimes\Psi}_{(\lambda_0(m,\sigma_{\lambda_1}(z))\lambda_0)}=
&\operatorname{Ad}v_{\lambda_0}^m(z)\otimes \tilde{v}_{\lambda_0}^m(z).
\end{align*}
With these facts one observes
\[[CT(X,\delta)\otimes_{C_0(X)}CT(X,\delta'),\alpha\otimes \chi]\mapsto 
[\phi(\alpha)^{20}\phi(\chi)^{20},\phi(\alpha)^{11}\phi(\chi)^{11},\phi(\alpha)^{02}\phi(\chi)^{02}].\]
\end{proof}

\begin{theorem}\label{MainIsomorphism}
Let $(X,\U,s)$ be in the standard setup. Then 
$\Xi_{\U,s}$ is a group isomorphism.
\end{theorem}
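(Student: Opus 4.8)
The plan is to prove that $\Xi_{\U,s}:\operatorname{Br}_{\R^n}(X)\to\mathbb{H}^2_{\check\partial s}(\pi(\U),\mathcal{S})$ is an isomorphism by exhibiting it as a morphism of long exact sequences and applying the Five Lemma, exactly in the spirit of Theorem \ref{PRWagreement}. The key observation is that $\check\partial s = F$ is our chosen representative of the Euler vector (over the good cover $\pi(\U)$), so Corollary \ref{dimreducedlongexact} provides a long exact sequence computing $\mathbb{H}^\bullet_{F}(\pi(\U),\cdot)$. On the Brauer side, we have the Packer--Raeburn--Williams sequence of Theorem \ref{PRWGysin} relating $\check{H}^k(Z,\mathcal{S})$, $H^k_{\R^n}(X,\mathcal{S})$, $\check{H}^{k-1}(Z,\hat{\mathcal{N}})$ and $\check{H}^{k+1}(Z,\mathcal{S})$, together with the fact (Lemma \ref{IsotoKerM}) that $H^2_{\R^n}(X,\mathcal{S})\cong\ker\operatorname{M}$. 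The route is: first build a commutative ladder relating the PRW sequence (truncated and extended appropriately) to the dimensionally reduced sequence, with $\Xi_{\U,s}$ occupying the relevant rung, and then chase.

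First I would set up the comparison on $\ker\operatorname{M}$. By Theorem \ref{PRWagreement} there is already a commutative diagram identifying $H^k_{\R^n}(\pi^{-1}(\W),\mathcal{S})$ with $\mathbb{H}^k_F(\W,\mathcal{S})$ for a good cover $\W$ of $Z$; combined with Lemma \ref{IsotoKerM}, $\Xi_{\U,s}$ restricted to $\ker\operatorname{M}\subset\operatorname{Br}_{\R^n}(X)$ should agree (up to the canonical identifications) with the map $(\nu,\eta)\mapsto(\phi(\nu,\eta)^{20},\phi(\nu,\eta)^{11},1)$ of Theorem \ref{PRWagreement}. This needs checking: one must verify that when the Mackey obstruction $f=\phi(\alpha)^{02}$ vanishes, the triple $(\phi(\alpha)^{20},\phi(\alpha)^{11},\phi(\alpha)^{02})$ computed via the unitary lifts $\{v_{\lambda_0}\},\{v_{\lambda_0\lambda_1}\}$ coincides cohomologically with the image of the Raeburn--Williams cocycle $(\nu,\eta)$ — this follows by relating both to the Tu--\v{C}ech cocycle $\varphi(\alpha)$ via Lemma \ref{cocyclerelationships} and the analogous computation implicit in Section \ref{inclusion}. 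Granting this, $\Xi_{\U,s}$ is an isomorphism from $\ker\operatorname{M}$ onto the subgroup $\mathbb{H}^2_F(\pi(\U),\mathcal{S})$ consisting of classes with trivial $\phi^{02}$-component.

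Next I would handle the Mackey-obstruction direction. The quotient of $\operatorname{Br}_{\R^n}(X)$ by $\ker\operatorname{M}$ injects into $C(Z,M_n^u(\T))$ via $\operatorname{M}$, and Lemma \ref{phi02identity} shows that under $\Xi_{\U,s}$ the $\phi^{02}$-component is precisely $f=\operatorname{M}([CT(X,\delta),\alpha])$. So there is a commutative square relating the short exact sequence $0\to\ker\operatorname{M}\to\operatorname{Br}_{\R^n}(X)\to\operatorname{M}(\operatorname{Br}_{\R^n}(X))\to0$ to the analogous truncation of the $\mathbb{H}$-complex, where the right-hand vertical is $\operatorname{M}(\operatorname{Br}_{\R^n}(X))\hookrightarrow C(Z,M_n^u(\T))$. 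To finish by the Five Lemma I need two things: (i) surjectivity of $\Xi_{\U,s}$, and (ii) identification of $\operatorname{M}(\operatorname{Br}_{\R^n}(X))$ with the image of $\mathbb{H}^2_F(\pi(\U),\mathcal{S})$ in $\check{C}^0(\pi(\U),\mathcal{M})/(\text{image of }D_{\check\partial s})$. For (i), I would invoke Tu's theorem (Theorem \ref{tusbigtheorem}): every class in $\check{H}^2(\R^n\ltimes X,\mathcal{S})$ comes from some $[CT(X,\delta),\alpha]$, and by Lemma \ref{cocyclerelationships} the triple $(\phi(\alpha)^{20},\phi(\alpha)^{11},\phi(\alpha)^{02})$ is built from the same local unitaries $\{u_{\lambda_0\lambda_1\lambda_{01}}\}$ that represent $\varphi(\alpha)$; conversely, given an arbitrary $D_{\check\partial s}$-cocycle $(\phi^{20},\phi^{11},\phi^{02})$, one reverses the construction of Section 8 to produce unitary lifts and hence (via the construction in Corollary \ref{UpsilonSurjective}) a pair $(CT(X,\delta),\alpha)$ mapping to it. For (ii), a $\phi^{02}\in\check{C}^0(\pi(\U),\mathcal{M})$ extends to a $D_{\check\partial s}$-cocycle exactly when $\check\partial\phi^{02}=1$, i.e. $\phi^{02}$ is a \emph{global} $M_n^u(\T)$-valued function on $Z$, and the PRW theory shows every such function arises as a Mackey obstruction; the subtlety is that not every global function need lift, but the obstruction to lifting lands in $\mathbb{H}^3_F(\pi(\U),\underline\Z)$ and is precisely the obstruction to the corresponding continuous-trace algebra admitting an $\R^n$-action — matching exactly.

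The hard part will be step (i), surjectivity, and more precisely the bookkeeping needed to reverse the chain of identities in Lemma \ref{cocyclerelationships}: given an abstract $D_{\check\partial s}$-cocycle we must manufacture compatible local unitaries $v^m_{\lambda_0}$, $v_{\lambda_0\lambda_1}$, and then glue them (over the cover $\U^1$ of $\R^n\times X$) into a genuine $\operatorname{Aut}\K$-valued groupoid homomorphism $\beta^{\alpha,\Phi}$, which by Proposition \ref{kmrw87} and the construction in Corollary \ref{UpsilonSurjective} yields $(CT(X,\delta),\alpha)$. The delicate point is continuity and consistency of these lifts across the triple overlaps where the $F=\check\partial s$ terms and the $C(F)$ terms enter — this is where the non-trivial cup products $\cup_1 F$ and $\cup_2 C(F)$ in the differential $D_{\check\partial s}$ must be matched term-by-term against the Tu--\v{C}ech cocycle identity, reversing the six-identity computation in the proof of Lemma \ref{triplecocycle}. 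Once surjectivity is in hand, injectivity and the Five Lemma conclusion are comparatively routine, since injectivity on $\ker\operatorname{M}$ follows from Theorem \ref{PRWagreement} and injectivity of $\operatorname{M}$ on the complementary part is built into the definition of the Brauer group modulo outer conjugacy (cf. Corollary \ref{mymapinjective}).
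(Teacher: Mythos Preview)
Your overall architecture matches the paper's: injectivity via restricting to $\ker\operatorname{M}$ and comparing with the Packer--Raeburn--Williams theory, surjectivity by passing through Tu--\v{C}ech cohomology. But there are two concrete problems.

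First, the Five Lemma framing is circular as you have written it. You say that to apply the Five Lemma you need (i) surjectivity of $\Xi_{\U,s}$ --- but that is exactly what the theorem asserts. In fact the paper does not use the Five Lemma here at all: injectivity and surjectivity are proved separately. For injectivity the clean argument (which you almost have) is: if $\Xi_{\U,s}[CT(X,\delta),\alpha]=0$ then the $\phi^{02}$-component of the image is trivial, because a $D_{\check\partial s}$-coboundary $D_{\check\partial s}(\tau^{10},\tau^{01})$ has \emph{no} $\phi^{02}$-part (there is no degree-$(-1)$ cochain to differentiate). Since $\phi(\alpha)^{02}$ is literally the Mackey obstruction (Lemma \ref{phi02identity}), this forces $[CT(X,\delta),\alpha]\in\ker\operatorname{M}$, and then Proposition \ref{BrauerCommuting} together with Corollary \ref{EquivariantInjective} finishes it. Your sentence ``injectivity of $\operatorname{M}$ on the complementary part is built into the definition'' is not right and the reference to Corollary \ref{mymapinjective} is irrelevant.

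Second, and more seriously, your surjectivity plan misidentifies what has to be constructed. You propose to ``manufacture compatible local unitaries $v^m_{\lambda_0}$, $v_{\lambda_0\lambda_1}$'' from an abstract $D_{\check\partial s}$-cocycle and glue them into a groupoid homomorphism. There is no evident way to do this --- the data $(\phi^{20},\phi^{11},\phi^{02})$ are $\T$-valued, not $U(\H)$-valued, and the paper explicitly remarks that one cannot directly construct the Brauer element. What the paper does instead is write down, by hand, a $\T$-valued Tu--\v{C}ech 2-cocycle $\varphi$ on a specific cover $\U^1$ of $\R^n\times X$. The key ingredient you are missing is Lemma \ref{Rnvaluedlifts}: the continuous $\R^n$-valued lifts $\tilde w_{\lambda_0}$ of the fibre coordinates allow one to define integer-valued functions
\[
m_{\lambda_0\lambda_1}(s,x)=s_{\lambda_0\lambda_1}(\pi(x))-\tilde w_{\lambda_1}(x)+\tilde w_{\lambda_0}(s^{-1}x)+s\in\Z^n,
\]
and the Tu--\v{C}ech cocycle is then an explicit product of $\phi^{20}$, $\phi^{11}$ and $\phi^{02}$ evaluated at these integers. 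One checks $\partial_{Tu}\varphi=1$ directly, then invokes Corollary \ref{UpsilonSurjective} and Proposition \ref{TuDimRed} (which packages the three identities of Lemma \ref{cocyclerelationships} in the reverse direction) to produce the required Brauer class. Reversing the six-identity computation of Lemma \ref{triplecocycle}, as you suggest, is not the mechanism; the explicit formula is.
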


To prove injectivity we use an obvious argument.

\begin{prop}\label{BrauerCommuting}
Let $(X,\U,s)$ be in the standard setup, and recall $\operatorname{M}$ denotes the 
Mackey obstruction map
$\operatorname{M}:\mathrm{Br}_{\R^n}(X)\to C(Z,M_n^u(\T)).$
Then there is a commutative diagram

\centerline{\xymatrix{
\mathrm{Br}_{\R^n}(X)|_{\operatorname{M}=0}\ar[rr]^{\cong}\ar[drr] && 
H_{\R^n}^2(\pi^{-1}(\pi(\U)),{\mathcal S})\ar[d]\\
&&{\mathbb H}^2_{\check\partial s}(\pi(\U),{\mathcal S}).
}}

\end{prop}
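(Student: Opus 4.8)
The plan is to identify each of the three maps in the square on the level of cochains and then match them directly.

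First, the horizontal isomorphism is that of Lemma \ref{IsotoKerM}: an element $[CT(X,\delta),\alpha]$ with $\operatorname{M}([CT(X,\delta),\alpha])=0$ is an $N$-principal system with $N=\Z^n$, so Theorem \ref{invarianttheorem} applies and (after passing to a common refinement, which in the standard setup we may take to be $\pi(\U)=\{\pi(U_{\lambda_0})\}$) provides trivialisations $\Phi_{\lambda_0}$ of $CT(X,\delta)|_{\pi^{-1}(\pi(U_{\lambda_0}))}$ carrying $\alpha$ to an action exterior equivalent to $\tau$, hence strictly continuous maps $u_{\lambda_0}\colon\R^n\to C(\pi^{-1}(\pi(U_{\lambda_0})),U(\H))$ satisfying (\ref{g1g2uequation}) and transition unitaries $v_{\lambda_0\lambda_1}$; the image of $[CT(X,\delta),\alpha]$ is the class $[\nu,\eta]$ defined by (\ref{rwnueqn}) and (\ref{rwetaeqn}). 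The right-hand vertical arrow is the one constructed in Theorem \ref{PRWagreement}, with $\W=\pi(\U)$ and $F=\check\partial s$ (which in the standard setup represents the Euler vector of $\pi\colon X\to Z$), sending $[\nu,\eta]$ to $[\phi(\nu,\eta)^{20},\phi(\nu,\eta)^{11},1]$. The diagonal is $\Xi_{\U,s}$ restricted to $\ker\operatorname{M}$. So it suffices to prove that $[\phi(\nu,\eta)^{20},\phi(\nu,\eta)^{11},1]$ represents $\Xi_{\U,s}([CT(X,\delta),\alpha])$.

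Since $\Xi_{\U,s}$ is independent of the trivialisations and unitary lifts used to build it, I will compute it from the very trivialisations $\Phi_{\lambda_0}|_{U_{\lambda_0}}$ above, together with the canonical lifts these force. Because $\Z^n\subset\R^n$ acts trivially on the principal $\T^n$-bundle $X$, one has $\tau_{e_i}=\operatorname{id}$, hence $\Phi_{\lambda_0}\circ\alpha_{e_i}\circ\Phi_{\lambda_0}^{-1}=\operatorname{Ad}u_{\lambda_0}^{e_i}$; taking $v_{\lambda_0}^i:=u_{\lambda_0}^{e_i}|_{U_{\lambda_0}}$ and using (\ref{g1g2uequation}) (which, for $\Z^n$, also forces the $u_{\lambda_0}^{e_i}$ to commute pointwise, consistently with $\operatorname{M}=0$) gives $v_{\lambda_0}^m(z)=u_{\lambda_0}^m(\sigma_{\lambda_0}(z))$ for all $m\in\Z^n$. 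Likewise, unwinding $\beta^{\alpha,\Phi}$ and using $s_{\lambda_0\lambda_1}(z)\cdot\sigma_{\lambda_1}(z)=\sigma_{\lambda_0}(z)$ shows $\beta^{\alpha,\Phi}_{(\lambda_0(-s_{\lambda_0\lambda_1}(z),\sigma_{\lambda_1}(z))\lambda_1)}=\operatorname{Ad}[u_{\lambda_1}^{-s_{\lambda_0\lambda_1}(z)}(\sigma_{\lambda_1}(z))\,v_{\lambda_0\lambda_1}(\sigma_{\lambda_0}(z))]$, so I take the $Z$-level transition in the $\Xi_{\U,s}$-construction to be $u_{\lambda_1}^{-s_{\lambda_0\lambda_1}(z)}(\sigma_{\lambda_1}(z))\,v_{\lambda_0\lambda_1}(\sigma_{\lambda_0}(z))$. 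With these choices the comparison is a direct, if bookkeeping-heavy, calculation: the third component is immediate, $\phi(\alpha)^{02}_{\lambda_0}=f|_{\pi(U_{\lambda_0})}=1$ by Lemma \ref{phi02identity} since $\operatorname{M}([CT(X,\delta),\alpha])=0$; for $\phi(\alpha)^{11}$, substituting into (\ref{phi11defn}) and using repeatedly that elements of $\Z^n$ fix $X$ pointwise, the cocycle relation $u_{\lambda_0}^{t+t'}(x)=u_{\lambda_0}^{t}(x)u_{\lambda_0}^{t'}(t^{-1}\cdot x)$, the fact that $\sigma_{\lambda_0}(z)$ and $\sigma_{\lambda_1}(z)$ lie on the same $\R^n$-orbit so that the orbit-invariance (\ref{etainvariantonorbits}) of $\eta|_{\Z^n}$ applies, and Lemma \ref{cyclicpermute} to commute the central factors, the expression collapses to $\eta_{\lambda_0\lambda_1}(m,\sigma_{\lambda_1}(z))=\phi(\nu,\eta)^{11}_{\lambda_0\lambda_1}(m,z)$; and for $\phi(\alpha)^{20}$, the analogous substitution into (\ref{phi20defn}) --- now also handling the factor $v_{\lambda_0}^{-\check\partial s_{\lambda_0\lambda_1\lambda_2}(z)}(z)=u_{\lambda_0}^{-\check\partial s_{\lambda_0\lambda_1\lambda_2}(z)}(\sigma_{\lambda_0}(z))$ by the same identities --- leaves $\nu_{\lambda_0\lambda_1\lambda_2}(\sigma_{\lambda_2}(z))\,\eta_{\lambda_0\lambda_1}(-s_{\lambda_1\lambda_2}(z),\sigma_{\lambda_2}(z))=\phi(\nu,\eta)^{20}_{\lambda_0\lambda_1\lambda_2}(z)$. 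Hence the square commutes; since the horizontal map is an isomorphism, injectivity of $\Xi_{\U,s}$ on $\ker\operatorname{M}$ is then equivalent to injectivity of the right-hand vertical map.

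The step I expect to be the main obstacle is the final identification of $\phi(\alpha)^{20}$: the $\Xi_{\U,s}$-cocycle evaluates its unitaries at the three different section points $\sigma_{\lambda_0}(z),\sigma_{\lambda_1}(z),\sigma_{\lambda_2}(z)$, whereas $\phi(\nu,\eta)^{20}$ evaluates $\nu$ and $\eta$ at the single point $\sigma_{\lambda_2}(z)$ with arguments shifted by the $\R^n$-valued cochain $s$. Reconciling the two requires carefully tracking how each unitary factor transforms along $\R^n$-orbits via the consequences of (\ref{g1g2uequation}), applying the orbit-invariance (\ref{etainvariantonorbits}) at the right places, and using the centrality/cyclic-permutation Lemma \ref{cyclicpermute} to move scalar factors past the noncommuting unitaries, so that the $v_{\lambda_i\lambda_j}$'s assemble into $\nu$ and the $u_{\lambda_i}$'s into the single $\eta$-factor. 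No new ideas are needed beyond those already used in Section \ref{inclusion} and in the proof of Lemma \ref{cocyclerelationships}, but the telescoping product must be organised with care.
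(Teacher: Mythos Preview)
Your proposal is correct and follows essentially the same route as the paper: use the special trivialisations from Theorem~\ref{invarianttheorem} to compute $\Xi_{\U,s}$ explicitly, and verify that the resulting cochain $(\phi(\alpha)^{20},\phi(\alpha)^{11},\phi(\alpha)^{02})$ agrees with $(\phi(\nu,\eta)^{20},\phi(\nu,\eta)^{11},1)$ component by component, using (\ref{g1g2uequation}), orbit-invariance (\ref{etainvariantonorbits}), and Lemma~\ref{cyclicpermute}. The only difference is your choice of lift for $\beta^{\alpha,\Phi}_{(\lambda_0(-s_{\lambda_0\lambda_1}(z),\sigma_{\lambda_1}(z))\lambda_1)}$: you factor as $(\Phi_{\lambda_1}\alpha\Phi_{\lambda_1}^{-1})(\Phi_{\lambda_1}\Phi_{\lambda_0}^{-1})$ to get $u_{\lambda_1}^{-s_{\lambda_0\lambda_1}(z)}(\sigma_{\lambda_1}(z))\,v_{\lambda_0\lambda_1}(\sigma_{\lambda_0}(z))$, whereas the paper factors the other way to get $\tilde v_{\lambda_0\lambda_1}(\sigma_{\lambda_1}(z))\,u_{\lambda_0}^{-s_{\lambda_0\lambda_1}(z)}(\sigma_{\lambda_1}(z))$; the two differ by a scalar and the computations collapse to the same answer (indeed your $\phi^{11}$ first lands on $\eta_{\lambda_0\lambda_1}(m,\sigma_{\lambda_0}(z))$, which equals $\eta_{\lambda_0\lambda_1}(m,\sigma_{\lambda_1}(z))$ by the orbit-invariance you cite).
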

\begin{proof}
Recall from Theorem \ref{PRWagreement} that the map $H_{\R^n}^2(\pi^{-1}(\pi(\U)),
{\mathcal S})\mapsto {\mathbb H}^2_{\check\partial s}(\pi(\U),{\mathcal S})$ is 
given by $(\nu,\eta)\mapsto(\phi^{20}(\nu,\eta),\phi^{11}(\nu,\eta),\phi^{02}(\nu,\eta))$ with:
\begin{align*}
&\phi(\nu,\eta)^{20}_{\lambda_0\lambda_1\lambda_2}(z):=\nu_{\lambda_0\lambda_1\lambda_2}
(\sigma_{\lambda_2}(z))\eta_{\lambda_0\lambda_{1}}(-s_{\lambda_{1}\lambda_2}(z),
\sigma_{\lambda_2}(z)), \\
&\phi(\nu,\eta)^{11}_{\lambda_0\lambda_1}(m,z):=
\eta_{\lambda_0\lambda_{1}}(m,\sigma_{\lambda_{1}}(z)),\quad \mbox {and} \quad
\phi(\nu,\eta)^{02}:= 1.
\end{align*}
Suppose that $[CT(X,\delta),\alpha]$ has trivial Mackey obstruction and 
$[CT(X,\delta),\alpha]\mapsto [\nu,\eta]\in H^2_{\R^n}(\pi^{-1}(\pi(\U)),{\mathcal S})$ 
under the map from Section \ref{inclusion}. Suppose also that $\Xi_{\U,s}[CT(X,\delta),
\alpha]=[\phi(\alpha)^{20},\phi(\alpha)^{11},\phi(\alpha)^{02}]$. Then we need to show
\[[\phi(\alpha)^{20},\phi(\alpha)^{11},\phi(\alpha)^{02}]=[\phi(\nu,\eta)^{20},\phi(\nu,\eta)^{11},
\phi(\nu,\eta)^{02}].\]
First, we revisit the construction of $(\nu,\eta)$. Recall from Theorem \ref{invarianttheorem} 
that there is an open cover $\W=\{W_{\lambda_0}\}$ of $Z$, local trivialisations 
$\Phi_{\lambda_0}:CT(X,\delta)|_{\pi^{-1}(W_{\lambda_0})}\to C_0(\pi^{-1}(W_{\lambda_0}),\K)$ 
and continuous maps $u_{\lambda_0}:G\to C(\pi^{-1}(W_{\lambda_0}),U(\H))$ such that for any 
section $h$ in $C_0(\pi^{-1}(W_{\lambda_0}),\K)$
\begin{align}
\Phi_{\lambda_0}\circ\alpha_{g_1}\circ \Phi_{\lambda_0}^{-1}(h)= 
\operatorname{Ad}u_{\lambda_0}^{g_1}\circ \tau_{g_1}(h),\\
\label{g1g2uequationch3}u_{\lambda_0}^{g_1g_0}(x)=u_{\lambda_0}^{g_1}(x)
u_{\lambda_0}^{g_0}(g_1^{-1}x).
\end{align}
In this case, we may as well assume $\W=\pi(\U)$, and that there are continuous maps 
$\tilde{v}_{\lambda_0\lambda_1}:\pi^{-1}(W_{\lambda_0\lambda_1})\to U(\H)$ such that 
$\Phi_{\lambda_1}\circ \Phi_{\lambda_0}^{-1}=\operatorname{Ad} \tilde{v}_{\lambda_0\lambda_1}$. 
Then we defined
\begin{align*}
\nu_{\lambda_0\lambda_1\lambda_2}(x):=&\tilde{v}_{\lambda_1\lambda_2}(x)
\tilde{v}_{\lambda_0\lambda_1}(x)\tilde{v}_{\lambda_0\lambda_2}(x)^*,\\
\eta_{\lambda_0\lambda_1}(s,x):=&u_{\lambda_1}^s(x)\tilde{v}_{\lambda_0\lambda_1}
(-s\cdot x)u_{\lambda_0}^s(x)^*\tilde{v}_{\lambda_0\lambda_1}(x)^*.
\end{align*}
On the other hand, to define $(\phi(\alpha)^{20},\phi(\alpha)^{11},\phi(\alpha)^{02})$ 
we need to find continuous unitary valued maps that implement $z\mapsto \beta_{(\lambda_0
(-s_{\lambda_0\lambda_1}(z),\sigma_{\lambda_1}(z))\lambda_1)}$ and $(m,z)\mapsto 
\beta_{(\lambda_0(m,\sigma_{\lambda_0}(z))\lambda_0)}$. Fix $T\in \K$, $z\in \pi(U_{\lambda_0})$ 
and let $h\in C(U_{\lambda_0},\K)$ be such that $h(\sigma_{\lambda_0}(z))=T$. Then
$$
\beta_{(\lambda_0(m,\sigma_{\lambda_0}(z))\lambda_0)}(T)=\Phi_{\lambda_0}\circ\alpha_m
\circ\Phi_{\lambda_0}^{-1}(h)(\sigma_{\lambda_0}(z))
=\operatorname{Ad}u_{\lambda_0}^{m}\circ \tau_{m}(h)(\sigma_{\lambda_0}(z))
=\operatorname{Ad}u_{\lambda_0}^{m}(\sigma_{\lambda_0}(z))(T).
$$
Note that (\ref{g1g2uequationch3}) implies it is also the case that (for $m\in\Z^n$)
\[u^m_{\lambda_0}(\sigma_{\lambda_0}(z))=(u^{e_1}_{\lambda_0}
(\sigma_{\lambda_0}(z))^{m_1}(u^{e_2}_{\lambda_0}(\sigma_{\lambda_0}(z))^{m_2}
\dots (u^{e_m}_{\lambda_0}(\sigma_{\lambda_0}(z))^{m_1}.\]
(This requirement was necessary: cf. Equation (\ref{vsplit})). Therefore we may suppose 
$\beta_{(\lambda_0(m,\sigma_{\lambda_0}(z))\lambda_0)}$ is implemented by 
$(m,z)\mapsto u_{\lambda_0}^{m}(\sigma_{\lambda_0}(z))$.  Let $T\in \K$, 
$z\in \pi(U_{\lambda_0})\cap \pi(U_{\lambda_1})$ and $h\in C(U_{\lambda_0},\K)$ 
satisfy $h(\sigma_{\lambda_0}(z))=T$. Then
\begin{align*}
\beta_{(\lambda_0(-s_{\lambda_0\lambda_1}(z),\sigma_{\lambda_1}(z))\lambda_1)}(T)=
&\Phi_{\lambda_1}\circ\alpha_{-s_{\lambda_0\lambda_1}(z)}\circ \Phi_{\lambda_0}^{-1}(h)
(\sigma_{\lambda_1}(z))\\
=&\Phi_{\lambda_1}\circ \Phi_{\lambda_0}^{-1}\circ\Phi_{\lambda_0}\circ
\alpha_{-s_{\lambda_0\lambda_1}(z)}\circ \Phi_{\lambda_0}^{-1}(h)(\sigma_{\lambda_1}(z))\\
=&\operatorname{Ad}\tilde{v}_{\lambda_0\lambda_1}(\sigma_{\lambda_1}(z))
\operatorname{Ad}u_{\lambda_0}^{-s_{\lambda_0\lambda_1}(z)}(\sigma_{\lambda_1}(z))
\tau_{-s_{\lambda_0\lambda_1}(z)}(h)(\sigma_{\lambda_1}(z))\\
=&\operatorname{Ad}\tilde{v}_{\lambda_0\lambda_1}(\sigma_{\lambda_1}(z))
u_{\lambda_0}^{-s_{\lambda_0\lambda_1}(z)}(\sigma_{\lambda_1}(z))(T).
\end{align*}
Therefore we may suppose $\beta_{(\lambda_0(-s_{\lambda_0\lambda_1}(z),
\sigma_{\lambda_1}(z))\lambda_1)}$ is implemented by
\[z\mapsto \tilde{v}_{\lambda_0\lambda_1}(\sigma_{\lambda_1}(z))u_{\lambda_0}^{-
s_{\lambda_0\lambda_1}(z)}(\sigma_{\lambda_1}(z)).\]
With this data we can calculate $(\phi(\alpha)^{20},\phi(\alpha)^{11},\phi(\alpha)^{02})$. 
The last entry is easy:
\begin{align*}
\phi(\alpha)^{02}_{\lambda_0}(z):=&u_{\lambda_0}^{e_j}(\sigma_{\lambda_0}(z))
u_{\lambda_0}^{e_i}(\sigma_{\lambda_0}(z))u_{\lambda_0}^{e_i+e_j}(\sigma_{\lambda_0}(z))^*
=1 \quad\mbox{by } (\ref{g1g2uequationch3})\\
=&\phi^{02}(\nu,\eta)_{\lambda_0}(z).
\end{align*}
For the middle entry we need to apply (\ref{g1g2uequationch3}):
\begin{align*}
\phi(\alpha)^{11}_{\lambda_0\lambda_1}(m,z):=& u_{\lambda_1}^{m}
(\sigma_{\lambda_1}(z))\tilde{v}_{\lambda_0\lambda_1}(\sigma_{\lambda_1}(z))
u_{\lambda_0}^{-s_{\lambda_0\lambda_1}(z)}(\sigma_{\lambda_1}(z))\\
&\quad\times u_{\lambda_0}^{m}(\sigma_{\lambda_0}(z))^*u_{\lambda_0}^{-
s_{\lambda_0\lambda_1}(z)}(\sigma_{\lambda_1}(z))^*\tilde{v}_{\lambda_0\lambda_1}
(\sigma_{\lambda_1}(z))^*\\
=&\phi(\nu,\eta)^{11}_{\lambda_0\lambda_1}(m,z)\tilde{v}_{\lambda_0\lambda_1}
(\sigma_{\lambda_1}(z))u_{\lambda_0}^{m}(\sigma_{\lambda_1}(z))u_{\lambda_0}^{-
s_{\lambda_0\lambda_1}(z)}(\sigma_{\lambda_1}(z))\\
&\quad\times u_{\lambda_0}^{m}(\sigma_{\lambda_0}(z))^*u_{\lambda_0}^{-
s_{\lambda_0\lambda_1}(z)}(\sigma_{\lambda_1}(z))^*\tilde{v}_{\lambda_0\lambda_1}
(\sigma_{\lambda_1}(z))^*\\
=&\phi(\nu,\eta)^{11}_{\lambda_0\lambda_1}(m,z)\tilde{v}_{\lambda_0\lambda_1}
(\sigma_{\lambda_1}(z))u_{\lambda_0}^{m-s_{\lambda_0\lambda_1}(z)}(\sigma_{\lambda_1}(z))\\
&\quad\times u_{\lambda_0}^{m-s_{\lambda_0\lambda_1}(z)}(\sigma_{\lambda_1}
(z))^*\tilde{v}_{\lambda_0\lambda_1}(\sigma_{\lambda_1}(z))^*
=\phi(\nu,\eta)^{11}_{\lambda_0\lambda_1}(m,z).
\end{align*}
For the remaining entry, we  apply (\ref{g1g2uequationch3}) again to obtain an identity:
\begin{align*}
&u^{-\check\partial s_{\lambda_0\lambda_1\lambda_2}(z)}_{\lambda_0}
(\sigma_{\lambda_0}(z))^*u^{-s_{\lambda_0\lambda_2}(z)}(\sigma_{\lambda_2}(z))^*\\
&=u^{-s_{\lambda_0\lambda_1}(z)}_{\lambda_0}(\sigma_{\lambda_1}(z))^*
u^{s_{\lambda_0\lambda_2}(z)-s_{\lambda_1\lambda_2}(z)}_{\lambda_0}
(\sigma_{\lambda_0}(z))^*u^{-s_{\lambda_0\lambda_2}(z)}_{\lambda_0}(\sigma_{\lambda_2}(z))^*\\
&=u^{-s_{\lambda_0\lambda_1}(z)}_{\lambda_0}(\sigma_{\lambda_1}(z))^*
u^{s_{\lambda_0\lambda_2}(z)-s_{\lambda_1\lambda_2}(z)}_{\lambda_0}(s_{\lambda_0\lambda_2}(z)
\cdot\sigma_{\lambda_2}(z))^*u^{-s_{\lambda_0\lambda_2}(z)}_{\lambda_0}(\sigma_{\lambda_2}(z))^*\\
&=u^{-s_{\lambda_0\lambda_1}(z)}_{\lambda_0}(\sigma_{\lambda_1}(z))^*
u^{-s_{\lambda_1\lambda_2}(z)}_{\lambda_0}(\sigma_{\lambda_2}(z))^*.
\end{align*}
With this information we can compute
\begin{align*}
\phi(\alpha&^{20}_{\lambda_0\lambda_1\lambda_2}(z):=\tilde{v}_{\lambda_1\lambda_2}
(\sigma_{\lambda_2}(z))u_{\lambda_1}^{-s_{\lambda_1\lambda_2}(z)}(\sigma_{\lambda_2}(z))
\tilde{v}_{\lambda_0\lambda_1}(\sigma_{\lambda_1}(z))u_{\lambda_0}^{-s_{\lambda_0\lambda_1}(z)}
(\sigma_{\lambda_1}(z))\\
&\quad\times u_{\lambda_0}^{-\check\partial s_{\lambda_0\lambda_1\lambda_2}(z)}(\sigma_{\lambda_0}
(z))u_{\lambda_0}^{-s_{\lambda_0\lambda_2}(z)}(\sigma_{\lambda_2}(z))^*
\tilde{v}_{\lambda_0\lambda_2}(\sigma_{\lambda_2}(z))^*\\
=&\tilde{v}_{\lambda_1\lambda_2}(\sigma_{\lambda_2}(z))u_{\lambda_1}^{-s_{\lambda_1\lambda_2}(z)}
(\sigma_{\lambda_2}(z))\tilde{v}_{\lambda_0\lambda_1}(\sigma_{\lambda_1}(z))u_{\lambda_0}^{-
s_{\lambda_0\lambda_1}(z)}(\sigma_{\lambda_1}(z))\\
&\quad\times u^{-s_{\lambda_0\lambda_1}(z)}_{\lambda_0}(\sigma_{\lambda_1}(z))^*u^{-
s_{\lambda_1\lambda_2}(z)}_{\lambda_0}(\sigma_{\lambda_2}(z))^*\tilde{v}_{\lambda_0\lambda_2}
(\sigma_{\lambda_2}(z))^*\\
=&\tilde{v}_{\lambda_1\lambda_2}(\sigma_{\lambda_2}(z))u_{\lambda_1}^{-s_{\lambda_1\lambda_2}(z)}
(\sigma_{\lambda_2}(z))\tilde{v}_{\lambda_0\lambda_1}(\sigma_{\lambda_1}(z)) u^{-
s_{\lambda_1\lambda_2}(z)}_{\lambda_0}(\sigma_{\lambda_2}(z))^*\tilde{v}_{\lambda_0\lambda_2}
(\sigma_{\lambda_2}(z))^*.\\
=&\tilde{v}_{\lambda_1\lambda_2}(\sigma_{\lambda_2}(z))
u_{\lambda_1}^{-s_{\lambda_{1}\lambda_2}(z)}(\sigma_{\lambda_2}(z))
\tilde{v}_{\lambda_0\lambda_1}(\sigma_{\lambda_1}(z))
u_{\lambda_0}^{-s_{\lambda_{1}\lambda_2}(z)}(\sigma_{\lambda_2}(z))^* \\
&\quad\times[\tilde{v}_{\lambda_0\lambda_1}(\sigma_{\lambda_2}(z))^*
\tilde{v}_{\lambda_0\lambda_1}(\sigma_{\lambda_2}(z))]\tilde{v}_{\lambda_0\lambda_2}
(\sigma_{\lambda_2}(z))^*\\
=&\tilde{v}_{\lambda_1\lambda_2}(\sigma_{\lambda_2}(z))\eta_{\lambda_0\lambda_{1}}
(-s_{\lambda_{1}\lambda_2}(z),\sigma_{\lambda_2}(z))\tilde{v}_{\lambda_0\lambda_1}
(\sigma_{\lambda_2}(z))\tilde{v}_{\lambda_0\lambda_2}(\sigma_{\lambda_2}(z))^* \\
=&\tilde{v}_{\lambda_1\lambda_2}(\sigma_{\lambda_2}(z))\tilde{v}_{\lambda_0\lambda_1}
(\sigma_{\lambda_2}(z))\tilde{v}_{\lambda_0\lambda_2}(\sigma_{\lambda_2}(z))^*
\eta_{\lambda_0\lambda_{1}}(-s_{\lambda_{1}\lambda_2}(z),\sigma_{\lambda_2}(z)) \\
=&\nu_{\lambda_0\lambda_1\lambda_2}(\sigma_{\lambda_2}(z))\eta_{\lambda_0\lambda_{1}}
(-s_{\lambda_{1}\lambda_2}(z),\sigma_{\lambda_2}(z))=\phi(\nu,
\eta)^{20}_{\lambda_0\lambda_1\lambda_2}(z).\quad\quad\quad
\end{align*}
\end{proof}

\begin{cor}\label{EquivariantInjective}
The map $H_{\R^n}^2(\pi^{-1}(\pi(\U)),{\mathcal S})\to 
{\mathbb H}^2_{\check\partial s}(\pi(\U),{\mathcal S})$ is injective.
\end{cor}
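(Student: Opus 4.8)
The plan is to read this off the morphism of long exact sequences provided by Theorem~\ref{PRWagreement}, by a four lemma. Write $\W=\pi(\U)$ and $F=\check\partial s$; the assertion is that $\rho\colon H^2_{\R^n}(\pi^{-1}(\W),{\mathcal S})\to{\mathbb H}^2_F(\W,{\mathcal S})$ is injective. Taking the $k=1$ and $k=2$ instances of the commutative ladder of Theorem~\ref{PRWagreement} (and extending the Raeburn--Williams sequence of Theorem~\ref{PRWGysin} one step to the left of where it is displayed), the relevant four-term segment of the top row is
\[
\check{H}^0(\W,\hat{{\mathcal N}})\xrightarrow{\ \cup F\ }\check{H}^2(\W,{\mathcal S})\xrightarrow{\ \pi^*_{\R^n}\ }H^2_{\R^n}(\pi^{-1}(\W),{\mathcal S})\xrightarrow{\ \pi_*\ }\check{H}^1(\W,\hat{{\mathcal N}}),
\]
sitting over
\[
\overline{{\mathbb H}}^0_F(\W,{\mathcal S})\xrightarrow{\ \cup F\ }\check{H}^2(\W,{\mathcal S})\xrightarrow{\ \pi^*\ }{\mathbb H}^2_F(\W,{\mathcal S})\xrightarrow{\ \pi_*\ }\overline{{\mathbb H}}^1_F(\W,{\mathcal S}),
\]
with exact rows; the two middle vertical maps are $\rho$ and $\operatorname{id}$, the left vertical $\check{H}^0(\W,\hat{{\mathcal N}})\to\overline{{\mathbb H}}^0_F(\W,{\mathcal S})$ is an isomorphism (in degree zero the complex computing $\overline{{\mathbb H}}^\bullet_F$ carries no $M_n^u(\T)$-component, so $\overline{{\mathbb H}}^0_F(\W,{\mathcal S})=\check{H}^0(\W,\hat{{\mathcal N}})$), and the right vertical is the map $\bar\rho\colon\check{H}^1(\W,\hat{{\mathcal N}})\to\overline{{\mathbb H}}^1_F(\W,{\mathcal S})$, $\gamma\mapsto[(\gamma,1)]$, of Theorem~\ref{PRWagreement}. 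Since the left vertical is surjective and $\operatorname{id}$ is injective, the four lemma reduces the claim to: $\bar\rho$ is injective.

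To see this, suppose $\gamma\in\check{Z}^1(\W,\hat{{\mathcal N}})$ with $\bar\rho[\gamma]=0$, i.e.\ the degree-$2$ cocycle $(\gamma,1)$ (trivial $M_n^u(\T)$-component) is a coboundary in the complex computing $\overline{{\mathbb H}}^\bullet_F(\W,{\mathcal S})$. A cochain of degree $1$ in that complex is nothing but an element $\phi^{01}\in\check{C}^0(\W,\hat{{\mathcal N}})$, and its differential is $(\check\partial\phi^{01},1)$. Hence $(\gamma,1)=(\check\partial\phi^{01},1)$ for some $\phi^{01}$, so $\gamma=\check\partial\phi^{01}$ and $[\gamma]=0$ in $\check{H}^1(\W,\hat{{\mathcal N}})$. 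This proves $\bar\rho$ injective, and the corollary follows.

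Equivalently, one can route the argument through Proposition~\ref{BrauerCommuting}: since the horizontal map $\operatorname{Br}_{\R^n}(X)|_{\operatorname{M}=0}\to H^2_{\R^n}(\pi^{-1}(\pi(\U)),{\mathcal S})$ there is an isomorphism, $\rho$ is injective exactly when the restriction of $\Xi_{\U,s}$ to $\operatorname{Br}_{\R^n}(X)|_{\operatorname{M}=0}$ is, and the four-lemma computation above is precisely a proof of the latter. There is no serious obstacle here; the only point requiring a little care is the bookkeeping of the ladder — making sure the Raeburn--Williams sequence is available one step further to the left and that $\overline{{\mathbb H}}^0_F(\W,{\mathcal S})$ is identified with $\check{H}^0(\W,\hat{{\mathcal N}})$ — so that the hypotheses of the four lemma are genuinely in force.
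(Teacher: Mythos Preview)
Your proof is correct and uses essentially the same ingredients as the paper---the commutative ladder of Theorem~\ref{PRWagreement} together with the Gysin sequence of Theorem~\ref{PRWGysin}---but you package the argument as a four-lemma application, whereas the paper performs an explicit element chase. In the paper's version one takes $[\nu,\eta]$ with trivial image, observes that the $\phi^{11}$-component of the image being a \v Cech coboundary forces $\pi_*[\nu,\eta]=0$, pulls back along $\pi^*_{\R^n}$ to some $[\gamma]\in\check{H}^2(\W,\mathcal{S})$, and then uses $\pi^*_{\R^n}\circ(\cup F)=0$ to kill $[\gamma]$; this is exactly the diagram chase underlying the four lemma, unpacked. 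Your formulation is slightly cleaner conceptually and has the advantage of isolating the one nontrivial verification (injectivity of $\bar\rho$ in degree $1$), though you do rely on exactness of the bottom row and on the identification $\overline{\mathbb H}^0_F(\W,\mathcal{S})\cong\check{H}^0(\W,\hat{\mathcal N})$, points you correctly flag as needing care.
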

\begin{proof}
Fix $[\nu,\eta]$, and assume that the image $[\phi(\nu,\eta)^{20},\phi(\nu,\eta)^{11},1]$ in 
${\mathbb H}^2_{\check\partial s}(\pi(\U),{\mathcal S})$ is the identity element $[1,1,1]$. 
Thus, there is a 1-cochain $(\varphi^{10},\varphi^{01})\in C^1_{\check\partial s}(\pi(\U),{\mathcal S})$ 
such that \[[\phi(\nu,\eta)^{20},\phi(\nu,\eta)^{11},1]=
[\check\partial \varphi^{10}+\varphi^{01}\cup \check\partial s,\check\partial \varphi^{01},1]\in 
{\mathbb H}^2_{\check\partial s}(\pi(\U),{\mathcal S}).\]
Recalling theorems \ref{PRWGysin} and \ref{PRWagreement}, we observe that 
$\check\partial\phi(\nu,\eta)^{11}=1$ and therefore $[\phi(\nu,\eta)^{11}]$ is a 
well defined class in $\check{H}^1(\pi(\U),{\mathcal S})$. Moreover, 
$[\phi(\nu,\eta)^{11}]\in\check{H}^1(\pi(\U),{\mathcal S})$  is precisely the image of 
$[\nu,\eta]$ under the ``integration" map $\pi_*:H_{\R^n}^2(\pi^{-1}\pi(\U),{\mathcal S})\to 
\check{H}^1(\pi(\U),{\mathcal S})$. By exactness of the Gysin sequence from Theorem 
\ref{PRWGysin}, the fact that $[\phi(\nu,\eta)^{11}]=[\check\partial \varphi^{01}]=1$ 
implies there exists a $\gamma\in \check{Z}^2(\pi(\U),{\mathcal S})$ such that 
$[\nu,\eta]=\pi^*_{\R^n}[\gamma]:=[\pi^*\gamma,1]$. Thus, we find
\[[1,1,1]=[\phi^{20}(\nu,\eta),\phi^{11}(\nu,\eta),1]=[\phi^{20}(\pi^*\gamma,1),1,1].\]
Using this, and the fact that $\phi(\pi^*\gamma,1)^{20}=\gamma$, shows there must exist a 1-cochain 
$(\tilde{\varphi}^{10},\tilde{\varphi}^{01})\in C^1_{\check\partial s}(\pi(\U),{\mathcal S})$ such that
$\gamma=\check\partial \tilde{\varphi}^{10}+ \tilde{\varphi}^{01}\cup \check\partial s$ 
and $\check\partial \tilde{\varphi}^{01}=1.$
Now, using exactness of the Gysin sequence from Theorem \ref{PRWGysin} 
we see that $\pi^*_{\R^n}\circ (\cup \check\partial s)=1$. Therefore
\[[\nu,\eta]=\pi_{\R^n}^*[\gamma]=\pi_{\R^n}^*[\gamma-\check\partial \tilde{\varphi}^{10}]=
\pi_{\R^n}^*([\tilde{\varphi}^{01}]\cup \check\partial s)=1,\]
which implies $H_{\R^n}^2(\pi^{-1}(\pi(\U)),{\mathcal S})\to 
{\mathbb H}^2_{\check\partial s}(\pi(\U),{\mathcal S})$ is injective.
\end{proof}

\begin{cor}
Let $(X,\U,s)$ be in the standard setup. Then the map $\Xi_{\U,s}$  is injective.
\end{cor}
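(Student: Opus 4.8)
The plan is to deduce injectivity of $\Xi_{\U,s}$ from the two facts already established for systems with trivial Mackey obstruction: the commuting triangle of Proposition~\ref{BrauerCommuting} and the injectivity statement of Corollary~\ref{EquivariantInjective}. The one preliminary observation needed is that a class lying in the kernel of $\Xi_{\U,s}$ automatically has vanishing Mackey obstruction, which is exactly what puts it in the range of applicability of that machinery.

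More precisely, suppose $[CT(X,\delta),\alpha]$ lies in $\ker\Xi_{\U,s}$, so that the cocycle $(\phi(\alpha)^{20},\phi(\alpha)^{11},\phi(\alpha)^{02})$ equals $D_{\check\partial s}(\varphi^{10},\varphi^{01})$ for some $1$-cochain $(\varphi^{10},\varphi^{01})\in C^1_{\check\partial s}(\pi(\U),{\mathcal S})$. Since a degree-$1$ cochain in this three-column complex has no $\mathcal{M}$-valued entry, the third component of $D_{\check\partial s}(\varphi^{10},\varphi^{01})$ is trivial (compare Equation~(\ref{diff02})); comparing third components therefore gives $\phi(\alpha)^{02}=1$. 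By Lemma~\ref{phi02identity} we have $\phi(\alpha)^{02}_{\lambda_0}=f|_{\pi(U_{\lambda_0})}$, where $f$ is the Mackey obstruction of $(CT(X,\delta),\alpha)$; hence $f\equiv 0$, i.e.\ $\operatorname{M}[CT(X,\delta),\alpha]=0$, and $[CT(X,\delta),\alpha]\in\operatorname{Br}_{\R^n}(X)|_{\operatorname{M}=0}$.

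Now I would invoke Proposition~\ref{BrauerCommuting}: restricted to $\operatorname{Br}_{\R^n}(X)|_{\operatorname{M}=0}$, the map $\Xi_{\U,s}$ factors as the isomorphism $\operatorname{Br}_{\R^n}(X)|_{\operatorname{M}=0}\stackrel{\cong}{\to}H^2_{\R^n}(\pi^{-1}(\pi(\U)),{\mathcal S})$ — available because $\pi(\U)$ is a good cover, via Lemma~\ref{IsotoKerM} and Theorem~\ref{invarianttheorem} — followed by the map $H^2_{\R^n}(\pi^{-1}(\pi(\U)),{\mathcal S})\to{\mathbb H}^2_{\check\partial s}(\pi(\U),{\mathcal S})$ of Theorem~\ref{PRWagreement}. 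The latter is injective by Corollary~\ref{EquivariantInjective}, so the composite is injective; since $\Xi_{\U,s}[CT(X,\delta),\alpha]=0$, we conclude $[CT(X,\delta),\alpha]=0$ in $\operatorname{Br}_{\R^n}(X)$, as required.

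The hard work has in fact already been done: it sits in Corollary~\ref{EquivariantInjective} (which rests on exactness of the Raeburn--Williams Gysin sequence, Theorem~\ref{PRWGysin}) and in the commutativity verification of Proposition~\ref{BrauerCommuting}. The only point in the present argument that requires any care is the bookkeeping that a $D_{\check\partial s}$-coboundary necessarily has trivial $\phi^{02}$-entry (forced by the truncation of the three-column complex in degree one) together with the identification of that entry with the Mackey obstruction; everything else is a formal diagram chase, which is the sense in which this injectivity argument is "obvious" once the earlier results are in place.
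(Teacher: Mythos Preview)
Your proof is correct and follows the same approach as the paper, which simply states ``This follows from the diagram of Proposition~\ref{BrauerCommuting} and Corollary~\ref{EquivariantInjective}.'' You have made explicit the one step the paper leaves to the reader: that an element of $\ker\Xi_{\U,s}$ necessarily has trivial Mackey obstruction (because the third component of any $D_{\check\partial s}$-coboundary is $1$, and that component is identified with the Mackey obstruction via Lemma~\ref{phi02identity}), which is precisely what is needed to land in the domain of Proposition~\ref{BrauerCommuting}.
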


\begin{proof}
This follows from the diagram of Proposition \ref{BrauerCommuting} and 
Corollary \ref{EquivariantInjective}.
\end{proof}

Now we prove surjectivity of $\Xi_{\U,s}$. Unfortunately, there seems to be no way to explicitly 
construct an element of $\mathrm{Br}_{\R^n}(X)$ corresponding to a given element of 
${\mathbb H}^2_{\check\partial s}(\pi(\U),{\mathcal S})$. Therefore, in order to prove 
surjectivity, we go via $\check{H}^2(\R^n\ltimes X,{\mathcal S})$. First, we make the 
relationship between $\check{H}^2(\sigma\U^\bullet,{\mathcal S})$ and 
${\mathbb H}^2_{\check\partial s}(\pi(\U),{\mathcal S})$ as suggested by 
Lemma \ref{cocyclerelationships} more concrete.

\begin{prop}\label{TuDimRed}
Let $(X,\U,s)$ be in the standard setup, and fix a cocycle $(\phi^{20},\phi^{11},\phi^{02})\in 
Z^2_{\check\partial s}(\pi(\U),{\mathcal S})$. Suppose there exists an open cover $\U^1$ 
of $(\R^n\ltimes X)^{(1)}$ and a Tu-\v{C}ech cocycle $\varphi\in 
\check{Z}^2(\sigma\U^\bullet,{\mathcal S})$ such that for any open sets 
$U_{\lambda_{01}}^1, U_{\lambda_{00}}^1\in \U^1$ with
$(-s_{\lambda_0\lambda_1}(z),\sigma_{\lambda_1})\in U^1_{\lambda_0\lambda_1\lambda_{01}}$ and
$(m,\sigma_{\lambda_0})\in U^1_{\lambda_0\lambda_0\lambda_{00}},$
there exist continuous maps
$$\tau^{10}_{\lambda_0\lambda_1\lambda_{01}}: \{z\in \pi(U_{\lambda_0})\cap 
\pi(U_{\lambda_1}): (-s_{\lambda_0\lambda_1}(z),\sigma_{\lambda_1}(z))\in 
U^1_{\lambda_{01}}\}\to \T\quad \mbox{and}$$
$$\tau^{01}_{\lambda_0\lambda_{00}}(m,\cdot): \{z\in \pi(U_{\lambda_0}): 
(m,\sigma_{\lambda_0}(z))\in U_{\lambda_{00}}^1\}\to \T,$$
such that for arbitrary indices $\lambda_{01}',\lambda_{02}'$ satisfying
$$(-s_{\lambda_0\lambda_1}(z)+m,\sigma_{\lambda_1}(z))\in 
U^1_{\lambda_{01}'} \mbox{ and }
(-s_{\lambda_0\lambda_1}(z)-s_{\lambda_1\lambda_2}(z),\sigma_{\lambda_2}(z))\in 
U^1_{\lambda_{02}'}$$
the following three identities hold:

\begin{align}
\label{TuDimRedId1}\prod_{1\leq i<j\leq n}\phi^{02}_{\lambda_0}(z)_{ij}^{m_il_j}=
&\tau^{01}_{\lambda_0\lambda_{12}}(l,z)\tau^{01}_{\lambda_0\lambda_{01}}(m,z)
\tau^{01}_{\lambda_0\lambda_{02}}(m+l,z)^*
\varphi(\alpha)_{\lambda_0\lambda_0\lambda_0\lambda_{01}\lambda_{02}
\lambda_{12}}(m,l,\sigma_{\lambda_0}(z)),\\
\label{TuDimRedId2}\phi^{11}_{\lambda_0\lambda_1}(m,z)=&\tau^{01}_{\lambda_1
\lambda_{11}}(m,z)\tau^{01}_{\lambda_0\lambda_{00}}(m,z)^*
\varphi(\alpha)_{\lambda_0\lambda_1\lambda_1\lambda_{01}\lambda_{01}'\lambda_{11}}
(-s_{\lambda_{0}\lambda_{1}}(z),m,\sigma_{\lambda_1}(z))\\
&\quad\times \varphi(\alpha)_{\lambda_0\lambda_0\lambda_1\lambda_{00}\lambda_{01}'
\lambda_{01}}(m,-s_{\lambda_{0}\lambda_{1}}(z),\sigma_{\lambda_1}(z))^*,\notag
\end{align}\begin{align}
\label{TuDimRedId3}\phi^{20}_{\lambda_0\lambda_1\lambda_2}(z)=
&\tau^{10}_{\lambda_1\lambda_2\lambda_{12}}(z)\tau^{10}_{\lambda_0\lambda_1\lambda_{01}}(z)
\tau^{01}_{\lambda_0\lambda_{00}}(-\check\partial s_{\lambda_0\lambda_1\lambda_2}(z),z)^*
\tau^{10}_{\lambda_0\lambda_2\lambda_{02}}(z)^* \\
&\quad\times \varphi(\alpha)_{\lambda_0\lambda_1\lambda_2\lambda_{01}\lambda_{02}'
\lambda_{12}}(-s_{\lambda_0\lambda_1}(z),-s_{\lambda_1\lambda_2}(z),
\sigma_{\lambda_2}(z))\notag\\
&\quad\times \varphi(\alpha)_{\lambda_0\lambda_0\lambda_2\lambda_{00}\lambda_{02}'
\lambda_{02}}(-\check\partial s_{\lambda_0\lambda_1\lambda_2}(z),-s_{\lambda_0\lambda_2}(z),
\sigma_{\lambda_2}(z))^*\notag.
\end{align}
Then there is a class $[CT(X,\delta),\alpha]\in \operatorname{Br}_{\R^n}(X)$ with 
$\Xi_{\U,s}[CT(X,\delta),\alpha]=[\phi^{20},\phi^{11},\phi^{02}]$.
\end{prop}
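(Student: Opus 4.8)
The plan is to pass through the Tu-\v{C}ech picture and exploit the surjectivity of $\Upsilon$ from Corollary \ref{UpsilonSurjective}. The given Tu-\v{C}ech $2$-cocycle $\varphi$ comes, by the discussion preceding Corollary \ref{UpsilonSurjective}, with unitary lifts $u_{\lambda_0\lambda_1\lambda_{01}}:U^1_{\lambda_0\lambda_1\lambda_{01}}\to U(\H)$ for which $\varphi=u\cdot u\cdot u^{*}$ as in (\ref{cocycledefn}). Running the construction in the proof of Corollary \ref{UpsilonSurjective} on this data produces a pair $(CT(X,\delta),\alpha):=(\Gamma_0(E,X),\alpha)\in\mathfrak{Br}_{\R^n}(X)$, where $E\to X$ is the bundle assembled from the cocycle $\operatorname{Ad}u_{\lambda_0\lambda_1\bullet}$. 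By construction $E$ is trivialised over $\U^0=\U$, so $(CT(X,\delta),\alpha)$ fits into the standard setup with exactly the cover we are working with; moreover, for the tautological trivialisations $\Phi_{\lambda_0}$ of $E$ the map $\beta^{\alpha,\Phi}$ is implemented by the very same $u_{\lambda_0\lambda_1\lambda_{01}}$, so that $\varphi(\alpha)=\varphi$ as a cocycle (not merely up to a coboundary) and $\Upsilon[CT(X,\delta),\alpha]=[\varphi]$.

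Next I would compute $\Xi_{\U,s}[CT(X,\delta),\alpha]$. Choosing unitary lifts $v^{i}_{\lambda_0}$ of $\Phi_{\lambda_0}\circ\alpha_{e_i}\circ\Phi_{\lambda_0}^{-1}$ over the geodesically convex sets $U_{\lambda_0}$ (available by Lemma \ref{zetavanishremark}) and $v_{\lambda_0\lambda_1}$ as in Definition \ref{prelims} and (\ref{betav10}), one obtains by Lemma \ref{triplecocycle} a cocycle $(\phi(\alpha)^{20},\phi(\alpha)^{11},\phi(\alpha)^{02})\in Z^2_{\check\partial s}(\pi(\U),{\mathcal S})$ representing $\Xi_{\U,s}[CT(X,\delta),\alpha]$. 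Since $\varphi(\alpha)=\varphi$, Lemma \ref{cocyclerelationships} applies verbatim and yields continuous functions $\widetilde\tau^{10}_{\lambda_0\lambda_1\lambda_{01}}$ and $\widetilde\tau^{01}_{\lambda_0\lambda_{00}}(m,\cdot)$, on the same domains as in the present statement, such that $(\phi(\alpha)^{20},\phi(\alpha)^{11},\phi(\alpha)^{02})$ satisfies identities (\ref{TuDimRedId1})--(\ref{TuDimRedId3}) with $\varphi$ and the $\widetilde\tau$'s. Thus I will have two cocycles in $Z^2_{\check\partial s}(\pi(\U),{\mathcal S})$, the given $(\phi^{20},\phi^{11},\phi^{02})$ and $(\phi(\alpha)^{20},\phi(\alpha)^{11},\phi(\alpha)^{02})$, both expressed through the \emph{same} Tu-\v{C}ech cocycle $\varphi$ and differing only in that one uses the transition data $\tau$ and the other $\widetilde\tau$.

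The final step is the comparison. Put $\rho^{10}_{\lambda_0\lambda_1\lambda_{01}}:=\tau^{10}_{\lambda_0\lambda_1\lambda_{01}}(\widetilde\tau^{10}_{\lambda_0\lambda_1\lambda_{01}})^{-1}$ and $\rho^{01}_{\lambda_0\lambda_{00}}(m,\cdot):=\tau^{01}_{\lambda_0\lambda_{00}}(m,\cdot)(\widetilde\tau^{01}_{\lambda_0\lambda_{00}}(m,\cdot))^{-1}$, and divide (\ref{TuDimRedId1})--(\ref{TuDimRedId3}) for the two triples pairwise; the $\varphi$-terms cancel. Because the quotients $\phi^{02}/\phi(\alpha)^{02}$, $\phi^{11}/\phi(\alpha)^{11}$, $\phi^{20}/\phi(\alpha)^{20}$ depend only on indices of the cover $\pi(\U)$ of $Z$, the divided identities force $\rho^{01}_{\lambda_0\lambda_{00}}(m,z)$ to be independent of the auxiliary index $\lambda_{00}$ and a homomorphism in $m$ --- hence an element $\rho^{01}_{\lambda_0}\in\check{C}^0(\pi(\U),\hat{\mathcal N})$, which in particular makes $\phi^{02}=\phi(\alpha)^{02}$, i.e.\ $\phi^{02}$ is the Mackey obstruction of $\alpha$ --- and $\rho^{10}_{\lambda_0\lambda_1\lambda_{01}}(z)$ to be independent of $\lambda_{01}$, an element $\rho^{10}_{\lambda_0\lambda_1}\in\check{C}^1(\pi(\U),{\mathcal S})$. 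Then $(\rho^{10},\rho^{01})\in C^1_{\check\partial s}(\pi(\U),{\mathcal S})$ and the three divided identities read exactly as the three components of
\[(\phi^{20},\phi^{11},\phi^{02})=(\phi(\alpha)^{20},\phi(\alpha)^{11},\phi(\alpha)^{02})\,D_{\check\partial s}(\rho^{10},\rho^{01}).\]
Hence $[\phi^{20},\phi^{11},\phi^{02}]=\Xi_{\U,s}[CT(X,\delta),\alpha]$ in ${\mathbb H}^2_{\check\partial s}(\pi(\U),{\mathcal S})$, as required.

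The main obstacle is this last comparison: unwinding the bookkeeping of the auxiliary $\U^1$-indices to see that $\tau/\widetilde\tau$ descends to a genuine $1$-cochain on the good cover $\pi(\U)$ of $Z$, and that the three hypothesis identities, divided by their Lemma \ref{cocyclerelationships} counterparts, reproduce precisely the three component equations of $D_{\check\partial s}$ (with the correct signs and the cup products $\cup_1\check\partial s$ and $\cup_2 C(\check\partial s)$). In particular the homomorphism property of $\rho^{01}$ in $m$ --- equivalently the coincidence of the given $\phi^{02}$ with the Mackey obstruction of the $\alpha$ produced from $\varphi$ --- must be extracted from the identities together with the cocycle conditions on $(\phi^{20},\phi^{11},\phi^{02})$ and on $\varphi$, rather than being assumed; this is where essentially all of the care is needed.
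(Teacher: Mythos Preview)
Your proposal is correct and follows essentially the same route as the paper's proof: construct $(CT(X,\delta),\alpha)$ from $\varphi$ via Corollary \ref{UpsilonSurjective}, obtain a second set of transition data $\widetilde\tau$ from Lemma \ref{cocyclerelationships} (after passing to a common refinement of the two covers of $(\R^n\ltimes X)^{(1)}$), and show the quotient $\rho=\tau\widetilde\tau^{-1}$ descends to a $1$-cochain in $C^1_{\check\partial s}(\pi(\U),\mathcal S)$ whose $D_{\check\partial s}$-coboundary realises the difference of the two triples. The paper handles the step you flag as the main obstacle by antisymmetrising the divided identity (\ref{TuDimRedId1}) in $(m,l)$, which forces $\prod_{i<j}(\phi^{02}/\phi(\alpha)^{02})_{ij}^{m_il_j-l_im_j}=1$ and hence $\phi^{02}=\phi(\alpha)^{02}$; the homomorphism property and index-independence of $\rho^{01}$ and $\rho^{10}$ then follow exactly as you indicate.
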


\begin{proof}
We observe first that the proof of Corollary \ref{UpsilonSurjective} carries over verbatim to 
show that there exists $(CT(X,\delta),\alpha)\in \mathfrak{Br}_{\R^n}(X)$ and choices of 
local trivialisations and unitary lifts such that $\Upsilon(CT(X,\delta),\alpha)=\varphi$. 
By Lemma \ref{cocyclerelationships}, there exists an open cover $\mathcal{V}^1$ of 
$\R^n\ltimes X^{(1)}$, a cocycle $(\psi^{20},\psi^{11},\psi^{02})\in 
Z^2_{\check\partial s}(\pi(\U),{\mathcal S})$ and functions $\{\gamma^{10},\gamma^{01}\}$ 
satisfying an analogous set of identities to the ones in the statement of this proposition. 
Moreover, it is the case that $\Xi_{\U,s}[CT(X,\delta),\alpha]=[\psi^{20},\psi^{11},\psi^{02}]$.
We thus need
$[\psi^{20},\psi^{11},\psi^{02}]=[\phi^{20},\phi^{11},\phi^{02}]$.

By taking a common refinement of $\U^1$ and $\mathcal{V}^1$ (and leaving $\U^0$ untouched), 
we may as well assume $\mathcal{V}^1=\U^1$. We need to show that $(\tau^{10}(\gamma^{10})^*,
\tau^{01}(\gamma^{01})^*)$ is a well-defined cochain in 
$C^1_{\check\partial s}(\pi(\U),{\mathcal S})$ such that
$(\phi^{20},\phi^{11},\phi^{02})=(\psi^{20},\psi^{11},\psi^{02})
D_{\check\partial s}(\tau^{10}(\gamma^{10})^*,\tau^{01}(\gamma^{01})^*).$
First, identity (\ref{TuDimRedId1}) implies that
\begin{align*}
\prod_{1\leq i<j\leq n}\phi^{02}_{\lambda_0}(z)_{ij}^{m_il_j}
=&\tau^{01}_{\lambda_0\lambda_{12}}(l,z)\gamma^{01}_{\lambda_0\lambda_{12}}(l,z)^*
\tau^{01}_{\lambda_0\lambda_{01}}(m,z)\gamma^{01}_{\lambda_0\lambda_{01}}(m,z)^*\\
&\quad\times \tau^{01}_{\lambda_0\lambda_{02}}(m+l,z)^*\gamma^{01}_{\lambda_0\lambda_{02}}
(m+l,z) \prod_{1\leq i<j\leq n}\psi^{02}_{\lambda_0}(z)_{ij}^{m_il_j}.
\end{align*}
Using a symmetry argument, we see this implies
\[\prod_{1\leq i<j\leq n}\phi^{02}_{\lambda_0}(z)_{ij}^{m_il_j-m_jl_i}=\prod_{1\leq i<j\leq n}
\psi^{02}_{\lambda_0}(z)_{ij}^{m_il_j-m_jl_i}.\]
Therefore $\phi^{02}=\psi^{02}$, and
\begin{equation}\label{taugammastar}
\tau^{01}_{\lambda_0\lambda_{12}}(l,z)\gamma^{01}_{\lambda_0\lambda_{12}}(l,z)^*
\tau^{01}_{\lambda_0\lambda_{01}}(m,z)\gamma^{01}_{\lambda_0\lambda_{01}}(m,z)^*
\tau^{01}_{\lambda_0\lambda_{02}}(m+l,z)^*\gamma^{01}_{\lambda_0\lambda_{02}}(m+l,z)=1.
\end{equation}
Notice that Equation (\ref{taugammastar}) implies
\[\tau^{01}_{\lambda_0\lambda_{12}}(l,z)\gamma^{01}_{\lambda_0\lambda_{12}}(l,z)^*=
\tau^{01}_{\lambda_0\lambda_{01}}(m,z)^*\gamma^{01}_{\lambda_0\lambda_{01}}(m,z)
\tau^{01}_{\lambda_0\lambda_{02}}(m+l,z)\gamma^{01}_{\lambda_0\lambda_{02}}(m+l,z)^*,\]
and since the right hand side is independent of the index $\lambda_{12}$, the left hand side must 
be as well. It follows that the map
$ (m,z)\mapsto \tau^{01}_{\lambda_0\bullet}(m,z)\gamma^{01}_{\lambda_0\bullet}(m,z)^*$, for 
$z\in\pi(U_{\lambda_0}),$
which we denote by $(\tau\gamma^*)^{01}$, is a well-defined 0-cochain in 
$\check{C}^0(\pi(\U),\hat{{\mathcal N}})$.

Second, identity (\ref{TuDimRedId3}) implies
\begin{align*}
&\phi^{20}_{\lambda_0\lambda_1\lambda_2}(z)\tau^{10}_{\lambda_1\lambda_2\lambda_{12}}(z)^*
\tau^{10}_{\lambda_0\lambda_1\lambda_{01}}(z)^*\tau^{01}_{\lambda_0\lambda_{00}}(-\check\partial 
s_{\lambda_0\lambda_1\lambda_2}(z),z)\tau^{10}_{\lambda_0\lambda_2\lambda_{02}}(z) \\
=&\psi^{20}_{\lambda_0\lambda_1\lambda_2}(z)\gamma^{10}_{\lambda_1\lambda_2\lambda_{12}}(z)^*
\gamma^{10}_{\lambda_0\lambda_1\lambda_{01}}(z)^*\gamma^{01}_{\lambda_0\lambda_{00}}
(-\check\partial s_{\lambda_0\lambda_1\lambda_2}(z),z)
\gamma^{10}_{\lambda_0\lambda_2\lambda_{02}}(z).
\end{align*}
Thus
\begin{align*}
\tau^{10}_{\lambda_1\lambda_2\lambda_{12}}(z)\gamma^{10}_{\lambda_1\lambda_2
\lambda_{12}}(z)^*
=&\phi^{20}_{\lambda_0\lambda_1\lambda_2}(z)^*\psi^{20}_{\lambda_0\lambda_1\lambda_2}(z)
\tau^{10}_{\lambda_0\lambda_1\lambda_{01}}(z)\gamma^{10}_{\lambda_0\lambda_1\lambda_{01}}(z)^*
\tau^{01}_{\lambda_0\lambda_{00}}(-\check\partial s_{\lambda_0\lambda_1\lambda_2}(z),z)^*
\\
&\quad\times\gamma^{01}_{\lambda_0\lambda_{00}}(-\check\partial s_{\lambda_0\lambda_1
\lambda_2}(z),z)\tau^{10}_{\lambda_0\lambda_2\lambda_{02}}(z)^*
\gamma^{10}_{\lambda_0\lambda_2\lambda_{02}}(z),
\end{align*}
and since the right hand side is independent of the index $\lambda_{12}$, so too is the left hand side.
It follows that the map
$ z\mapsto \tau^{10}_{\lambda_0\lambda_1\bullet}(z)\gamma^{01}_{\lambda_0\lambda_1\bullet}(z)^*$,  
for $z\in\pi(U_{\lambda_0})\cap \pi(U_{\lambda_0}),$
which we denote $(\tau\gamma^*)^{10}$, is a well-defined 1-cochain in $\check{C}^1(\pi(\U),{\mathcal S})$.

Therefore, we have shown that $((\tau\gamma^*)^{10},(\tau\gamma^*)^{01})\in 
C^1_{\check\partial s}(\pi(\U),{\mathcal S})$, and it is straightforward to see that
$\phi^{02}=\psi^{02},$ and that
\begin{align*}
\phi^{11}_{\lambda_0\lambda_1}(m,z)=&\psi^{11}_{\lambda_0\lambda_1}(m,z)
(\tau\gamma^*)^{01}_{\lambda_1}(m,z)(\tau\gamma^*)^{01}_{\lambda_0}(m,z)^*,\\
\phi^{20}_{\lambda_0\lambda_1\lambda_2}(z)=&\psi^{20}_{\lambda_0\lambda_1\lambda_2}(z)
(\tau\gamma^*)^{10}_{\lambda_1\lambda_2}(z)(\tau\gamma^*)^{10}_{\lambda_0\lambda_1}(z)
(\tau\gamma^*)_{\lambda_0\lambda_2}(z)^*(\tau\gamma^*)^{01}_{\lambda_0}(\check\partial 
s_{\lambda_0\lambda_1\lambda_2}(z),z).
\end{align*}
Consequently
$\Xi_{\U,s}[CT(X,\delta),\alpha]=[\psi^{20},\psi^{11},\psi^{02}]=[\phi^{20},\phi^{11},\phi^{02}].$
\end{proof}

Proposition \ref{TuDimRed} gives us a strategy for the surjectivity proof but first
we need a preliminary result.

\begin{lemma}\label{Rnvaluedlifts}
Let $(X,\U,s)$ be in the standard setup. Define $\R^n/\Z^n$-equivariant functions 
$w_{\lambda_0}:\pi^{-1}(\pi(U_{\lambda_0}))\to \R^n/\Z^n$ by $w_{\lambda_0}(x)^{-1}
x:=\sigma_{\lambda_0}(\pi(x))$. 
Then for all indices ${\lambda_0}$ there is a continuous function $\tilde{w}_{\lambda_0}:
U_{\lambda_0}\to \R^n$ such that for all $x\in U_{\lambda_0}$,
$[\tilde{w}_{\lambda_0}(x)]_{\R^n/\Z^n}=w_{\lambda_0}(x).$
\end{lemma}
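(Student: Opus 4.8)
The plan is to reduce the statement to the elementary lifting property of the universal covering $q:\R^n\to\R^n/\Z^n$, so the only real work is to check that $w_{\lambda_0}$ is continuous and that the sets of $\U$ are nice enough for the lifting criterion to apply.

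First I would record that $w_{\lambda_0}$ is continuous. Since $\pi:X\to Z$ is a principal $\R^n/\Z^n$-bundle, its translation map $X\times_Z X\to\R^n/\Z^n$, sending a pair $(x,y)$ of points in a common fibre to the unique $t$ with $t^{-1}\cdot x=y$, is continuous. The map $x\mapsto(x,\sigma_{\lambda_0}(\pi(x)))$ from $\pi^{-1}(\pi(U_{\lambda_0}))$ into $X\times_Z X$ is continuous because $\pi$ and $\sigma_{\lambda_0}$ are, so the composite $x\mapsto w_{\lambda_0}(x)$ is continuous; indeed, since $\sigma_{\lambda_0}$ is a $C^\infty$ section of $\pi$ over $\pi(U_{\lambda_0})$, the assignment $x\mapsto(\pi(x),w_{\lambda_0}(x))$ is precisely the $C^\infty$ trivialisation of $\pi^{-1}(\pi(U_{\lambda_0}))$ induced by $\sigma_{\lambda_0}$, so $w_{\lambda_0}$ is in fact $C^\infty$. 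In particular $w_{\lambda_0}|_{U_{\lambda_0}}:U_{\lambda_0}\to\R^n/\Z^n$ is continuous.

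Next I would invoke the geometry of the cover. Because $(X,\U,s)$ is in the standard setup, $\U$ is the good cover produced by Lemma \ref{geoconvex}, so each $U_{\lambda_0}$ is geodesically convex and hence contractible; in particular $U_{\lambda_0}$ is path-connected and simply connected, and being an open subset of the manifold $X$ it is locally path-connected. The quotient homomorphism $q:\R^n\to\R^n/\Z^n$ is a covering map, so the standard lifting criterion for covering spaces applies: any continuous map from a path-connected, locally path-connected, simply connected space into $\R^n/\Z^n$ lifts continuously through $q$ (uniquely once the value at one point is fixed — any choice will do here, since the lemma asserts nothing about compatibility of the $\tilde w_{\lambda_0}$ across different $\lambda_0$). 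Applying this to $w_{\lambda_0}|_{U_{\lambda_0}}$ gives a continuous (indeed $C^\infty$, since $q$ is a local diffeomorphism) map $\tilde w_{\lambda_0}:U_{\lambda_0}\to\R^n$ with $q\circ\tilde w_{\lambda_0}=w_{\lambda_0}|_{U_{\lambda_0}}$, that is, $[\tilde w_{\lambda_0}(x)]_{\R^n/\Z^n}=w_{\lambda_0}(x)$ for all $x\in U_{\lambda_0}$.

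There is no serious obstacle in this argument; the only point that needs attention is to use contractibility of the members of the good cover (hence simple connectivity) rather than merely paracompactness of $Z$, so that the covering-space lift is guaranteed to exist on each $U_{\lambda_0}$. Everything else is bookkeeping: continuity of the bundle translation map and the elementary path-lifting machinery.
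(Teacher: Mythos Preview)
Your proof is correct and follows essentially the same idea as the paper's: both arguments rest on the contractibility of each $U_{\lambda_0}$ (coming from the good cover in the standard setup) to guarantee the lift. The only cosmetic difference is that you invoke the covering-space lifting criterion for $q:\R^n\to\R^n/\Z^n$ directly, whereas the paper phrases the same obstruction sheaf-cohomologically, noting that the connecting map $C(U_{\lambda_0},\T^n)\to\check{H}^1(U_{\lambda_0},\underline{\Z}^n)$ lands in a group that vanishes because $\underline{\Z}^n$ is locally constant and $U_{\lambda_0}$ is contractible; these are two ways of expressing the same vanishing obstruction.
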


\begin{proof}
Consider the exact sequence
$C(U_{\lambda_0},\R^n)\to C(U_{\lambda_0},\T^n)\to \check{H}^1(U_{\lambda_0},\Z^n).$
Then $w_{\lambda_0}|_{U_{\lambda_0}}$ has a lift to a continuous function 
$\tilde{w}_{\lambda_0}:U_{\lambda_0}\to \R^n$ if and only if the image of $w_{\lambda_0}$ 
under the connecting homomorphism
$C(U_{\lambda_0},\T^n)\to \check{H}^1(U_{\lambda_0},\Z^n)$
is trivial. But, we have a locally constant sheaf over a contractible space $U_{\lambda_0}$,  
because $\U$ is 
good, which establishes the conclusion.
\end{proof}

\begin{cor}\label{surjectiveproof}
For $(X,\U,s)$ in the standard setup the map $\Xi_{\U,s}$  is surjective.
\end{cor}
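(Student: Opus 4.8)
The plan is to reduce surjectivity of $\Xi_{\U,s}$ to the known surjectivity of $\Upsilon:\operatorname{Br}_{\R^n}(X)\to\check{H}^2(\R^n\ltimes X,{\mathcal S})$ (Corollary \ref{UpsilonSurjective}) by invoking Proposition \ref{TuDimRed}. So, given an arbitrary class $[\phi^{20},\phi^{11},\phi^{02}]\in{\mathbb H}^2_{\check\partial s}(\pi(\U),{\mathcal S})$, the task is to produce an open cover $\U^1$ of $(\R^n\ltimes X)^{(1)}$, a Tu-\v{C}ech $2$-cocycle $\varphi\in\check{Z}^2(\sigma\U^\bullet,{\mathcal S})$, and the families $\{\tau^{10}_{\lambda_0\lambda_1\lambda_{01}}\},\{\tau^{01}_{\lambda_0\lambda_{00}}\}$ so that the three identities (\ref{TuDimRedId1})--(\ref{TuDimRedId3}) hold. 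Once this is done, Proposition \ref{TuDimRed} immediately yields a class $[CT(X,\delta),\alpha]$ with $\Xi_{\U,s}[CT(X,\delta),\alpha]=[\phi^{20},\phi^{11},\phi^{02}]$, which is exactly surjectivity.

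The concrete construction proceeds as follows. First I would fix the local $\R^n$-valued lifts $\tilde{w}_{\lambda_0}:U_{\lambda_0}\to\R^n$ from Lemma \ref{Rnvaluedlifts}, which split the bundle coordinate near each $U_{\lambda_0}$; these let one pass freely between a point $x\in U_{\lambda_0}$ and the pair $(\tilde w_{\lambda_0}(x),\sigma_{\lambda_0}(\pi(x)))$. Using the $\tilde{w}_{\lambda_0}$ I would build the cover $\U^1$ of $\R^n\ltimes X$ by declaring $U^1_{\lambda_{01}}$ to be (essentially) the preimages under the maps $(g,x)\mapsto(g+\tilde w_{\lambda_1}(x)-\tilde w_{\lambda_0}(g^{-1}x),\pi(x))$ of a good cover of $\R^n\times Z$ refined by $\pi(\U)$; this is the natural "straightening" of the groupoid arrow space compatible with the sections $\sigma_{\lambda_0}$. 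Then I would define $\varphi$ on a $2$-composable arrow $(g_0,g_1,x)\in U^2$ by transporting the triple $(\phi^{20},\phi^{11},\phi^{02})$ back up through these straightening coordinates: very roughly, $\varphi$ should be a product of a term involving $\phi^{20}$ evaluated at $\pi(x)$, a $\phi^{11}$-term evaluated at the $\R^n$-displacement recorded by the $\tilde w$'s together with the cochain $s$, and a $\phi^{02}$-term recording the non-commutativity contribution, all multiplied by a coboundary correction to make $\partial_{Tu}\varphi=1$. The $\tau^{10}$ and $\tau^{01}$ are then read off as the discrepancy between these straightening coordinates and the indices $\lambda_{01}$, exactly as in the proof of Lemma \ref{cocyclerelationships} but run in reverse. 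Verifying $\partial_{Tu}\varphi=1$ is the same bookkeeping as in Lemma \ref{triplecocycle}, now used in the opposite direction: one feeds the cocycle identities $\check\partial\phi^{02}=1$, (\ref{last1}), (\ref{last2}) for $(\phi^{20},\phi^{11},\phi^{02})$ into the six-term chain and extracts the single Tu-\v{C}ech cocycle identity.

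Alternatively — and this is probably the cleaner route to write up — one can avoid constructing $\varphi$ by hand and instead argue abstractly. By Corollary \ref{dimreducedlongexact} and the exactness results, $\Xi_{\U,s}$ sits in a map of long exact sequences: composing with the commutative diagram of Proposition \ref{BrauerCommuting} and Theorem \ref{PRWagreement}, one has $\mathrm{Br}_{\R^n}(X)|_{\operatorname{M}=0}\cong H^2_{\R^n}(\pi^{-1}(\pi(\U)),{\mathcal S})\hookrightarrow{\mathbb H}^2_{\check\partial s}(\pi(\U),{\mathcal S})$, and the Mackey obstruction map $\operatorname{M}:\mathrm{Br}_{\R^n}(X)\to C(Z,M_n^u(\T))$ corresponds under $\Xi_{\U,s}$ to the projection $(\phi^{20},\phi^{11},\phi^{02})\mapsto[\phi^{02}]$ onto (a quotient of) $\check{C}^0(\pi(\U),{\mathcal M})$ — this last compatibility is essentially Lemma \ref{phi02identity}. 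Then a short diagram chase in the exact sequence of Corollary \ref{dimreducedlongexact}, plus surjectivity of $\operatorname{M}$ onto its natural image and the known description of $\ker\operatorname{M}$, forces $\Xi_{\U,s}$ to hit everything: given $[\phi^{20},\phi^{11},\phi^{02}]$, the class $[\phi^{02}]$ is realized by some $[CT(X,\delta),\alpha]$, and after subtracting $\Xi_{\U,s}[CT(X,\delta),\alpha]$ the remainder lies in the image of $H^2_{\R^n}(\pi^{-1}(\pi(\U)),{\mathcal S})$, hence in the image of $\Xi_{\U,s}$ by Proposition \ref{BrauerCommuting}. I expect the main obstacle to be precisely the realisation of an arbitrary $\phi^{02}$-component as the Mackey obstruction of a genuine element of $\mathrm{Br}_{\R^n}(X)$ over the given cover $\U$ — i.e. showing $\operatorname{M}$ is surjective onto the relevant subgroup of $C(Z,M_n^u(\T))$ compatibly with the standard setup — and in matching up the cup-product-with-$\check\partial s$ maps in the dimensionally reduced sequence with the $\cup F$ maps in Theorem \ref{PRWGysin}; both are plausible given the machinery already assembled but require care. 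If that abstract route runs into trouble, one falls back on the explicit construction of $\varphi$ above and applies Proposition \ref{TuDimRed} directly.
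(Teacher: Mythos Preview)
Your first route is precisely the paper's approach: fix the lifts $\tilde w_{\lambda_0}$ from Lemma \ref{Rnvaluedlifts} (normalized so $\tilde w_{\lambda_0}(\sigma_{\lambda_0}(z))=0$), use them to build an integer-valued ``displacement'' function $m_{\lambda_0\lambda_1}(s,x)=s_{\lambda_0\lambda_1}(\pi(x))-\tilde w_{\lambda_1}(x)+\tilde w_{\lambda_0}(s^{-1}x)+s$, take $\U^1$ to be the level sets of this function, write down a completely explicit formula for $\varphi$ as a product of $\phi^{20}$, $\phi^{11}$ and two $\phi^{02}$ factors evaluated at the $m$'s and at $F=\check\partial s$, verify $\partial_{Tu}\varphi=1$ by a direct four-term computation using the $D_{\check\partial s}$-cocycle identities, and then set $\tau^{10}\equiv 1$ and $\tau^{01}$ equal to an explicit $\phi^{02}$-correction. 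The identities (\ref{TuDimRedId1})--(\ref{TuDimRedId3}) then fall out by short computations using $m_{\lambda_0\lambda_1}(-s_{\lambda_0\lambda_1}(z),\sigma_{\lambda_1}(z))=0$ and $m_{\lambda_1\lambda_1}(m,\sigma_{\lambda_1}(z))=m$, and Proposition \ref{TuDimRed} finishes. So your sketch is on target; what you are missing is only the explicit formula for $\varphi$ and the (non-trivial but mechanical) verification that it is $\partial_{Tu}$-closed.

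Your alternative abstract route is \emph{not} what the paper does, and the obstacle you flagged is real: nowhere prior to this point has surjectivity of the Mackey obstruction map $\operatorname{M}$ onto $C(Z,M_n^u(\T))$ been established independently of $\Xi_{\U,s}$, so the diagram chase would be circular unless you supply that separately. Moreover, the image of $H^2_{\R^n}(\pi^{-1}(\pi(\U)),{\mathcal S})\to{\mathbb H}^2_{\check\partial s}(\pi(\U),{\mathcal S})$ has only been shown to be injective (Corollary \ref{EquivariantInjective}), not to coincide with the kernel of the $\phi^{02}$-projection, so ``the remainder lies in the image'' also needs proof. The paper avoids both issues by going the explicit route; if you wanted to pursue the abstract argument you would first need to realize an arbitrary $f\in C(Z,M_n^u(\T))$ as a Mackey obstruction (e.g.\ via the construction $g\mapsto[C_0(X,\K),\operatorname{Ad}L_{\tilde g}\circ\tau]$ used later in Lemma \ref{Liftindependence}, specialized to $\T$-valued $g$), which is doable but is essentially a second explicit construction anyway.
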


\begin{proof}
Fix $(\phi^{20},\phi^{10},\phi^{02})$. By Proposition \ref{TuDimRed} we need  an open 
cover $\U^1$ of $\R^n\times X$, a Tu-\v{C}ech cocycle $\varphi\in Z^2(\sigma\U^\bullet,{\mathcal S})$ 
and functions $(\tau^{10},\tau^{01})$ satisfying the required identities. By Lemma \ref{Rnvaluedlifts} 
there are continuous functions $\tilde{w}_{\lambda_0}:U_{\lambda_0}\to \R^n/\Z^n$ that satisfy 
$[\tilde{w}_{\lambda_0}]_{\R^n/\Z^n}=w_{\lambda_0}$ and they can be chosen to also satisfy
\begin{align}\label{wchoice0}
&\tilde{w}_{\lambda_0}(\sigma_{\lambda_0}(z))=0\in\R^n.
\end{align}
We now claim that the function
$m_{\lambda_0\lambda_1}(s,x):=s_{\lambda_0\lambda_1}(\pi(x))-\tilde{w}_{\lambda_1}(x)+
\tilde{w}_{\lambda_0}(s^{-1}x)+s$
is continuous on $\{(s,x)\in \R^n\times X:x\in U_{\lambda_0}^0, s^{-1}x\in U_{\lambda_1}^0\}$, 
and takes values in $\Z^n$. Indeed, continuity follows from the fact that it is the sum of four 
continuous functions, and is defined on their common domain. To see that 
$m_{\lambda_0\lambda_1}(s,x)\in\Z^n$ we just use the fact that $w_{\lambda_0}$ is equivariant 
and recall that, for all $x\in \pi^{-1}(\pi(U_{\lambda_0}))\cap \pi^{-1}(\pi(U_{\lambda_1}))$ we have
\[[s_{\lambda_0\lambda_1}(\pi(x))]_{\R^n/\Z^n}w_{\lambda_1}(x)^{-1}w_{\lambda_0}(x)=[0]_{\R^n/\Z^n}.\]
Then we may now calculate:
$[s_{\lambda_0\lambda_1}(z)-\tilde{w}_{\lambda_1}(x)+\tilde{w}_{\lambda_0}(s^{-1}x)+s]_{\R^n/\Z^n}$
\begin{align*}
&\quad=[s_{\lambda_0\lambda_1}(z)]_{\R^n/\Z^n}w_{\lambda_1}(x)^{-1}
w_{\lambda_0}(s^{-1}x)[s]_{\R^n/\Z^n}=[0]_{\R^n/\Z^n}.
\end{align*}
For later use, note that (\ref{wchoice0}) implies
\begin{align}
\label{mwiths1}m_{\lambda_0\lambda_1}(-s_{\lambda_0\lambda_1}(z),\sigma_{\lambda_1}(z))=0, 
\mbox{ and}\\
\label{mwiths2}m_{\lambda_1\lambda_1}(m,\sigma_{\lambda_1}(z))=m,\quad m\in\Z^n.
\end{align}

Now we define an open cover $\U^1$ of $\R^n\ltimes X$ with open sets 
$U^1_{\lambda_0\lambda_1 l}$ indexed by $\mathcal{I}^1:=\mathcal{I}\times\mathcal{I}\times \Z^n 
$ (where the open cover $\U$ of $X$ is $\U=\{U_{\lambda_0}\}_{\lambda_0\in \mathcal{I}}$)  
and we set 
\[U^1_{\lambda_0\lambda_1 l}:=\{(s,x)\in \R^n\times X:s^{-1}x\in 
U_{\lambda_0}, x\in U_{\lambda_1}, m_{\lambda_0\lambda_1}(s,x)=l\}.\]
We next construct a cocycle $\varphi\in\check{Z}^2(\sigma\U^\bullet,{\mathcal S})$ with 
image $[\phi^{20},\phi^{11},\phi^{02}]\in H^2_F(\pi(\U),{\mathcal S})$. 
To simplify our exposition, we use the notation:
$\phi^{02}(m,l,z):=\prod_{1\leq i<j\leq n}\phi^{02}(z)^{m_il_j}_{ij},$ and
$F_{\lambda_0\lambda_1\lambda_2}(z):=\check\partial s_{\lambda_0\lambda_1\lambda_2}(z).$

Then we define a Tu-\v{C}ech 2-cocycle $\varphi\in\check{Z}^2(\sigma\U^\bullet,{\mathcal S})$ by
\begin{align*}
\varphi_{\lambda_0\lambda_1\lambda_2 (\lambda_0'\lambda_1' l_1)
(\lambda_2'\lambda_3'l_2)(\lambda_4'\lambda_5' l_3)}(s,t,x)
:=&\phi^{20}_{\lambda_0\lambda_1\lambda_2}(\pi(x))\phi^{11}_{\lambda_0\lambda_1}
(m_{\lambda_1\lambda_2}(t,x),\pi(x))\\
&\quad\times\phi^{02}(m_{\lambda_1\lambda_2}(t,x),m_{\lambda_0\lambda_1}(s,t^{-1}x),\pi(x))^*\\
&\quad\times\phi^{02}(F_{\lambda_0\lambda_1\lambda_2}(\pi(x)),m_{\lambda_0\lambda_2}(s+t,x),\pi(x)).
\end{align*}
We now take the Tu-\v{C}ech differential
term by term. Thus, suppressing the indices from $\U^1$ (since they do not determine 
the value of $\varphi$, only its domain) we define
\begin{align*}
&\varphi^a_{\lambda_0\lambda_1\lambda_2}(s,t,x):=
\phi^{20}_{\lambda_0\lambda_1\lambda_2}(\pi(x)),\quad\mbox{and}\quad
\varphi^b_{\lambda_0\lambda_1\lambda_2}(s,t,x):=
\phi^{11}_{\lambda_0\lambda_1}(m_{\lambda_1\lambda_2}(t,x),\pi(x)),\\
&\varphi^c_{\lambda_0\lambda_1\lambda_2}(s,t,x):=
\phi^{02}(m_{\lambda_1\lambda_2}(t,x),m_{\lambda_0\lambda_1}(s,t^{-1}x),\pi(x))^*,\\
&\varphi^d_{\lambda_0\lambda_1\lambda_2}(s,t,x):=
\phi^{02}(F_{\lambda_0\lambda_1\lambda_2}(z),m_{\lambda_0\lambda_2}(s+t,x),\pi(x)).
\end{align*}
Write $z:=\pi(x)$, and then the differentials of each of these terms are as follows.
\begin{align*}
\partial_{Tu}\varphi^a_{\lambda_0\lambda_1\lambda_2\lambda_3}(r,s,t,x)
=&\phi^{20}_{\lambda_1\lambda_2\lambda_3}(z)\phi^{20}_{\lambda_0\lambda_2\lambda_3}(z)^*
\phi^{20}_{\lambda_0\lambda_1\lambda_3}(z)\phi^{20}_{\lambda_0\lambda_1\lambda_2}(z)^*,\\
\partial_{Tu}\varphi^b_{\lambda_0\lambda_1\lambda_2\lambda_3}(r,s,t,x)
=&\phi^{11}_{\lambda_1\lambda_2}(m_{\lambda_2\lambda_3}(t,x),z)
\phi^{11}_{\lambda_0\lambda_2}(m_{\lambda_2\lambda_3}(t,x),z)^*\\
&\quad\times\phi^{11}_{\lambda_0\lambda_1}(m_{\lambda_1\lambda_3}(s+t,x),z)
\phi^{11}_{\lambda_0\lambda_1}(m_{\lambda_1\lambda_2}(s,t^{-1}x),z)^*\\
=&\phi^{11}_{\lambda_0\lambda_1}(F_{\lambda_1\lambda_2\lambda_3}(z),z)^*\check\partial
\phi^{11}_{\lambda_0\lambda_1\lambda_2}(\cdot,z)(m_{\lambda_2\lambda_3}(t,x)),
\end{align*}\begin{align*}
&\partial_{Tu}\varphi_{\lambda_0\lambda_1\lambda_2\lambda_3}^c(r,s,t,x)\\
&=\phi^{02}(m_{\lambda_2\lambda_3}(t,x),m_{\lambda_1\lambda_2}(s,t^{-1}x),z)^*\phi^{02}
(m_{\lambda_2\lambda_3}(t,x),m_{\lambda_0\lambda_2}(r+s,t^{-1}x),z)\\
&\quad\times\phi^{02}(m_{\lambda_1\lambda_3}(s+t,x),m_{\lambda_0\lambda_1}(r,s^{-1}t^{-1}x),z)^*
\phi^{02}(m_{\lambda_1\lambda_2}(s,t^{-1}x),m_{\lambda_0\lambda_1}(r,s^{-1}t^{-1}x),z)\\
&=\phi^{02}(m_{\lambda_2\lambda_3}(t,x),F_{\lambda_0\lambda_1\lambda_2}(z),z)^*\phi^{02}
(F_{\lambda_1\lambda_2\lambda_3}(z),m_{\lambda_0\lambda_1}(r,s^{-1}t^{-1}x),z),
\end{align*}
\begin{align*}
\mbox{and }\quad&\partial_{Tu}\varphi_{\lambda_0\lambda_1\lambda_2\lambda_3}^d(r,s,t,x)\\
&=\phi^{02}(F_{\lambda_1\lambda_2\lambda_3}(z),m_{\lambda_1\lambda_3}(s+t,x),z)\phi^{02}
(F_{\lambda_0\lambda_2\lambda_3}(z),m_{\lambda_0\lambda_3}(r+s+t,x),z)^*\\
&\quad\times\phi^{02}(F_{\lambda_0\lambda_1\lambda_3}(z),m_{\lambda_0\lambda_3}(r+s+t,x),z)
\phi^{02}(F_{\lambda_0\lambda_1\lambda_2}(z),m_{\lambda_0\lambda_2}(r+s,t^{-1}x),z)^*\\
&=\phi^{02}(F_{\lambda_1\lambda_2\lambda_3}(z),F_{\lambda_0\lambda_1\lambda_3}(z),z)
\phi^{02}(F_{\lambda_0\lambda_1\lambda_2}(z),F_{\lambda_0\lambda_2\lambda_3}(z),z)^*\\
&\quad\times\phi^{02}(F_{\lambda_0\lambda_1\lambda_2}(z),m_{\lambda_2\lambda_3}(t,x),z)
\phi^{02}(F_{\lambda_1\lambda_2\lambda_3}(z),m_{\lambda_0\lambda_1}(r,s^{-1}t^{-1}x),z)^*.
\end{align*}
We also need to recall a couple of facts from the $D_{F}$-cocycle identity:
\begin{align*}
&\check\partial\phi^{11}_{\lambda_0\lambda_1\lambda_2}(\cdot,z)(m_{\lambda_2\lambda_3}(t,x))=
\phi^{02}_{\lambda_3}(F_{\lambda_0\lambda_1\lambda_2}(z),m_{\lambda_2\lambda_3}(t,x),z)^*
\phi^{02}_{\lambda_3}(m_{\lambda_2\lambda_3}(t,x),F_{\lambda_0\lambda_1\lambda_2}(z),z),\\
&\check\partial\phi^{20}(z)_{\lambda_0\lambda_1\lambda_2\lambda_3}=\phi^{11}_{\lambda_0
\lambda_1}(F_{\lambda_1\lambda_2\lambda_3}(z),z)\phi^{02}(F_{\lambda_0\lambda_1\lambda_2}
(z),F_{\lambda_0\lambda_2\lambda_3}(z))\phi^{02}(F_{\lambda_1\lambda_2\lambda_3}
(z),F_{\lambda_0\lambda_1\lambda_3}(z))^*.
\end{align*}
Put together, these facts imply $\partial_{Tu}\varphi$ is given by
\begin{align*}
&\check\partial\phi^{20}_{\lambda_0\lambda_1\lambda_2\lambda_3}(z)
\phi^{11}_{\lambda_0\lambda_1}(F_{\lambda_1\lambda_2\lambda_3}(z),z)^*\check\partial
\phi^{11}_{\lambda_0\lambda_1\lambda_2}(\cdot,z)(m_{\lambda_2\lambda_3}(t,x))\\
&\quad\times\phi^{02}(m_{\lambda_2\lambda_3}(t,x),F_{\lambda_0\lambda_1\lambda_2}(z),z)^*
\phi^{02}(F_{\lambda_1\lambda_2\lambda_3}(z),m_{\lambda_0\lambda_1}(r,s^{-1}t^{-1}x),z)\\
&\quad\times\phi^{02}(F_{\lambda_1\lambda_2\lambda_3}(z),F_{\lambda_0\lambda_1\lambda_3}(z),z)
\phi^{02}(F_{\lambda_0\lambda_1\lambda_2}(z),F_{\lambda_0\lambda_2\lambda_3}(z),z)^*\\
&\quad\times\phi^{02}(F_{\lambda_0\lambda_1\lambda_2}(z),m_{\lambda_2\lambda_3}(t,x),z)\phi^{02}
(F_{\lambda_1\lambda_2\lambda_3}(z),m_{\lambda_0\lambda_1}(r,s^{-1}t^{-1}x),z)^*
\end{align*}\begin{align*}
&=[\phi^{11}_{\lambda_0\lambda_1}(F_{\lambda_1\lambda_2\lambda_3}(z),z)\phi^{02}
(F_{\lambda_0\lambda_1\lambda_2}(z),F_{\lambda_0\lambda_2\lambda_3}(z))\phi^{02}
(F_{\lambda_1\lambda_2\lambda_3}(z),F_{\lambda_0\lambda_1\lambda_3}(z))^*]\\
&\quad\times \phi^{11}_{\lambda_0\lambda_1}(F_{\lambda_1\lambda_2\lambda_3}
(z),z)^*[\phi^{02}_{\lambda_3}(F_{\lambda_0\lambda_1\lambda_2}(z),m_{\lambda_2
\lambda_3}(t,x),z)^*\phi^{02}_{\lambda_3}(m_{\lambda_2\lambda_3}
(t,x),F_{\lambda_0\lambda_1\lambda_2}(z),z)]\\
&\quad\times\phi^{02}(m_{\lambda_2\lambda_3}(t,x),F_{\lambda_0\lambda_1\lambda_2}(z),z)^*
\phi^{02}(F_{\lambda_1\lambda_2\lambda_3}(z),F_{\lambda_0\lambda_1\lambda_3}(z),z)\\
&\quad\times\phi^{02}(F_{\lambda_0\lambda_1\lambda_2}(z),F_{\lambda_0\lambda_2
\lambda_3}(z),z)^*\phi^{02}(F_{\lambda_0\lambda_1\lambda_2}(z),m_{\lambda_2\lambda_3}(t,x),z)
\quad=\quad1.
\end{align*}
Therefore, the cochain $\varphi$ is closed under $\partial_{Tu}$.
We now need to find functions $(\tau^{10},\tau^{01})$ satisfying the identities from Proposition
\ref{TuDimRed}.  First, let us define
\begin{align*}
\tau^{01}_{\lambda_0}(m,z):=\phi^{20}_{\lambda_0\lambda_0\lambda_0}(z)^*\prod_{1\leq i<j\leq
 n}f(z)_{ij}^{-m_im_j},\quad\quad \mbox{and  }\quad
\tau^{10}_{\lambda_0\lambda_1}(z):=1.
\end{align*}
Then, using (\ref{mwiths1}), we see that checking identity (\ref{TuDimRedId1}) amounts to the computation
\begin{align*}
&\varphi_{\lambda_0\lambda_0\lambda_0}(m,l,\sigma_{\lambda_0}(z))
\tau^{01}_{\lambda_0\cdot}(m,z)\tau_{\lambda_0\cdot}^{01}(l,z)\tau^{01}_{\lambda_0\cdot}(m+l,z)^*\\
&=(\phi^{20}_{\lambda_0\lambda_0\lambda_0}(z)\prod_{1\leq i<j\leq n}f(z)_{ij}^{-l_im_j})
(\phi^{20}_{\lambda_0\lambda_0\lambda_0}(z)^*\prod_{1\leq i<j\leq n}f(z)_{ij}^{-m_im_j})\\
&\quad\times (\phi^{20}_{\lambda_0\lambda_0\lambda_0}(z)^*\prod_{1\leq i<j\leq n}
f(z)_{ij}^{-l_il_j})(\phi^{20}_{\lambda_0\lambda_0\lambda_0}(z)\prod_{1\leq i<j\leq n}
f(z)_{ij}^{(m+l)_i(m+l)_j})=\prod_{1\leq i<j\leq n}f(z)_{ij}^{m_il_j}.
\end{align*}
For (\ref{TuDimRedId2}), the cocycle identity for $(\phi^{20},\phi^{11},\phi^{02})$ implies 
$\phi^{20}_{\lambda_0\lambda_0\lambda_1}(z)=\phi^{20}_{\lambda_0\lambda_0\lambda_0}(z)$ 
and $\phi^{20}_{\lambda_0\lambda_1\lambda_1}(z)=\phi^{20}_{\lambda_1\lambda_1\lambda_1}(z)$. 
Therefore, using (\ref{mwiths1}) and (\ref{mwiths2}) we see that
\begin{align*}
&\varphi_{\lambda_0\lambda_1\lambda_1}(-s_{\lambda_0\lambda_1}(z),m,
\sigma_{\lambda_1}(z))\varphi_{\lambda_0\lambda_0\lambda_1}(m,-s_{\lambda_0\lambda_1}(z),
\sigma_{\lambda_1}(z))^*\tau^{01}_{\lambda_1\cdot}(m,z)\tau^{01}_{\lambda_0\cdot}(m,z)^*\\
&\quad\quad=(\phi^{20}_{\lambda_0\lambda_1\lambda_1}(z)
\phi^{11}_{\lambda_0\lambda_1}(m,z))(\phi^{20}_{\lambda_0\lambda_0\lambda_1}(z))^*
\phi^{20}_{\lambda_1\lambda_1\lambda_1}(z)^*\phi^{20}_{\lambda_0\lambda_0\lambda_0}(z)
\quad=\quad \phi^{11}_{\lambda_0\lambda_1}(m,z).
\end{align*}
Finally, for identity (\ref{TuDimRedId3})
\begin{align*}
&\varphi_{\lambda_0\lambda_1\lambda_2}(-s_{\lambda_0\lambda_1}(z),
-s_{\lambda_1\lambda_2}(z),\sigma_{\lambda_2}(z))\varphi_{\lambda_0\lambda_0\lambda_2}
(-F_{\lambda_0\lambda_1\lambda_2}(z),-s_{\lambda_0\lambda_2}(z),\sigma_{\lambda_2}(z))^*\\
&\quad\times\tau^{01}_{\lambda_0}(F_{\lambda_0\lambda_1\lambda_2}(z),z)
\tau^{10}_{\lambda_1\lambda_2}(z)\tau^{10}_{\lambda_0\lambda_1}(z)
\tau^{10}_{\lambda_0\lambda_2}(z)^*\\
&=\phi^{20}_{\lambda_0\lambda_1\lambda_2}(z)\phi^{02}(F_{\lambda_0\lambda_1\lambda_2}(z),
-F_{\lambda_0\lambda_1\lambda_2}(z),\pi(x))\\
&\quad\times\phi^{20}_{\lambda_0\lambda_0\lambda_2}(\pi(x))^*
\phi^{20}_{\lambda_0\lambda_0\lambda_0}(z)\prod_{1\leq i<j\leq n}\phi^{02}
(\pi(x))_{ij}^{F_{\lambda_0\lambda_1\lambda_2}(z)_iF_{\lambda_0\lambda_1\lambda_2}(z)_j}
\quad=\quad\phi^{20}_{\lambda_0\lambda_1\lambda_2}(z).
\end{align*}
Therefore, by Proposition \ref{TuDimRed}, there exists a class $[CT(X,\delta),\alpha]$ 
such that  $\Xi_{\U,s}:[CT(X,\delta),\alpha]\mapsto[\phi^{20},\phi^{11},\phi^{02}]$, 
implying $\Xi_{\U,s}:\operatorname{Br}_{\R^n}(X)\to {\mathbb H}^2_F(\pi(\U^0),{\mathcal S})$ is surjective.
\end{proof}

The formula for the Tu-\v{C}ech cocycle in Corollary \ref{surjectiveproof} allows us to 
calculate the Dixmier-Douady class corresponding to a cocycle $[\phi^{20},\phi^{11},\phi^{02}]$.
Corollary \ref{surjectiveproof} completes the proof of Theorem \ref{MainIsomorphism}, 
which in turn provides the left downward isomorphism from the diagram of Theorem \ref{MainSquare}. 
To complete Theorem \ref{MainSquare} we need the right downward arrow
$\check{H}^3(X,\underline{\Z})|_{\pi^{0,3}=0}\stackrel{\cong}{\longrightarrow}
{\mathbb H}^3_{\check\partial s}(\pi(\U),\underline{\Z}),$
and  commutativity of the diagram.
The first step is:

\begin{theorem}[{\cite[Thm 2.2]{MatRos06}}]
Let $\delta\in\check{H}^3(X,\underline{\Z})$. Then $\delta$ is in the image of the forgetful 
homomorphism $F:\operatorname{Br}_{\R^n}(X)\to \check{H}^3(X,\underline{\Z})$
if an only if $\pi^{0,3}(\delta)=0$.
\end{theorem}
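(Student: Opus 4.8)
The plan is to prove the two implications separately. Neither needs the full strength of the machinery developed for the rest of the diagram, but both can also be deduced from it once the right-hand vertical isomorphism and the commutativity in Theorem~\ref{MainSquare} are in hand.

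For the ``only if'' direction I would argue fibrewise. Suppose $\delta=F([CT(X,\delta),\alpha])$ for some $\R^n$-action $\alpha$. Fix $x\in X$; since $\R^n$ acts on $X$ through $\R^n/\Z^n$ it preserves each fibre, so restricting the system to $F_x:=\iota_x(\R^n/\Z^n)$ produces an element $[CT(F_x,\iota_x^*\delta),\alpha|_{F_x}]\in\operatorname{Br}_{\R^n}(F_x)$ with forgetful image $\iota_x^*\delta$, the underlying $\R^n$-action on $F_x\cong\R^n/\Z^n$ being the transitive translation action. I would then show $\iota_x^*\delta=0$ by passing to the crossed product: by Green's imprimitivity theorem $CT(F_x,\iota_x^*\delta)\rtimes_\alpha\R^n$ is Morita equivalent to $\K\rtimes_\sigma\Z^n\cong\K\otimes C^*_\sigma(\Z^n)$, a stabilised noncommutative $n$-torus, whose $K$-theory $K_0\oplus K_1$ has total rank $2^n$; on the other hand the iterated Connes--Thom isomorphism identifies the $K$-theory of this crossed product with the $\iota_x^*\delta$-twisted $K$-theory of $\R^n/\Z^n$, for which the Atiyah--Hirzebruch spectral sequence has first (and, by the rank count, only) nonzero differential $d_3=\iota_x^*\delta\cup(-)$, since the integral operation $\mathrm{Sq}^3$ vanishes on a torus. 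Matching ranks forces $\iota_x^*\delta\cup(-)=0$ on all of $\check{H}^*(\R^n/\Z^n,\underline{\Z})$, and evaluating on $\check{H}^0$ gives $\iota_x^*\delta=0$. As $(\pi^{0,3}\delta)(\pi(x))=\iota_x^*\delta$, this yields $\pi^{0,3}(\delta)=0$.

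For the ``if'' direction I would invoke the diagram. Once the right vertical arrow $\check{H}^3(X,\underline{\Z})|_{\pi^{0,3}=0}\xrightarrow{\cong}\mathbb{H}^3_{\check\partial s}(\pi(\U),\underline{\Z})$ of Theorem~\ref{MainSquare} is available, a class $\delta$ with $\pi^{0,3}(\delta)=0$ corresponds to some $d\in\mathbb{H}^3_{\check\partial s}(\pi(\U),\underline{\Z})$; by the exact sequence of Corollary~\ref{dimreducedlongexact} the connecting map $\mathbb{H}^2_{\check\partial s}(\pi(\U),\mathcal{S})\to\mathbb{H}^3_{\check\partial s}(\pi(\U),\underline{\Z})$ is surjective, so $d$ lifts to a class $c$; by Theorem~\ref{MainIsomorphism}, $c=\Xi_{\U,s}([CT(X,\delta'),\alpha])$ for some element of $\operatorname{Br}_{\R^n}(X)$; and by commutativity of the square (which identifies the Dixmier--Douady class of this algebra through the Tu--\v{C}ech cocycle computed in Corollary~\ref{surjectiveproof} together with Lemma~\ref{forgetfullemma}) we get $F([CT(X,\delta'),\alpha])=\delta$, so $\delta\in\operatorname{im}F$. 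A self-contained alternative, closer to the original Mathai--Rosenberg argument, is to use the Leray--Serre filtration to split $\delta$ into contributions living in $\check{H}^3(Z,\underline{\Z})$, $\check{H}^2(Z,\underline{\Z}^n)$ and $\check{H}^1(Z,\hat{\mathcal{N}})$ (the three columns of $\mathbb{H}^\bullet_F$) and to build the $\R^n$-action stratum by stratum, which essentially reconstructs the surjectivity argument of Corollary~\ref{surjectiveproof}.

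The hard part is the ``only if'' direction, and within it the vanishing $\iota_x^*\delta=0$: there is no purely cohomological reason for it, and one genuinely needs operator-algebraic input --- Green imprimitivity, Connes--Thom, and the rigidity of $K_*$ for noncommutative tori --- to conclude that a continuous-trace algebra over $\R^n/\Z^n$ admitting an $\R^n$-action covering the translation action must have trivial Dixmier--Douady invariant. Everything else is bookkeeping with the long exact sequence of Corollary~\ref{dimreducedlongexact} and the commutativity already required for Theorem~\ref{MainSquare}.
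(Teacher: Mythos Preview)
The paper does not prove this theorem; it is quoted from \cite{MatRos06} and used as an external input. In particular, the right vertical arrow in Theorem~\ref{MainSquare} is \emph{defined} in this paper by first lifting a class in $\check{H}^3(X,\underline{\Z})|_{\pi^{0,3}=0}$ to $\operatorname{Br}_{\R^n}(X)$ through $F^{-1}$, then pushing through $\Xi_{\U,s}$ and the connecting map (see the diagram immediately following the theorem's statement and the proof of Theorem~\ref{DimReductThm}, where commutativity holds ``by definition''). The existence of such a lift is exactly the ``if'' direction you are trying to prove. Hence your primary argument for the ``if'' direction is circular: you invoke Theorem~\ref{MainSquare}, but within this paper Theorem~\ref{MainSquare} is established only after, and by means of, the Mathai--Rosenberg result.

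Your ``only if'' argument --- fibrewise restriction, Green imprimitivity, Connes--Thom, and the rank comparison for $K_*$ of noncommutative tori forcing $\iota_x^*\delta\cup(-)=0$ --- is sound and is essentially the argument in \cite{MatRos06}. Your sketched alternative for the ``if'' direction via the Leray--Serre filtration is likewise the correct route (and again is what Mathai--Rosenberg do), but it must be carried out directly, independently of this paper's diagram. So: drop the appeal to Theorem~\ref{MainSquare} for the ``if'' direction and promote your ``self-contained alternative'' to the main argument.
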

Next, as every class in $\check{H}^3(X,\underline{\Z})|_{\pi^{0,3}=0}$ lifts to 
$\mbox{Br}_{\R^n}(X)$ we can define a map implicitly from 
$\check{H}^3(X,\underline{\Z})|_{\pi^{0,3}=0}$ to 
${\mathbb H}^3_{\check\partial s}(\pi(\U),\underline{\Z})$ as the composition

\centerline{\xymatrix{
\mathrm{Br}_{\R^n}(X)\ar[r]^{\Xi_{\U,s}}&{\mathbb H}^2_{\check\partial s}(\pi(\U),{\mathcal S})\ar[d]\\
\check{H}^3(X,\underline{\Z})|_{\pi^{0,3}=0}\ar[u]^{F^{-1}}&
{\mathbb H}^3_{\check\partial s}(\pi(\U),\underline{\Z}),
}}

\noindent provided the image in ${\mathbb H}^3_{\check\partial s}(\pi(\U),\underline{\Z})$ is 
independent of the choice of the lift\\ $\check{H}^3(X,\underline{\Z})|_{\pi^{0,3}=0}\to 
\mathrm{Br}_{\R^n}(X)$. From  {\cite[Thm 2.3]{MatRos06}} we have:

\begin{prop}\label{BrauerKernel}
There is an exact sequence
\[H^2_M(\R^n,C(X,\T))\to \mathrm{Br}_{\R^n}(X)\to \check{H}^3(X,\underline{\Z})\vline_{\pi^{0,3}=0}\to 0.\]
\end{prop}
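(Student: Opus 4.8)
\textbf{Proof plan for Proposition~\ref{BrauerKernel}.}

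The plan is to extract this exact sequence directly from the results of \cite{MatRos06}, which we have already quoted: by \cite[Thm~2.2]{MatRos06} the forgetful homomorphism $F:\operatorname{Br}_{\R^n}(X)\to \check{H}^3(X,\underline{\Z})$ has image exactly $\check{H}^3(X,\underline{\Z})|_{\pi^{0,3}=0}$, which gives surjectivity of the last map and hence exactness at $\check{H}^3(X,\underline{\Z})|_{\pi^{0,3}=0}$. The only thing that remains is to identify $\ker F$ as the image of $H^2_M(\R^n,C(X,\T))$ under a natural map $\iota:H^2_M(\R^n,C(X,\T))\to\operatorname{Br}_{\R^n}(X)$. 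So the two things I would do are: (i) describe the map $\iota$, and (ii) prove $\operatorname{im}\,\iota=\ker F$.

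For (i), the map $\iota$ sends a Moore $2$-cocycle class $[\omega]\in H^2_M(\R^n,C(X,\T))$ to the class $[C_0(X,\K),\alpha^\omega]$, where $\alpha^\omega$ is the $\R^n$-action on $C_0(X,\K)$ twisting the trivial-bundle action $\tau$ by the central cocycle $\omega$; concretely one realises $\omega$ via a projective representation (Borel, then continuous after exterior equivalence, using that $\R^n$ is a Lie group and $C(X,\T)$-valued cocycles on $\R^n$ are smoothable) and lets the resulting unitaries implement $\alpha^\omega$ fibrewise. Since the underlying algebra is $C_0(X,\K)$, clearly $F\circ\iota=0$, so $\operatorname{im}\,\iota\subseteq\ker F$. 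That $\iota$ is a group homomorphism into $\operatorname{Br}_{\R^n}(X)$ follows from the additivity of the Mackey/central-extension construction under balanced tensor product over $C_0(X)$, exactly as in the remarks preceding Corollary~\ref{XiHomomorphism}.

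For (ii), the containment $\ker F\subseteq\operatorname{im}\,\iota$ is the substantive point, and this is precisely \cite[Thm~2.3]{MatRos06}: if $[CT(X,\delta),\alpha]\in\ker F$ then $\delta=0$, so we may assume the algebra is $C_0(X,\K)$ with some action $\alpha$ covering $\tau$; after an exterior equivalence we may assume $\alpha$ fixes the spectrum pointwise in the sense that $\alpha$ and $\tau$ differ by a unitary $\R^n$-cocycle $w:\R^n\to UM(C_0(X,\K))=C_b(X,U(\H))$, and the obstruction to choosing $w$ centrally (i.e.\ to $\alpha$ being exterior equivalent to $\tau$ itself) lives in $H^2_M(\R^n,C(X,\T))$ and is hit by $\iota$ essentially by construction. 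I would phrase this by invoking \cite[Thm~2.3]{MatRos06} to produce the cocycle class in $H^2_M(\R^n,C(X,\T))$ and then checking that applying $\iota$ to that class recovers $[C_0(X,\K),\alpha]$ up to outer conjugacy, which is a matter of unwinding the definition of exterior equivalence in Section~\ref{ctstracesection}.

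The main obstacle is step (ii): one has to be careful that the ``central obstruction'' really is a genuine \emph{Moore} cohomology class over the coefficient module $C(X,\T)$ (continuity of representatives, not just Borel measurability) and that exactness holds on the nose rather than up to some homotopy-theoretic fudge — this is exactly the kind of imprecision flagged in the introduction regarding \cite{MatRos05,MatRos06}. Since we are citing \cite[Thm~2.2]{MatRos06} and \cite[Thm~2.3]{MatRos06} as black boxes, the honest statement is that Proposition~\ref{BrauerKernel} is a direct consequence of those two theorems together with the identification of $\iota$ above; I would therefore keep the proof short, citing both theorems and spending the bulk of the argument only on verifying $F\circ\iota=0$ and $\operatorname{im}\,\iota=\ker F$ from the definitions.
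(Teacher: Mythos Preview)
Your proposal is correct and matches the paper's approach: the paper does not give any proof of this proposition at all, but simply cites it as a direct consequence of \cite[Thm~2.3]{MatRos06} (the sentence ``From \cite[Thm~2.3]{MatRos06} we have:'' immediately precedes the statement). Your plan is therefore more detailed than what the paper provides, but it is exactly the right unpacking of that citation, and your description of the map $\iota$ agrees with the explicit formula the paper gives in the paragraph following the proposition.
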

To find the image of $H^2_M(\R^n,C(X,\T))$ under the composition
\[H^2_M(\R^n,C(X,\T))\to \mathrm{Br}_{\R^n}(X)
\stackrel{\Xi_{\U,s}}{\longrightarrow}{\mathbb H}^2_{\check\partial s}(\pi(\U),{\mathcal S}),\]
we now describe $H^2_M(\R^n,C(X,\T))\to \mathrm{Br}_{\R^n}(X)$ in more detail. 
First, we know from \cite[Lemma 2.1]{MatRos05} that there is an isomorphism 
$C(Z,\wedge^2\R^n) \cong H^2_M(\R^n,C(X,\T))$ taking $g\in C(Z,M_n^u(\R))$ 
$\cong C(Z,\wedge^2\R^n)$, to  the cocycle $\tilde{g}$ in $Z^2_M(\R^n,C(X,\T))$ defined by
\[(s,t)\mapsto \left(x\mapsto \left[\sum_{1\leq i<j\leq n}g(\pi(x))_{ij}t_is_j\right]_{\R/\Z}\right).\]
We find the image of this in $\mathrm{Br}_{\R^n}(X)$ as follows. Let $\H=L^2(\R^n)$, 
and define a continuous map $L_{\tilde{g}}:\R^n\to C(X,U(L^2(\R^n)))$ with the formula
\[[L_{\tilde{g}}(s)(x)](\xi)(t):=\tilde{g}(s,t-s)(x)\xi(t-s),\quad s,t\in\R^n, x\in X, \xi\in L^2(\R^n).\]
Notice in particular that
\begin{align*}
L_{\tilde{g}}(s)(x)[L_{\tilde{g}}(t)(-s\cdot x)\xi](r)=&\tilde{g}(s,r-s)[L_{\tilde{g}}(t)(-s\cdot x)\xi](r-s) \\
=&\tilde{g}(s,r-s)(x)\tilde{g}(t,r-s-t)(-s\cdot x)\xi(r-s-t) \\
=&\tilde{g}(s,r-s)(x)\tilde{g}(t,r-s-t)(x)\xi(r-s-t)\quad\quad \\
=&\tilde{g}(s,t)(x)\tilde{g}(s+t,r-s-t)(x)\xi(r-s-t)\quad\quad \\
=&\tilde{g}(s,t)(x)[L_{\tilde{g}}(s+t)(x)\xi](r).
\end{align*}
It follows that the map $H^2_M(\R^n,C(X,\T))\cong C(Z,M_n^u(\R))\to \mathrm{Br}_{\R^n}(X)$ 
is given by (cf. \cite[Thm 5.1]{CroKumRaeWil97}):
$g\mapsto [C_0(X,\K),\operatorname{Ad}L_{\tilde{g}}\circ \tau]$.

\begin{lemma}\label{Liftindependence}
Let $(X,\U,s)$ be in the standard setup. Then the composition
$$C(Z,M_n^u(\R))\rightarrow \mathrm{Br}_{\R^n}(X)\stackrel{\Xi_{\U,s}}{\rightarrow} 
{\mathbb H}^2_{\check\partial s}(\pi(\U),{\mathcal S})\rightarrow
 {\mathbb H}^3_{\check\partial s}(\pi(\U),\underline{\Z})$$
\noindent is the zero map.
\end{lemma}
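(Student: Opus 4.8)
The plan is to trace a class $g \in C(Z,M_n^u(\R))$ through the composition and exhibit an explicit primitive. First I would recall that the map $C(Z,M_n^u(\R)) \to \mathrm{Br}_{\R^n}(X)$ sends $g$ to $[C_0(X,\K), \operatorname{Ad} L_{\tilde g}\circ\tau]$, a system with underlying trivial Dixmier--Douady class and trivial local trivialisations $\Phi_{\lambda_0} = \operatorname{id}$. So the first task is to compute $\Xi_{\U,s}[C_0(X,\K),\operatorname{Ad} L_{\tilde g}\circ\tau]$ concretely. Here $\beta^{\alpha,\Phi}_{(\lambda_0(s,x)\lambda_1)} = \operatorname{Ad} L_{\tilde g}(s)(x)$, so one can take $v_{\lambda_0}^i(x) := L_{\tilde g}(e_i)(x)$ and $v_{\lambda_0\lambda_1}(z) := L_{\tilde g}(-s_{\lambda_0\lambda_1}(z))(\sigma_{\lambda_1}(z))$. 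Plugging these into the definitions (\ref{phi20defn}), (\ref{phi11defn}) and the formula for $\phi(\alpha)^{02}$, and using the cocycle computation for $L_{\tilde g}$ displayed just before the lemma (namely $L_{\tilde g}(s)(x)L_{\tilde g}(t)(-s\cdot x) = \tilde g(s,t)(x) L_{\tilde g}(s+t)(x)$, with $\tilde g$ central), I expect $\phi(\alpha)^{02}_{\lambda_0}(z)_{ij} = [g(z)_{ij}]_{\R/\Z}$ — so this class is precisely the image of $g$ under $C(Z,M_n^u(\R)) \to C(Z,M_n^u(\T))$ — while $\phi(\alpha)^{11}$ and $\phi(\alpha)^{20}$ are built purely out of values of $\tilde g$ and hence lie in the image of the real-coefficient cochains.

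The key structural point is then that the triple $(\phi(\alpha)^{20}, \phi(\alpha)^{11}, \phi(\alpha)^{02})$ is the reduction mod $\Z$ of a triple of \emph{real}-valued cochains: explicitly $\Phi^{02}_{\lambda_0}(z)_{ij} := g(z)_{ij}$, together with the real lifts $\Phi^{11}_{\lambda_0\lambda_1}(m,z) := \sum_{i<j} g(z)_{ij}(-s_{\lambda_0\lambda_1}(z))_i m_j - \dots$ coming from the exponents appearing in $L_{\tilde g}$, and similarly $\Phi^{20}$. This exhibits $\Xi_{\U,s}(g)$ as lying in the image of the natural map $\mathbb H^2_{\check\partial s}(\pi(\U),{\mathcal R}) \to \mathbb H^2_{\check\partial s}(\pi(\U),{\mathcal S})$. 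Therefore, in the long exact sequence of Proposition \ref{cohomologylongexactsequence} (equivalently Corollary \ref{dimreducedlongexact}),
\[
\mathbb H^2_{\check\partial s}(\pi(\U),{\mathcal R}) \to \mathbb H^2_{\check\partial s}(\pi(\U),{\mathcal S}) \to \mathbb H^3_{\check\partial s}(\pi(\U),\underline{\Z}),
\]
the class $\Xi_{\U,s}(g)$ maps to zero in $\mathbb H^3_{\check\partial s}(\pi(\U),\underline{\Z})$ by exactness. Since the vertical map ${\mathbb H}^2_{\check\partial s}(\pi(\U),{\mathcal S})\to {\mathbb H}^3_{\check\partial s}(\pi(\U),\underline{\Z})$ in the statement is exactly this connecting homomorphism (from Corollary \ref{dimreducedlongexact}), the composition is zero.

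The main obstacle, and where the real work lies, is the first step: verifying carefully that the cochains $\phi(\alpha)^{11}$ and $\phi(\alpha)^{20}$ produced by $L_{\tilde g}$ genuinely admit \emph{continuous $\R$-valued} lifts that together with $\Phi^{02} = g$ form a cocycle in $C^2_{\check\partial s}(\pi(\U),{\mathcal R})$ whose $D_{\check\partial s}$-class maps onto $\Xi_{\U,s}(g)$. This amounts to bookkeeping the additive exponents in the definition of $L_{\tilde g}(s)(x)$ and checking the required $D_{\check\partial s}$-identities at the level of real cochains; the identities (\ref{last1}) and (\ref{last2}) from Lemma \ref{triplecocycle}, reduced to the abelian/real setting, make this routine but lengthy. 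An alternative, possibly cleaner, route would be to avoid lifting explicitly by instead observing directly that $[C_0(X,\K),\operatorname{Ad} L_{\tilde g}\circ\tau]$ has trivial forgetful image (its Dixmier--Douady class is $0$), so that under the identification of the right-hand isomorphism of Theorem \ref{MainSquare} its image in $\check H^3(X,\underline{\Z})|_{\pi^{0,3}=0}$ is zero — but since that right-hand isomorphism and the commutativity of the square are only established \emph{after} this lemma, I would prefer the self-contained exact-sequence argument above.
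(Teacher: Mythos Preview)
Your proposal is correct and is essentially the paper's own argument, just packaged slightly differently. Both you and the paper compute $\Xi_{\U,s}(g)$ explicitly using the unitaries $L_{\tilde g}$, obtaining real-valued expressions for $\phi(\alpha)^{20},\phi(\alpha)^{11},\phi(\alpha)^{02}$; you then invoke exactness of the sequence in Corollary~\ref{dimreducedlongexact} after checking the real lift is a $D_{\check\partial s}$-cocycle, while the paper computes the connecting homomorphism $\Delta$ term by term and shows $\Delta(\ldots)^{30}=\Delta(\ldots)^{21}=\Delta(\ldots)^{12}=0$ --- but by the snake-lemma definition of $\Delta$ these two verifications are literally the same computation, namely that $D_{\check\partial s}$ applied to the real lift vanishes.
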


\begin{proof}
Let $g\in C(Z,M_n^u(\R))$ have image $(C_0(X,\K),\operatorname{Ad}
L_{\tilde{g}}\circ \tau)\in\mathfrak{Br}_{\R^n}(X)$. Then, if we take the identity 
maps as the local trivialisations, we have
\[\beta^{\alpha,\Phi}_{(\lambda_0(m,\sigma_{\lambda_0}(z))\lambda_0)}=
\operatorname{Ad}L_{\tilde{g}(\cdot,\cdot)(\sigma_{\lambda_0}(z))}(m)
\mbox{and}\
\beta^{\alpha,\Phi}_{(\lambda_0(-s_{\lambda_0\lambda_1}(z),\sigma_{\lambda_1}(z))\lambda_1)}
=\operatorname{Ad}L_{\tilde{g}(\cdot,\cdot)(\sigma_{\lambda_1}(z))}(-s_{\lambda_0\lambda_1}(z)).\]
Therefore we can define
\[u_{\lambda_0}^m(z):=(L_{\tilde{g}(\cdot,\cdot)(\sigma_{\lambda_0}(z))}(e_1))^{m_1}
\dots(L_{\tilde{g}(\cdot,\cdot)(\sigma_{\lambda_0}(z))}(e_n))^{m_n}
\mbox{and}\
u_{\lambda_0\lambda_1}(z):=L_{\tilde{g}(\cdot,\cdot)(\sigma_{\lambda_1}(z))}
(-s_{\lambda_0\lambda_1}(z)).\]
We can then calculate the image $[\phi(\operatorname{Ad}L\circ\tau)^{20},
\phi(\operatorname{Ad}L\circ\tau)^{11},\phi(\operatorname{Ad}L\circ\tau)^{02}]$ of $(C_0(X,\K),
\operatorname{Ad}L_{\tilde{g}}\circ \tau)$ in ${\mathbb H}^2_{\check\partial s}(\pi(\U),{\mathcal S})$ 
using
\begin{equation}\label{Lequation}
L_{\tilde{g}}(s)(x)L_{\tilde{g}}(t)(-s\cdot x)=\tilde{g}(s,t)(x)L_{\tilde{g}}(s+t)(x).
\end{equation}
as follows
\begin{align*}
\phi(\operatorname{Ad}L\circ\tau)^{02}_{\lambda_0}(z)_{ij}
&\quad=L_{\tilde{g}(\cdot,\cdot)(\sigma_{\lambda_0}(z))}(e_j)L_{\tilde{g}(\cdot,\cdot)
(\sigma_{\lambda_0}(z))}(e_i)L_{\tilde{g}(\cdot,\cdot)(\sigma_{\lambda_0}(z))}(e_i+e_j)^*\\
&\quad=\tilde{g}(e_j,e_i)(\sigma_{\lambda_0}(z))\quad=\quad [g(z)_{ij}]_{\R/\Z},\quad\quad\\
\phi(\operatorname{Ad}L\circ\tau)^{11}_{\lambda_0\lambda_1}(m,z)
&\quad=u_{\lambda_1}^m(z)u _{\lambda_0\lambda_1}(z)u_{\lambda_0}^m(z)^*
u_{\lambda_0\lambda_1}(z)^* \quad\mbox{ by } (\ref{Lequation})\times 2\\
&\quad=L_{\tilde{g}(\cdot,\cdot)(\sigma_{\lambda_1}(z))}(m)
L_{\tilde{g}(\cdot,\cdot)(\sigma_{\lambda_1}(z))}(-s_{\lambda_0\lambda_1}(z))\\
&\quad\quad\times L_{\tilde{g}(\cdot,\cdot)(\sigma_{\lambda_0}(z))}(m)^*
L_{\tilde{g}(\cdot,\cdot)(\sigma_{\lambda_1}(z))}(-s_{\lambda_0\lambda_1}(z))^*\\
&\quad=\tilde{g}(m,-s_{\lambda_0\lambda_1}(z))(\sigma_{\lambda_1}(z))\tilde{g}(-
s_{\lambda_0\lambda_1}(z),m)(\sigma_{\lambda_1}(z))^*\\
&\quad=\left[\sum_{1\leq i<j\leq n}g(z)_{ij}(m_is_{\lambda_0\lambda_1}(z)_j-s_{\lambda_0\lambda_1}
(z)_im_j)\right]_{\R/\Z},\quad\quad\quad\quad
\end{align*}\begin{align*}
\phi(\operatorname{Ad}L\circ\tau)^{20}_{\lambda_0\lambda_1\lambda_2}(z)
&\quad=u _{\lambda_1\lambda_2}(z)u _{\lambda_0\lambda_1}(z)
u_{\lambda_0}^{-\check\partial s_{\lambda_0\lambda_1\lambda_2}(z)}(z)
u _{\lambda_0\lambda_2}(z)^*\\
&\quad=L_{\tilde{g}(\cdot,\cdot)(\sigma_{\lambda_2}(z))}(-s_{\lambda_1\lambda_2}(z))
L_{\tilde{g}(\cdot,\cdot)(\sigma_{\lambda_1}(z))}(-s_{\lambda_0\lambda_1}(z))\\
&\quad\quad\times L_{\tilde{g}(\cdot,\cdot)(\sigma_{\lambda_0}(z))}(-\check\partial 
s_{\lambda_0\lambda_1\lambda_2}(z))^*L_{\tilde{g}(\cdot,\cdot)(\sigma_{\lambda_2}(z))}
(-s_{\lambda_0\lambda_2}(z))^*\\
&\quad=\tilde{g}(-s_{\lambda_1\lambda_2}(z),-s_{\lambda_0\lambda_1}(z))\tilde{g}
(-s_{\lambda_0\lambda_2}(z),-\check\partial s_{\lambda_0\lambda_1\lambda_2}(z))^*\\
&\quad=\left[\sum_{1\leq i<j\leq n}g(z)_{ij}(s_{\lambda_0\lambda_1}(z)_is_{\lambda_1\lambda_2}(z)_j-
\check\partial s_{\lambda_0\lambda_1\lambda_2}(z)_is_{\lambda_0\lambda_2}(z)_j)\right]_{\R/\Z}.
\end{align*}
Now refer to the construction of the connecting homomorphism after 
Proposition \ref{cohomologylongexactsequence} to find 
$\Delta(\phi(\operatorname{Ad}L\tau)^{20},\phi(\operatorname{Ad}L\tau)^{11},
\phi(\operatorname{Ad}L\tau)^{02})\in {\mathbb H}^3_{\check\partial s}(\pi(\U),\underline{\Z})$, 
which will be given by a triple
\begin{align*}
&\big(\Delta(\phi(\operatorname{Ad}L\tau)^{20},\phi(\operatorname{Ad}L\tau)^{11},
\phi(\operatorname{Ad}L\tau)^{02})^{30}, \\
&\quad \Delta(\phi(\operatorname{Ad}L\tau)^{20},\phi(\operatorname{Ad}L\tau)^{11},
\phi(\operatorname{Ad}L\tau)^{02})^{21},\\
&\quad\quad\Delta(\phi(\operatorname{Ad}L\tau)^{21},\phi(\operatorname{Ad}L\tau)^{11},
\phi(\operatorname{Ad}L\tau)^{02})^{12}\big).
\end{align*}
The latter two terms are relatively easy to compute:
\begin{align*}
&\Delta(\phi(\operatorname{Ad}L\tau)^{20},\phi(\operatorname{Ad}L\tau)^{11},
\phi(\operatorname{Ad}L\tau)^{02})_{\lambda_0\lambda_1\lambda_2}^{21}(z)_l\\
&\quad=\check{\partial}\left[\sum_{1\leq i<j\leq n}g_{ij}(z)((e_l)_is_{\cdot\cdot}(z)_j-
s_{\cdot\cdot}(z)_i(e_l)_j)\right]_{_{\lambda_0\lambda_1\lambda_2}}\\
& \quad\quad\quad +\sum_{1\leq i<j\leq n} g(z)_{ij}(F_{\lambda_0\lambda_1\lambda_2}(z)_i(e_l)_j-
(e_l)_iF_{\lambda_0\lambda_1\lambda_2}(z)_j)=0.
\end{align*}
$$\Delta(\phi(\operatorname{Ad}L\tau)^{20},\phi(\operatorname{Ad}L\tau)^{11},
\phi(\operatorname{Ad}L\tau)^{02})_{\lambda_0\lambda_1}^{12}(z)_{ij}=g(z)_{ij}-g(z)_{ij}=0.$$
The first term
is similar, but requires one to compute the \v Cech differential $\check\partial$ of
\[\bigcap_{k=0}^2 \pi(U_{\lambda_k})\ni z\mapsto \sum_{1\leq i<j\leq n}g(z)_{ij}
(s_{\lambda_0\lambda_1}(z)_is_{\lambda_1\lambda_2}(z)_j-\check\partial 
s_{\lambda_0\lambda_1\lambda_2}(z)_is_{\lambda_0\lambda_2}(z)_j).\]
We find this differential has the formula
\begin{align*}\bigcap_{k=0}^3 \pi(U_{\lambda_k})\ni z\mapsto &\sum_{1\leq i<j\leq n}
g(z)_{ij}\left[\check\partial s_{\lambda_0\lambda_1\lambda_2}(z)_i\check\partial 
s_{\lambda_0\lambda_2\lambda_3}(z)_j-\check\partial s_{\lambda_1\lambda_2\lambda_3}(z)_i
\check\partial s_{\lambda_0\lambda_1\lambda_3}(z)_j\right.\\
&\quad \left. \check\partial s_{\lambda_1\lambda_2\lambda_3}(z)_i s_{\lambda_0\lambda_1}(z)_j-
s_{\lambda_0\lambda_1}(z)_i\check\partial s_{\lambda_1\lambda_2\lambda_3}(z)_j\right].
\end{align*}
Then we see that
\begin{align*}
&\Delta(\phi(\operatorname{Ad}L\tau)^{20},\phi(\operatorname{Ad}L\tau)^{11},
\phi(\operatorname{Ad}L\tau)^{02})_{\lambda_0\lambda_1\lambda_2\lambda_3}^{30}(z)\\
=&\sum_{1\leq i<j\leq n}g(z)_{ij}\left[\check\partial s_{\lambda_0\lambda_1
\lambda_2}(z)_i\check\partial s_{\lambda_0\lambda_2\lambda_3}(z)_j-
\check\partial s_{\lambda_1\lambda_2\lambda_3}(z)_i\check\partial 
s_{\lambda_0\lambda_1\lambda_3}(z)_j\right.\\
&\quad \left. +\check\partial s_{\lambda_1\lambda_2\lambda_3}(z)_i 
s_{\lambda_0\lambda_1}(z)_j-s_{\lambda_0\lambda_1}(z)_i\check\partial 
s_{\lambda_1\lambda_2\lambda_3}(z)_j\right]\\
&\quad-\sum_{1\leq i<j\leq n}g(z)_{ij}(\check\partial 
s_{\lambda_1\lambda_2\lambda_3}(z)_i
s_{\lambda_0\lambda_1}(z)_j-s_{\lambda_0\lambda_1}(z)_i\check\partial 
s_{\lambda_1\lambda_2\lambda_3}(z)_j)\\
&\quad-\sum_{1\leq i<j\leq n}g(z)_{ij}(\check\partial 
s_{\lambda_0\lambda_1\lambda_2}(z)_i\check\partial 
s_{\lambda_0\lambda_2\lambda_3}(z)_j-\check\partial 
s_{\lambda_1\lambda_2\lambda_3}(z)_i\check\partial s_{\lambda_0\lambda_1\lambda_3}(z)_j)
\quad =0.
\end{align*}
\end{proof}

\begin{cor}\label{NonclassicalLiftIndependence}
Let $(X,\U,s)$ be in the standard setup, and suppose $\delta\in 
\check{H}^3(X,\underline{\Z})|_{\pi^{0,3}=0}$. Then the image of $\delta$ under the composition

\centerline{\xymatrix{
\mathrm{Br}_{\R^n}(X)\ar[r]^{\Xi_{\U,s}}&{\mathbb H}^2_{\check\partial s}(\pi(\U),{\mathcal S})\ar[d]&\\
\check{H}^3(X,\underline{\Z})|_{\pi^{0,3}=0}\ar[u]^{F^{-1}}&
{\mathbb H}^3_{\check\partial s}(\pi(\U),\underline{\Z})
}}

\noindent is independent of the choice of lift $\check{H}^3(X,\underline{\Z})|_{\pi^{0,3}=0}
\to \mathrm{Br}_{\R^n}(X)$.
\end{cor}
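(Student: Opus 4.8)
The statement is the direct corollary of Lemma~\ref{Liftindependence} together with Proposition~\ref{BrauerKernel}. The plan is to recall that, by Proposition~\ref{BrauerKernel}, two lifts $\mathfrak{a}_1,\mathfrak{a}_2\in\mathrm{Br}_{\R^n}(X)$ of the same class $\delta\in\check{H}^3(X,\underline{\Z})|_{\pi^{0,3}=0}$ differ by an element in the image of the map $H^2_M(\R^n,C(X,\T))\to\mathrm{Br}_{\R^n}(X)$; that is, $\mathfrak{a}_1-\mathfrak{a}_2 = \iota(c)$ for some $c\in H^2_M(\R^n,C(X,\T))$, where $\iota$ denotes that map. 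Since $\Xi_{\U,s}$ is a group homomorphism (Corollary~\ref{XiHomomorphism}) and the connecting homomorphism $\Delta:{\mathbb H}^2_{\check\partial s}(\pi(\U),{\mathcal S})\to{\mathbb H}^3_{\check\partial s}(\pi(\U),\underline{\Z})$ is also a homomorphism, the composite $\Delta\circ\Xi_{\U,s}:\mathrm{Br}_{\R^n}(X)\to{\mathbb H}^3_{\check\partial s}(\pi(\U),\underline{\Z})$ is a homomorphism, so the two images differ by $\Delta\circ\Xi_{\U,s}(\iota(c))$.

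\textbf{Key steps, in order.} First I would identify the isomorphism $H^2_M(\R^n,C(X,\T))\cong C(Z,M_n^u(\R))$ from \cite{MatRos05} already recalled in the excerpt, so that the image of $H^2_M(\R^n,C(X,\T))$ inside $\mathrm{Br}_{\R^n}(X)$ is exactly the image of $C(Z,M_n^u(\R))$ under the map $g\mapsto[C_0(X,\K),\operatorname{Ad}L_{\tilde g}\circ\tau]$. Second, apply Lemma~\ref{Liftindependence}, which states precisely that the composition
\[
C(Z,M_n^u(\R))\to\mathrm{Br}_{\R^n}(X)\xrightarrow{\Xi_{\U,s}}{\mathbb H}^2_{\check\partial s}(\pi(\U),{\mathcal S})\to{\mathbb H}^3_{\check\partial s}(\pi(\U),\underline{\Z})
\]
is the zero map. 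Hence $\Delta\circ\Xi_{\U,s}(\iota(c))=0$ for every $c$, so $\Delta\circ\Xi_{\U,s}(\mathfrak{a}_1)=\Delta\circ\Xi_{\U,s}(\mathfrak{a}_2)$, which is exactly the asserted independence of the choice of lift. Third, I would note that a lift always exists because the forgetful map $F$ is surjective onto $\check{H}^3(X,\underline{\Z})|_{\pi^{0,3}=0}$ by \cite[Thm 2.2]{MatRos06}, so the composition displayed in the corollary's diagram is genuinely well-defined.

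\textbf{Main obstacle.} There is essentially no obstacle: all the substantive work has been done in Lemma~\ref{Liftindependence} (the long cohomological computation with $L_{\tilde g}$ and the three components of $\Delta$) and in Proposition~\ref{BrauerKernel}. The only point requiring a line of care is making sure the exact sequence of Proposition~\ref{BrauerKernel} is used correctly: it says the kernel of $\mathrm{Br}_{\R^n}(X)\to\check{H}^3(X,\underline{\Z})|_{\pi^{0,3}=0}$ is exactly the image of $H^2_M(\R^n,C(X,\T))$, so any two lifts differ by such an element — one should phrase this as ``the difference of two lifts lies in the image of $H^2_M(\R^n,C(X,\T))$'' rather than claiming the lift is unique. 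With that observed, the corollary follows in a few lines by linearity.
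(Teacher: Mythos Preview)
Your proposal is correct and follows exactly the same approach as the paper, which simply states ``Follows from Proposition~\ref{BrauerKernel} and Lemma~\ref{Liftindependence}.'' You have merely spelled out the implicit reasoning: two lifts differ by an element in the image of $H^2_M(\R^n,C(X,\T))\cong C(Z,M_n^u(\R))$, and Lemma~\ref{Liftindependence} shows such elements map to zero under $\Delta\circ\Xi_{\U,s}$.
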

\begin{proof}
Follows from Proposition \ref{BrauerKernel} and Lemma \ref{Liftindependence}.
\end{proof}

\begin{theorem}\label{DimReductThm}
Let $(X,\U,s)$ be in the standard setup. Then we have an isomorphism of groups
$\check{H}^3(X,\underline{\Z})|_{\pi^{0,3}=0}\cong 
{\mathbb H}^3_{\check\partial s}(\pi(\U),\underline{\Z})$.
\end{theorem}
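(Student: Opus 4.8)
The plan is to assemble the isomorphism from the pieces already established, using a diagram chase together with the forgetful homomorphism and the connecting map in dimensionally reduced cohomology.

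First I would recall the commutative square that we are building (the statement of Theorem \ref{MainSquare}): the left vertical arrow $\Xi_{\U,s}\colon\mathrm{Br}_{\R^n}(X)\to {\mathbb H}^2_{\check\partial s}(\pi(\U),{\mathcal S})$ is an isomorphism by Theorem \ref{MainIsomorphism}, and the top horizontal arrow is the forgetful homomorphism $F\colon\mathrm{Br}_{\R^n}(X)\to\check{H}^3(X,\underline{\Z})$, whose image is exactly $\check{H}^3(X,\underline{\Z})|_{\pi^{0,3}=0}$ by \cite[Thm 2.2]{MatRos06}. The right vertical arrow will be the connecting homomorphism $\Delta\colon{\mathbb H}^2_{\check\partial s}(\pi(\U),{\mathcal S})\to {\mathbb H}^3_{\check\partial s}(\pi(\U),\underline{\Z})$ from the long exact sequence of Corollary \ref{dimreducedlongexact}. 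So I would define the candidate map $\Theta\colon\check{H}^3(X,\underline{\Z})|_{\pi^{0,3}=0}\to {\mathbb H}^3_{\check\partial s}(\pi(\U),\underline{\Z})$ as the composition $\Delta\circ\Xi_{\U,s}\circ F^{-1}$, which makes sense as a map of sets once we know it is independent of the choice of lift along $F$; but that independence is precisely Corollary \ref{NonclassicalLiftIndependence}. Hence $\Theta$ is a well-defined group homomorphism, and the diagram of Theorem \ref{MainSquare} commutes by construction.

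Next I would prove $\Theta$ is an isomorphism by a five-lemma-style argument comparing two exact sequences. On the source side we have the exact sequence from Proposition \ref{BrauerKernel} (composed with the forgetful map), namely $H^2_M(\R^n,C(X,\T))\to \mathrm{Br}_{\R^n}(X)\to \check{H}^3(X,\underline{\Z})|_{\pi^{0,3}=0}\to 0$, together with the identification $H^2_M(\R^n,C(X,\T))\cong C(Z,M_n^u(\R))$ from \cite[Lemma 2.1]{MatRos05}. On the target side we have the tail of the exact sequence of Corollary \ref{dimreducedlongexact}: $C(Z,M_n^u(\R))\to {\mathbb H}^2_{\check\partial s}(\pi(\U),{\mathcal S})\stackrel{\Delta}{\to} {\mathbb H}^3_{\check\partial s}(\pi(\U),\underline{\Z})\to 0$. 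Lemma \ref{Liftindependence} tells us exactly that the map $C(Z,M_n^u(\R))\to {\mathbb H}^2_{\check\partial s}(\pi(\U),{\mathcal S})$ appearing in the first sequence agrees, under $\Xi_{\U,s}$, with the map $C(Z,M_n^u(\R))\to {\mathbb H}^2_{\check\partial s}(\pi(\U),{\mathcal S})$ in the second sequence up to the precomposition $\mathrm{Br}_{\R^n}(X)\xrightarrow{\Xi_{\U,s}}{\mathbb H}^2_{\check\partial s}(\pi(\U),{\mathcal S})$ — i.e., the two sequences fit into a commuting ladder with vertical isomorphisms $\Xi_{\U,s}$ on the ${\mathbb H}^2$ term and the identity on the $C(Z,M_n^u(\R))$ term. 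Then surjectivity of $\Theta$ is immediate from surjectivity of $\Delta$ and of $\Xi_{\U,s}$, and injectivity follows from a short diagram chase: if $\Theta$ kills a class $\delta$, lift it to $[A,\alpha]\in\mathrm{Br}_{\R^n}(X)$, so that $\Delta\Xi_{\U,s}[A,\alpha]=0$; by exactness of the second sequence, $\Xi_{\U,s}[A,\alpha]$ is in the image of $C(Z,M_n^u(\R))$, hence by the ladder equals $\Xi_{\U,s}$ of something in the image of $H^2_M(\R^n,C(X,\T))\to\mathrm{Br}_{\R^n}(X)$; since $\Xi_{\U,s}$ is injective (Theorem \ref{MainIsomorphism}), $[A,\alpha]$ itself lies in that image, and then exactness of the first sequence forces $\delta=0$.

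The step I expect to be the main obstacle is verifying that the ladder genuinely commutes, i.e.\ making Lemma \ref{Liftindependence} do the work of comparing the two arrows out of $C(Z,M_n^u(\R))$ on the nose (not merely up to sign or up to a further coboundary). One has to be careful that the identification $H^2_M(\R^n,C(X,\T))\cong C(Z,M_n^u(\R))$ used in \cite{MatRos05} matches the identification ${\mathbb H}^2_{\check\partial s}(\pi(\U),{\mathcal R})\cong C(Z,M_n^u(\R))$ coming from Lemma \ref{Rcompute}, and that the map $L_{\tilde g}$ construction feeding into Lemma \ref{Liftindependence} produces precisely the representative cocycle that Lemma \ref{Rcompute}'s isomorphism expects; the computation in the proof of Lemma \ref{Liftindependence} shows the composite to ${\mathbb H}^3$ is zero, which is the key input, but one must also check the composite to ${\mathbb H}^2_{\check\partial s}(\pi(\U),{\mathcal S})$ lands in (and surjects onto) the image of the ${\mathcal R}$-term as required for exactness to be usable. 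Once that bookkeeping is in place, the rest is the routine five-lemma chase sketched above.
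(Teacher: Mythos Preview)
Your proposal is correct and is essentially the paper's own argument: both set up the same ladder with rows coming from Proposition \ref{BrauerKernel} and Corollary \ref{dimreducedlongexact}, vertical isomorphism $\Xi_{\U,s}$ in the middle, and conclude by the Five Lemma. The paper dispatches your ``main obstacle'' (commutativity of the leftmost square) in one line by declaring the bottom-row maps to be defined by going clockwise around the squares; the substantive content behind that declaration is exactly the explicit cocycle computation carried out in the proof of Lemma \ref{Liftindependence}, which shows that $\Xi_{\U,s}$ sends $[C_0(X,\K),\operatorname{Ad}L_{\tilde g}\circ\tau]$ to the class represented by the $[\,\cdot\,]_{\R/\Z}$-reduction of an $\mathcal{R}$-valued cocycle with $\phi^{02}$-component $g$, so your bookkeeping worry is already resolved there.
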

\begin{proof}
Corollary \ref{NonclassicalLiftIndependence} gives a well-defined homomorphism
$\check{H}^3(X,\underline{\Z})|_{\pi^{0,3}=0}\mapsto 
{\mathbb H}^3_{\check\partial s}(\pi(\U),\underline{\Z})$.
Then, Corollary \ref{dimreducedlongexact}, Proposition 
\ref{BrauerKernel} and Theorem \ref{MainIsomorphism} give a diagram with exact rows
\centerline{\xymatrix{
C(Z,M_n^u(\R))\ar[r]\ar[d]^{\operatorname{id}}&\operatorname{Br}_{\R^n}(X)
\ar[r]\ar[d]^{\cong}_{\Xi_{\U,s}}&\check{H}^3(X,\underline{\Z})|_{\pi^{0,3}=0}\ar[d]\ar[r]&
0\ar[d]\ar[r]&0\ar[d]\\
C(Z,M_n^u(\R))\ar[r]&{\mathbb H}^2_{\check\partial s}(\pi(\U),{\mathcal S})
\ar[r]&{\mathbb H}^3_{\check\partial s}(\pi(\U),\underline{\Z})\ar[r]&0\ar[r]&0
}}
\noindent The squares are commutative by definition (because the maps on the 
bottom row are defined implicitly by going clockwise around the square). 
The isomorphism then follows from the Five Lemma.
\end{proof}

The proof of Theorem \ref{MainSquare} then follows from Theorem \ref{DimReductThm} and its proof.



\end{document}